\newtheorem{theorem}{Theorem}[section] 
\newtheorem*{theorem*}{Theorem}
\newtheorem{lemma}[theorem]{Lemma}
\newtheorem{proposition}[theorem]{Proposition}
\newtheorem*{proposition*}{Proposition}
\newtheorem{corollary}[theorem]{Corollary}
\newtheorem*{corollary*}{Corollary}
\newtheorem{notation}[theorem]{Notation}
\newtheorem{definition}[theorem]{Definition}
\theoremstyle{definition}
\newtheorem{remark}[theorem]{Remark}
\newtheorem{example}[theorem]{Example}
\numberwithin{equation}{section}
\newcommand{\up}{\big\uparrow}
\newcommand{\down}{\big\downarrow}
\newcommand{\Char}{\operatorname{Char}}
\newcommand{\Gal}{\operatorname{Gal}}
\newcommand{\image}{\operatorname{Im}}
\newcommand{\Irr}{\operatorname{Irr}}
\newcommand{\LR}{\operatorname{LR}}
\newcommand{\Syl}{\operatorname{Syl}}
\newcommand{\triv}{\mathbbm{1}}
\newcommand{\C}{\mathbb{C}}
\newcommand{\N}{\mathbb{N}}
\newcommand{\Q}{\mathbb{Q}}
\newcommand{\Z}{\mathbb{Z}}
\newcommand{\cB}{\mathcal{B}}
\newcommand{\cF}{\mathcal{F}}
\newcommand{\cG}{\mathcal{G}}
\newcommand{\cM}{\mathcal{M}}
\newcommand{\cN}{\mathcal{N}}
\newcommand{\cP}{\mathcal{P}}
\newcommand{\cT}{\mathcal{T}}
\newcommand{\cX}{\mathcal{X}}
\newcommand{\fh}{\mathsf{h}}
\newcommand{\fH}{\mathsf{H}}
\newcommand{\ft}{\mathsf{t}}
\newcommand{\fv}{\mathsf{v}}
\newcommand{\tworow}[2]{\ft_{#1}({#2})}
\newcommand{\hook}[2]{\fh_{#1}({#2})}
\newcommand{\punc}{{}^\circ}
\newcommand{\puncb}[2]{{}^\circ\cB_{#1}(#2)}
\newcommand{\puncp}[1]{{}^\circ\cP(#1)}
\newcommand{\close}[1]{{#1}^\bullet}
\newcommand{\val}{\mathcal{V}}
\newcommand{\newqed}{\hfill$\lozenge$}
\newcommand{\itemspace}[1]{\setlength\itemsep{#1}}
\begin{document}
	
\title[Sylow branching trees]{Sylow branching trees for symmetric groups}

\author{Eugenio Giannelli}
\address[E. Giannelli]{Dipartimento di Matematica e Informatica U.~Dini, Viale Morgagni 67/a, Firenze, Italy}
\email{eugenio.giannelli@unifi.it}

\author{Stacey Law}
\address[S. Law]{School of Mathematics, Watson Building, University of Birmingham, Edgbaston, Birmingham B15 2TT, UK} 
\email{s.law@bham.ac.uk}

\begin{abstract}
	Let $p\ge 5$ be a prime and let $P$ be a Sylow $p$-subgroup of a finite symmetric group $S_n$. To every irreducible character of $P$ we associate a collection of labelled, complete $p$-ary trees. 
	The main results of this article describe the positivity of Sylow branching coefficients for all irreducible characters of $P$ in terms of combinatorial properties of these trees, extending previous work on the linear characters of $P$.
\end{abstract}

\dedicatory{Dedicated to Gabriel Navarro on his 60th birthday.}


\subjclass[2020]{20C15, 20C30}

\maketitle


\section{Introduction}\label{sec:intro}

Let $G$ be a finite group, let $p$ be a prime number and let $P$ be a Sylow $p$-subgroup of $G$. 
The \emph{Sylow branching coefficients} of $G$ are the multiplicities $[\chi\big\downarrow_P, \phi]$, where $\chi$ and $\phi$ are any irreducible characters of $G$ and $P$ respectively. 
The study of the relationship between the representations of a group and the structure of its Sylow $p$-subgroups has been a central theme of research for decades.  
Most notably, Problem 12 of Brauer's celebrated article \cite{Brauer} asks how much information about $P$ can be obtained from the character table of $G$. 
More recently, character restriction to Sylow subgroups has been studied in connection with the McKay conjecture \cite{NTV14, GKNT17}, and to shed light on fields of values of irreducible characters of degree coprime to $p$ \cite{NT21, IN24}.

In contrast with this activity, specific information on Sylow branching coefficients is scarcely available in the literature. 
For this reason, the authors began an investigation of these coefficients for symmetric groups in recent years. 
In particular, in \cite{GL1} we determined the irreducible constituents of the permutation character $\triv_P\up^{S_n}$ for any $n\in\N$ and any odd prime $p$. Here $P$ denotes a Sylow $p$-subgroup of $S_n$ and $\triv_P$ its trivial character. 
In other words, we completely classified those $\chi\in\Irr(S_n)$ for which $[\chi\down_P, \triv_P]\neq 0$.
Later in \cite{GL2,L}, we extended this by considering all linear (i.e.~degree 1) characters of $P$, not just the trivial character.

Recently in \cite{GLLV}, we were able to use these results to confirm a conjecture proposed by Malle and Navarro in \cite{MN}, concerning the relation between the irreducible characters of any finite group and the algebraic structure of its Sylow subgroups. In particular, we characterise when $P$ is normal in $G$ in terms of Sylow branching coefficients.

The present article represents a further, significant advance in this line of research. We now generalise our previous work from considering only linear characters $\theta$ of $P$ to studying Sylow branching coefficients for \emph{all} irreducible characters $\theta$ of $P$.
Given $\theta\in\Irr(P)$, we let $\Omega(\theta)$ be the subset of $\Irr(S_n)$ consisting of the irreducible constituents of $\theta\up^{S_n}$.
Since the irreducible characters of $S_n$ are naturally labelled by partitions of $n$, we can equivalently think of the set $\Omega(\theta)$ as a subset of the set $\cP(n)$ of partitions of $n$ in the following way:
\[ \Omega(\theta) := \{\lambda\in\cP(n) \mid [\chi^\lambda\down_P,\theta]\neq 0\}. \]
Here, we denoted by $\chi^\lambda$ the irreducible character of $S_n$ naturally labelled by $\lambda\in\cP(n)$. 

In this paper, we first associate to each $\theta\in\Irr(P)$ a collection of labelled, complete $p$-ary trees $\cT(\theta)$, then we describe the set $\Omega(\theta)$ in terms of some (surprisingly easy!) combinatorial properties of the trees in $\cT(\theta)$. (To be precise, the elements of $\cT(\theta)$ will be graph-theoretic trees if $n$ is a power of $p$; for general $n\in\N$, they will be forests, or more specifically, ordered tuples of trees.)

Our first main result is contained in \textbf{Theorems~\ref{thm:val-1-arbitrary} and~\ref{thm:puncture-arbitrary}}. For $p\ge 5$, we determine $\Omega(\theta)$ exactly for a certain class of irreducible characters of $P$ defined in terms of the \emph{value} $\val(\theta)$ of $\theta$ (see Definitions~\ref{def:trees-stats} and~\ref{def:theta-stats}).
In order to describe this result briefly, we introduce the following notation. Given $t,n\in\N$, we define
\begin{equation}\label{eq:box}
	\cB_n(t) := \{\lambda\in\cP(n) \mid \lambda_1, \ell(\lambda)\le t\}.
\end{equation}
This is the set of all partitions of $n$ whose first part and number of parts is at most $t$, or equivalently whose associated Young diagram fits into a $t\times t$ square box. (Later in Section~\ref{sec:prelim}, we will also define the `punctured' box $\puncb{n}{t}$ which is $\cB_n(t)$ with two specific partitions and their conjugates removed.)
In Theorems~\ref{thm:val-1-arbitrary} and~\ref{thm:puncture-arbitrary}, we show that there is an easy-to-compute statistic $\gamma_0(\theta)$ such that $\Omega(\theta)$ is precisely $\cB_n(n-\gamma_0(\theta))$ or $\puncb{n}{n-\gamma_0(\theta)}$, and determine when each occurs.

The second main result of this work deals with \emph{every} irreducible character $\theta$ of $P$ when $p\ge 5$. Let $m(\theta)$ and $M(\theta)$ be the positive integers defined as follows:
\begin{equation}\label{eq:m-and-M}
	m(\theta) := \max\{t\in\N \mid \cB_{n}(t) \subseteq \Omega(\theta) \} \quad \text{and}\quad  M(\theta) := \min\{t\in\N \mid \Omega(\theta) \subseteq \cB_{n}(t)\}.
\end{equation}
That is, for any $\theta\in\Irr(P)$ the values $m(\theta)$ and $M(\theta)$ are tight bounds such that
\[ \cB_{n}(m(\theta))\subseteq \Omega(\theta)\subseteq \cB_{n}(M(\theta)). \]
In \textbf{Theorems~\ref{thm:arbitrary-M} and~\ref{thm:final-m(theta)}} we determine both $M(\theta)$ and $m(\theta)$ for every $\theta\in\Irr(P)$, expressing these quantities in terms of the number of vertices in the associated trees in $\cT(\theta)$ satisfying a specific (and easily computable) property.
While it remains an open problem to determine in general the positivity of Sylow branching coefficients $[\chi^\lambda\down_P,\theta]$ when the partition $\lambda$ lies in $\cB_n(M(\theta))\setminus\cB_n(m(\theta))$, we show in \textbf{Corollary~\ref{cor:M-m}} that in fact there are very few such $\lambda$, since the difference $M(\theta)-m(\theta)$ is very small compared to $n$.
These results provide some interesting consequences.
For instance, if we define $\Omega(n)$ as the intersection of all the sets $\Omega(\theta)$ as $\theta$ varies across all of $\Irr(P)$, then \textbf{Corollary~\ref{cor:limit}} shows that the proportion $|\Omega(n)|/|\cP(n)|$ tends to $1$ when $n$ goes to infinity. This says that, asymptotically, the restriction to $P$ of almost all irreducible characters of $S_n$ has every irreducible character of $P$ as a constituent.
We conclude the article by remarking on the case of $p\in\{2,3\}$.

\bigskip

\section{Background, Notation and Preliminary Results}\label{sec:prelim}

First, we fix some notation adopted in the rest of the paper, as well as recall some basic facts concerning the combinatorial representation theory of symmetric groups and their Sylow subgroups. For $n$ a natural number, $S_n$ denotes the symmetric group on $n$ objects. Given a prime number $p$, we let $\Syl_p(S_n)$ denote the set of Sylow $p$-subgroups of $S_n$, and let $P_n\in\Syl_p(S_n)$.

Given $x,y\in\Z$ we let $[x,y]:=\{z\in\Z \mid  x\le z\le y\}$. Notice that if $x>y$ then $[x,y]$ is the empty set. 
For a finite group $G$ we let $\Char(G)$ denote the set of ordinary characters of $G$, and let $\Irr(G)$ denote the subset of those characters which are irreducible. 
Given $\theta,\chi\in\Char(G)$ we write $\theta\mid\chi$ to mean that $\chi=\theta+\Delta$ for some $\Delta\in\Char(G)$ or $\Delta=0$. If $\theta$ is irreducible and $\theta\mid \chi$, we say that $\theta$ is an irreducible constituent of $\chi$. Given a subgroup $H$ of $G$ and $\phi\in\Irr(H)$ we let $\Irr(G\mid\phi):=\{\chi\in\Irr(G) \mid [\chi\down_H,\phi] \ne 0\}$. 

\subsection*{Representations of $S_n$ and Combinatorics of Partitions}
Given $n\in\N$, it is well known that the irreducible characters of $S_n$ are naturally in bijection with the partitions of $n$. We denote by $\cP(n)$ the set of partitions of $n$ and, for any $\lambda\in\cP(n)$, we denote by $\chi^\lambda$ the irreducible character of $S_n$ corresponding to $\lambda$.
Standard sources for a complete account of the ordinary representation theory of $S_n$ are \cite{JK} and \cite{OlssonBook}. In order to ease the notation we will sometimes identify irreducible characters with their corresponding partitions. It will be always clear from the context whether $\lambda$ denotes a partition or its corresponding irreducible character.

Let $\lambda=(\lambda_1,\lambda_2,\ldots,\lambda_t)\in\cP(n)$. The integers $\lambda_i$ are called the \emph{parts} of $\lambda$. The integer $t$ is called \emph{length} of $\lambda$, also denoted $\ell(\lambda)$. If $\lambda$ is a partition and $m\in\N$ satisfies $m\ge\lambda_1$, then we will use the shorthand $(m,\lambda)$ to mean the partition $(m,\lambda_1,\lambda_2,\dotsc,\lambda_t)$. Given partitions $\lambda$ and $\mu$ we say that $\mu$ is a \emph{subpartition} of $\lambda$, written $\mu\subseteq\lambda$, if $\mu_i\le\lambda_i$ for all $i\in [1,\ell(\mu)]$.

The conjugate partition of $\lambda$ is denoted by $\lambda'$, and observe that $\ell(\lambda')=\lambda_1$. 
Given a subset $A\subseteq\cP(n)$ we say that $A$ is \emph{closed under conjugation} if for every $\lambda\in \cP(n)$ one has that $\lambda\in A$ if and only if $\lambda'\in A$. 
We define the closure $\close{A}$ of a set of partitions $A$ under the conjugation operator as follows: 
\[ \close{A} := A \cup \{\lambda' \mid \lambda \in A \}. \]
We have an easy fact concerning the Sylow restrictions of $\chi^\lambda$ and $\chi^{\lambda'}$ whenever $p$ is odd.

\begin{lemma}\label{lem:conj-restr}
	Let $p$ be an odd prime, $n\in\N$ and $P_n\in\Syl_p(S_n)$. Let $\lambda\in\cP(n)$. Then $\chi^\lambda\down_{P_n} = \chi^{\lambda'}\down_{P_n}$. In particular, $\Omega(\theta)=\close{\Omega(\theta)}$.
\end{lemma}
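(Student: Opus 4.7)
The plan is to use the standard identity $\chi^{\lambda'} = \chi^{\lambda} \otimes \operatorname{sgn}$ relating the irreducible characters labelled by a partition and its conjugate, where $\operatorname{sgn}$ denotes the sign character of $S_n$. Upon restriction to $P_n$, we obtain
\[ \chi^{\lambda'}\down_{P_n} = \chi^{\lambda}\down_{P_n} \cdot \operatorname{sgn}\down_{P_n}, \]
so it suffices to show that $\operatorname{sgn}\down_{P_n} = \triv_{P_n}$.

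The key point I would establish is that $P_n \le A_n$, i.e., every element of $P_n$ is an even permutation. Since $p$ is odd, every $p$-element of $S_n$ has cycle type whose cycle lengths are odd powers of $p$; any $p^k$-cycle with $p$ odd is a product of $p^k-1$ transpositions, hence an even permutation. Therefore every element of $P_n$, being a $p$-element of $S_n$, lies in $A_n$. (Alternatively, one observes $|S_n : A_n| = 2$ is coprime to $p$, so $P_n \subseteq A_n$ by order considerations.) Consequently $\operatorname{sgn}\down_{P_n}$ is the trivial character, which proves the first claim.

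For the ``in particular'' statement, I would unpack the definitions: a partition $\lambda\in\cP(n)$ lies in $\Omega(\theta)$ if and only if $[\chi^{\lambda}\down_{P_n}, \theta] \neq 0$, and by the equality just proved this is equivalent to $[\chi^{\lambda'}\down_{P_n}, \theta] \neq 0$, i.e.~$\lambda' \in \Omega(\theta)$. Hence $\Omega(\theta)$ is closed under conjugation, so $\Omega(\theta) = \close{\Omega(\theta)}$.

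I do not anticipate any real obstacle here; the only subtle point is justifying $P_n \le A_n$, which reduces to the arithmetic observation that $p$ is odd.
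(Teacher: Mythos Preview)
Your proof is correct and follows essentially the same approach as the paper: both use the identity $\chi^{\lambda'}=\chi^\lambda\cdot\operatorname{sgn}$ together with the observation that $P_n\le A_n$ when $p$ is odd, so that the sign character restricts trivially. Your write-up is simply more detailed, spelling out the cycle-type justification and the ``in particular'' clause, which the paper leaves implicit.
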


\begin{proof}
	This follows from the well-known fact that $\chi^{\lambda'}$ equals $\chi^\lambda$ multiplied by the sign character (see \cite[2.1.8]{JK}, for example), and the observation that $P_n$ is contained in the alternating subgroup of $S_n$ whenever $p$ is odd.
\end{proof}

Let $n\in\N$ and let $t\in [1,n]$. We recall from \eqref{eq:box} that $\cB_n(t)$ is the subset of $\cP(n)$ consisting of those partitions whose Young diagram fits into a $t\times t$ square grid. More precisely, 
\[ \cB_n(t) := \{\lambda\in\cP(n) \mid \lambda_1\le t, \ell(\lambda)\le t \}. \]
Notice also that $\cB_n(t)=\close{\cB_n(t)}$. 

A partition $\lambda$ with at most two parts, i.e.~$\ell(\lambda)\le 2$, is called a \emph{two-row} partition. On the other hand, we will call \emph{hook} partitions all those partitions of the form $(n-x, 1^x)$ with $x\in[0,n-1]$, where the exponential notation means that there are $x$ parts of size $1$. 
The following definition is non-standard and will be used frequently later in this article. 

\begin{definition}
	Let $n\in\N$.
	\begin{itemize}
		\item[(a)] We define the set $\operatorname{Thin}(n)$ of \emph{thin partitions} of $n$ to be the following subset of $\cP(n)$:
		\[ \operatorname{Thin}(n) := \close{(\{ \lambda\in\cP(n) \mid \lambda_2\le 1 \} \cup \{ \lambda\in\cP(n) \mid \ell(\lambda)\le 2 \})}. \]
		In other words, a partition is thin if it has a Durfee square of size at most 2. Explicitly, a thin partition is one which is a hook partition, a two-row partition, or the conjugate of a two-row partition.
		\item[(b)] For any $x\in[\lceil\tfrac{n}{2}\rceil,n]$, we denote by $\tworow{n}{x}$ the two-row partition $(x,n-x)$.
		\item[(c)] For any $y\in[1,n]$, we denote by $\hook{n}{y}$ the hook partition $(y,1^{n-y})$.
	\end{itemize}
\end{definition}

In order to study the restriction of characters of symmetric groups to their Sylow subgroups, we will often consider first restriction to intermediate subgroups. An important family of subgroups that will come into play in this setting is that of \emph{Young subgroups}. In this case, the properties of the restriction of irreducible characters are controlled by the Littlewood--Richardson rule. This gives an explicit method for calculating multiplicities of the form $[\chi^\lambda\down_{S_{m+n}}, \chi^\mu\times\chi^\nu]$, where $\lambda\in\cP(m+n)$, $\mu\in\cP(m)$ and $\nu\in\cP(n)$. We refer the reader to \cite[Chapter 16]{J} for the precise definition of this combinatorial rule. 

Here we just recall that given a partition $\nu=(n_1,n_2,\ldots, n_t)$ of $n$, meaning $n_i\in\N_0$ for all $i$ and $\sum_i n_i=n$, we let $Y_{\nu}=S_{n_1}\times S_{n_2}\times\cdots\times S_{n_t}\leq S_n$ be the Young subgroup corresponding to $\nu$. That is, setting $n_0=0$, the $t$ orbits of $Y_\nu$ in its action on $[1,n]$ are $\{\sum_{j=0}^{i-1}n_j+1, \sum_{j=0}^{i-1}n_j+2, \dotsc, \sum_{j=0}^{i-1}n_j+n_i \}$, for each $i\in[1,t]$.
For any $\lambda\in\cP(n)$ and any $(\mu^1,\mu^2,\ldots,\mu^t)\in\cP(n_1)\times\cP(n_2)\times\cdots\times\cP(n_t)$ we denote by $\LR(\lambda; \mu^1,\ldots, \mu^t)$ the multiplicity of 
$\chi^{\mu^1}\times\cdots\times\chi^{\mu^t}$ as an irreducible constituent of $\chi^\lambda\down_{Y_\nu}$. One combinatorial interpretation of this coefficient is described in detail in \cite[Section 2.4]{GL2} and is omitted here. Nevertheless, we record some easy consequences of the Littlewood--Richardson rule that we will use frequently in this article. 

\begin{lemma}\label{lem:LR-first-part}
	If $\LR(\lambda; \mu^1,\ldots, \mu^t)\neq 0$, then $\mu^i\subseteq \lambda$ for all $i\in [1,t]$ and $\lambda_1\leq (\mu^1)_1+(\mu^2)_1+\cdots+ (\mu^t)_1$.
\end{lemma}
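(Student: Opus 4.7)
The plan is to reduce to the two-factor case by induction on $t$, using transitivity of restriction through nested Young subgroups, and then apply the Littlewood--Richardson rule directly.

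For the base case $t=2$, write $\mu=\mu^1\in\cP(m)$ and $\nu=\mu^2\in\cP(k)$ with $m+k=n$, so that $\LR(\lambda;\mu,\nu)$ is the classical Littlewood--Richardson coefficient $c^\lambda_{\mu\nu}$. Recall that it counts semistandard skew tableaux of shape $\lambda/\mu$ with content $\nu$ whose reverse reading word is a lattice word. The mere existence of a skew tableau of shape $\lambda/\mu$ already forces $\mu\subseteq\lambda$. The containment $\nu\subseteq\lambda$ then follows from the symmetry $c^\lambda_{\mu\nu}=c^\lambda_{\nu\mu}$ (equivalently, from swapping the two factors of the Young subgroup $S_m\times S_k$).

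For the bound on $\lambda_1$, the key observation is that in any such LR skew tableau, every entry in the first row must equal $1$. Indeed, suppose some entry $j>1$ appears in position $(1,c)$ of the first row. Reading the entries of the tableau from right to left, top to bottom, when we encounter this $j$ we will previously only have read entries lying in row $1$ strictly to the right of $(1,c)$; by semistandardness each such entry is $\ge j$, so no $(j-1)$ has been read yet, contradicting the lattice-word condition. Hence the first row of $\lambda/\mu$ is filled exclusively with $1$'s, so the number of $1$'s (which equals $\nu_1$) must be at least $\lambda_1-\mu_1$. This gives $\lambda_1\le \mu_1+\nu_1$.

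For the inductive step with $t\ge 3$, pass through the intermediate Young subgroup $S_{n_1}\times S_{n_2+\cdots+n_t}\le S_n$. By transitivity of restriction, there exists $\kappa\in\cP(n_2+\cdots+n_t)$ with both $\LR(\lambda;\mu^1,\kappa)\neq 0$ and $\LR(\kappa;\mu^2,\ldots,\mu^t)\neq 0$. Applying the base case to the first factor yields $\mu^1\subseteq\lambda$, $\kappa\subseteq\lambda$ and $\lambda_1\le (\mu^1)_1+\kappa_1$; applying the induction hypothesis to the second factor yields $\mu^i\subseteq\kappa$ for all $i\in[2,t]$ and $\kappa_1\le (\mu^2)_1+\cdots+(\mu^t)_1$. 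Combining the two chains of containments with $\kappa\subseteq\lambda$, and the two inequalities, gives exactly the two conclusions. The only non-routine point is the lattice-word argument in the base case; everything else is bookkeeping.
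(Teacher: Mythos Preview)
Your proof is correct. The paper does not actually supply a proof of this lemma, stating only that it is an ``easy consequence of the Littlewood--Richardson rule''; your argument spells out precisely those details, and the induction via an intermediate Young subgroup together with the lattice-word observation that all first-row entries of an LR tableau equal $1$ is exactly the standard justification.
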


\begin{lemma}\label{lem:iterated-LR}
	Let $a,b_1,\dotsc,b_a\in\N$. Let $\nu^1,\dotsc,\nu^a$ be partitions such that $b_i\ge |\nu^i|$ for all $i$ and let $c=|\nu^1|+\cdots+|\nu^a|$. 
	Let $\mu\in\cP(c)$ and let $\lambda=(b_1+b_2+\cdots+b_a,\mu)$. Then 
	\[ \LR(\lambda; (b_1,\nu^1),\ldots,(b_1,\nu^a))=\LR(\mu; \nu^1,\ldots,\nu^a). \]
\end{lemma}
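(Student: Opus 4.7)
My plan is to prove this by induction on $a$, reducing to the key case $a=2$.

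The base case $a=1$ is trivial: both sides equal $\delta_{\mu,\nu^1}$. The case $a=2$ is the heart of the argument. Here $\LR(\lambda;(b_1,\nu^1),(b_2,\nu^2))=c^\lambda_{(b_1,\nu^1),(b_2,\nu^2)}$ counts LR fillings of the skew shape $\lambda/(b_1,\nu^1)$ with content $(b_2,\nu^2)$. The top row of this skew shape consists of exactly $b_2$ cells lying in row $1$ of $\lambda$. Because rows are weakly increasing and the reverse reading word must be a lattice word, the rightmost cell of row $1$ must contain $1$, and then the row condition forces all $b_2$ cells of row $1$ to be $1$'s --- exhausting every $1$ allowed by the content. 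Restricting the remaining filling to $\mu/\nu^1$ and subtracting $1$ from each entry should yield an LR filling of $\mu/\nu^1$ with content $\nu^2$; the inverse is obtained by prepending $1^{b_2}$ to row $1$ and shifting all other entries up by $1$. The assumption $b_1\ge|\nu^1|\ge(\nu^1)_1$ ensures the column-strict constraint between rows $1$ and $2$ of the skew shape only bites in columns $>b_1$, where the lifted entries are automatically $\ge 2$; the assumption $b_2\ge|\nu^2|\ge(\nu^2)_1$ is precisely what is needed for the ballot condition to survive the prepending of $1^{b_2}$. This establishes $c^\lambda_{(b_1,\nu^1),(b_2,\nu^2)}=c^\mu_{\nu^1,\nu^2}$.

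For the inductive step $a\ge 3$, transitivity of the Littlewood--Richardson rule gives
\[ \LR(\lambda;(b_1,\nu^1),\dotsc,(b_a,\nu^a)) = \sum_\alpha c^\lambda_{(b_1,\nu^1),\alpha}\,\LR(\alpha;(b_2,\nu^2),\dotsc,(b_a,\nu^a)). \]
Applying Lemma~\ref{lem:LR-first-part} to both factors pins down $\alpha_1=b_2+\cdots+b_a$: the second factor forces $\alpha_1\le b_2+\cdots+b_a$, and the first (using $\lambda_1=b_1+\cdots+b_a$) forces $\alpha_1\ge b_2+\cdots+b_a$. So $\alpha=(b_2+\cdots+b_a,\bar\alpha)$ for a partition $\bar\alpha$ of $c-|\nu^1|$. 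The $a=2$ case (applied with $b_1'=b_1$ and $b_2'=b_2+\cdots+b_a\ge|\bar\alpha|$) gives $c^\lambda_{(b_1,\nu^1),\alpha}=c^\mu_{\nu^1,\bar\alpha}$, while the inductive hypothesis applied at $\alpha$ gives $\LR(\alpha;(b_2,\nu^2),\dotsc,(b_a,\nu^a))=\LR(\bar\alpha;\nu^2,\dotsc,\nu^a)$. Summing over $\bar\alpha$ then recovers $\LR(\mu;\nu^1,\dotsc,\nu^a)$, as required.

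The main obstacle is the bijective argument in the $a=2$ case: one must carefully check that the column-strict and ballot conditions are preserved in both directions of the bijection, particularly at the boundary between the forced row of $1$'s and the shifted filling of $\mu/\nu^1$. Once this is in place, the inductive step is essentially bookkeeping driven by the first-part inequalities of Lemma~\ref{lem:LR-first-part}.
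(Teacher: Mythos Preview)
Your proof is correct. The paper itself does not give a proof of this lemma: it is stated, together with Lemma~\ref{lem:LR-first-part}, as one of ``some easy consequences of the Littlewood--Richardson rule'' and left to the reader. Your write-up therefore supplies the details the paper omits.

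Your approach --- reducing by induction to the case $a=2$ and handling that case via an explicit bijection on LR tableaux --- is the natural one. The key observations are exactly those you identify: in any LR filling of $\lambda/(b_1,\nu^1)$ of content $(b_2,\nu^2)$, the $b_2$ cells in row~1 are forced to be $1$'s and exhaust the supply of $1$'s, so that deleting row~1 and decrementing all remaining entries gives an LR filling of $\mu/\nu^1$ of content $\nu^2$; and conversely the hypotheses $b_1\ge|\nu^1|$ and $b_2\ge|\nu^2|$ are precisely what make the reverse map preserve the semistandard and lattice conditions. The inductive step is then forced by Lemma~\ref{lem:LR-first-part}, as you explain: the constraint $\alpha_1\le b_2+\cdots+b_a$ from the second factor and $\alpha_1\ge b_2+\cdots+b_a$ from the first pin down $\alpha_1$, after which both the $a=2$ case and the inductive hypothesis apply with the required size bounds.

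One minor remark: in the statement as printed there is a typo (the final $(b_1,\nu^a)$ should read $(b_a,\nu^a)$); you have silently corrected this, which is the intended reading throughout the paper's applications of the lemma.
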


Given two partitions $\lambda\in\cP(n)$ and $\mu\in\cP(m)$, we define $\lambda+\mu$ as the partition of $n+m$ whose $i$-th part is given by $\lambda_i+\mu_i$ for all $i\in\N$ (here we regard $\lambda_i=0$ whenever $i>\ell(\lambda)$ and similarly for $\mu$). 
The following is another immediate application of the Littlewood--Richardson rule. 

\begin{lemma}\label{lem:LR-sum}
	Let $n,k\in\N$, let $\mu^1,\ldots,\mu^k\in\cP(n)$ and let $\lambda=\mu^1+\mu^2+\cdots+\mu^k\in\cP(kn)$. Then $\LR(\lambda; \mu^1,\ldots, \mu^k)=1$.
\end{lemma}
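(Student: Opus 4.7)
My approach would be induction on $k$, with base case $k=2$ proved directly from the combinatorial Littlewood--Richardson rule. In the base case $\lambda=\mu+\nu$ and the skew shape $\lambda/\mu$ has exactly $\nu_i$ boxes in its $i$-th row. The filling placing $i$ in every box of the $i$-th row of this skew shape is a Littlewood--Richardson tableau of content $\nu$: it is semistandard (columns strictly increase because any box of the skew shape directly above one in row $i$ must lie in row $i-1$ of the skew shape and carry label $i-1$), and its reverse reading word $1^{\nu_1}2^{\nu_2}\cdots$ is a lattice word since $\nu_1\ge\nu_2\ge\cdots$. For uniqueness I would argue by induction on the row index: the lattice condition together with weakly-increasing rows forces row $1$ of the skew shape to be entirely $1$'s, and once rows $1,\ldots,i-1$ carry only $j$'s in row $j$, column strictness and the lattice condition force row $i$ to consist entirely of $i$'s.

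For the inductive step, transitivity of restriction through the intermediate Young subgroup $S_{(k-1)n}\times S_n$ yields
\[
\LR(\lambda;\mu^1,\ldots,\mu^k) \;=\; \sum_{\tau\in\cP((k-1)n)} \LR(\tau;\mu^1,\ldots,\mu^{k-1})\cdot\LR(\lambda;\tau,\mu^k).
\]
Setting $\sigma := \mu^1+\cdots+\mu^{k-1}$, the inductive hypothesis and the $k=2$ base case together show that the term $\tau=\sigma$ contributes exactly $1$ to this sum. It then remains to show that every other $\tau$ contributes $0$. I would do this via the standard dominance-order bound $\LR(\alpha;\beta,\gamma)\ne 0 \Rightarrow \alpha \trianglelefteq \beta+\gamma$: iterated on the first factor it gives $\tau \trianglelefteq \sigma$, while applied to the second factor it yields $\lambda = \sigma+\mu^k \trianglelefteq \tau+\mu^k$, hence $\sigma \trianglelefteq \tau$ after comparing partial sums. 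The two inequalities combine to force $\tau=\sigma$, completing the induction.

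The main obstacle is precisely this uniqueness step. Lemma~\ref{lem:LR-first-part} recorded in the paper only controls the first part of $\tau$, and even paired with its conjugate form (bounding $\ell(\tau)$) it falls short of pinning down $\tau$ in general. The dominance-order inequality invoked above is well known but not recorded in the excerpt, so incorporating it cleanly would require either a brief self-contained proof or an external citation. An attractive alternative that avoids dominance entirely is to work directly in the iterated Littlewood--Richardson model of \cite[Section 2.4]{GL2}, constructing by induction on $k$ the unique chain $\emptyset = \tau^0\subset\tau^1\subset\cdots\subset\tau^k=\lambda$ with $\tau^i = \mu^1+\cdots+\mu^i$ and carrying at each step the unique Littlewood--Richardson filling prescribed by the base case.
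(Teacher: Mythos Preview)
Your argument is correct; the paper itself gives no proof beyond declaring the lemma ``another immediate application of the Littlewood--Richardson rule,'' so your explicit base case and dominance-order induction simply flesh out what the paper leaves to the reader. Either of your two suggested routes (the dominance bound, or the iterated model of \cite[Section~2.4]{GL2}) is entirely adequate here.
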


Let $n,m\in\N$ and let $A\subseteq\cP(n)$ and $B\subseteq\cP(m)$. We define
\[ A\star B := \{\lambda\in\cP(n+m) \mid \LR(\lambda;\mu,\nu) \neq 0\ \text{for some}\ \mu\in A\ \text{and}\ \nu\in B\}. \]
The $\star$-product operator on sets of partitions was first introduced in \cite{GL2}, and it is clearly both commutative and associative. 
Let us now fix $q\in\N$ and let us consider $A^{\star q}$, the $\star$-product of $q$ copies of the subset $A$. By definition, $A^{\star q}$ is a subset of $\cP(qn)$. We now want to define a specific subset of $A^{\star q}$ that will play a crucial role in several arguments below. In order to do this we first fix the following convention: we will say that a collection of objects $\omega_1,\dotsc,\omega_r$ is \emph{mixed} if the objects are not all equal. (In this article, such objects may refer to collections of partitions, or of characters of a given finite group, or of trees as defined in Section~\ref{sec:trees} later, for example.)
With this in mind, we define $\cM(q, A)$ as the following subset of $A^{\star q}$:
\[ \cM(q,A) := \{\lambda\in A^{\star q} \mid \LR(\lambda; \mu^1,\ldots, \mu^q)\neq 0\ \text{for some mixed}\ \mu^1,\ldots, \mu^q\in A\}. \]
We give a short example of some of the concepts introduced so far. 

\begin{example}\label{ex:1}
	Let $n=4$ and consider the set $\cB_4(3)=\{(3,1), (2,2), (2,1,1)\}$. It is easy to verify that $\mathcal{B}_4(3)^{\star 2}=\mathcal{B}_8(6)$. For instance, $(6,2)\in \mathcal{B}_8(6)$, and in fact we have that $\LR((6,2); (3,1), (3,1))=1$. Setting $Y$ as the Young subgroup $Y_{(4,4)}=S_4\times S_4
	<S_8$, we have that
	\[ (6,2)\down_Y = (4)\times (4) + (4)\times (3,1) + (3,1)\times (4) + (4)\times (2,2) + (2,2)\times (4) + (3,1)\times (3,1). \]
	This shows that $(6,2)\notin \cM(2,\mathcal{B}_4(3))$ and therefore that $\cM(2,\mathcal{B}_4(3))\subsetneqq \mathcal{B}_4(3)^{\star 2}=\mathcal{B}_8(6)$. In fact, it is straightforward to calculate that $\cM(2,\mathcal{B}_4(3))=\mathcal{B}_8(6)\setminus \close{\{(6,2), (6,1,1), (4,4)\}}$.	\newqed
\end{example}

Example~\ref{ex:1}\footnote{Note that at the end Example~\ref{ex:1}, and throughout the rest of this paper, we have used the symbol $\lozenge$ to indicate the end of the example (or remark) environment to help with readability.} highlights a few properties of the $\star$ operator that we will use often in this paper. For instance, we have seen that $\mathcal{B}_4(3)^{\star 2}=\mathcal{B}_8(6)$. This is a specific occurrence of a more general behaviour which was first shown in \cite[Proposition 3.2]{GL2}.

\begin{proposition}\label{prop:SL5.7}
	Let $n,n',t,t'\in\N$ be such that $\tfrac{n}{2}<t\le n$ and $\tfrac{n'}{2}<t'\le n'$. Then
	\[ \cB_n(t)\star\cB_{n'}(t')=\cB_{n+n'}(t+t'). \]
\end{proposition}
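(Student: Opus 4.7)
The plan is to prove the two inclusions $\cB_n(t)\star\cB_{n'}(t') \subseteq \cB_{n+n'}(t+t')$ and $\cB_{n+n'}(t+t') \subseteq \cB_n(t)\star\cB_{n'}(t')$ separately.

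For the forward inclusion, the argument is short. Suppose $\lambda \in \cB_n(t)\star\cB_{n'}(t')$, so $\LR(\lambda;\mu,\nu) > 0$ for some $\mu \in \cB_n(t)$ and $\nu \in \cB_{n'}(t')$. Lemma~\ref{lem:LR-first-part} gives $\lambda_1 \le \mu_1 + \nu_1 \le t+t'$. The standard conjugation symmetry of Littlewood--Richardson coefficients, $\LR(\lambda;\mu,\nu) = \LR(\lambda';\mu',\nu')$, combined with Lemma~\ref{lem:LR-first-part} applied to the conjugate shapes, then yields $\ell(\lambda) = \lambda'_1 \le \ell(\mu) + \ell(\nu) \le t+t'$. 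Since $|\lambda| = n+n'$, this puts $\lambda$ in $\cB_{n+n'}(t+t')$.

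For the reverse inclusion, I fix $\lambda \in \cB_{n+n'}(t+t')$ and aim to produce $\mu \in \cB_n(t)$ and $\nu \in \cB_{n'}(t')$ with $\LR(\lambda;\mu,\nu) > 0$. My main strategy is to realise $\lambda$ as a row-wise partition sum $\lambda = \mu + \nu$, since Lemma~\ref{lem:LR-sum} (or the canonical Littlewood--Richardson tableau on which it is based) then gives $\LR(\lambda;\mu,\nu) = 1$ at once. Assuming without loss of generality $t \le t'$, and noting that all three of $\cB_n(t), \cB_{n'}(t'), \cB_{n+n'}(t+t')$ are closed under conjugation and that $\LR$ is conjugation-symmetric, I am also free to replace $\lambda$ by $\lambda'$. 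In the favourable subcases I would choose $\mu_i$ for $i \le t$ inside the interval $[\max(0,\lambda_i-t'),\min(t,\lambda_i)]$, subject to the compatibility conditions $0 \le \mu_i-\mu_{i+1} \le \lambda_i-\lambda_{i+1}$ that keep both $\mu$ and $\nu := \lambda-\mu$ as partitions; the hypotheses $n < 2t$ and $n' < 2t'$ provide enough room in the resulting feasibility polytope to hit $\sum_i \mu_i = n$ exactly.

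The main obstacle is the case in which no row-sum decomposition $\lambda = \mu+\nu$ exists with $(\mu,\nu) \in \cB_n(t)\times\cB_{n'}(t')$ (even after conjugating $\lambda$); a concrete instance is $\lambda = (4,4)$ with $n=t=3$, $n'=5$, $t'=3$, where no pair in $\cB_3(3)\times\cB_5(3)$ sums row-wise to $(4,4)$ and yet $\LR((4,4);(2,1),(3,2))=1$. Here Lemma~\ref{lem:LR-sum} is not directly applicable, and one must specify $(\mu,\nu)$ together with an explicit Littlewood--Richardson tableau of shape $\lambda/\mu$ and content $\nu$ to witness $\LR(\lambda;\mu,\nu) > 0$. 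My plan is to perform a case analysis on the shape of $\lambda$ (for instance on whether $\lambda_1$ and $\ell(\lambda)$ exceed $\max(t,t')$), take candidates such as $\mu=(t,n-t)$ and $\nu=(t',n'-t')$ (both of which lie in the required boxes exactly by $n<2t$ and $n'<2t'$), and then verify by a direct count of lattice fillings that at least one valid Littlewood--Richardson tableau exists. This case-by-case positivity verification is the technical heart of the argument, and is exactly where the strict inequalities $n/2 < t$ and $n'/2 < t'$ are used in an essential way.
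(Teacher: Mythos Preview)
The paper does not give its own proof of this proposition: it is stated with the attribution ``first shown in \cite[Proposition 3.2]{GL2}'' and invoked thereafter as a black box. So there is no in-paper argument to compare your sketch against; what you have written is an attempt to reconstruct the proof of that cited result.

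Your forward inclusion is correct and complete: Lemma~\ref{lem:LR-first-part} together with the conjugation symmetry $\LR(\lambda;\mu,\nu)=\LR(\lambda';\mu',\nu')$ gives both $\lambda_1\le t+t'$ and $\ell(\lambda)\le t+t'$.

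For the reverse inclusion, your overall strategy (try a row-wise sum $\lambda=\mu+\nu$ and invoke Lemma~\ref{lem:LR-sum}; when that fails, exhibit an explicit Littlewood--Richardson filling) is sound, and your example $\lambda=(4,4)$, $(n,t,n',t')=(3,3,5,3)$ correctly demonstrates that the row-sum approach alone cannot cover every case, even after conjugating $\lambda$. However, the proposal stops precisely at the point where the actual work lies: you state that a case analysis on the shape of $\lambda$ would be performed and that candidates like $\mu=(t,n-t)$, $\nu=(t',n'-t')$ would be checked, but none of this is carried out. Since you yourself identify this as ``the technical heart of the argument,'' what you have is an outline rather than a proof. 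If you want to complete it along these lines, you will need either a uniform construction of $(\mu,\nu)$ together with a witness tableau that works for all $\lambda\in\cB_{n+n'}(t+t')$, or a clean reduction that shrinks the residual cases to something finite and checkable; as written, neither is supplied.
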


A second observation is that given $m,q\in\N$ and $t\in[1,n]$, Example~\ref{ex:1} shows that we cannot hope to have $\cM(q,\mathcal{B}_m(t))=\mathcal{B}_{qm}(qt)$. However, when $q$ is at least $3$ and $t$ is not too small compared to $m$, then the difference between these two sets is very small, as shown in \cite[Proposition 3.7]{GL2}.

\begin{proposition}\label{prop:BinD}
	Let $m,t,q\in\N$ with $\tfrac{m}{2}+1<t\le m$ and $q\ge 3$. Then we have that 
	\[ \cB_{qm}(qt-1)\subseteq \cM(q,\cB_m(t)). \]
\end{proposition}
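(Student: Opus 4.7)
My plan is to prove this by induction on $q \ge 3$, with the inductive step relying on Proposition~\ref{prop:SL5.7} and the associativity of Littlewood--Richardson coefficients.

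For the inductive step ($q \ge 4$, assuming the statement for $q-1$), fix $\lambda \in \cB_{qm}(qt-1)$. Iterated application of Proposition~\ref{prop:SL5.7} (valid because $t > m/2$) yields
\[ \cB_m(t) \star \cB_{(q-1)m}((q-1)t) = \cB_{qm}(qt), \]
so there exist $\mu \in \cB_m(t)$ and $\alpha \in \cB_{(q-1)m}((q-1)t)$ with $\LR(\lambda; \mu, \alpha) > 0$. I would argue that such a pair can always be chosen with the stronger containment $\alpha \in \cB_{(q-1)m}((q-1)t - 1)$; the inductive hypothesis then supplies mixed $\mu^1, \ldots, \mu^{q-1} \in \cB_m(t)$ with $\LR(\alpha; \mu^1, \ldots, \mu^{q-1}) > 0$, and associativity of the LR coefficients gives $\LR(\lambda; \mu, \mu^1, \ldots, \mu^{q-1}) > 0$ with $(\mu, \mu^1, \ldots, \mu^{q-1})$ mixed. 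To force the stronger bound on $\alpha$, I would exploit the one-unit slack coming from $\lambda_1, \ell(\lambda) \le qt - 1$ by selecting $\mu$ to include at least one box of the first row of $\lambda$ together with at least one box in a sufficiently deep row; the shape restrictions $m \le 2t-3$ and $t \le m$ arising from $\tfrac{m}{2} + 1 < t \le m$ guarantee that such a $\mu$ can be chosen within $\cB_m(t)$.

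For the base case $q = 3$ no inductive hypothesis is available, and I would construct a mixed triple directly. Starting from a decomposition $\LR(\lambda; \mu, \alpha) > 0$ with $\mu \in \cB_m(t)$ and $\alpha \in \cB_{2m}(2t) = \cB_m(t) \star \cB_m(t)$, we obtain a further decomposition $\LR(\alpha; \mu^1, \mu^2) > 0$ with $\mu^1, \mu^2 \in \cB_m(t)$. The goal is to arrange that $(\mu, \mu^1, \mu^2)$ is mixed. I would argue by contradiction: if \emph{every} decomposition of $\lambda$ inside $\cB_m(t)^{\star 3}$ forced $\mu = \mu^1 = \mu^2$, then Lemma~\ref{lem:LR-sum} applied to small perturbations of the common $\mu$ (transferring a box between two rows of $\mu$ to produce a nearby element of $\cB_m(t)$) would yield an alternative decomposition with non-equal components, unless $\lambda$ already saturates the bound in the sense that $\lambda_1 = 3t$ or $\ell(\lambda) = 3t$, contradicting $\lambda \in \cB_{3m}(3t-1)$.

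The main obstacle is the strict sharpening in the inductive step: Proposition~\ref{prop:SL5.7} alone delivers $\alpha$ only in the larger box $\cB_{(q-1)m}((q-1)t)$, and tightening this to $\cB_{(q-1)m}((q-1)t-1)$ requires a coordinated, simultaneous reduction of both $\alpha_1$ and $\ell(\alpha)$ via the choice of $\mu$, which is delicate because $|\mu| = m < 2t$ leaves only a handful of boxes to place at the two extremes of $\lambda$'s Young diagram. The base case $q = 3$ is separately delicate since the mixed triple must be exhibited by hand, and the most natural decompositions (such as the partition sum from Lemma~\ref{lem:LR-sum}) can produce only all-equal components in certain corner cases like $\lambda = 3\mu$.
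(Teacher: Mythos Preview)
The paper does not supply its own proof of this proposition: it is quoted verbatim from \cite[Proposition 3.7]{GL2} and used as a black box. So there is no in-paper argument to compare your proposal against; I can only assess whether your outline would stand on its own.

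As you yourself flag, the proposal is a plan rather than a proof, and both of the steps you single out as ``obstacles'' are genuine gaps that are not closed by what you have written. In the inductive step you need, for every $\lambda\in\cB_{qm}(qt-1)$, a decomposition $\LR(\lambda;\mu,\alpha)>0$ with $\mu\in\cB_m(t)$ and $\alpha\in\cB_{(q-1)m}((q-1)t-1)$ simultaneously in both the first row and the first column. Your remark that ``the shape restrictions $m\le 2t-3$ and $t\le m$ guarantee that such a $\mu$ can be chosen'' is an assertion, not an argument: the difficulty is that forcing $\mu_1$ large enough to absorb a box from the first row of $\lambda$ and $\ell(\mu)$ large enough to absorb one from the first column are in tension (a $\mu\in\cB_m(t)$ cannot have $\mu_1=\ell(\mu)=t$ since $m\le 2t-3<2t-1$), and you have not shown how to balance them for an arbitrary $\lambda$.

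The base case $q=3$ is weaker still. Your contradiction argument says: if every triple decomposition of $\lambda$ in $\cB_m(t)^{\star 3}$ were of the form $(\mu,\mu,\mu)$, then perturbing one copy of $\mu$ would give a mixed decomposition. But $\LR(\lambda;\mu,\mu,\mu)>0$ does not in general imply $\LR(\lambda;\mu,\mu,\mu')>0$ for a nearby $\mu'$; Lemma~\ref{lem:LR-sum} only tells you that $\mu+\mu+\mu'$ lies in the star product, not that $\lambda$ does via that particular triple. So the perturbation step, as stated, is not valid. You would need a constructive argument producing a mixed triple directly from the shape of $\lambda$, and that is essentially the whole content of the proposition.
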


We are now ready to extend this combinatorial result and completely describe $\cM(q,\cB_m(t))$.

\begin{proposition}\label{prop:mixed}
	Let $m,t,q\in\N$ with $2<\tfrac{m}{2}+1<t\le m$ and $q\ge 3$. Then
	\[ \cM(q,\cB_m(t)) = \begin{cases}
		\cB_{qm}(qt)\setminus \close{\{(qt,qm-qt), (qt,1^{qm-qt})\}} & \text{if } m-t\ne 1,\\
		\cB_{qm}(qt-1) & \text{if } m-t=1.
	\end{cases} \]
\end{proposition}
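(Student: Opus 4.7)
The plan is to sandwich $\cM(q,\cB_m(t))$ between $\cB_{qm}(qt-1)$ and $\cB_{qm}(qt)$, and then determine which partitions in the boundary $\cB_{qm}(qt)\setminus\cB_{qm}(qt-1)$ lie in $\cM(q,\cB_m(t))$. The upper containment $\cM(q,\cB_m(t))\subseteq\cB_m(t)^{\star q}=\cB_{qm}(qt)$ follows by iterating Proposition~\ref{prop:SL5.7}, while the lower containment $\cB_{qm}(qt-1)\subseteq\cM(q,\cB_m(t))$ is precisely Proposition~\ref{prop:BinD}. Using the conjugation symmetry $\LR(\lambda;\mu^1,\ldots,\mu^q)=\LR(\lambda';(\mu^1)',\ldots,(\mu^q)')$ together with $\cB_m(t)=\close{\cB_m(t)}$, it suffices to analyse the partitions $\lambda$ with $\lambda_1=qt$ lying in $\cM(q,\cB_m(t))$.

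For such a $\lambda$, Lemma~\ref{lem:LR-first-part} forces $(\mu^i)_1=t$ for every $i$ in any decomposition $\LR(\lambda;\mu^1,\ldots,\mu^q)\neq 0$ with $\mu^i\in\cB_m(t)$, since each $(\mu^i)_1\le t$ yet $\sum_i (\mu^i)_1\ge qt$. Writing $\mu^i=(t,\mu^{i,\ast})$ and $\lambda=(qt,\lambda^\ast)$, the hypothesis $t>m/2+1$ gives $m-t\le t-3$, so the constraints $(\mu^{i,\ast})_1\le t$ and $\ell(\mu^{i,\ast})\le t-1$ are automatic and the $\mu^{i,\ast}$ range freely over $\cP(m-t)$. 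Lemma~\ref{lem:iterated-LR} then yields $\LR(\lambda;\mu^1,\ldots,\mu^q)=\LR(\lambda^\ast;\mu^{1,\ast},\ldots,\mu^{q,\ast})$, and $(\mu^1,\ldots,\mu^q)$ is mixed if and only if $(\mu^{1,\ast},\ldots,\mu^{q,\ast})$ is. The boundary inclusion problem thus reduces to characterising $\cM(q,\cP(m-t))$.

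I would determine $\cM(q,\cP(m-t))$ by cases on $m-t$. If $m-t\in\{0,1\}$, then $\cP(m-t)$ is a singleton and no mixed tuple exists, so $\cM(q,\cP(m-t))=\emptyset$; translating back handles Case~2 (when $m-t=1$) and the $m-t=0$ subcase of Case~1. If $m-t\ge 3$, Proposition~\ref{prop:BinD} applied with inner parameters gives $\cP(q(m-t))\setminus\{(q(m-t)),(1^{q(m-t)})\}\subseteq\cM(q,\cP(m-t))$; the two partitions $(q(m-t))$ and $(1^{q(m-t)})$ are excluded from $\cM$ because any decomposition $\LR((q(m-t));\nu^1,\ldots,\nu^q)\neq 0$ forces every $\nu^j$ to equal the row partition $(m-t)$ (and dually for the column partition), so the tuple is not mixed. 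Translating back then shows that the partitions with $\lambda_1=qt$ excluded from $\cM(q,\cB_m(t))$ are exactly $(qt,qm-qt)$ and $(qt,1^{qm-qt})$, and combining with the conjugate analysis for $\ell(\lambda)=qt$ produces Case~1.

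The main obstacle is the intermediate case $m-t=2$, where Proposition~\ref{prop:BinD} does not apply to the inner problem $\cM(q,\cP(2))$. I would handle this by induction on $q\ge 3$: the base case $q=3$ is verified by direct Pieri-rule computation, checking that the union over mixed tuples from $\{(2),(1,1)\}^3$ of the supports of the corresponding LR products is exactly $\cP(6)\setminus\{(6),(1^6)\}$. For the inductive step, one uses the immediate inclusion $\cM(q-1,\cP(2))\star\cP(2)\subseteq\cM(q,\cP(2))$ and checks that every $\lambda^\ast\in\cP(2q)\setminus\{(2q),(1^{2q})\}$ admits some $\mu\in\cP(2(q-1))\setminus\{(2(q-1)),(1^{2(q-1)})\}$ with $\LR(\lambda^\ast;\mu,\nu)\neq 0$ for some $\nu\in\cP(2)$. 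By Pieri's rule this is immediate except for the short list of boundary $\lambda^\ast$ (of the form $(2q-1,1)$, $(2q-2,2)$, $(2q-2,1,1)$, and their conjugates), each of which admits an explicit alternative non-excluded $\mu$.
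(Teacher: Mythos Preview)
Your proposal is correct and follows essentially the same route as the paper: sandwich $\cM(q,\cB_m(t))$ between $\cB_{qm}(qt-1)$ and $\cB_{qm}(qt)$ via Propositions~\ref{prop:BinD} and~\ref{prop:SL5.7}, reduce by conjugation symmetry to partitions with first part $qt$, use Lemma~\ref{lem:LR-first-part} to force $(\mu^i)_1=t$, and apply Lemma~\ref{lem:iterated-LR} to reduce the boundary question to determining $\cM(q,\cP(m-t))$, which is then handled case by case on $m-t$.

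The only notable difference is in the $m-t=2$ case: the paper simply asserts $\cM(q,\cP(2))=\cB_{2q}(2q-1)$ and moves on, whereas you supply an inductive argument in $q$ with an explicit base case. Your argument here is more detailed than the paper's. (Your list of ``boundary'' $\lambda^\ast$ requiring separate checks is slightly loose, since e.g.\ $(2q-1,1)$ already admits the non-excluded $\mu=(2q-3,1)$ via $\nu=(2)$; but the spirit is right and each case does go through.) The paper also treats $t=m$ separately at the outset, while you fold it into the $m-t\in\{0,1\}$ discussion; both organisations are fine.
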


\begin{proof}
	If $t=m$ then $\cB_{qm}(qt)=\cP(qm)$. In this case the statement clearly holds because $\cP(qm)\setminus\cB_{qm}(qm-1)=\{(qm),(1^{qm})\}$ and $(qm)$ is not in $\cM(q,\cB_m(t))$ as $(qm)\down_{(S_m)^{\times q}} = (m)^{\times q}$, and similarly $(1^{qm})\notin\cM(q,\cB_m(t))$. 

	We can now assume that $t<m$. By Propositions~\ref{prop:SL5.7} and~\ref{prop:BinD} we have $\cB_{qm}(qt-1)\subseteq \cM(q,\cB_m(t))\subseteq\cB_m(t)^{\star q}=\cB_{qm}(qt)$. Since all sets considered here are closed under conjugation, it remains to consider whether $\lambda:=(qt,\mu)$ belongs to $\cM(q,\cB_m(t))$ or not, for each $\mu\in\cP(qm-qt)$.
		
	Suppose first that $\mu\in\close{\{(qm-qt)\}}$. We want to show that $\lambda\notin\cM(q,\cB_m(t))$. 
	Suppose for a contradiction that $\lambda\in\cM(q,\cB_m(t))$. Then there exist mixed $\lambda^1, \ldots, \lambda^q\in \cB_m(t)$ such that $\LR(\lambda;\lambda^1, \ldots, \lambda^q)\neq 0$. Using Lemma~\ref{lem:LR-first-part} we see that $qt=\lambda_1\le (\lambda^1)_1+\cdots+(\lambda^q)_1 \le qt$. Hence we must have $(\lambda^i)_1=t$ for all $i\in [1,q]$. In particular there exist mixed $\mu^1,\ldots, \mu^q\in\cP(m-t)$ such that $\lambda^i=(t,\mu^i)$ for all $i\in [1,q]$. Using Lemma~\ref{lem:iterated-LR} we deduce that $\LR(\mu;\mu^1, \ldots, \mu^q)\neq 0$. This is impossible since $\mu\notin\cM(q,\cP(m-t))$, as we have observed before. 

	Suppose now that $\mu\in\cP(qm-qt)\setminus\close{\{(qm-qt)\}}=\cB_{qm-qt}(qm-qt-1)$. 
	If $m-t>2$ then $\frac{m-t}{2}+1<m-t$, so by Proposition~\ref{prop:BinD} we have $\cB_{qm-qt}(qm-qt-1)\subseteq\cM(q,\cP(m-t))$.
	Hence there exist mixed $\nu^1,\ldots ,\nu^q\in\cP(m-t)$ such that $\LR(\mu;\nu^1,\ldots, \nu^q)\neq 0$. It follows that 
	$\LR(\lambda; (t,\nu^1), \ldots, (t,\nu^q))\neq 0$ by Lemma~\ref{lem:iterated-LR}. Since $(t,\nu^1), \ldots, (t,\nu^q))\in \cB_m(t)$ and are mixed, we conclude that $\lambda\in\cM(q,\cB_m(t))$.
	Let us now assume that $m-t=2$. Then $\mu\in\cB_{2q}(2q-1)=\cM(q,\cP(2))$, and arguing exactly as in the previous case we see that $\lambda\in\cM(q,\cB_m(t))$. 
	Finally assume that $m-t=1$. In this case $\mu\in\cB_q(q-1)$. 
	Suppose for a contradiction that $\lambda\in\cM(q,\cB_m(t))$. Then there exist mixed $\lambda^1, \ldots, \lambda^q\in \cB_m(t)$ such that $\LR(\lambda;\lambda^1, \ldots, \lambda^q)\neq 0$. Using Lemma~\ref{lem:LR-first-part} we see that $qt=\lambda_1\le (\lambda^1)_1+\cdots+(\lambda^q)_1 \le qt$. Thus, for every $i\in [1,q]$ there exist $\mu^i\in\cP(m-t)$ such that $\lambda^i=(t,\mu^i)$. Since $m-t=1$ this implies that $\lambda^1=\lambda^2=\cdots=\lambda^q$, which is a contradiction. 
\end{proof}

\begin{remark}
	In the statement of Proposition~\ref{prop:mixed}, notice that when $m=t$ the two possible cases for the structure of $\cM(q,\cB_m(t))$ coincide. Moreover, the hypothesis $m>2$ is necessary. For $m\le 2$, we have the following possibilities: if $m=1$ then $t=m=1$ and $\cM(q,\cP(1))=\emptyset$ as $|\cP(1)|=1$. On the other hand, if $m=2$ then $t=1$ gives $\cB_m(t)=\emptyset$, while if $t=2$ then $\cM(q,\cP(2))=\cB_{2q}(2q-1)$ since $q\ge 3$. \newqed
\end{remark}

We conclude this section by introducing a new combinatorial object.

\begin{definition}\label{def:puncture}
	For $m,t\in\N$ with $\frac{m}{2}< t\le m-2$, we define the \emph{punctured box} $\puncb{m}{t}$ to be
	\[ \puncb{m}{t} := \cB_m(t)\setminus \close{\{ (t,m-t-1,1), (t,2,1^{m-t-2}) \}}. \]
	Similarly, for $n\in\N_{\ge 3}$, we define the \emph{punctured set of partitions of $n$}, $\puncp{n}$, to be
	\[ \puncp{n} := \cP(n)\setminus \close{\{(n-1,1)\}}. \]
\end{definition} 

As we will show in our main results, these apparently strange subsets of partitions are intimately connected to the sets $\Omega(\theta)$ that, as mentioned in the introduction, lie at the centre of our research interest in this article. 

\begin{remark}
	In the definition of a punctured box $\puncb{m}{t}$ we require $t>\tfrac{m}{2}$ so that $(t,m-t)$ and $(t,1^{m-t})$ are genuine partitions lying in $\cB_m(t)$; moreover, we observe that the condition $\tfrac{m}{2}<m-2$ implies that $m>4$. 
	On the other hand, $t\leq m-2$ guarantees the well-definition of the partition $(t, m-t-1, 1)$. In particular, we notice that when $t=m-2$ then in fact $\puncb{m}{t} = \cB_m(m-3)$.
	
	As for $\puncp{n}$, this is ill-defined for $n=1$ and $\puncp{2}=\emptyset$, hence $\puncp{n}$ is defined only for $n\ge 3$.
	Even though $\cB_n(n)=\cP(n)$, it is not the case that $\puncb{n}{n}=\puncp{n}$. In particular, notice that $\puncb{m}{t}$ was not defined when $t=m$. \newqed 
\end{remark}

The following propositions describe how the various subsets of partitions introduced so far interact with each other with respect to the $\star$-product operator. These statements, and more generally speaking the $\star$-product operator itself, constitute the necessary combinatorial tools we will need to study restriction of irreducible characters of symmetric groups to their Sylow $p$-subgroups. In order to fully understand the previous assertion, we recommend to the reader to see Corollary~\ref{cor:omega-mixed} and Lemma~\ref{lem:omega-general-n} below, before proceeding. 

\begin{proposition}\label{prop:smooth}
	Let $x,a,y,b\in\N$. Then:
	\begin{itemize}\itemspace{5pt}
		\item[(i)] $\cB_x(a)\star\cB_y(b)=\cB_{x+y}(a+b)$, for $\tfrac{x}{2}<a\le x$ and $\tfrac{y}{2}<b\le y$. 
		
		\item[(ii)] $\cB_x(a)\star \puncp{y} = \cB_{x+y}(a+y)$, for $\frac{x}{2}<a\le x$ and $y\ge 5$.
		
		\item[(iii)] $\puncp{x} \star \puncp{y} = \cP(x+y)$, for $x,y\ge 5$.
		
		\item[(iv)] $\cB_x(a) \star \puncb{y}{b} = \cB_{x+y}(a+b)$, for $\tfrac{x}{2}<a\le x-1$ and $\tfrac{y}{2}+1<b\le y-5$.
		
		\item[(v)] $\cP(x) \star \puncb{y}{b} = \puncb{x+y}{x+b}$, for $\tfrac{y}{2}+1<b\le y-5$.
		
		\item[(vi)] $\puncp{x} \star\puncb{y}{b} = \puncb{x+y}{x+b}$, for $x\ge 5$ and $\tfrac{y}{2}+1<b\le y-5$.
		
		\item[(vii)] $\puncb{x}{a} \star \puncb{y}{b} = \cB_{x+y}(a+b)$, for $\tfrac{x}{2}+1<a\le x-5$ and $\tfrac{y}{2}+1<b\le y-5$. 
	\end{itemize}
\end{proposition}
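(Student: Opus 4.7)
The plan is to prove each of the seven identities separately. Part~(i) is a direct restatement of Proposition~\ref{prop:SL5.7}, so no work is needed. For parts~(ii) through~(vii), each equality $A\star B = C$ is established by proving the two inclusions.

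For the \emph{upper inclusion} $A\star B\subseteq C$, I would start from $\puncp{n}\subseteq\cP(n)=\cB_n(n)$ and $\puncb{m}{t}\subseteq\cB_m(t)$, which combined with part~(i) immediately yield $A\star B\subseteq \cB_{x+y}(a+b)$. In parts~(v) and~(vi), where $C$ is itself a punctured box, I further need to rule out that any partition in $A\star B$ equals a forbidden partition of $\puncb{x+y}{x+b}$. Given such a putative forbidden $\lambda = (x+b, y-b-1, 1)$ or one of its near variants arising as $\LR(\lambda;\mu,\nu)\neq 0$, Lemma~\ref{lem:LR-first-part} together with the constraint $\nu\in\puncb{y}{b}$ forces $\mu_1$ and $\nu_1$ to their maximum admissible values; Lemma~\ref{lem:iterated-LR} then reduces the non-vanishing of the LR coefficient to a tail coefficient whose only solution has $\nu$ itself of a forbidden shape, a contradiction.

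For the \emph{lower inclusion} $C\subseteq A\star B$, I would take $\lambda\in C$ and construct $\mu\in A$, $\nu\in B$ with $\LR(\lambda;\mu,\nu)\neq 0$ by a case analysis on the shape of $\lambda$. The main constructions available are the Pieri rule, which gives $\LR(\lambda;\mu,(k))\neq 0$ whenever $\lambda/\mu$ is a horizontal strip of size $k$, and its vertical analogue with $(1^k)$; Lemma~\ref{lem:LR-sum}, whenever $\lambda$ admits a partition-sum decomposition $\lambda=\mu+\nu$ with both summands in the prescribed punctured sets; and Lemma~\ref{lem:iterated-LR}, used for the boundary case $\lambda_1=a+b$, in which the first parts of $\mu$ and $\nu$ are forced to equal $a$ and $b$ respectively, reducing the problem to decomposing the tail of $\lambda$ via an earlier part of the proposition.

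The main obstacle lies in the extremal shapes of $\lambda$ close to the forbidden partitions (for instance, $\lambda$ nearly a hook or a two-row shape pressed against the box boundary), where the natural choice of $\mu$ or $\nu$ would itself be forbidden. Handling these cases requires producing an alternative decomposition: typically, splitting the first row of $\lambda$ as $\mu_1+\nu_1$ differently (with $\mu_1<a$ and $\nu_1>b$, or vice versa) and compensating by a judicious choice of the remaining rows, or switching between horizontal and vertical Pieri additions. The hypotheses $a\le x-5$ and $b\le y-5$ appearing in parts~(iv)--(vii) provide exactly the slack needed to guarantee that such an alternative always exists, and verifying this fallback construction case by case constitutes the most technical part of the argument.
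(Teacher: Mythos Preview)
Your outline is correct and would work, but it is considerably more laborious than the paper's argument, and you are anticipating difficulties that the paper sidesteps entirely.

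The paper's key simplification, which you do not mention, is the inclusion $\cB_y(b-1)\subseteq\puncb{y}{b}$ (and similarly $\cB_x(x-2)\subseteq\puncp{x}$). Combined with part~(i), this immediately gives $\cB_{x+y}(a+b-1)\subseteq A\star B$ in every case, so the only partitions left to place in $A\star B$ are those with $\lambda_1=a+b$ (up to conjugation). For these, the paper uses exactly the idea you identify: Lemma~\ref{lem:iterated-LR} forces $\mu_1=a$, $\nu_1=b$ and reduces to decomposing the tail $\mu\in\cP(x-a+y-b)$, which is handled by an \emph{already-proven} earlier part of the proposition---(ii) gives $\cP(x-a)\star\puncp{y-b}=\cP(x-a+y-b)$ for part~(iv), and (iii) gives $\puncp{x-a}\star\puncp{y-b}=\cP(x-a+y-b)$ for part~(vii). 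This internal recursion is what makes the hypotheses $b\le y-5$ natural: they ensure $y-b\ge 5$ so that (ii) and (iii) apply to the tails. There are no ``extremal shapes'' requiring fallback constructions, no switching between horizontal and vertical Pieri moves, and no genuine case analysis on the shape of $\lambda$ beyond the single dichotomy $\lambda_1<a+b$ versus $\lambda_1=a+b$.

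Two further points: part~(ii) is already in the literature as \cite[Lemma~3.3]{GL2}, so no new argument is needed there; and for the upper inclusion in~(v), the paper's argument is a one-line observation that $\lambda_1=x+b$ forces $\alpha=(x)$ and $\beta=(b,\mu)$, whence $\beta\in\puncb{y}{b}$ iff $\mu\in\puncp{y-b}$---this simultaneously proves both the exclusion of the forbidden partitions and the inclusion of the allowed ones, rather than treating them separately as you propose.
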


\begin{proof}
	In all cases, the sets involved are closed under conjugation of partitions, so when considering partitions  we may assume without loss of generality that their first part is greater than or equal to their length.
	\begin{enumerate}[label=\textup{(\roman*)}]
		\item This is Proposition~\ref{prop:SL5.7}.

		\item This is \cite[Lemma 3.3]{GL2}.

		\item By (ii) we have that $\puncp{x}\star\puncp{y}\supseteq \cB_x(x-2)\star\puncp{y}=\cB_{x+y}(x+y-2)$. 
		Moreover, for each $\lambda\in\{(x+y-1,1),(x+y)\}$, we have that $\LR(\lambda;(x),(y))\neq 0$. Hence $\lambda\in \puncp{x} \star \puncp{y} $ and therefore we conclude that $\puncp{x} \star \puncp{y} = \cP(x+y)$.
		
		\item First, observe that $\cB_x(a)\star \puncb{y}{b}\supseteq \cB_x(a) \star \cB_y(b-1) = \cB_{x+y}(a+b-1)$ by (i).
		Next, suppose $\lambda=(a+b,\mu)$ where $\mu\in\cP(x-a+y-b)$. Since $\cP(x-a+y-b)=\cP(x-a)\star\puncp{y-b}$ by (ii),
		then $\LR(\mu;\alpha,\beta)\neq 0$ for some $\alpha\in\cP(x-a)$ and $\beta\in\puncp{y-b}$. Thus, using Lemma~\ref{lem:iterated-LR} we see that $\LR(\lambda;(a,\alpha),(b,\beta))\neq 0$, and so $\lambda\in \cB_x(a)\star \puncb{y}{b}$. 
		
		\item First, observe that 
		$\cB_{x+y}(x+b-1) = \cP(x)\star\cB_y(b-1) \subseteq \cP(x) \star \puncb{y}{b} \subseteq \cP(x) \star \cB_y(b) = \cB_{x+y}(x+b)$. Next, suppose $\lambda=(x+b,\mu)$ where $\mu\in\cP(y-b)$. Then $\LR(\lambda;\alpha,\beta)\neq 0$ for $\alpha\in\cP(x)$ and $\beta\in\puncb{y}{b}$ only if $\alpha=(x)$ and $\beta=(b,\mu)$, from which we deduce $\lambda\in\cP(x) \star \puncb{y}{b}$ if and only if $\mu\in\puncp{y-b}$. Thus $\cP(x) \star \puncb{y}{b} = \puncb{x+y}{x+b}$.
		
		\item From (ii) and (v), $\cB_{x+y}(x+b-1) = \puncp{x}\star\cB_y(b-1) \subseteq \puncp{x}\star\puncb{y}{b} \subseteq \cP(x) \star \puncb{y}{b} = \puncb{x+y}{x+b}$. Moreover if $\lambda=(x+b,\mu)$ for some $\mu\in\puncp{y-b}$, then $\LR(\lambda;(x),(b,\mu))\ne0$ by Lemma~\ref{lem:iterated-LR}, and so $\lambda\in\puncp{x}\star\puncb{y}{b}$.

		\item From (i) and (iv) we have that 
		\[ \cB_{x+y}(a+b-1) = \puncb{x}{a} \star \cB_y(b-1) \subseteq \puncb{x}{a} \star \puncb{y}{b} \subseteq \cB_x(a)\star\cB_y(b) = \cB_{x+y}(a+b). \]
		Now, suppose $\lambda=(a+b,\mu)$ where $\mu\in \cP(x-a+y-b)$. Since $\cP(x-a+y-b)=\puncp{x-a}\star\puncp{y-b}$ by (iii),
		then $\LR(\mu;\alpha,\beta)\neq 0$ for some $\alpha\in\cP(x-a)$ and $\beta\in\cP(y-b)$. Then $\LR(\lambda;(a,\alpha),(b,\beta))\neq 0$ by Lemma~\ref{lem:iterated-LR}, and therefore we have that $\lambda\in\puncb{x}{a}\star\puncb{y}{b}$. 
	\end{enumerate}
\end{proof}

We remark that the various hypotheses on the variables $x,a,y,b$ in Proposition~\ref{prop:smooth} are necessary. For instance, in (i) we ask for $\frac{x}{2}<a$; if $a=\frac{x}{2}$ then $\cB_{2a}(a)$ has no hooks and therefore $\cB_{2a}(a)\star\cB_1(1)\ne\cB_{2a+1}(a+1)$, as $(a+1,1^a)\in \cB_{2a+1}(a+1)$. Since it is irrelevant for our current purposes we omit similar examples for the remaining statements. 

The different outcomes of $\star$-products of sets appearing in Proposition~\ref{prop:smooth} are summarised in Table~\ref{tab:smooth}. We note that the table is symmetric as $\star$ is commutative. 

\begin{table}[h!]
	\centering
	\begin{small}
		\[ \begin{array}{|c|cccc|}
			\hline
			\star & \cB & \punc\cB & \cP & \punc\cP\\
			\hline
			\cB & \cB & \cB & \cB & \cB \\
			\punc\cB & {\cB} & \cB & \punc\cB & \punc\cB \\
			\cP & {\cB} & {\punc\cB} & \cP & \cP \\
			\punc\cP & {\cB} & {\punc\cB} & {\cP} & \cP\\
			\hline
		\end{array} \]
		\caption{General form of Proposition~\ref{prop:smooth}.}\label{tab:smooth}
	\end{small}
\end{table}

We are now able to prove an analogue of Proposition~\ref{prop:BinD} for punctured boxes. 

 \begin{lemma}\label{lem:A'}
	Let $q,m,t\in\N$ be such that $q\ge 3$ and $\tfrac{m}{2}+2<t\le m-5$. Then
	\[ \cB_{qm}(qt-1) \subseteq\cM(q,\puncb{m}{t}). \]
\end{lemma}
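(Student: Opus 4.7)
The plan is to proceed by induction on $q\ge 3$, using the $\star$-product identities in Proposition~\ref{prop:smooth} as the main tool. For the inductive step $(q\ge 4)$, I would apply Proposition~\ref{prop:smooth}(iv) with parameters $(x,a,y,b)=((q-1)m,(q-1)t-1,m,t)$ to obtain
\[ \cB_{(q-1)m}((q-1)t-1)\star\puncb{m}{t}=\cB_{qm}(qt-1); \]
the conditions $\tfrac{x}{2}<a\le x-1$ and $\tfrac{y}{2}+1<b\le y-5$ required by that proposition follow immediately from $q\ge 4$ together with the standing assumption $\tfrac{m}{2}+2<t\le m-5$. Given $\lambda\in\cB_{qm}(qt-1)$, this identity yields $\beta\in\cB_{(q-1)m}((q-1)t-1)$ and $\alpha\in\puncb{m}{t}$ with $\LR(\lambda;\beta,\alpha)\ne 0$. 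Applying the inductive hypothesis to $\beta$ (valid since $q-1\ge 3$) supplies mixed $\mu^1,\ldots,\mu^{q-1}\in\puncb{m}{t}$ with $\LR(\beta;\mu^1,\ldots,\mu^{q-1})\ne 0$; then $\LR(\lambda;\mu^1,\ldots,\mu^{q-1},\alpha)\ne 0$, and the extended tuple automatically inherits the mixed property from its first $q-1$ entries.

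For the base case $q=3$, I would combine Propositions~\ref{prop:smooth}(i) and~\ref{prop:smooth}(vii) to decompose
\[ \cB_{3m}(3t-1)=\cB_m(t-1)\star\cB_{2m}(2t)=\cB_m(t-1)\star\puncb{m}{t}\star\puncb{m}{t}. \]
Because both excluded shapes in $\close{\{(t,m-t-1,1),(t,2,1^{m-t-2})\}}$ have first part or length exactly $t$, we have $\cB_m(t-1)\subseteq\puncb{m}{t}$. Hence every $\lambda\in\cB_{3m}(3t-1)$ admits some triple $\LR(\lambda;\mu^1,\mu^2,\mu^3)\ne 0$ with $\mu^i\in\puncb{m}{t}$; whenever this triple is not constant, we are done.

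The main obstacle is the residual sub-case of the base step in which the only decomposition the above construction yields is constant: $\mu^1=\mu^2=\mu^3=\nu$ for some $\nu\in\cB_m(t-1)$, forcing $\LR(\lambda;\nu,\nu,\nu)\ne 0$. Here I would exploit the Littlewood--Richardson associativity
\[ \LR(\lambda;\nu,\nu,\nu)=\sum_{\beta\in\cB_{2m}(2t-2)}\LR(\beta;\nu,\nu)\,\LR(\lambda;\beta,\nu) \]
to pick some $\beta$ contributing nontrivially to the sum, and then apply Proposition~\ref{prop:smooth}(vii) to $\beta\in\cB_{2m}(2t)$ to write $\LR(\beta;\eta^1,\eta^2)\ne 0$ for some $\eta^1,\eta^2\in\puncb{m}{t}$. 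The triple $(\eta^1,\eta^2,\nu)$ then furnishes the required mixed decomposition of $\lambda$ provided $(\eta^1,\eta^2)\ne(\nu,\nu)$. The most intricate point---which I expect to be the crux of the argument---is verifying that this inequality can always be arranged by choosing $\beta$ and $(\eta^1,\eta^2)$ suitably. In the extreme case when $\beta$ is forced to equal $2\nu$ and every $\puncb{m}{t}$-factorisation of $2\nu$ uses only copies of $\nu$, the bounds $\tfrac{m}{2}+2<t\le m-5$ force $m\ge 15$ and hence $\nu_1,\ell(\nu)\ge 2$; this leaves enough room to shift a single cell between two rows of $\nu$, producing $\nu^+\ne\nu$ in $\puncb{m}{t}$, and combined with the complementary $\nu^-$ satisfying $\nu^++\nu^-=2\nu$, Lemma~\ref{lem:LR-sum} furnishes the mixed decomposition of $\lambda=3\nu$---with avoidance of the four forbidden shapes enabled by the margin $t\le m-5$.
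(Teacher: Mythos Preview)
Your inductive step for $q\ge 4$ is correct and is a genuinely different route from the paper: the paper treats all $q\ge 3$ uniformly without induction, analysing $\lambda=(qt-u,\mu)$ according to $u\in[1,q]$ and applying the $\star$-identities of Proposition~\ref{prop:smooth} to the tail $\mu$. Your reduction to $q=3$ via Proposition~\ref{prop:smooth}(iv) and the inherited mixedness is clean and would shorten the argument once the base case is secured.

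The base case, however, has a real gap. Your decomposition $\cB_{3m}(3t-1)=\cB_m(t-1)\star\puncb{m}{t}\star\puncb{m}{t}$ does dispose of every $\lambda$ with $\lambda_1\in\{3t-2,3t-1\}$ automatically (the first-part bound $(\mu^1)_1\le t-1$ forces the triple to be non-constant), so the only genuinely delicate case is $\lambda\in\cB_{3m}(3t-3)$. But your treatment of this residual case does not go through. First, from $\LR(\lambda;\nu,\nu,\nu)\ne 0$ you silently pass to $\lambda=3\nu$ in the final sentence; this is unjustified, since many $\lambda\ne 3\nu$ satisfy $\LR(\lambda;\nu,\nu,\nu)\ne 0$, and Lemma~\ref{lem:LR-sum} only controls the case $\lambda=3\nu$. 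Second, even for $\lambda=3\nu$ your ``shift a single cell'' construction of $\nu^{\pm}$ with $\nu^{+}+\nu^{-}=2\nu$ can fail to produce partitions: if $\nu=(a^b)$ is a rectangle with $a,b\ge 2$ (which lies in $\cB_m(t-1)$ for many admissible $m,t$), then any single-cell transfer from row $j$ to row $i<j$ makes $\nu^{+}$ a partition but forces $(\nu^{-})_i=a-1<a=(\nu^{-})_{i+1}$, so $\nu^{-}$ is not a partition. Thus neither the reduction to $\lambda=3\nu$ nor the construction inside that sub-case is sound as written.

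A quick repair for $q=3$, closer to the paper's method, is to invoke Proposition~\ref{prop:mixed} directly: since $\tfrac{m}{2}+1<t-1\le m$ and $m-(t-1)\ge 6\ne 1$, one gets $\cM(3,\cB_m(t-1))=\cB_{3m}(3t-3)\setminus\close{\{\tworow{3m}{3t-3},\hook{3m}{3t-3}\}}$, and $\cB_m(t-1)\subseteq\puncb{m}{t}$ gives $\cM(3,\cB_m(t-1))\subseteq\cM(3,\puncb{m}{t})$. Combined with your observation that $\lambda_1\in\{3t-2,3t-1\}$ forces a mixed triple, this leaves only the four thin partitions $\close{\{\tworow{3m}{3t-3},\hook{3m}{3t-3}\}}$ to be checked by hand, exactly as the paper does for general $q$.
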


\begin{proof}
	Since $\cB_{m}(t-1)\subseteq\puncb{m}{t}$ and $\tfrac{m}{2}+1<t-1$, we have that $\cB_{qm}(qt-q-1)\subseteq \cM(q,\puncb{m}{t})$ by Proposition~\ref{prop:mixed}. Now suppose $\lambda=(qt-u,\mu)\in\cP(qm)$ where $u\in\{1,2,\ldots,q\}$ and $\mu\in\cP(qm-qt+u)$.

	If $u=q$, then $\lambda_1=qt-q$. Let $N=q\cdot(m-t+1)$.
	If $\mu\in\cB_N(N-1)$ then $\LR(\mu;\mu^1,\ldots,\mu^q)\ne0$ for some mixed $\mu^i\in\cP(m-t+1)$, since $\cB_N(N-1)\subseteq\cM(q,\cP(m-t+1))$ by Proposition~\ref{prop:mixed}. Then $\LR(\lambda;(t-1,\mu^1),\ldots,(t-1,\mu^q))\ne0$ by Lemma~\ref{lem:iterated-LR}, and $(t-1,\mu^i)\in\puncb{m}{t}$ for all $i\in [1,q]$. 
	Hence $\lambda\in\cM(q,\puncb{m}{t})$.
	If $\mu\in\close{\{(N)\}}$ then $\lambda\in\{ \tworow{qm}{qt-q}, \hook{qm}{qt-q} \}$. 
	Suppose $\lambda=\tworow{qm}{qt-q}$. Then $\LR(\lambda;\tworow{m}{t-1},\ldots,\tworow{m}{t-1},\tworow{m}{t})\neq 0$.
	Hence $\lambda\in\cM(q,\puncb{m}{t})$. Similarly, $\hook{qm}{qt-q}\in \cM(q,\puncb{m}{t})$.
			
	Let us now consider the case where $u\in[1,q-1]$. Then $\mu\in\cP(q(m-t)+u) = \puncp{m-t}^{\star(q-u)}\star\cP(m-t+1)^{\star u}$ by Theorem~\ref{prop:smooth}, since $m-t\ge 5$ and $u,q-u\ge 1$. So $\LR(\mu;\nu^1,\ldots,\nu^{q-u},\omega^1,\ldots,\omega^u)\neq 0$ for some $\nu^1,\ldots,\nu^{q-u}\in\puncp{m-t}$ and some $\omega^1,\ldots,\omega^u\in\cP(m-t+1)$. Thus $\LR(\lambda;(t,\nu^1),\ldots,(t,\nu^{q-u}),(t-1,\omega^1),\ldots,(t-1,\omega^u))\neq 0$ by Lemma~\ref{lem:iterated-LR}. Since $u,q-u\geq 1$ we have that $(t,\nu^1),\ldots,(t,\nu^{q-u}),(t-1,\omega^1),\ldots,(t-1,\omega^u))$ are mixed elements of $\puncb{m}{t}$. Hence $\lambda\in\cM(q,\puncb{m}{t})$.

	Since $\puncb{m}{t}=\close{(\puncb{m}{t})}$, then $\cM(q,\puncb{m}{t})=\close{\cM(q,\puncb{m}{t})}$. Thus $\cB_{qm}(qt-1)\subseteq\cM(q,\puncb{m}{t})$.
\end{proof}

We conclude with a last technical statement that we obtain as a consequence of Proposition~\ref{prop:smooth}.

\begin{lemma}\label{lem:smoothing-no-thins}
	Let $x,a,y,b\in\N$. Suppose that $\Delta_x(a)$ is a subset of $\cP(x)$ of one of the following forms:
	\begin{itemize}
		\item[(i)] $\puncp{x}$ with $x\ge 5$, or
		\item[(ii)] $\puncb{x}{a}$ with $\tfrac{x}{2}+1<a\le x-5$, or
		\item[(iii)] $\cB_x(a)$ with $\tfrac{x}{2}<a\le x$.
	\end{itemize}
	Let $\cN_1$ be a (possibly empty) subset of $\cP(x)\setminus\Delta_x(a)$ containing no thin partitions. Suppose now that $\Delta_y(b)$ and $\cN_2$ are subsets of $\cP(y)$ satisfying analogous conditions. Then
	\[ (\Delta_x(a) \sqcup \cN_1) \star (\Delta_y(b) \sqcup \cN_2) = \Delta_{x+y}(a+b) \sqcup \cN_3, \]
	where 
	\[ \Delta_{x+y}(a+b) = \begin{cases}
		\puncb{x+y}{a+b} & \text{if } \Delta_x(a)=\puncb{x}{a} \text{ and } \Delta_y(b)\in\{ \puncp{y}, \cP(y) \},\\
		\puncb{x+y}{a+b} & \text{if } \Delta_y(b)=\puncb{y}{b} \text{ and } \Delta_x(a)\in\{ \puncp{x}, \cP(x) \},\\
		\cB_{x+y}(a+b) & \text{otherwise},
	\end{cases} \]
	and $\cN_3$ is a subset of $\cP(x+y)\setminus\Delta_{x+y}(a+b)$ that contains no thin partitions.
\end{lemma}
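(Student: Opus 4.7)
The plan is to expand $L := (\Delta_x(a) \sqcup \cN_1) \star (\Delta_y(b) \sqcup \cN_2)$ using the obvious bilinearity of $\star$, which is immediate from its definition: $(A_1\cup A_2)\star B = (A_1\star B)\cup(A_2\star B)$. This yields
\[ L = \bigl(\Delta_x(a)\star\Delta_y(b)\bigr) \cup \bigl(\Delta_x(a)\star\cN_2\bigr) \cup \bigl(\cN_1\star\Delta_y(b)\bigr) \cup \bigl(\cN_1\star\cN_2\bigr). \]
The first piece is identified with the claimed $\Delta_{x+y}(a+b)$ by a case check against Proposition~\ref{prop:smooth}: running through the combinations of $\Delta_x(a)$ and $\Delta_y(b)$ with types in $\{\puncp{},\puncb{}{},\cB,\cP\}$ (noting that $\cP(x)=\cB_x(x)$ realises the subcase $a=x$ of type (iii), and adopting the convention $a=x$ when $\Delta_x(a)=\puncp{x}$), one reads off from parts (i)--(vii) that the result is $\puncb{x+y}{a+b}$ precisely when exactly one of $\Delta_x(a),\Delta_y(b)$ is a punctured box and the other is $\cP$ or $\puncp{}$, and is $\cB_{x+y}(a+b)$ otherwise. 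The hypotheses on $(x,a,y,b)$ in cases (i)--(iii) are tailored so that the appropriate part of Proposition~\ref{prop:smooth} applies in every combination.

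I would then define $\cN_3 := L \setminus \Delta_{x+y}(a+b)$, so that $L = \Delta_{x+y}(a+b) \sqcup \cN_3$ holds by construction. The substance of the lemma is to verify that $\cN_3$ contains no thin partitions. I would argue contrapositively: suppose $\lambda \in L$ is thin, and pick $\mu \in \Delta_x(a) \sqcup \cN_1$ and $\nu \in \Delta_y(b) \sqcup \cN_2$ with $\LR(\lambda;\mu,\nu) \neq 0$. By Lemma~\ref{lem:LR-first-part}, $\mu \subseteq \lambda$ and $\nu \subseteq \lambda$. The key elementary observation is that each of the three thin shapes is preserved under the subpartition relation: if $\lambda$ is a hook then $\mu_2 \leq \lambda_2 \leq 1$, so $\mu$ is a hook; if $\ell(\lambda) \leq 2$ then $\ell(\mu) \leq 2$; and if $\lambda_1 \leq 2$ then $\mu_1 \leq 2$. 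Hence $\mu$, and symmetrically $\nu$, is thin. By hypothesis $\cN_1$ and $\cN_2$ contain no thin partitions, forcing $\mu \in \Delta_x(a)$ and $\nu \in \Delta_y(b)$. Consequently $\lambda \in \Delta_x(a)\star\Delta_y(b) = \Delta_{x+y}(a+b)$, so $\lambda \notin \cN_3$.

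The only source of friction is the routine bookkeeping that identifies $\Delta_x(a)\star\Delta_y(b)$ with the correct branch of the piecewise formula; the inheritance-of-thinness observation that powers the second paragraph is the single new idea, and it is very short. I therefore do not foresee any genuine obstacle beyond the case enumeration.
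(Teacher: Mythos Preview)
Your proof is correct and follows essentially the same approach as the paper: identify $\Delta_x(a)\star\Delta_y(b)$ with $\Delta_{x+y}(a+b)$ via Proposition~\ref{prop:smooth}, set $\cN_3$ to be the complement, and then argue that any thin $\lambda$ in the product must come from thin $\mu,\nu$ (since $\mu,\nu\subseteq\lambda$), hence from $\Delta_x(a)\times\Delta_y(b)$. Your explicit verification that subpartitions of thin partitions are thin is exactly what the paper leaves implicit in the phrase ``$\mu,\nu\subset\lambda$ so it must be that $\mu$ and $\nu$ themselves are thin''.
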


\begin{proof}
	Let $X:=(\Delta_x(a) \sqcup \cN_1) \star (\Delta_y(b) \sqcup \cN_2)$, so $X\supseteq \Delta_x(a)\star\Delta_y(b)$. Moreover, $\Delta_x(a)\star\Delta_y(b)$ equals $\Delta_{x+y}(a+b)$ in each case, by Theorem~\ref{prop:smooth} (cf.~Table~\ref{tab:smooth}). Setting $\cN_3:=X\setminus \Delta_{x+y}(a+b)$, it remains to show that $\cN_3$ contains no thin partitions in order to complete the proof.
	
	Suppose $\lambda\in X$ is thin. Then $\LR(\lambda;\mu,\nu)\ne0$ for some $\mu\in(\Delta_x(a) \sqcup \cN_1)$ and $\nu\in(\Delta_y(b) \sqcup \cN_2)$. But $\mu,\nu\subset\lambda$ so it must be that $\mu$ and $\nu$ themselves are thin. That is, $\mu\in\Delta_x(a)$ and $\nu\in\Delta_y(b)$, whence $\lambda\in \Delta_x(a)\star\Delta_y(b) = \Delta_{x+y}(a+b)$. Hence $\lambda\notin\cN_3$, and we conclude that no partitions in $\cN_3$ are thin.
\end{proof}

\bigskip

\section{Representations of $P_n$ and associated p-ary Trees}\label{sec:trees}

Fix a prime number $p$. 
The main aim of this section is to provide the reader with the definitions of the statistics $\gamma_0(\theta)$ and $\gamma_1(\theta)$ for each $\theta\in\Irr(P)$, where $P$ is a Sylow $p$-subgroup of a symmetric group (see Definition~\ref{def:trees-stats}). These statistics will be integral to describing the positivity of Sylow branching coefficients for symmetric groups in Sections~\ref{sec:pk} and~\ref{sec:arbitrary-n}, and are defined using certain labelled $p$-ary trees naturally associated to $\theta\in\Irr(P)$.

We remark that this correspondence between such characters and trees is not new, as \cite[Proposition 3.1]{OOR} describes for any natural numbers $n$ and $r$ an indexing of the irreducible representations of the $n$-fold wreath product $C_r\wr\cdots\wr C_r$ (where $C_r$ denotes the cyclic group of order $r$) using certain labelled trees. Below we treat only the special case where $r=p$ is prime, describing this correspondence in detail in Definitions~\ref{def:Phi-tree} and~\ref{def:admissible}, since the Sylow $p$-subgroups of symmetric groups are isomorphic to direct products of copies of iterated wreath products of cyclic groups of order $p$; this description is well known and was first published in \cite{Kaloujnine}.

\medskip

The structure of this section is as follows: in order to study $\Irr(P)$, we start by fixing some notation for characters of wreath products, before recording the structure of Sylow $p$-subgroups of symmetric groups and their irreducible characters. We refer the reader to \cite[Chapter 4]{JK} for further detail on these topics. Then, we introduce the relevant $p$-ary trees in Definition \ref{def:Fk}, before describing how they relate to $\Irr(P)$ in Definitions~\ref{def:Phi-tree} and~\ref{def:admissible}. In Definition~\ref{def:trees-stats} we present a number of statistics on these trees, and we end the section with some useful properties of characters that these combinatorial objects capture, such as Theorem~\ref{thm:equiv-classes} and Proposition~\ref{prop:arrows}.

\medskip

Let $K$ be a finite group, $n$ be a natural number and $H$ be a subgroup of $S_n$. We denote by $K^{\times n}$ the direct product of $n$ copies of $K$. The permutation action of $S_n$ on the direct factors of $K^{\times n}$ induces an action of $S_n$ (and thus of $H$) via automorphisms on $K^{\times n}$, giving the wreath product $K\wr H := K^{\times n} \rtimes H$. The normal subgroup $K^{\times n}$ is usually called \textit{base group} of the wreath product $K\wr H$. 
We denote the elements of $K\wr H$ by $(x_1,\dots,x_n ;h)$ for $x_i\in K$ and $h\in H$. Let $W$ be a $\C K$--module and suppose it affords the character $\phi$.
We let $W^{\otimes n} := W \otimes \cdots \otimes W$ ($n$ copies) be the corresponding $\C K^{\times n}$--module. The left action of $K\wr H$ on $W^{\otimes n}$ defined by linearly extending 
\[ (x_1,\dots ,x_n; h) : v_1 \otimes \cdots \otimes v_n \mapsto x_1 v_{h^{-1}(1)} \otimes \cdots \otimes x_n v_{h^{-1}(n) } \]
turns $W^{\otimes n}$ into a $\C(K\wr H)$--module, which we denote by $\widetilde{W}^{\otimes n}$.
We denote by $\widetilde{\phi^{\times n}}$ the character afforded by the $\C(K\wr H)$--module $\widetilde{W}^{\otimes n}$.
For any character $\psi$ of $H$, we let $\psi$ also denote its inflation to $K\wr H$ and define
\[ \cX(\phi; \psi) := \widetilde{\phi^{\times n}} \cdot \psi \in\Char(K\wr H). \]
Let $\phi \in\Irr(K)$ and let $\phi^{\times n}:= \phi \times \cdots \times \phi$ be the corresponding irreducible character of $K^{\times n}$. Observe that $\widetilde{\phi^{\times n}}\in\Irr(K\wr H)$ is an extension of $\phi^{\times n}$.
Hence, by Gallagher's Theorem \cite[Corollary 6.17]{IBook} we have
\[ \Irr(K\wr H \mid \phi^{\times n})= \{ \cX(\phi; \psi) \mid \psi \in \Irr(H)\}. \]
If $H=C_p$ is a cyclic group of prime order $p$, then every $\psi \in \Irr(K\wr C_p)$ is either of the form
\begin{itemize}
	\item[(i)] $\psi= \phi_1 \times \cdots \times \phi_p \up^{K\wr C_p}_{K^{\times p}}$ where $\phi_1 , \dots \phi_p \in \Irr(K)$ are mixed, or
	\item[(ii)] $\psi=\cX(\phi;\theta)$ for some $\phi\in\Irr(K)$ and $\theta\in\Irr(C_p)$.
\end{itemize}
We remark that in case (i) we have that $\Irr(K\wr C_p \mid \phi_1 \times \cdots \times \phi_p)=\{\psi\}$. On the other hand, for any $\phi\in\Irr(K)$ we have by basic Clifford theory that
\[ (\phi^{\times p})\up^{K\wr C_p}_{K^{\times p}} = \sum_{\theta\in\Irr(C_p)} \cX(\phi;\theta). \]
We record here two lemmas that we will use frequently later in the article. We refer the reader to \cite[Lemmas 2.18 and 2.19]{SLThesis} for complete proofs.

\begin{lemma}\label{lem:SL2.18}
	Let $p$ be a prime and $K$ a finite group. Let $\eta\in\Char(K)$ and $\phi\in\Irr(K)$. If $[\eta,\phi]\ge 2$, then $[\cX(\eta;\tau), \cX(\phi;\theta)] \ge \sum_{i=1}^{p-1}\tfrac{1}{p}\binom{p}{i}$ for all $\tau,\theta\in\Irr(C_p)$. In particular, if $p$ is odd then $[ \cX(\eta;\tau), \cX(\phi;\theta)]\ge 2$. 
\end{lemma}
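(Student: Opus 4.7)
Since $[\eta,\phi]\ge 2$, write $\eta=\phi+\eta_0$ for some $\eta_0\in\Char(G)$ with $[\eta_0,\phi]\ge 1$. The plan is to decompose $\widetilde{\eta^{\times p}}$ according to the orbit structure of $C_p$ acting by cyclic shift on $\{0,1\}^p$, then multiply through by $\tau$, and finally extract $[\cX(\eta;\tau),\cX(\phi;\theta)]$ via Frobenius reciprocity.

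Concretely, letting $V_0,V_1$ be $\C G$-modules affording $\eta_0$ and $\phi$, the module $(V_0\oplus V_1)^{\otimes p}$ splits as a direct sum indexed by $\{0,1\}^p$, and the cyclic $C_p$-action on tensor factors permutes the index set. The two constant tuples are fixed, contributing $\widetilde{\eta_0^{\times p}}$ and $\widetilde{\phi^{\times p}}$ to $\widetilde{\eta^{\times p}}$. Because $p$ is prime, every non-constant tuple lies in a $C_p$-orbit of size exactly $p$, and summing across such an orbit $O$ produces an induced character $(\phi_{\epsilon_1}\times\cdots\times\phi_{\epsilon_p})\up^{G\wr C_p}_{G^{\times p}}$, where $\phi_0=\eta_0$, $\phi_1=\phi$, and $(\epsilon_1,\ldots,\epsilon_p)$ is any representative of $O$. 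Multiplying by $\tau$ and applying the projection formula (noting that $\tau\down_{G^{\times p}}=\triv$, since $\tau$ factors through $G\wr C_p/G^{\times p}\cong C_p$) leaves each induced summand unchanged, giving
\[ \cX(\eta;\tau)=\cX(\phi;\tau)+\cX(\eta_0;\tau)+\sum_{O\text{ mixed}} (\phi_{\epsilon_1}\times\cdots\times\phi_{\epsilon_p})\up^{G\wr C_p}_{G^{\times p}}. \]

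Pairing with $\cX(\phi;\theta)$, the first two summands contribute a nonnegative integer. Using $\cX(\phi;\theta)\down_{G^{\times p}}=\phi^{\times p}$ and Frobenius reciprocity, each mixed orbit $O$ with exactly $k$ coordinates equal to $0$ contributes $\prod_{i}[\phi_{\epsilon_i},\phi]=[\eta_0,\phi]^{k}\ge 1$ to the inner product. There are $\tfrac{1}{p}\binom{p}{k}$ such orbits for each $k\in[1,p-1]$ (the quotient is an integer by Fermat), so summing yields the lower bound $\sum_{i=1}^{p-1}\tfrac{1}{p}\binom{p}{i}$. For the ``in particular'' statement, this sum equals $(2^{p}-2)/p$, and a short check shows that $2^{p}\ge 2p+2$ for every odd prime $p$ (with equality exactly at $p=3$), giving the bound of $2$.

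I anticipate the only delicate step to be the wreath-product bookkeeping that identifies the sum over a single free $C_p$-orbit inside $\widetilde{\eta^{\times p}}$ with the induced character $(\phi_{\epsilon_1}\times\cdots\times\phi_{\epsilon_p})\up^{G\wr C_p}_{G^{\times p}}$, and that verifies the projection-formula cleanup producing the clean expression for $\cX(\eta;\tau)$. Once this is in hand, what remains is orbit counting plus the elementary identity $\sum_{k=1}^{p-1}\binom{p}{k}=2^{p}-2$.
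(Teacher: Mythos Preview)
Your argument is correct. The paper does not include its own proof of this lemma but instead refers the reader to \cite[Lemma~2.18]{SLThesis}; your approach---decomposing $\widetilde{\eta^{\times p}}$ via the $C_p$-orbits on $\{0,1\}^p$, identifying the free-orbit pieces with induced characters, applying the projection formula to absorb $\tau$, and then counting via Frobenius reciprocity---is the natural one and is almost certainly what appears in the cited thesis. The only cosmetic point worth tightening is that $\eta_0:=\eta-\phi$ is a genuine (not virtual) character precisely because $[\eta,\phi]\ge 2$, which you implicitly use when asserting $[\cX(\eta_0;\tau),\cX(\phi;\theta)]\ge 0$; you might make this explicit in a final write-up.
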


\begin{lemma}\label{lem:SL2.19}
	Let $K$ and $H$ be finite groups. Let $\alpha\in\Irr(K)$ and $\Delta\in\Char(K)$ be such that $[\alpha,\Delta]=1$. Then $[\cX(\Delta;\theta),\cX(\alpha;\beta)] = \delta_{\beta,\theta}$ for any $\beta,\theta\in\Irr(H)$.
\end{lemma}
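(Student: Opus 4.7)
The plan is to compute the inner product $[\cX(\Delta;\theta), \cX(\alpha;\beta)]_{K\wr H}$ directly from the definition of $\widetilde{(-)^{\times n}}$, bypassing a Clifford-theoretic analysis.

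First, I would record two standard facts about the extension construction. \emph{Multiplicativity}: for $\phi,\phi'\in\Char(K)$, the canonical $\C(K\wr H)$-module isomorphism $(V\otimes V')^{\otimes n}\cong V^{\otimes n}\otimes V'^{\otimes n}$ yields
\[ \widetilde{\phi^{\times n}}\cdot\widetilde{\phi'^{\times n}}=\widetilde{(\phi\phi')^{\times n}}, \]
and similarly dualising gives $\overline{\widetilde{\alpha^{\times n}}}=\widetilde{\overline{\alpha}^{\times n}}$. \emph{Character formula}: for $((x_1,\ldots,x_n);h)\in K\wr H$, a direct matrix-trace computation gives
\[ \widetilde{\phi^{\times n}}((x_i);h) = \prod_{C \text{ cycle of } h} \phi(x_C), \]
where $x_C$ is an ordered product of the $x_i$ with $i\in C$ along the cycle (the precise order will not matter below).

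Combining these, I would rewrite $\cX(\Delta;\theta)\cdot\overline{\cX(\alpha;\beta)} = \widetilde{(\Delta\overline{\alpha})^{\times n}}\cdot(\theta\overline{\beta})$. Next, for each fixed $h\in H$, I would sum the first factor over $(x_1,\ldots,x_n)\in K^n$ cycle-by-cycle: for a cycle $C$ of length $\ell$, a change-of-variables $y:=x_C$ (solving for one free coordinate of $C$ in terms of $y$ and the others) yields
\[ \sum_{(x_i)_{i\in C}\in K^\ell} (\Delta\overline{\alpha})(x_C) = |K|^{\ell-1}\sum_{y\in K}(\Delta\overline{\alpha})(y) = |K|^\ell \cdot [\Delta,\alpha]_K = |K|^\ell, \]
using the hypothesis $[\alpha,\Delta]=1$. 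Multiplying across the cycles of $h$ gives $|K|^n$, independent of $h$, and hence
\[ [\cX(\Delta;\theta),\cX(\alpha;\beta)]_{K\wr H} = \frac{1}{|K|^n|H|}\sum_{h\in H}|K|^n\,\theta(h)\overline{\beta(h)} = [\theta,\beta]_H = \delta_{\theta,\beta}. \]

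The main technical obstacle is justifying the two properties of $\widetilde{(-)^{\times n}}$ invoked above. Both are standard in the representation theory of wreath products (the trace formula in particular underlies e.g.\ the proof of Lemma~\ref{lem:SL2.18}), but one must be careful about conventions for the direction of the $H$-action on the base group when verifying the trace formula. As a conceptual alternative I would instead write $\Delta=\alpha+\Delta'$ with $\Delta'\in\Char(G)\cup\{0\}$ satisfying $[\alpha,\Delta']=0$, and analyse the $H$-orbit decomposition of $(V_\alpha\oplus V_{\Delta'})^{\otimes n}$ as a $\C(K\wr H)$-module: the two constant $H$-orbits contribute $\widetilde{\alpha^{\times n}}$ and $\widetilde{(\Delta')^{\times n}}$, while the non-constant orbits contribute inductions from proper subgroups $K^{\times n}\rtimes H_0$ with $H_0\lneq H$. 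Clifford theory then shows that only the $\widetilde{\alpha^{\times n}}$-summand can pair nontrivially with $\cX(\alpha;\beta)$, since $\alpha^{\times n}$ does not appear as a constituent in the $K^{\times n}$-restriction of either $\widetilde{(\Delta')^{\times n}}\cdot\theta$ or any of the non-constant pieces; on the distinguished summand Gallagher's theorem gives the desired $\delta_{\theta,\beta}$.
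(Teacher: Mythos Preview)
Your proof is correct. The paper itself does not prove this lemma; it only cites \cite[Lemma~2.19]{SLThesis} for a complete proof, so there is no argument in the paper to compare against directly.

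Your first, direct approach is clean and self-contained. One small simplification: you do not actually need the multiplicativity identity $\widetilde{\phi^{\times n}}\cdot\widetilde{\phi'^{\times n}}=\widetilde{(\phi\phi')^{\times n}}$ as a separate step. Once you have the cycle-product character formula $\widetilde{\phi^{\times n}}((x_i);h)=\prod_C\phi(x_C)$, the factors $x_C$ depend only on $(x_i)$ and $h$ (not on $\phi$), so the pointwise product $\widetilde{\Delta^{\times n}}((x_i);h)\cdot\overline{\widetilde{\alpha^{\times n}}((x_i);h)}=\prod_C \Delta(x_C)\overline{\alpha(x_C)}$ is immediate. This sidesteps the module-isomorphism argument entirely. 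The change-of-variables on each cycle and the conclusion $[\theta,\beta]_H=\delta_{\theta,\beta}$ are fine. Note also that the statement implicitly assumes $H\le S_n$ (as in the paper's standing setup for $\cX$), which you are correctly using when speaking of cycles of $h$.

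Your Clifford-theoretic alternative---writing $\Delta=\alpha+\Delta'$ with $[\alpha,\Delta']=0$, decomposing $(V_\alpha\oplus V_{\Delta'})^{\otimes n}$ into $H$-orbit pieces, and invoking Gallagher on the $\widetilde{\alpha^{\times n}}$ summand---is also correct and is likely closer in spirit to how such statements are typically proved in the literature (and plausibly in the cited thesis). The direct character computation has the advantage of being shorter and avoiding any module-level bookkeeping; the Clifford-theoretic route makes the structural reason for the answer more transparent and generalises more readily (e.g.\ to the companion Lemma~\ref{lem:SL2.18}).
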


Given $n\in\N$, we let $P_n$ denote a Sylow $p$-subgroup of $S_n$. Observe that $P_1$ is the trivial group while $P_p\cong C_p$ is cyclic of order $p$. For each $k\in\N_{\ge 2}$, we have that $P_{p^k}=\big(P_{p^{k-1}}\big)^{\times p} \rtimes P_p=P_{p^{k-1}}\wr P_p\cong P_p\wr \cdots \wr P_p$ ($k$-fold wreath product). 

Now consider an arbitrary natural number $n\in\N$, not necessarily a power of $p$. In this article we will write the $p$-adic expansion of $n$ in the following form: 
\[ n = \sum_{i=1}^t p^{n_i} \]
for some $t\in\N$ and $n_i\in\N_0$ such that $n_1\le n_2\le \cdots\le n_t$, and for each $j\in\N_0$, $|\{i\in[1,t]\mid n_i=j \}|\le p-1$. In other words, if $n=\sum_{i=1}^k a_ip^{m_i}$ is the usual base $p$ expansion of $n$ (i.e.~$0\le m_1<m_2<\cdots<m_k$ and $a_i\in[1,p-1]$ for all $i$), then the sequence $n_1,n_2,\dotsc,n_t$ equals $m_1,\dotsc,m_1,m_2,\dotsc,m_2,\dotsc,m_k,\dotsc,m_k$ where each $m_i$ appears $a_i$ times.
Then $P_n \cong P_{p^{n_1}} \times \cdots \times P_{p^{n_t}}$, and it follows that
\[ \Irr(P_n) = \{\theta_1\times\theta_2\times\cdots\times\theta_t\ |\ \theta_i\in\Irr(P_{p^{n_i}}),\ \text{for all}\ i\in [1,t]\}. \]
Hence the study of irreducible characters of $P_n$ ultimately relies on the properties of irreducible characters of the Sylow $p$-subgroups $P_{p^k}$ of the symmetric group $S_{p^k}$, as $k$ varies over all non-negative integers.
From now on we will denote by $\phi_0, \phi_1,\ldots, \phi_{p-1}$ the irreducible characters of $P_p$, with $\phi_0$ being the trivial character $\triv_{P_p}$. 
Since $P_{p^k}=P_{p^{k-1}}\wr P_p$, the information gathered above about characters of wreath products implies that if $\theta\in\Irr(P_{p^k})$ then exactly one of the following two cases holds:
\begin{itemize}
	\item[(i)] $\theta= \theta_1 \times \cdots \times \theta_p \up^{P_{p^k}}$, where $\theta_1 , \dots \theta_p \in \Irr(P_{p^{k-1}})$ are mixed, or
	\item[(ii)] $\theta = \cX(\phi; \phi_\varepsilon)$ for some $\phi \in \Irr(P_{p^{k-1}})$ and $\phi_\varepsilon \in \Irr(P_p)$. 
\end{itemize}
Notice that in case (i), $\theta=\theta_{\sigma(1)}\times\cdots\times\theta_{\sigma(p)}\up^{P_{p^k}}$ for any cyclic permutation $\sigma\in\langle(12\dotsc p)\rangle$. In case (ii), the parameter $\varepsilon$ lies in $[0,p-1]$, and we may sometimes abbreviate $\cX(\phi;\phi_\varepsilon)$ to $\cX(\phi;\varepsilon)$ when the meaning is clear from context.

This description allows us to canonically associate to each $\theta\in\Irr(P_{p^k})$ an equivalence class of labelled, complete $p$-ary trees $\cT(\theta)$ (see Lemma~\ref{lem:tree1} below). As we will prove in the following sections, this combinatorial object will allow us to give an easy formula for calculating the integers $m(\theta)$ and $M(\theta)$ (as defined in (\ref{eq:m-and-M}) in the introduction) for all $\theta\in\Irr(P_n)$. In particular, Theorems~\ref{thm:M} and~\ref{thm:m-final-pk} will show that $M(\theta)$ and $m(\theta)$ are directly related to the parameters $\gamma_0(\theta)$ and $\gamma_1(\theta)$.

\begin{definition}\label{def:Fk}
	For each $k\in\N$, let $\cF_k$ be the set consisting of all the rooted, complete $p$-ary trees of height $k-1$ such that each vertex is labelled by an integer in $\{0,1,\dotsc,p-1,p\}$.
\end{definition}

\begin{remark}
	\begin{enumerate}[label=(\roman*)]
		\item In Definition~\ref{def:Fk}, height $k-1$ means that the maximal distance from a vertex to the root is $k-1$. By complete we mean that every vertex, other than those at distance $k-1$ from the root, is adjacent to exactly $p$ vertices of distance one greater to the root, and vertices at distance $k-1$ are leaves. See Figure~\ref{fig:people} for diagrams of examples of such trees; in particular, we will always draw such trees with the root vertex at the top.
		
		\item We use the notation $\substack{\varepsilon\\\bullet}$ for the tree in $\cF_1$ consisting of a single vertex whose label is $\varepsilon\in [0,p]$. In particular, $\cF_1=\{ \substack{0\\\bullet}, \substack{1\\\bullet}, \dotsc, \substack{p-1\\\bullet}, \substack{p\\\bullet} \}$.	
		For $k\ge 2$ and $T\in\cF_k$, we will use the notation $T=(T_1\mid T_2\mid \dotsc\mid T_p;\substack{\varepsilon\\\bullet})$ to say that $\varepsilon$ is the label of the root vertex of $T$, and $T_1,\dotsc,T_p\in\cF_{k-1}$ are the subtrees of $T$ rooted at the children of the root vertex of $T$, from left to right. 
		
		\item Sometimes the root vertex $\substack{\varepsilon\\\bullet}$ is replaced by just its label $\varepsilon$ in such notation, i.e.~$T=(T_1\mid\dotsc\mid T_p;\varepsilon)$. Similarly, if $T_i\in\cF_1$ (i.e.~$T_i$ contains only a single vertex, say $\substack{\delta\\\bullet}$), then we may just write the label $\delta$ in place of $\substack{\delta\\\bullet}$.
		\newqed
	\end{enumerate}
\end{remark}

Every rooted, unlabelled, complete $p$-ary tree of height $k$ has as its graph automorphism group $\Gamma_k$, isomorphic to the $k$-fold wreath product $S_p\wr S_p\wr \cdots\wr S_p$. A Sylow $p$-subgroup $\Pi_k$ of $\Gamma_k$ is isomorphic to the $k$-fold wreath product $C_p\wr C_p\wr \cdots\wr C_p$, acting cyclically on the $p$ subtrees under each non-leaf vertex. Note therefore that $\Pi_k$ acts on $\cF_{k+1}$ by permuting the vertex labels, in accordance with its action on vertices.

\begin{definition}\label{def:Phi-tree}
	Let $k\in\N$. 
	\begin{itemize}
		\item[(a)] Let $\cT_k$ denote the set of orbits of $\cF_k$ under the action of $\Pi_{k-1}$. In other words, each element of $\cT_k$ is an orbit, or an equivalence class of trees belonging to $\cF_k$, where two trees are equivalent if one may be obtained from the other via an element of $\Pi_{k-1}$. Given $T\in\cF_k$, we let $[T]$ denote its $\Pi_{k-1}$-orbit.
		
		\item[(b)] We define the map 
		\[ \Phi_k:\Irr(P_{p^k})\to\cT_k \]
		recursively as follows:
		\begin{itemize}
			\item[(i)] For $k=1$ and for any $\varepsilon\in[0,p-1]$, we define $\Phi_1(\phi_\varepsilon)=[\substack{\varepsilon\\\bullet}]$.
			
			\item[(ii)] For any $k\in\N_{\ge 2}$ and $\theta\in\Irr(P_{p^k})$, we define
			\[ \Phi_k(\theta) = \begin{cases}
				[(T_1\mid \dotsc\mid T_p; \varepsilon)] & \text{if } \theta=\cX(\phi;\phi_\varepsilon)\text{ for some }\phi\in\Irr(P_{p^{k-1}})\text{ and }\varepsilon\in[0,p-1],\\
				[(T_1\mid \dotsc\mid T_p; p)] & \text{if } \theta=\theta_1\times\cdots\times\theta_p\up^{P_{p^k}} \text{ for some mixed }\theta_1,\dotsc,\theta_p\in\Irr(P_{p^{k-1}}),
			\end{cases} \]
			where $T_i\in\Phi_{k-1}(\phi)$ for each $i\in[1,p]$ in the first case, and $T_i\in\Phi_{k-1}(\theta_i)$ for each $i\in[1,p]$ in the second case. 
		\end{itemize}
	\end{itemize}
\end{definition}


\begin{lemma}\label{lem:tree1}
	Let $k\in\N$. Then the map $\Phi_k$ is well-defined and injective.
\end{lemma}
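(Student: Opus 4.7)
The plan is to prove the claim by induction on $k$, handling well-definedness and injectivity in parallel, using the key structural fact that $\Pi_{k-1} \cong \Pi_{k-2} \wr C_p = \Pi_{k-2}^{\times p} \rtimes C_p$, where the base group acts on each subtree independently and the top $C_p$ cyclically permutes the $p$ subtrees rooted at the children of the root.

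\textbf{Base case.} For $k=1$, the group $\Pi_0$ is trivial, so $\cT_1 = \cF_1$ and $\Phi_1$ is the bijection $\phi_\varepsilon \mapsto [\substack{\varepsilon\\\bullet}]$. Well-definedness and injectivity are immediate.

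\textbf{Inductive step, well-definedness.} Assume $\Phi_{k-1}$ is well-defined and injective. Given $\theta\in\Irr(P_{p^k})$, exactly one of cases (i) or (ii) in the description preceding the definition of $\Phi_k$ holds. In case (i), the pair $(\phi, \varepsilon)$ with $\varepsilon\in[0,p-1]$ is uniquely determined by $\theta$ (via Gallagher's theorem); in case (ii), the tuple $(\theta_1,\ldots,\theta_p)$ is determined up to cyclic permutation (since mixed tuples related by $\langle (1 2 \cdots p)\rangle$ induce the same character). In either case, to prove well-definedness I must check that different choices of representatives $T_i\in\Phi_{k-1}(\theta_i)$ yield trees in the same $\Pi_{k-1}$-orbit. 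If $T_i'\in\Phi_{k-1}(\theta_i)$ is another choice, then $T_i' = \sigma_i\cdot T_i$ for some $\sigma_i\in\Pi_{k-2}$, and the element $(\sigma_1,\ldots,\sigma_p;1)$ of the base group $\Pi_{k-2}^{\times p}\subset\Pi_{k-1}$ sends $(T_1\mid\cdots\mid T_p;\star)$ to $(T_1'\mid\cdots\mid T_p';\star)$, preserving the root label. Finally, in case (ii) a cyclic reordering of $(\theta_1,\ldots,\theta_p)$ corresponds to applying a generator of the top $C_p\subset\Pi_{k-1}$, which lies in the same orbit. Hence $\Phi_k(\theta)$ is a single, well-defined element of $\cT_k$.

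\textbf{Inductive step, injectivity.} Suppose $\Phi_k(\theta) = \Phi_k(\theta')$. Since $\Pi_{k-1}$ fixes the root vertex, the root label is an orbit invariant; moreover, cases (i) and (ii) give root labels in $[0,p-1]$ and $\{p\}$ respectively, so $\theta$ and $\theta'$ fall into the same case with the same root label. In case (i) both have the form $\theta = \cX(\phi;\phi_\varepsilon)$, $\theta'=\cX(\phi';\phi_\varepsilon)$, and because the subtrees of a representative of $\Phi_k(\theta)$ all belong to the single orbit $\Phi_{k-1}(\phi)$, equality of orbits in $\cT_k$ forces $\Phi_{k-1}(\phi) = \Phi_{k-1}(\phi')$; by induction $\phi=\phi'$, so $\theta=\theta'$. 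In case (ii), any element of $\Pi_{k-1}$ carrying $(T_1\mid\cdots\mid T_p;p)$ to $(T_1'\mid\cdots\mid T_p';p)$ factors through the quotient $\Pi_{k-1}/\Pi_{k-2}^{\times p}\cong C_p$, so it permutes the subtree slots by some cyclic $\sigma\in\langle(12\cdots p)\rangle$, giving $[T_{\sigma(i)}] = [T_i']$ in $\cT_{k-1}$ for each $i$. The inductive hypothesis yields $\theta_{\sigma(i)}=\theta_i'$, and since $\theta_1\times\cdots\times\theta_p\up^{P_{p^k}}$ is invariant under cyclic permutation of the tuple, $\theta = \theta'$.

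The main conceptual obstacle is bookkeeping the two-layer structure of $\Pi_{k-1}$: one must separate the base-group action (which absorbs the freedom in representatives) from the top cyclic action (which matches the freedom in the ordering of $(\theta_1,\ldots,\theta_p)$). Once this decomposition is in hand, both well-definedness and injectivity follow directly from the inductive hypothesis together with the elementary Clifford-theoretic description of $\Irr(P_{p^k})$ recalled earlier in Section~\ref{sec:trees}.
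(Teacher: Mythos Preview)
Your proof is correct and follows essentially the same approach as the paper's: both proceed by induction on $k$, use the wreath product decomposition $\Pi_{k-1}\cong\Pi_{k-2}^{\times p}\rtimes C_p$ to absorb the choice of subtree representatives via base-group elements and the cyclic ambiguity in case (ii) via the top $C_p$, and then argue injectivity by observing that the root label is an orbit invariant and applying the inductive hypothesis to the subtrees. The only cosmetic difference is that the paper phrases the injectivity step by picking a common representative tree and noting its subtrees lie in the intersection of the relevant $\Pi_{k-2}$-orbits, whereas you track the action of an element carrying one representative to another; the content is the same.
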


\begin{proof}
	The statement is clear for $k=1$, so now assume $k\ge 2$. 
	
	To see that $\Phi_k$ is well-defined: 
	suppose $T_i, T_i'\in \cF_{k-1}$ satisfy $[T_i]=[T'_i]$ for each $i\in[1,p]$. Then there exist $g_i\in\Pi_{k-2}$ such that $g_i(T_i)=T'_i$. Then $g:=(g_1,\dotsc,g_p;1)\in\Pi_{k-1}$ and $g\big((T_1\mid\dotsc\mid T_p;\varepsilon)\big) = (T'_1\mid\dotsc\mid T'_p;\varepsilon)$ for any $\varepsilon\in[0,p-1]$. That is, $[(T_1\mid\dotsc\mid T_p;\varepsilon)] = [(T'_1\mid\dotsc\mid T'_p;\varepsilon)]$. Similarly, $[(T_1\mid\dotsc\mid T_p;p)] = [(T'_1\mid\dotsc\mid T'_p;p)]$. 
	It is also easy to see that if $\theta=\theta_1\times\cdots\times\theta_p\up^{P_{p^k}}=\eta_1\times\cdots\times\eta_p\up^{P_{p^k}}$ then there exists $\sigma\in C_p$ such that $\theta_{\sigma(i)}=\eta_i$ for all $i\in [1,p]$. Therefore, given $T_i\in\Phi_{k-1}(\theta_i)$ and $T'_i\in\Phi_{k-1}(\eta_i)$ for each $i\in[1,p]$ and setting $h:=(1,\dotsc,1 ;\sigma)$, we have that $h\big((T_1\mid\dotsc\mid T_p; p)\big) = (T_{\sigma(1)}\mid\dotsc\mid T_{\sigma(p)}; p)$. It follows that $[(T_1\mid\dotsc\mid T_p;p)] = [(T_{\sigma(1)}\mid\dotsc\mid T_{\sigma(p)}; p)] = [(T'_1\mid\dotsc\mid T'_p;p)]$, and thus that $\Phi_k$ is well-defined. That is, $\Phi_k(\theta)$ as defined in Definition~\ref{def:Phi-tree} does not depend on the choice of the tree $T_i\in \Phi_{k-1}(\phi)$ (or $T_i\in \Phi_{k-1}(\theta_i)$).
	
	Now, we show that $\Phi_k$ is injective. Proceeding by induction on $k$, we may assume as before that $k\ge 2$. Suppose $\theta,\chi\in\Irr(P_{p^k})$ satisfy $\Phi_k(\theta)=\Phi_k(\chi)=[(T_1\mid\dotsc\mid T_p;\varepsilon)]$ where $\varepsilon\in[0,p]$.
	
	If $\varepsilon\ne p$, then $\theta=\cX(\phi;\phi_\varepsilon)$ and $\chi=\cX(\eta;\phi_\varepsilon)$ for some $\phi,\eta\in\Irr(P_{p^{k-1}})$. Moreover, $T_i\in\Phi_{k-1}(\phi)\cap\Phi_{k-1}(\eta)$ for all $i\in[1,p]$. Since $\Phi_{k-1}(\phi)$ and $\Phi_{k-1}(\eta)$ are orbits, we must have that $\Phi_{k-1}(\phi)=\Phi_{k-1}(\eta)$. By inductive hypothesis, $\Phi_{k-1}$ is injective and so $\phi=\eta$, whence $\theta=\chi$.
	
	If $\varepsilon=p$, then $\theta=\theta_1\times\cdots\times\theta_p\up^{P_{p^k}}$ for some mixed $\theta_i\in\Irr(P_{p^{k-1}})$ and $\chi=\eta_1\times\cdots\times\eta_p\up^{P_{p^{k}}}$ for some mixed $\eta_i\in\Irr(P_{p^{k-1}})$. Furthermore, we have that $T_i\in\Phi_{k-1}(\theta_i)\cap\Phi_{k-1}(\eta_i)$ for each $i\in[1,p]$, after cyclically permuting the $\eta_i$ if necessary. Thus $\theta_i=\eta_i$ for all $i$ by the inductive hypothesis, and we conclude that $\theta=\chi$, as desired.
\end{proof}

\begin{remark}
	We note that $\Phi_k$ is not surjective for any $k\in\N$. Indeed, for $k=1$ we have that $\cT_1 = \{ [\substack{0\\\bullet}], [\substack{1\\\bullet}], \dotsc, [\substack{p-1\\\bullet}], [\substack{p\\\bullet}] \}$ and $\cT_1\setminus\image(\Phi_1) = \{ [\substack{p\\\bullet}] \}$. 
	For $k\ge 2$, we can see for example that $[T]\notin\image(\Phi_k)$ for any tree $T\in\cF_k$ of the form $T=(T'\mid\dotsc\mid T';p)$ where $T'\in\cF_{k-1}$, or any $T$ of the form $T=(T_1\mid\dotsc\mid T_p;\varepsilon)$ where $[T_1], [T_2], \ldots, [T_p]\in\cT_{k-1}$ are mixed and $\varepsilon\in[0,p-1]$.
	\newqed
\end{remark}

Lemma~\ref{lem:tree1} implies that for every $k\in\N$, there exists a unique bijection $\Psi_k: \image(\Phi_k)\longrightarrow \Irr(P_{p^k})$ such that $\Psi_k\circ\Phi_k=\mathrm{Id}_{\Irr(P_{p^k})}$ and $\Phi_k\circ\Psi_k=\mathrm{Id}_{\image(\Phi_k)}$. By induction on $k$, it is routine to check that $\Psi_1([\substack{\varepsilon\\\bullet}])=\phi_\varepsilon$ for every $\varepsilon\in[0,p-1]$, and that for any $k\ge 2$ and any $T=(T_1\mid\dotsc\mid T_p;\varepsilon)$ such that $T_i\in\cF_{k-1}$ and $\varepsilon\in[0,p]$ satisfying $[T]\in\image(\Phi_k)$, one has
\[ \Psi_k([T]) = \begin{cases}
	\cX\big(\Psi_{k-1}([T_1]); \phi_\varepsilon\big) & \text{if }\varepsilon\in[0,p-1],\\
	\big( \Psi_{k-1}([T_1]) \times \cdots \times \Psi_{k-1}([T_p]) \big)\up^{P_{p^k}}_{(P_{p^{k-1}})^{\times p}} & \text{if }\varepsilon=p.
\end{cases} \]

\begin{definition}\label{def:admissible}
	Let $k\in\N$. We say that a tree $T\in\cF_k$ is \emph{admissible} if $[T]\in\image(\Phi_k)$. In this case, we say that $\theta(T):=\Psi_k([T])$ is the irreducible character of $P_{p^k}$ corresponding to $T$. Similarly, given $\theta\in\Irr(P_{p^k})$ we say that $\cT(\theta):=\Phi_k(\theta)$ is the $\Pi_{k-1}$-orbit of trees associated to $\theta$.
\end{definition}

\begin{remark}
	\begin{enumerate}[label=(\roman*)]
		\item From the definition of $\Phi_k$ we can immediately see that if $T\in\cF_k$ is an admissible tree and $x$ is a vertex of $T$ whose label is $\varepsilon\in[0,p]$, then $\varepsilon\ne p$ whenever $x$ is a leaf. On the other hand, if $x$ is not a leaf, then the $p$ subtrees rooted at the children of $x$ are all in the same orbit if and only if $\varepsilon\ne p$. In Tables~\ref{tab:5} and~\ref{tab:25}, we illustrate all admissible trees in $\cF_k$ (up to the action of $\Pi_{k-1}$) where $k=1$ and $k=2$, respectively, for $p=5$.
		
		\item We note that the vertex labels $1,2,\dotsc,p-1$ play very similar roles since they correspond to the non-trivial irreducible characters of $C_p$, which are all Galois conjugates of one another.
		
		On the other hand, $0$ and $p$ reflect very different representation-theoretic notions. In relation to $\Irr(P_{p^k})$ where $k\in\N$, a vertex label $0$ denotes a trivial character of $C_p$, at some level of the iterated wreath product structure of $P_{p^k}$. 
		
		In contrast, a vertex label $p$ indicates a place where we have induced from a mixed product of irreducible characters of $P_{p^i}$ for some $i<k$, rather than extending some $P_{p^{i+1}}$-invariant product $\phi^{\times p}$ where $\phi\in\Irr(P_{p^i})$, to give an irreducible character of $P_{p^{i+1}}$.
		(Indeed, the label $p$ is technically not required since it simply records when the $p$ subtrees under the vertex which it labels are mixed, but we have decided to include it so that all vertices in our $p$-ary trees are labelled.)
		\newqed
	\end{enumerate}
\end{remark}

Now, we record some graph-theoretic notions and fix some useful notation that will appear in the rest of this article.
\begin{notation}
	Let $T$ be a rooted, labelled tree.
	\begin{itemize}
		\item[(i)] We write $x\in T$ to mean that $x$ is a vertex of $T$.
		\item[(ii)] The label of a vertex $x\in T$ is denoted by $\ell(x)$.
		\item[(iii)] We use $d(x)$ to denote the distance between $x\in T$ and the root of $T$.
		\item[(iv)] Between any two vertices in $T$ there exists a unique path since $T$ is a tree. By a \emph{descending path} in $T$ we mean a sequence of vertices $x_1,\dotsc,x_m$ in $T$ such that $x_i$ and $x_{i+1}$ are adjacent and $d(x_{i+1})=d(x_i)+1$ for each $i\in[1, m-1]$. An ascending path is defined analogously. 
		%
	\end{itemize}
\end{notation}

We are now ready to introduce the combinatorial statistics on trees lying at the heart of our main results.
As we have seen via the maps $\Phi_k$ from Definition~\ref{def:Phi-tree}, the vertex labels $0,1,\dotsc,p-1$ on the (admissible) $p$-ary trees in $\cF_k$ relate to the irreducible characters $\phi_0,\phi_1,\dotsc,\phi_{p-1}$ of $C_p$, and we will call those corresponding to non-trivial characters `humans'. All of the tree statistics we introduce below will be based on the patterns of humans and non-humans in these $p$-ary trees.

\begin{definition}\label{def:trees-stats}
	Let $k\in\N$ and let $T$ be a tree in $\cF_k$.
	\begin{enumerate}[label=(\alph*)]\itemspace{5pt}
		\item Let $x$ be a vertex of $T$. Define
		\[ \fv(x) := \begin{cases}
			1 & \text{if}\ \ \ell(x)\in [1,p-1],\\
			0 & \text{if}\ \ \ell(x)\in\{0,p\}.
		\end{cases} \]
		We say that $x$ is a \emph{person} (or \emph{human}) if and only if $\fv(x)=1$, in other words, 
		we call a vertex a \emph{person} if and only if $p$ does not divide its label.
		
		\item Define $\fH(T):=\{x\in T\mid \fv(x)=1 \}$ and $\eta(T)=|\fH(T)|$. 
		In particular, $\fH(T)$ is the set of people (humans) in $T$, and $\eta(T)$ is the number of people in $T$.
		
		
		\item Let $x,y\in\fH(T)$ and $t\in\N_0$. We say that $y$ is a \emph{$t$-descendant} of $x$ if there is a descending path $x=x_0,x_1,\ldots,x_m=y$ in $T$ such that $|\{i\in[1,m] \mid \fv(x_i)=1\}|=t$. 
		
		In other words, $x$ and $y$ must both be people and $y$ is a \emph{$t$-descendant} of $x$ if and only if, among the unique path in $T$ strictly between $x$ and $y$, there are exactly $t-1$ vertices which are people.
		
		\item For $i\in\N_0$, define $\Gamma_i(T):= \{ x\in\fH(T) \mid x\text{ has an }i\text{-descendant but no }(i+1)\text{-descendants} \}$, and let $\gamma_i(T):=|\Gamma_i(T)|$.
		
		\item Finally, we define the \emph{value} of $T$ to be $\val(T):=\min\{i\in\N_0\mid \gamma_i(T)=0 \}$.
	\end{enumerate}
\end{definition}

In particular, $\gamma_0(T)$ counts the number of people (humans) in $T$ which are either leaves or such that no vertex amongst its $p$ subtrees are people. In genealogy terms, $\gamma_0(T)$ counts the number of 
people with no `children' (i.e.~humans with no human descendants). The statistic $\gamma_1(T)$ then counts the number of people who are parents but not grandparents (i.e.~humans with exactly $1$ generation of human descendants, see also Remark~\ref{rmk:tree-stats}).

We illustrate the various notions introduced in Definition~\ref{def:trees-stats} in Figure~\ref{fig:people}, giving several examples when $p=3$.
Further examples when $p=5$ and $n\in\{5,25,125\}$ are given in Examples~\ref{ex:5,25} and~\ref{ex:125}.

\begin{figure}[h!]
	\centering
	\begin{subfigure}[t]{0.4\textwidth}
		\centering
		\begin{tikzpicture}[scale=1.0, every node/.style={scale=0.7}]
			\draw (0,0.2) node(R) {$\overset{3}{\circ}$};
			\draw (-2,-0.6) node(1) {$\overset{1}{a}$};
			\draw (0,-0.6) node(2) {$\overset{1}{b}$};
			\draw (2,-0.6) node(3) {$\overset{0}{\circ}$};
			\draw (R) -- (1);
			\draw (R) -- (2);
			\draw (R) -- (3);
			\draw (-2.6,-1.2) node(11) {$\overset{2}{\bullet}$};
			\draw (-2,-1.2) node(12) {$\overset{2}{\bullet}$};
			\draw (-1.4,-1.2) node(13) {$\overset{2}{\bullet}$};
			\draw (1) -- (11);
			\draw (1) -- (12);
			\draw (1) -- (13);
			\draw (-0.6,-1.2) node(21) {$\overset{3}{\circ}$};
			\draw (0,-1.2) node(22) {$\overset{3}{\circ}$};
			\draw (0.6,-1.2) node(23) {$\overset{3}{\circ}$};
			\draw (2) -- (21);
			\draw (2) -- (22);
			\draw (2) -- (23);
			\draw (1.4,-1.2) node(31) {$\overset{1}{\bullet}$};
			\draw (2,-1.2) node(32) {$\overset{1}{\bullet}$};
			\draw (2.6,-1.2) node(33) {$\overset{1}{\bullet}$};
			\draw (3) -- (31);
			\draw (3) -- (32);
			\draw (3) -- (33);
			\draw (-2.8,-1.8) node(111) {$\overset{1}{\bullet}$};
			\draw (-2.6,-1.8) node(112) {$\overset{1}{\bullet}$};
			\draw (-2.4,-1.8) node(113) {$\overset{1}{\bullet}$};
			\draw (11) -- (111);
			\draw (11) -- (112);
			\draw (11) -- (113);
			\draw (-2.2,-1.8) node(121) {$\overset{1}{\bullet}$};
			\draw (-2,-1.8) node(122) {$\overset{1}{\bullet}$};
			\draw (-1.8,-1.8) node(123) {$\overset{1}{\bullet}$};
			\draw (12) -- (121);
			\draw (12) -- (122);
			\draw (12) -- (123);
			\draw (-1.6,-1.8) node(131) {$\overset{1}{\bullet}$};
			\draw (-1.4,-1.8) node(132) {$\overset{1}{\bullet}$};
			\draw (-1.2,-1.8) node(133) {$\overset{1}{\bullet}$};
			\draw (13) -- (131);
			\draw (13) -- (132);
			\draw (13) -- (133);
			\draw (-0.8,-1.8) node(211) {$\overset{0}{\circ}$};
			\draw (-0.6,-1.8) node(212) {$\overset{1}{\bullet}$};
			\draw (-0.4,-1.8) node(213) {$\overset{1}{\bullet}$};
			\draw (21) -- (211);
			\draw (21) -- (212);
			\draw (21) -- (213);
			\draw (-0.2,-1.8) node(221) {$\overset{0}{\circ}$};
			\draw (0,-1.8) node(222) {$\overset{1}{\bullet}$};
			\draw (0.2,-1.8) node(223) {$\overset{1}{\bullet}$};
			\draw (22) -- (221);
			\draw (22) -- (222);
			\draw (22) -- (223);
			\draw (0.4,-1.8) node(231) {$\overset{0}{\circ}$};
			\draw (0.6,-1.8) node(232) {$\overset{1}{\bullet}$};
			\draw (0.8,-1.8) node(233) {$\overset{1}{\bullet}$};
			\draw (23) -- (231);
			\draw (23) -- (232);
			\draw (23) -- (233);
			\draw (1.2,-1.8) node(311) {$\overset{0}{\circ}$};
			\draw (1.4,-1.8) node(312) {$\overset{0}{\circ}$};
			\draw (1.6,-1.8) node(313) {$\overset{0}{\circ}$};
			\draw (31) -- (311);
			\draw (31) -- (312);
			\draw (31) -- (313);
			\draw (1.8,-1.8) node(321) {$\overset{0}{\circ}$};
			\draw (2,-1.8) node(322) {$\overset{0}{\circ}$};
			\draw (2.2,-1.8) node(323) {$\overset{0}{\circ}$};
			\draw (32) -- (321);
			\draw (32) -- (322);
			\draw (32) -- (323);
			\draw (2.4,-1.8) node(331) {$\overset{0}{\circ}$};
			\draw (2.6,-1.8) node(332) {$\overset{0}{\circ}$};
			\draw (2.8,-1.8) node(333) {$\overset{0}{\circ}$};
			\draw (33) -- (331);
			\draw (33) -- (332);
			\draw (33) -- (333);
		\end{tikzpicture}
		\caption{An admissible tree $T\in\cF_4$ with $\val(T)=3$ and $\eta(T)=23$. The vertex $a$ is a person and has two generations of `human' descendants, while $b$ only has one. Also, $\gamma_0(T)=18$, $\gamma_1(T)=4$, $\gamma_2(T)=1$ and $\gamma_i(T)=0$ for all $i\ge 3$.}
		\label{fig:people-1}
	\end{subfigure}
	\hfill
	\begin{subfigure}[t]{0.27\textwidth}
		\centering
		\begin{tikzpicture}[scale=1.0, every node/.style={scale=0.7}]
			\draw (0,0.2) node(R) {$\overset{2}{\bullet}$};
			\draw (-1.2,-0.6) node(1) {$\overset{3}{\circ}$};
			\draw (0,-0.6) node(2) {$\overset{0}{\circ}$};
			\draw (1.2,-0.6) node(3) {$\overset{0}{\circ}$};
			\draw (R) -- (1);
			\draw (R) -- (2);
			\draw (R) -- (3);
			\draw (-1.5,-1.4) node(11) {$\overset{0}{\circ}$};
			\draw (-1.2,-1.4) node(12) {$\overset{0}{\circ}$};
			\draw (-0.9,-1.4) node(13) {$\overset{0}{\circ}$};
			\draw (1) -- (11);
			\draw (1) -- (12);
			\draw (1) -- (13);
			\draw (-0.3,-1.4) node(21) {$\overset{2}{\bullet}$};
			\draw (0,-1.4) node(22) {$\overset{2}{\bullet}$};
			\draw (0.3,-1.4) node(23) {$\overset{3}{\circ}$};
			\draw (2) -- (21);
			\draw (2) -- (22);
			\draw (2) -- (23);
			\draw (0.9,-1.4) node(31) {$\overset{1}{\bullet}$};
			\draw (1.2,-1.4) node(32) {$\overset{1}{\bullet}$};
			\draw (1.5,-1.4) node(33) {$\overset{1}{\bullet}$};
			\draw (3) -- (31);
			\draw (3) -- (32);
			\draw (3) -- (33);
		\end{tikzpicture}
		\caption{An inadmissible tree $T\in\cF_3$ with $\val(T)=2$, $\eta(T)=6$, $\gamma_0(T)=5$ and $\gamma_1(T)=1$.}
		\label{fig:people-2}
	\end{subfigure}
	\hfill
	\begin{subfigure}[t]{0.27\textwidth}
		\centering
		\begin{tikzpicture}[scale=1.0, every node/.style={scale=0.7}]
			\draw (0,0.2) node(R) {$\overset{2}{\bullet}$};
			\draw (-1.2,-0.6) node(1) {$\overset{0}{\circ}$};
			\draw (0,-0.6) node(2) {$\overset{0}{\circ}$};
			\draw (1.2,-0.6) node(3) {$\overset{0}{\circ}$};
			\draw (R) -- (1);
			\draw (R) -- (2);
			\draw (R) -- (3);
			\draw (-1.5,-1.4) node(11) {$\overset{1}{\bullet}$};
			\draw (-1.2,-1.4) node(12) {$\overset{1}{\bullet}$};
			\draw (-0.9,-1.4) node(13) {$\overset{1}{\bullet}$};
			\draw (1) -- (11);
			\draw (1) -- (12);
			\draw (1) -- (13);
			\draw (-0.3,-1.4) node(21) {$\overset{1}{\bullet}$};
			\draw (0,-1.4) node(22) {$\overset{1}{\bullet}$};
			\draw (0.3,-1.4) node(23) {$\overset{1}{\bullet}$};
			\draw (2) -- (21);
			\draw (2) -- (22);
			\draw (2) -- (23);
			\draw (0.9,-1.4) node(31) {$\overset{1}{\bullet}$};
			\draw (1.2,-1.4) node(32) {$\overset{1}{\bullet}$};
			\draw (1.5,-1.4) node(33) {$\overset{1}{\bullet}$};
			\draw (3) -- (31);
			\draw (3) -- (32);
			\draw (3) -- (33);
		\end{tikzpicture}
		\caption{An admissible tree $T\in\cF_3$ with $\val(T)=2$, $\eta(T)=10$, $\gamma_0(T)=9$ and $\gamma_1(T)=1$.}
		\label{fig:people-3}
	\end{subfigure}
	\caption{Examples illustrating Definition~\ref{def:trees-stats} with $p=3$. 
		Here, and in subsequent figures, filled circles $\bullet$ indicate (unnamed) vertices in $\fH(T)$, i.e.~those with label in $[1,p-1]$, and unfilled circles $\circ$ indicate (unnamed) vertices not in $\fH(T)$, i.e.~those with label in $\{0,p\}$. The label of each vertex is written above it. This is to aid with visualising and counting which vertices are `people' (Definition~\ref{def:trees-stats}(a)) in our figures illustrating such labelled trees.}
	\label{fig:people}
\end{figure}

\begin{remark}\label{rmk:tree-stats}
	\begin{itemize}
		\item[(i)] Every $x\in\fH(T)$ is its own unique 0-descendant. 		
		Moreover, in Definition~\ref{def:trees-stats}(d) notice that $x_1,\dotsc,x_{m-1}$ need not be people; indeed, $y$ being a $t$-descendant of $x$ means that exactly $t-1$ of $x_1,\dotsc,x_{m-1}$ are people.		
		Informally, $y$ is in the $t$-th generation of `human' descendants of $x$ (we do not includes vertices that are not people for the purposes of counting such generations).
		
		\item[(ii)] Since $T$ has finite height, clearly $\{i\in\N_0\mid\gamma_i(T)=0\}\ne\emptyset$ and the value $\val(T)$ of $T$ is well-defined. Furthermore, we observe that if $\gamma_i(T)=0$, then $\gamma_{i+1}(T)=0$. 
		
		Indeed, suppose $\gamma_{i+1}(T)>0$, so there is some $x\in T$ with a $(i+1)$-descendant, say $y\in T$, and $x$ has no $(i+2)$-descendants. Let $x=x_0,x_1,\dotsc,x_m=y$ be the unique path in $T$ between $x$ and $y$, so $|\{ l\in[1,m]\mid \fv(x_l)=1 \}|=i+1$. Suppose $j=\min\{l\in[1,m]\mid x_l\in\fH(T)\}$. Then $x_j\in\Gamma_i(T)$ because $y$ is an $i$-descendant of $x_j$, but if $x_j$ has any $(i+1)$-descendants then those would be $(i+2)$-descendants of $x$. We have thus shown that $\gamma_{i+1}(T)>0$ implies $\gamma_i(T)>0$.
		
		\item[(iii)] We note that $\{ \Gamma_i(T) \mid i\in\N_0 \}$ gives a partition of $\fH(T)$, so the number of people in $T$ equals
		$\eta(T)=\sum_{i=0}^\infty\gamma_i(T)$. With this in mind, viewing the subset of those vertices in $\fH(T)$ as forming a `family tree', the value $\val(T)$ is then the number of different generations of people in $T$. (Here, people in the same generation may have different distances to the root of $T$.) 
		
		For example, $\val(T)=0$ if and only if $T$ has no people in it, while $\val(T)=1$ if and only if there is at least one person in $T$ but no person in $T$ has any 1-descendants. In particular, when $\val(T)\le 1$ then $\eta(T)=\gamma_0(T)$.
		\newqed
	\end{itemize}
\end{remark}

\begin{lemma}\label{lem:tree-bounds}
	Let $k\in\N$ and $T\in\cF_k$. Then the following hold:
	\begin{itemize}\itemspace{5pt}
		\item[(a)] $\gamma_i(T)\le p^{k-1-i}$ for all $i\in[0,k-1]$.
		\item[(b)] If $T$ is an admissible tree and $\gamma_0(T)<p$, then $\val(T)\le 1$.
	\end{itemize}
\end{lemma}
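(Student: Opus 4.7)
I would prove part (a) by induction on $k$, with the base case $k=1$ being immediate. For the inductive step, decompose $T = (T_1 \mid \cdots \mid T_p; \varepsilon) \in \cF_k$. The key structural observation is that for any person $x$ lying in some $T_j$, its $t$-descendants in $T$ coincide with its $t$-descendants in $T_j$, so $\Gamma_i(T) \cap T_j = \Gamma_i(T_j)$; only the root $r$ can additionally contribute to $\gamma_i(T)$. When $\varepsilon \in \{0,p\}$, $r$ is not a person and $\gamma_i(T) = \sum_j \gamma_i(T_j) \le p \cdot p^{k-2-i} = p^{k-1-i}$ by the inductive hypothesis (the boundary case $i = k-1$ is trivial since $\gamma_{k-1}(T_j) = 0$ for height reasons). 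When $\varepsilon \in [1, p-1]$, the root is a person and contributes to $\Gamma_i(T)$ for exactly one value of $i$, namely $i = \max_j \val(T_j)$; this reduces to the clean characterization that $\val(T_j)$ equals the maximum number of people on any descending path from the root of $T_j$, provable by a short mutual inequality. Thus $\gamma_i(T) = \sum_j \gamma_i(T_j) + \mathbbm{1}_{i = \max_j \val(T_j)}$, and when the indicator equals $1$, some $T_{j^\ast}$ satisfies $\val(T_{j^\ast}) = i$ and hence $\gamma_i(T_{j^\ast}) = 0$, giving $\gamma_i(T) \le (p-1) p^{k-2-i} + 1 \le p^{k-1-i}$.

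For part (b), I would argue the contrapositive: assuming $\val(T) \ge 2$, I aim to exhibit $p$ distinct elements of $\Gamma_0(T)$. The hypothesis forces $\gamma_1(T) \ge 1$ (using the earlier remark that $\gamma_i = 0$ implies $\gamma_{i+1} = 0$), so there exists $x \in \Gamma_1(T)$: a person with a 1-descendant $y$ but no 2-descendant. Concatenating paths shows that any 1-descendant of $y$ would be a 2-descendant of $x$, so $y \in \Gamma_0(T)$. Now $x$ is a non-leaf with $\ell(x) \in [1, p-1]$ (in particular $\ell(x) \ne p$), so the admissibility of $T$ forces the $p$ subtrees rooted at the children of $x$ to lie in a single $\Pi$-orbit. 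Choosing label-preserving isomorphisms from the sibling subtree containing $y$ to each of the other $p-1$ sibling subtrees yields vertices $y_2, \ldots, y_p$ with the same label as $y$ and isomorphic descendant structure. Each $y_i$ therefore lies in $\Gamma_0(T)$, so $y, y_2, \ldots, y_p$ are $p$ distinct elements of $\Gamma_0(T)$.

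The main technical content is the ``max people on a descending path'' characterization of $\val$ underlying the Case 2 formula in (a); once in hand, both parts reduce to routine counting, with (b) being a clean consequence of the $p$-fold symmetry imposed by admissibility at internal ``human'' vertices.
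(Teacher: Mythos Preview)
Your proof is correct for both parts. Part (b) is essentially identical to the paper's argument: take $x\in\Gamma_1(T)$, observe $\ell(x)\ne p$, use admissibility to get $p$ isomorphic subtrees below $x$, and extract one element of $\Gamma_0(T)$ from each.

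For part (a), however, the paper gives a much shorter direct argument that avoids induction entirely. The observation is simply that any $x\in\Gamma_i(T)$ satisfies $d(x)\le k-1-i$ (since it has an $i$-descendant), and that no two elements of $\Gamma_i(T)$ can lie on the same root-to-leaf path (if $x_1$ were a proper ancestor of $x_2$ with both in $\Gamma_i(T)$, then an $i$-descendant of $x_2$ would be a $(\ge i+1)$-descendant of $x_1$). Hence the map sending each $x\in\Gamma_i(T)$ to any chosen descendant at depth $k-1-i$ is injective, giving $\gamma_i(T)\le p^{k-1-i}$ immediately. Your inductive approach works, but it forces you to develop the auxiliary characterization of $\val(T_j)$ as the maximum number of people on a descending path, and then to handle a case split on whether the root is a person together with boundary cases at $i=k-1$. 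The paper's pigeonhole-style count sidesteps all of this.
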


\begin{proof}
	\begin{itemize}
		\item[(a)] If $x\in\Gamma_i(T)$ then $d(x)\le k-1-i$ since $x$ must have an $i$-descendant. Consider the $p^{k-1-i}$ vertices $y\in T$ such that $d(y)=k-1-i$. Clearly each path between the root of $T$ and such a vertex $y$ intersects $\Gamma_i(T)$ in at most one vertex. Thus $\gamma_i(T)\le p^{k-1-i}$.
		\item[(b)] Suppose for a contradiction that $\val(T)\ge 2$. Then $\gamma_1(T)\ne 0$, so there exists some $x\in\Gamma_1(T)$. Suppose the subtree of $T$ rooted at $x$ is $(T_1\mid\dotsc\mid T_p;x)$. Since $x\in\Gamma_1(T)$ there must exist some $i\in[1,p]$ and a vertex $y_i$ in $T_i$ such that $\ell(y_i)\in[1,p-1]$. Since $T$ is admissible and since $\ell(x)\ne p$ we must have $[T_1]=[T_2]=\cdots=[T_p]$. It follows that for every $j\in[1,p]$ there exists a vertex $y_j\in T_j$ such that $\ell(y_j)=\ell(y_i)\in[1,p-1]$. We conclude that $y_1,y_2,\dotsc,y_p\in\Gamma_0(T)$ and so $\gamma_0(T)\ge p$.
	\end{itemize}
\end{proof}

Now, we wish to associate the tree statistics introduced in Definition~\ref{def:trees-stats} to characters of the Sylow subgroups $P_{p^k}$.
Suppose $T,T'\in\cF_k$ are trees lying in the same $\Pi_{k-1}$-orbit. Then $\eta(T)=\eta(T')$, $\gamma_i(T)=\gamma_i(T')$ for all $i\in\N_0$, and $\val(T)=\val(T')$, since the action of $\Pi_{k-1}$ preserves adjacency of vertices. This guarantees that the following quantities are well-defined.

\begin{definition}\label{def:theta-stats}
	Let $k\in\N$ and $\theta\in\Irr(P_{p^k})$. We define $\eta(\theta):=\eta(T)$, $\gamma_i(\theta):=\gamma_i(T)$ for all $i\in\N_0$ and $\val(\theta):=\val(T)$ where $T$ is any tree in $\cT(\theta)$.
\end{definition}

The following result is particularly interesting for our purposes, as it allows us to drastically reduce the irreducible characters of $P_{p^k}$ we need to analyse in order to study the Sylow branching coefficients for symmetric groups. In particular, we can see that `all people are equal'. That is to say, mathematically: let $\theta\in\Irr(P_{p^k})$ and $T\in\cT(\theta)$. Suppose that $T$ has a vertex labelled by $i\in[1,p-1]$. If when we replace the label $i$ by some $j\in[1,p-1]$ we obtain an admissible tree $T'$, corresponding to some character $\zeta\in\Irr(P_{p^k})$, 
then we will show that $\theta\up^{S_{p^k}} = \zeta\up^{S_{p^k}}$. Of course, we may iterate this and perform any number of such label replacements. We first describe the resulting equivalence relation on characters, before proceeding with proving that their inductions to $S_{p^k}$ are equal.

\begin{definition}\label{def:label-replace}
	Let $k\in\N$.
	\begin{itemize}
		\item[(a)] Suppose $T,T'\in\cF_k$. We say that $T'$ is obtained from $T$ by a \emph{good label replacement} if $T'$ may be obtained from $T$ by selecting a vertex $x\in T$ such that $i:=\ell(x)\in[1,p-1]$, and replacing $i$ by some $j\in[1,p-1]$.
		
		\item[(b)] Given $\theta,\zeta\in\Irr(P_{p^k})$, we say that $\theta\sim_{0,p}\zeta$ if some $T'\in\cT(\zeta)$ may be obtained from some $T\in\cT(\theta)$ by a sequence of good label replacements. 
	\end{itemize}
\end{definition}

In other words, $\theta\sim_{0,p}\zeta$ if and only if there is some $T\in\cT(\theta)$ and $T'\in\cT(\zeta)$ such that $T$ and $T'$ have label 0 in exactly the same places as each other, and also $T$ and $T'$ have label $p$ in the exactly the same places as each other. 
Clearly $\sim_{0,p}$ is well-defined and an equivalence relation on $\Irr(P_{p^k})$, and its importance is highlighted by the following result.

\begin{theorem}\label{thm:equiv-classes}
	Let $k\in\N$ and $\theta,\zeta\in\Irr(P_{p^k})$. If $\theta\sim_{0,p}\zeta$, then $\theta\up^{S_{p^k}}=\zeta\up^{S_{p^k}}$. In particular, $\Omega(\theta)=\Omega(\zeta)$.
\end{theorem}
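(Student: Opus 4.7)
My plan is to proceed by induction on $k$. For the base case $k=1$ the admissible trees are the single vertices labelled $\varepsilon\in[0,p-1]$, corresponding to the characters $\phi_\varepsilon\in\Irr(P_p)$; the only nontrivial instance of the theorem is $\phi_i\up^{S_p}=\phi_j\up^{S_p}$ for $i,j\in[1,p-1]$, which holds because $\{\phi_1,\dotsc,\phi_{p-1}\}$ forms a single orbit under $\mathrm{Gal}(\Q(\zeta_p)/\Q)$ while every irreducible character of $S_p$ is rational-valued, so by Frobenius reciprocity the inductions agree.

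For the inductive step I would use the equivalent characterization of $\sim_{0,p}$ stated immediately after Definition~\ref{def:label-replace}: pick representatives $T\in\cT(\theta)$ and $T'\in\cT(\zeta)$ whose $0$-labels and $p$-labels occur at precisely the same vertices. The root labels then both lie in $\{p\}$, in $\{0\}$, or in $[1,p-1]$, and the matching of patterns descends to each of the $p$ subtrees under the root. If both root labels equal $p$, then $\theta=(\theta_1\times\cdots\times\theta_p)\up^{P_{p^k}}$ and $\zeta=(\zeta_1\times\cdots\times\zeta_p)\up^{P_{p^k}}$, the inductive hypothesis applied to the matched subtrees gives $\theta_l\up^{S_{p^{k-1}}}=\zeta_l\up^{S_{p^{k-1}}}$ for each $l$, and transitivity of induction through the Young subgroup $S_{p^{k-1}}^{\times p}\leq S_{p^k}$ yields $\theta\up^{S_{p^k}}=(\theta_1\up^{S_{p^{k-1}}}\times\cdots\times\theta_p\up^{S_{p^{k-1}}})\up^{S_{p^k}}_{S_{p^{k-1}}^{\times p}}=\zeta\up^{S_{p^k}}$. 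If instead both root labels lie in $[0,p-1]$, admissibility forces all subtrees under each root to be $\Pi_{k-2}$-equivalent, so $\theta=\cX(\phi;\phi_\varepsilon)$ and $\zeta=\cX(\phi';\phi_{\varepsilon'})$ with matched first subtrees, and the inductive hypothesis gives $\hat\phi:=\phi\up^{S_{p^{k-1}}}=\phi'\up^{S_{p^{k-1}}}$.

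The theorem thus reduces to the following key lemma, which I expect to be the main obstacle: if $\hat\phi=\hat{\phi'}$ and $\varepsilon,\varepsilon'\in[0,p-1]$ satisfy $\varepsilon=0\Leftrightarrow\varepsilon'=0$, then $\cX(\phi;\phi_\varepsilon)\up^{S_{p^k}}=\cX(\phi';\phi_{\varepsilon'})\up^{S_{p^k}}$. My plan for this step is to pass through the intermediate subgroup $H:=S_{p^{k-1}}\wr C_p$, which contains $P_{p^k}$. The irreducibles of $H$ are either of the form $\widetilde{\alpha^{\times p}}\cdot\phi_\delta$ for $\alpha\in\Irr(S_{p^{k-1}})$ and $\delta\in[0,p-1]$, or induced from mixed tuples on $S_{p^{k-1}}^{\times p}$. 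A direct computation decomposing $\widetilde{\alpha^{\times p}}\down_{P_{p^k}}$ by grouping tensor indices into constant versus non-constant $C_p$-orbits, combined with Frobenius reciprocity, shows that $[\cX(\phi;\phi_\varepsilon)\up^H,\xi]$ depends on $\phi$ only through $\hat\phi$ and on $\varepsilon$ only through the indicator $\mathbbm{1}[\varepsilon=\delta]$ (or not at all, for mixed $\xi$).

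Finally, inducing from $H$ up to $S_{p^k}$ and pairing with $\chi^\lambda\in\Irr(S_{p^k})$, the $\varepsilon$-dependent contributions reduce to sums of the form $\sum_\alpha[\alpha,\hat\phi]\cdot[\widetilde{\alpha^{\times p}}\cdot\phi_\varepsilon,\chi^\lambda\down_H]$. Since $\widetilde{\alpha^{\times p}}$ is rational-valued (as $\alpha$ is) and $\chi^\lambda\down_H$ is rational-valued, while $\{\widetilde{\alpha^{\times p}}\cdot\phi_\varepsilon:\varepsilon\in[1,p-1]\}$ forms a single Galois orbit under $(\Z/p\Z)^\times$, these multiplicities are equal for all $\varepsilon\in[1,p-1]$; the $\varepsilon=0$ case is handled separately because $\phi_0$ is Galois-fixed, and the hypothesis $\varepsilon=0\Leftrightarrow\varepsilon'=0$ precisely prevents mixing these Galois classes. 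The main technical work lies in the careful bookkeeping required to isolate the indicator dependence when decomposing $\widetilde{\alpha^{\times p}}\down_{P_{p^k}}$ into constituents of the form $\cX(\phi;\phi_\varepsilon)$ versus mixed-induced irreducibles.
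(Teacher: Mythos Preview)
Your proposal is correct and follows the same inductive skeleton as the paper, but the execution of the key step (root label in $[0,p-1]$) is genuinely different.

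The paper handles this step by invoking a clean compatibility formula due to Chuang--Tan \cite[Lemma 3.2]{CT}: for inclusions $Q\le R$ and $P_p\le S_p$ one has $\cX(\phi;\psi)\big\uparrow = \cX(\phi\big\uparrow;\psi\big\uparrow)$. With this in hand, the paper treats the two subcases separately: for root label $0$ it passes to $W=S_{p^{k-1}}\wr P_p$ and gets $\cX(\theta_1;\phi_0)\big\uparrow^W=\cX(\hat\theta_1;\phi_0)$, while for root label $\varepsilon\in[1,p-1]$ it passes all the way to $V=S_{p^{k-1}}\wr S_p$ and gets $\cX(\theta_1;\phi_\varepsilon)\big\uparrow^V=\cX(\hat\theta_1;\phi_\varepsilon\big\uparrow^{S_p})$, then applies the base case $\phi_i\big\uparrow^{S_p}=\phi_j\big\uparrow^{S_p}$. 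No Galois argument at height $k$ is needed.

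Your route stays at $H=W$ throughout and replaces the citation to \cite{CT} by a direct inner-product computation in $R\wr C_p$; indeed one checks that $[\cX(\hat\phi;\phi_\varepsilon),\cX(\alpha;\phi_\delta)]=\tfrac{m^p-m}{p}+m\cdot\mathbbm{1}[\varepsilon=\delta]$ where $m=[\hat\phi,\alpha]$, confirming your claimed dependence on $\hat\phi$ and on the indicator. You then finish with a Galois argument on the rational character $\chi^\lambda\big\downarrow_H$, using that $\{\cX(\alpha;\phi_\varepsilon):\varepsilon\in[1,p-1]\}$ is a single $\mathrm{Gal}(\Q(\zeta_p)/\Q)$-orbit. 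This is valid and self-contained, at the cost of re-deriving (a special case of) the Chuang--Tan lemma and carrying the extra bookkeeping you anticipate. The paper's approach is shorter precisely because it outsources that computation to \cite{CT} and avoids the Galois step by going one level higher to $V$.
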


\begin{proof}
	We proceed by induction on $k$. If $k=1$ then the fact that $\phi_i\up^{S_p}=\phi_j\up^{S_p}$ for all $i,j\in[1,p-1]$ follows from a straightforward application of the Murnaghan--Nakayama rule (see \cite[Corollary 2.4]{SLThesis}). Let us now assume $k\ge 2$. To ease the notation when considering various subgroups of $S_{p^k}$, we let $P:=P_{p^k}$, $Q:=P_{p^{k-1}}$ and $R:=S_{p^{k-1}}$. Further, let $Y:=(S_{p^{k-1}})^{\times p}$, $W=Y\rtimes P_p\cong S_{p^{k-1}}\wr P_p$ and $V:=Y\rtimes S_p\cong S_{p^{k-1}}\wr S_p$.
	
	If $\theta=\theta_1\times\cdots\times\theta_p\up^P$ for some mixed $\theta_1,\dotsc,\theta_p\in\Irr(Q)$, then the root of any $T\in\cT(\theta)$ is labelled by $p$. 
	Since $\theta\sim_{0,p}\zeta$, it follows that the root of any $T'\in\cT(\zeta)$ is also labelled by $p$, and so $\zeta=\zeta_1\times\cdots\times\zeta_p\up^P$ for some mixed $\zeta_1,\dotsc,\zeta_p\in\Irr(Q)$. Moreover, from $\theta\sim_{0,p}\zeta$ we also deduce that $\theta_i\sim_{0,p}\zeta_i$ for all $i\in[1,p]$, after cyclically permuting the $\zeta_i$ if necessary.
	By inductive hypothesis, $\theta_i\up^R=\zeta_i\up^R$ for all $i$. 
	Since $\theta\up^{S_{p^k}} = (\theta_1\times\cdots\times\theta_p\up^Y)\up^{S_{p^k}} = (\theta_1\up^R\times\cdots\times\theta_p\up^R)\up^{S_{p^k}}$, and similarly for $\zeta$, we conclude that $\theta\up^{S_{p^k}}=\zeta\up^{S_{p^k}}$.
	
	If $\theta=\cX(\theta_1;\phi_0)$ for some $\theta_1\in\Irr(Q)$, then the root of any $T\in\cT(\theta)$ is labelled by 0. Then, arguing as above, we deduce that $\zeta=\cX(\zeta_1;\phi_0)$ for some $\zeta_1\in\Irr(Q)$. Moreover, $\theta_1\sim_{0,p}\zeta_1$, and so $\theta_1\up^R=\zeta_1\up^R$. Hence
	\[ \theta\up^W = \cX(\theta_1;\phi_0)\up_P^W = \cX(\theta_1\up_Q^R;\phi_0) = \cX(\zeta_1\up_Q^R;\phi_0) = \cX(\zeta_1;\phi_0)\up_P^W = \zeta\up^W, \]
	where the second equality follows from \cite[Lemma 3.2]{CT}, and we conclude that $\theta\up^{S_{p^k}}=\zeta\up^{S_{p^k}}$.
	
	Finally, if $\theta=\cX(\theta_1;\phi_i)$ for some $i\in[1,p-1]$ and $\theta_1\in\Irr(Q)$, then the root of any $T\in\cT(\theta)$ is labelled by $i$. Then we deduce that $\zeta=\cX(\zeta_1;\phi_j)$ for some $j\in[1,p-1]$ and $\zeta_1\in\Irr(Q)$ and $\theta_1\sim_{0,p}\zeta_1$. Thus
	\[ \theta\up^V = \cX(\theta_1;\phi_i)\up_P^V = \cX(\theta_1\up_Q^R;\phi_i\up_{P_p}^{S_p}) = \cX(\zeta_1\up_Q^R;\phi_j\up_{P_p}^{S_p}) = \cX(\zeta_1;\phi_j)\up_P^V = \zeta\up^V, \]
	where again the second equality follows from \cite[Lemma 3.2]{CT}, and so $\theta\up^{S_{p^k}}=\zeta\up^{S_{p^k}}$.
\end{proof}

\begin{remark}
	While good label replacements can be defined for all trees $T\in\cF_k$, an admissible tree need not remain admissible after such a replacement. For instance, $T=(\substack{2\\\bullet}\mid\substack{2\\\bullet}\mid\substack{2\\\bullet};1)\in\cF_2$ is admissible (where $p=3$), but $T'=(\substack{1\\\bullet}\mid\substack{2\\\bullet}\mid\substack{2\\\bullet};1)$ which differs from $T$ by a good label replacement is not. 
	
	The hypothesis that both $\theta$ and $\zeta$ belong to $\Irr(P_{p^k})$ in Definition~\ref{def:label-replace} and Theorem~\ref{thm:equiv-classes} guarantees that the trees $T$ and $T'$ considered therein are admissible. 
	\newqed
\end{remark}

A key purpose of Theorem \ref{thm:equiv-classes} is to observe that $\Omega(\theta)=\Omega(\zeta)$ whenever $\theta\sim_{0,p}\zeta$. A second, natural equivalence relation on $\Irr(P_{p^k})$ that preserves the sets $\Omega(\theta)$ is the one induced by Galois conjugation. 
We now briefly explain the connection between these two equivalence relations. 
In particular, we will see that if $\theta$ and $\zeta$ are Galois conjugates then $\theta\sim_{0,p}\zeta$, showing that each $\sim_{0,p}$-equivalence class is a union of equivalence classes under Galois conjugacy, and therefore that the equivalence relation $\sim_{0,p}$ is more useful for the purposes of this article. 

Indeed, it is well known that given any $\theta\in\Irr(P_{p^k})$, the corresponding field of values $\Q(\theta)$ is a subfield of $\Q(\omega)$, where $\omega=e^{\frac{2\pi i}{p}}$ is a primitive $p$-th root of unity. (A proof of this fact can be obtained using \cite[Lemma 4.3.9]{JK} and induction on the parameter $k$.)
Setting $\cG:=\Gal(\Q(\omega)|\Q)$, we have that $\cG$ is a cyclic group of order $p-1$, acting naturally on $\Irr(P_{p^k})$. In particular, $(\theta^{\sigma})\up^{S_{p^k}}=\theta\up^{S_{p^k}}$ for any $\theta\in\Irr(P_{p^k})$ and any $\sigma\in\cG$.
In the following remark, we describe the effect of Galois conjugation on the trees corresponding to irreducible characters.

\begin{remark}
	The Galois group $\cG$ acts on $\Irr(P_p)$ by fixing the trivial character $\phi_0$ and by transitively permuting the remaining $p-1$ linear characters $\phi_1,\phi_2,\ldots, \phi_{p-1}$. Let $\sigma$ be a generator of $\cG$ and let us denote by $\tau$ the element of $S_{p-1}$ such that $\phi_j^{\sigma}=\phi_{j\tau}$, for all $j\in [1,p-1]$.
	
	Now, for any $k\in\N$ and any $\theta\in\Irr(P_{p^k})$, let us fix $T\in\cT(\theta)$ and denote by $T\cdot\tau$ the tree obtained from $T$ by replacing every label $j$ with $j\tau$, for any $j\in [1,p-1]$. Notice that $T\cdot\tau$ is obtained from $T$ by a sequence of good label replacements, since no action was taken on vertices labelled by $0$ or by $p$. 
	It is now enough to proceed by induction on the parameter $k$ to show that $T\cdot\tau$ is an element of $\cT(\theta^\sigma)$. As an immediate consequence of the above discussion we deduce that $\theta\sim_{0,p}\theta^\sigma$. 
	We have omitted the details of this proof as this result is not directly relevant to the main goals of this article. 
	
	We conclude with a small example to highlight the properties described above. For the prime $p=3$ we have that $\cG=\langle\sigma\rangle\cong C_2$ and $\tau=(1,2)\in S_2$. Let  $T=(\substack{0\\\bullet}\mid\substack{1\\\bullet}\mid\substack{2\\\bullet};3)\in\cF_2$ and let $\theta=\theta(T)$ be the corresponding irreducible character of $P_9$. Its $\cG$-conjugacy class is $\{\theta, \theta^\sigma\}$ and a tree corresponding to $\theta^\sigma$ is given by 
	\[ T\cdot\tau=(\substack{0\\\bullet}\mid\substack{2\\\bullet}\mid\substack{1\\\bullet};3). \]
	On the other hand, we observe that the $\sim_{0,p}$-equivalence class of $\theta$ is the set $\{\theta, \theta^\sigma, \eta, \eta^\sigma\}$, where $\eta$ and $\eta^\sigma$ are the irreducible characters of $P_9$ whose corresponding trees are, respectively,
	\[ S=(\substack{0\\\bullet}\mid\substack{1\\\bullet}\mid\substack{1\\\bullet};3)\ \text{and}\ \ S\cdot\tau=(\substack{0\\\bullet}\mid\substack{2\\\bullet}\mid\substack{2\\\bullet};3). \]
	\newqed
\end{remark}

We conclude this section by extending Definitions~\ref{def:admissible} and~\ref{def:trees-stats} from powers of $p$ to arbitrary natural numbers. 

\begin{definition}\label{def:trees-arbitrary}
	Let $n\in\N$ and let $n=\sum_{i=1}^t p^{n_i}$ be the $p$-adic expansion of $n$. Let $\theta\in\Irr(P_n)$, so $\theta=\theta_1\times\cdots\times\theta_t$ for some uniquely determined $\theta_i\in\Irr(P_{p^{n_i}})$ for each $i\in[1,t]$. We let 
		\[ \cT(\theta) := \cT(\theta_1)\times\cT(\theta_2)\times \cdots\times\cT(\theta_t). \]
	By $T\in\cT(\theta)$, we mean a $t$-tuple of trees $T=(T_1,\dotsc,T_t)$ such that $T_i\in\cT(\theta_i)$ for each $i$. By the set of vertices of $T$, we mean the disjoint union of the vertices of $T_1,\dotsc,T_t$. Moreover, we define
	\[ \eta(\theta):=\sum_{i=1}^t\eta(\theta_i),\quad \val(\theta)=\max_{i\in[1,t]} \val(\theta_i) \quad \text{and} \quad \gamma_y(\theta):=\sum_{i=1}^t\gamma_y(\theta_i)\ \forall\ y\in\N_0. \]
\end{definition}

A first piece of algebraic information on the irreducible characters of $P_n$ that can be easily read off their corresponding trees is the character degree. 

\begin{proposition}\label{prop:arrows}
	Let $n\in\N$, let $\theta\in\Irr(P_n)$ and $T\in\cT(\theta)$. Let $a=|\{x\in T\mid\ell(x)=p\}|$ denote the number of vertices in $T$ labelled by $p$.
	Then $\theta(1)=p^a$.
\end{proposition}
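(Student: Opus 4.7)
The plan is to argue by induction on $k$ for $\theta\in\Irr(P_{p^k})$, and then deduce the general case by multiplicativity using the decomposition $P_n\cong P_{p^{n_1}}\times\cdots\times P_{p^{n_t}}$ from Definition~\ref{def:trees-arbitrary}. The base case $k=1$ is immediate: any $\theta=\phi_\varepsilon$ has $\varepsilon\in[0,p-1]$, so a single-vertex tree $T\in\cT(\theta)$ contains $a=0$ vertices labelled $p$, and $\theta(1)=1=p^0$.

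For the inductive step with $k\ge 2$, the plan is to split along the dichotomy from Section~\ref{sec:trees} for the irreducible characters of $P_{p^k}=P_{p^{k-1}}\wr P_p$. In the first case, $\theta=\cX(\phi;\phi_\varepsilon)$ with $\varepsilon\in[0,p-1]$, and by the standard formula for the degree of $\cX(\phi;\phi_\varepsilon)$ we have $\theta(1)=\phi(1)^p$. A tree $T\in\cT(\theta)$ has the form $(T_1\mid\cdots\mid T_p;\varepsilon)$ with every $T_i$ in the same $\Pi_{k-2}$-orbit $\cT(\phi)$; since the action of $\Pi_{k-2}$ preserves vertex labels as multisets, every $T_i$ carries the same number $a'$ of $p$-labels. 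The root contributes no such label (as $\varepsilon\ne p$), so $a=pa'$. By the inductive hypothesis $\phi(1)=p^{a'}$, and therefore $\theta(1)=p^{pa'}=p^a$, as required.

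In the second case, $\theta=(\theta_1\times\cdots\times\theta_p)\up^{P_{p^k}}$ for mixed $\theta_i\in\Irr(P_{p^{k-1}})$. Because the $\theta_i$ are not all equal and $C_p\cong P_{p^k}/(P_{p^{k-1}})^{\times p}$ has prime order, the character $\theta_1\times\cdots\times\theta_p$ of the base group has trivial inertia in $C_p$; hence the induction is of index $p$ and so $\theta(1)=p\cdot\theta_1(1)\cdots\theta_p(1)$. A tree $T\in\cT(\theta)$ has the form $(T_1\mid\cdots\mid T_p;p)$ with $T_i\in\cT(\theta_i)$. Writing $a_i$ for the number of vertices of $T_i$ labelled $p$, we have $a=1+\sum_i a_i$, and the inductive hypothesis gives $\theta_i(1)=p^{a_i}$. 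Therefore $\theta(1)=p\cdot p^{a_1}\cdots p^{a_p}=p^{a}$.

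Finally, for arbitrary $n\in\N$ with $p$-adic expansion $n=\sum_{i=1}^t p^{n_i}$, every $\theta\in\Irr(P_n)$ factors as $\theta=\theta_1\times\cdots\times\theta_t$ with $\theta_i\in\Irr(P_{p^{n_i}})$, and a tree $T=(T_1,\dotsc,T_t)\in\cT(\theta)$ satisfies $a=\sum_i a_i$ where $a_i$ counts the vertices of $T_i$ labelled $p$. The power-of-$p$ case just proved yields $\theta_i(1)=p^{a_i}$, so $\theta(1)=\prod_i\theta_i(1)=p^a$. There is no essential obstacle in this argument: once the two-case recursion defining $\cT(\theta)$ is lined up against the corresponding degree formulas for wreath product characters, the verification reduces to bookkeeping of the exponent of $p$.
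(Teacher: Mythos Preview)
Your proof is correct and follows essentially the same approach as the paper: induction on $k$ for the prime power case, splitting according to whether $\theta=\cX(\phi;\phi_\varepsilon)$ or $\theta=(\theta_1\times\cdots\times\theta_p)\up^{P_{p^k}}$, using the degree formulas $\phi(1)^p$ and $p\cdot\prod_i\theta_i(1)$ respectively, and then handling arbitrary $n$ by multiplicativity over the direct factors of $P_n$.
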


\begin{proof}
	We first consider the case where $n=p^k$ for some $k\in\N$, and proceed by induction on $k$. If $k=1$ the assertion is clear because $a=0$ and $\theta(1)=1$. Assume now that $k\ge 2$ and suppose that $T=(T_1\mid \dotsc\mid\ldots|T_p;\varepsilon)$ for some $T_1,\ldots, T_p\in\cF_{k-1}$ and $\varepsilon\in[0,p]$.
	
	If $\varepsilon\neq p$, then $[T_1]=[T_2]=\cdots=[T_p]$. Let $\alpha$ be the number of vertices labelled by $p$ in $T_1$ and let $\zeta\in\Irr(P_{k-1})$ be the character corresponding to $T_1$, i.e.~$\zeta=\theta(T_1)=\Psi_{k-1}([T_1])$.
	Given any $i\in [1,p]$, it is easy to see that $\alpha$ is the number of vertices labelled by $p$ in $T_i$ because $[T_1]=[T_i]$. It follows that
	 $a=p\cdot\alpha$, and by inductive hypothesis we have that $\zeta(1)=p^{\alpha}$. Since $\theta=\cX(\zeta;\varphi_{\varepsilon})$, we conclude that $\theta(1)=\zeta(1)^p=p^{p\cdot\alpha}=p^a$, as desired. 
	
	If $\varepsilon=p$, then let $\alpha_i$ be the number of vertices of $T_i$ that are labelled by $p$ and let $\zeta_i:=\theta(T_i)\in\Irr(P_{p^{k-1}})$ for each $i\in[1,p]$. Then $\theta=(\zeta_1\times\cdots\times\zeta_p)\up^{P_{p^k}}$, and $a=\alpha_1+\cdots+\alpha_p+1$. We conclude using the inductive hypothesis that $\theta(1) = |P_{p^k}:(P_{p^{k-1}})^{\times p}| \cdot\zeta_1(1)\cdot\zeta_2(1)\cdots\zeta_p(1)=p\cdot p^{\alpha_1}\cdot p^{\alpha_2}\cdots p^{\alpha_p}=p^a$.
	
	The assertion for general $n\in\N$ then follows immediately.
\end{proof}

\bigskip

\section{The Structure of the Set $\Omega(\theta)$: Prime Power Case}\label{sec:pk}

In this section, we fix a prime number $p\ge 5$, and let $n=p^k$ for some $k\in\N$.
We investigate the structure of the sets $\Omega(\theta)$ for every $\theta\in\Irr(P_{p^k})$. 
In Section~\ref{sec:arbitrary-n}, we will then extend our results to any arbitrary $n\in\N$. 
From now on we denote by $\triv_n$ (instead of $\triv_{P_n}$) the trivial character of the Sylow $p$-subgroup $P_n$.

We start by recalling a number of useful results that we will use throughout this paper. We include their statements below in our present notation for the convenience of the reader. 
The first one was obtained in \cite[Theorem A]{GL1}, and it can be considered as the starting point of this entire line of research. 
\begin{theorem}\label{thm:GL1}
	Let $p\ge 5$ be a prime, $n\in\N$ and $P_n\in\Syl_p(S_n)$. Then
	\[ \Omega(\triv_n) = \begin{cases}
		\puncp{p^k} & \text{if }n=p^k\text{ for some }k\in\N,\\
		\cP(n) & \text{otherwise}.
	\end{cases} \]
\end{theorem}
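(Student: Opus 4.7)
My plan is to handle the prime-power case $n=p^k$ by induction on $k$, and then reduce the non-prime-power case to it via the Young subgroup decomposition.

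For the prime-power case, the easy containment $\Omega(\triv_{p^k})\subseteq\puncp{p^k}$ follows from an orbit-counting argument: the permutation character $\chi^{(p^k)}+\chi^{(p^k-1,1)}$ of $S_{p^k}$ on $\{1,\dotsc,p^k\}$ restricts to the permutation character of $P_{p^k}$ on the same set, which (by transitivity) contains $\triv_{p^k}$ with multiplicity exactly one. Since this is already accounted for by $\chi^{(p^k)}\down_{P_{p^k}}=\triv_{p^k}$, we obtain $(p^k-1,1)\notin\Omega(\triv_{p^k})$, and Lemma~\ref{lem:conj-restr} also excludes $(2,1^{p^k-2})$. For the reverse containment $\puncp{p^k}\subseteq\Omega(\triv_{p^k})$, I would proceed by induction on $k$. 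The base case $k=1$ is a direct computation on $S_p$ with $P_p=C_p$ using e.g.~the Murnaghan--Nakayama rule. For $k\ge 2$ and $\lambda\in\puncp{p^k}$, I would use the identification $\triv_{P_{p^k}}=\cX(\triv_{P_{p^{k-1}}};\phi_0)$, the chain $P_{p^k}\le S_{p^{k-1}}\wr P_p\le S_{p^k}$, and the wreath-product induction formula $\triv_{P_{p^k}}\up^{S_{p^{k-1}}\wr P_p}=\cX(\pi_{k-1};\triv_{P_p})$ from \cite[Lemma 3.2]{CT}, where $\pi_{k-1}:=\triv_{P_{p^{k-1}}}\up^{S_{p^{k-1}}}$. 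By the inductive hypothesis $\pi_{k-1}=\sum_{\mu\in\puncp{p^{k-1}}}a_\mu\chi^\mu$ with $a_\mu\ge 1$; decomposing $\cX(\pi_{k-1};\triv_{P_p})$ via Lemmas~\ref{lem:SL2.18} and~\ref{lem:SL2.19} together with the combinatorial inclusion $\puncp{p^k}\subseteq(\puncp{p^{k-1}})^{\star p}=\cP(p^k)$ (from iterated applications of Proposition~\ref{prop:smooth}(ii), (iii), valid since $p\ge 5$) then yields the claim.

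For the non-prime-power case, write the $p$-adic expansion $n=\sum_{i=1}^{t}p^{n_i}$ with $t\ge 2$, so $P_n\cong P_{p^{n_1}}\times\cdots\times P_{p^{n_t}}$ and $\triv_{P_n}$ is the exterior product of the factor trivials. Via the Young subgroup $Y_\nu$ for $\nu=(p^{n_1},\dotsc,p^{n_t})$ and the Littlewood--Richardson rule,
\[ [\chi^\lambda\down_{P_n},\triv_{P_n}]=\sum_{\vec{\mu}}\LR(\lambda;\mu^1,\dotsc,\mu^t)\prod_{i=1}^{t}[\chi^{\mu^i}\down_{P_{p^{n_i}}},\triv_{P_{p^{n_i}}}], \]
whence $\Omega(\triv_n)=\Omega(\triv_{p^{n_1}})\star\cdots\star\Omega(\triv_{p^{n_t}})$. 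Applying the prime-power case, this equals $\puncp{p^{n_1}}\star\cdots\star\puncp{p^{n_t}}$ (with $\puncp{1}:=\{(1)\}$ where $n_i=0$), and iterated use of Proposition~\ref{prop:smooth}(ii), (iii) reduces it to $\cP(n)$, as $p\ge 5$ ensures $p^{n_i}\ge 5$ whenever $n_i\ge 1$.

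The main obstacle is the inductive step in the prime-power case. Although Clifford theory ensures that the $p$ constituents $\cX(\triv_{P_{p^{k-1}}};\phi_\varepsilon)$ for $\varepsilon\in[0,p-1]$ together absorb the full base-group multiplicity $[\chi^\lambda\down_{P_{p^{k-1}}^{\times p}},\triv^{\times p}]$, isolating a positive contribution in the $\varepsilon=0$ component requires delicate use of Lemmas~\ref{lem:SL2.18} and~\ref{lem:SL2.19}. This is especially subtle for those $\lambda$ lying near the thin boundary of $\puncp{p^k}$, where the Littlewood--Richardson decompositions into $p$-tuples $(\mu^1,\dotsc,\mu^p)$ are necessarily mixed and the wreath-product bookkeeping does not reduce to the diagonal case $\mu^1=\cdots=\mu^p$ addressed cleanly by Lemma~\ref{lem:SL2.19}.
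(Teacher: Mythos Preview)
The paper does not prove this theorem: it is stated as a citation of \cite[Theorem A]{GL1} and used as a black box throughout. So there is no ``paper's own proof'' to compare against here.

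That said, your outline is structurally reasonable and the non-prime-power reduction is correct as written (this is exactly Lemma~\ref{lem:omega-general-n} combined with Proposition~\ref{prop:smooth}). The easy containment $\Omega(\triv_{p^k})\subseteq\puncp{p^k}$ via the natural permutation character is also fine.

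However, the gap you yourself flag in the inductive step is real and not merely a matter of bookkeeping. Your combinatorial input $(\puncp{p^{k-1}})^{\star p}=\cP(p^k)$ only guarantees that for each $\lambda\in\puncp{p^k}$ there exist $\mu^1,\dotsc,\mu^p\in\puncp{p^{k-1}}$ with $\LR(\lambda;\mu^1,\dotsc,\mu^p)>0$. When these can be chosen \emph{mixed}, Frobenius reciprocity gives $[(\chi^{\mu^1}\times\cdots\times\chi^{\mu^p})\big\uparrow^W,\cX(\pi_{k-1};\triv_{P_p})]=\prod_i a_{\mu^i}\ge 1$ and you are done. The problematic case is when the only witnesses have $\mu^1=\cdots=\mu^p=\mu$: then you need $\cX(\chi^\mu;\triv_{P_p})\mid\chi^\lambda\big\downarrow_W$, but Clifford theory only gives you $\cX(\chi^\mu;\phi_\varepsilon)$ for \emph{some} $\varepsilon$, and if $a_\mu=1$ then Lemma~\ref{lem:SL2.19} forces $\varepsilon=0$ on the other side, leaving you no room to match. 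Resolving this for the boundary partitions (e.g.\ $\lambda=(p^k)$, $(p^k-2,2)$, $(p^k-2,1,1)$ and their conjugates) is exactly where the substance of the original proof in \cite{GL1} lies; it cannot be extracted from Lemmas~\ref{lem:SL2.18} and~\ref{lem:SL2.19} and the $\star$-product machinery alone. You would need either a direct multiplicity computation for these few $\lambda$, or a separate argument showing that mixed witnesses always exist when $\lambda\ne(p^k),(1^{p^k})$.
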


Theorem~\ref{thm:GL1} describes $\Omega(\theta)$ for the specific case where $\theta$ is the trivial character of $P_n$. The set $\Omega(\triv_n)$ was also determined for $p=3$ in \cite{GL1}, although this will not be needed for the present paper as we will only consider $p\ge 5$ here.

The next step in this investigation for $p\ge 5$ was done in \cite{GL2}, where the authors considered $\Omega(\theta)$ for every linear character $\theta$ of $P_n$ (see \cite{L} for the case of $p=3$). In particular, \cite[Lemma 4.3]{GL2} deals with a specific family of linear characters. 

\begin{lemma}\label{lem:GL2-4.3}
	Let $p\ge 5$ be prime and $k\in\N_{\geq 2}$. Let $\theta=\cX(\triv_{p^{k-1}};\phi_\varepsilon)$ where $\varepsilon\in[1, p-1]$. Then $\Omega(\theta)=\cB_{p^{k}}(p^{k}-1)$ and $[\chi^{(p^{k}-1,1)}\down_{P_{p^{k}}},\theta]=1$.
\end{lemma}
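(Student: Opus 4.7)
The strategy is to derive a master formula relating $\theta$ to two simpler induced characters. Setting $Q:=(P_{p^{k-1}})^{\times p}\trianglelefteq P_{p^k}$, observe that $\theta=\cX(\triv_{p^{k-1}};\phi_\varepsilon)$ is just the inflation of the non-trivial linear character $\phi_\varepsilon$ along the quotient $P_{p^k}\twoheadrightarrow P_{p^k}/Q\cong P_p$; in particular $\theta\down_Q=\triv_Q$. Basic Clifford theory gives
$$\triv_Q\up^{P_{p^k}}=\sum_{\varepsilon=0}^{p-1}\cX(\triv_{p^{k-1}};\phi_\varepsilon)=\triv_{p^k}+\sum_{\varepsilon=1}^{p-1}\cX(\triv_{p^{k-1}};\phi_\varepsilon),$$
whose $p-1$ non-trivial summands are pairwise $\sim_{0,p}$-equivalent to $\theta$. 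Applying Theorem~\ref{thm:equiv-classes} after inducing further to $S_{p^k}$ collapses them all, producing
$$\triv_Q\up^{S_{p^k}}=\triv_{p^k}\up^{S_{p^k}}+(p-1)\,\theta\up^{S_{p^k}}.$$
Pairing with $\chi^\lambda$ via Frobenius reciprocity then yields my master identity
$$(p-1)[\chi^\lambda\down_{P_{p^k}},\theta]=N(\lambda)-T(\lambda), \qquad (\star)$$
where $N(\lambda):=[\chi^\lambda\down_Q,\triv_Q]$ and $T(\lambda):=[\chi^\lambda\down_{P_{p^k}},\triv_{p^k}]$.

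Since $\cB_{p^k}(p^k-1)=\cP(p^k)\setminus\{(p^k),(1^{p^k})\}$, the inclusion $\Omega(\theta)\subseteq\cB_{p^k}(p^k-1)$ is immediate from $(\star)$: for both exceptional partitions, $\chi^\lambda\down_Q=\triv_Q$ (using Lemma~\ref{lem:conj-restr} for the sign, since $p$ is odd), so $N(\lambda)=T(\lambda)=1$. Next, to obtain the precise multiplicity $[\chi^{(p^k-1,1)}\down_{P_{p^k}},\theta]=1$, I use $T((p^k-1,1))=0$ from Theorem~\ref{thm:GL1} and compute $N((p^k-1,1))$ by restricting in stages through the Young subgroup $(S_{p^{k-1}})^{\times p}$ and applying the Littlewood--Richardson rule. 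Lemmas~\ref{lem:LR-first-part} and~\ref{lem:LR-sum} force the contributing tuples $(\mu^1,\ldots,\mu^p)\in\cP(p^{k-1})^p$ to be either $((p^{k-1}),\ldots,(p^{k-1}))$, whose LR-coefficient is the Kostka number $K_{(p^k-1,1),(p^{k-1})^p}=p-1$ (a routine SSYT count: the single cell in row 2 may be labelled by any value in $\{2,\ldots,p\}$), or one of $p$ cyclic shifts of $((p^{k-1}-1,1),(p^{k-1})^{p-1})$; the latter vanish under the recursive use of Theorem~\ref{thm:GL1}, since $(p^{k-1}-1,1)\notin\puncp{p^{k-1}}=\Omega(\triv_{p^{k-1}})$. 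Hence $N((p^k-1,1))=p-1$, and $(\star)$ delivers the multiplicity $1$; the conjugate partition $(2,1^{p^k-2})$ is handled by Lemma~\ref{lem:conj-restr}.

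For the reverse inclusion $\cB_{p^k}(p^k-1)\subseteq\Omega(\theta)$, by $(\star)$ it suffices to establish the strict inequality $N(\lambda)>T(\lambda)$ for every $\lambda\in\cP(p^k)\setminus\{(p^k),(1^{p^k}),(p^k-1,1),(2,1^{p^k-2})\}$: divisibility of the right-hand side of $(\star)$ by $p-1$ will then automatically upgrade strict positivity to the required bound $[\chi^\lambda\down_{P_{p^k}},\theta]\ge 1$. My approach is to expand
$$N(\lambda)=\sum_{(\mu^1,\dotsc,\mu^p)\in\puncp{p^{k-1}}^{\times p}}\LR(\lambda;\mu^1,\dotsc,\mu^p)\prod_{i=1}^p[\chi^{\mu^i}\down_{P_{p^{k-1}}},\triv_{p^{k-1}}],$$
where every factor $[\chi^{\mu^i}\down,\triv_{p^{k-1}}]$ is positive by Theorem~\ref{thm:GL1}, and to note that iterating Proposition~\ref{prop:smooth}(iii) (applicable since $p^{k-1}\ge 5$) yields $\puncp{p^{k-1}}^{\star p}=\cP(p^k)$, guaranteeing at least one nonzero summand for every $\lambda$. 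The main obstacle is controlling cancellation against $T(\lambda)$: the $\triv_{p^k}$-isotypic piece of $\chi^\lambda\down_{P_{p^k}}$ already contributes $T(\lambda)$ to $N(\lambda)$ upon restriction to $Q$, so I must exhibit extra contributions from other constituents. I plan to do this by induction on $k$, applying the analogue of $(\star)$ at smaller powers of $p$ in tandem with Proposition~\ref{prop:mixed} to secure genuinely \emph{mixed} Littlewood--Richardson decompositions of $\lambda$ beyond the degenerate one $((p^{k-1}),\ldots,(p^{k-1}))$ that is the only source of the $T(\lambda)$-contribution.
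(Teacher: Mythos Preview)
The paper does not actually prove this lemma; it is quoted verbatim as \cite[Lemma 4.3]{GL2}. So there is no ``paper's own proof'' to compare against, and your attempt must stand on its own.

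Your master identity $(\star)$ is correct and elegant, and it cleanly handles the inclusion $\Omega(\theta)\subseteq\cB_{p^k}(p^k-1)$ as well as the exact multiplicity at $(p^k-1,1)$. The Kostka computation $K_{(p^k-1,1),(p^{k-1})^p}=p-1$ and the elimination of the tuples containing $(p^{k-1}-1,1)$ via Theorem~\ref{thm:GL1} are both fine.

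The reverse inclusion, however, is not proved: you write ``I plan to do this by induction on $k$'' and sketch an intention, but no argument is given. There is also a conceptual slip in your final sentence. You describe the tuple $((p^{k-1}),\dotsc,(p^{k-1}))$ as ``the only source of the $T(\lambda)$-contribution'', but $T(\lambda)=[\chi^\lambda\down_{P_{p^k}},\triv_{p^k}]$ is not a term in your expansion of $N(\lambda)$ at all; it is a separate quantity, and there is no reason it should coincide with (or be bounded by) the Kostka number $K_{\lambda,(p^{k-1})^p}$ coming from that degenerate tuple. So the cancellation you need to control is not the one you describe, and the inductive plan you outline does not obviously produce the strict inequality $N(\lambda)>T(\lambda)$. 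A workable route within the paper's toolkit is to abandon $(\star)$ for this direction and instead use Lemma~\ref{lem:mixed}: it suffices to exhibit, for each $\lambda\in\cB_{p^k}(p^k-1)$, mixed $\lambda^1,\dotsc,\lambda^p\in\puncp{p^{k-1}}=\Omega(\triv_{p^{k-1}})$ with $\LR(\lambda;\lambda^1,\dotsc,\lambda^p)>0$; but even that requires a careful case analysis near the boundary $\lambda_1\in\{p^k-1,p^k-2\}$ that you have not supplied.
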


The next two results concern character restrictions and inductions involving wreath products of symmetric groups.
We let $\Irr_{p'}(G)$ denote the set of irreducible characters of $G$ whose degree is coprime to $p$, and we recall that $\Irr_{p'}(S_{p^k})$ consists precisely of those irreducible characters labelled by hook partitions of $p^k$. 

\begin{theorem}[{\cite[Corollary 9.1]{PW}}]\label{thm:PW9.1}
	Let $m,n\in\N$, $\mu=(\mu_1,\ldots,\mu_k)\in \cP(m)$ and $\nu=(\nu_1,\ldots,\nu_l)\in \cP(n)$. Then the lexicographically greatest $\lambda\in \cP(mn)$ such that $[\chi^\lambda, \cX(\chi^\mu;\chi^\nu)\up_{S_m\wr S_n}^{S_{mn}}] \ne 0$ is
	\[ \lambda=(n\mu_1,\ldots,n\mu_{k-1},n(\mu_k-1)+\nu_1,\nu_2,\ldots,\nu_l). \]
	Moreover, $[ \chi^\lambda, \cX(\chi^\mu;\chi^\nu)\up_{S_m\wr S_n}^{S_{mn}}] =1$.
\end{theorem}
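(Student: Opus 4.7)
The plan is to combine Frobenius reciprocity with Clifford theory on the base group of $S_m\wr S_n$, then identify $\lambda^*$ via a careful Littlewood--Richardson analysis. For the upper bound, I would apply Frobenius reciprocity to write $[\chi^\lambda, \cX(\chi^\mu;\chi^\nu)\up^{S_{mn}}] = [\chi^\lambda\down_{S_m\wr S_n}, \cX(\chi^\mu;\chi^\nu)]$, then restrict further to the base group $U := (S_m)^{\times n}$. Since $\chi^\nu$ is inflated from the quotient $S_n$, we have $\cX(\chi^\mu;\chi^\nu)\down_U = \chi^\nu(1)\cdot(\chi^\mu)^{\times n}$, and hence any constituent $\chi^\lambda$ of the induced character must satisfy $\LR(\lambda;\mu,\ldots,\mu)\neq 0$ (with $n$ copies of $\mu$). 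Applying Lemma~\ref{lem:LR-first-part} iteratively to the leading rows forces $\lambda_i\le n\mu_i$ for $i\in[1,k-1]$, giving the desired upper bound. Moreover, by Clifford theory applied to $U\lhd S_m\wr S_n$, the multiplicity equals $[\chi^\nu, M]_{S_n}$, where $M := \operatorname{Hom}_U((\chi^\mu)^{\times n}, \chi^\lambda\down_U)$ is the natural $S_n$-module of dimension $\LR(\lambda;\mu,\ldots,\mu)$.

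For existence of $\lambda^*$ with multiplicity $1$, the key observation is that the skew shape $\lambda^*/(n\mu)^-$, where $(n\mu)^- := (n\mu_1,\ldots,n\mu_{k-1},n(\mu_k-1))$, is precisely the Young diagram of $\nu$ placed into position, admitting the unique Littlewood--Richardson filling with entries $i$ throughout row $i$. Combined with Lemma~\ref{lem:iterated-LR}, this allows one to exhibit an $S_n$-equivariant isomorphism $M\cong\chi^\nu$ at $\lambda=\lambda^*$, using a basis of $M$ given by LR tableaux of shape $\lambda^*$ with content $\mu^{\times n}$ that factor through the skew decomposition. Alternatively, one may carry out a simultaneous induction on $n$ across all $\nu\vdash n$, exploiting the subgroup chain $(S_m\wr S_{n-1})\times S_m\le S_m\wr S_n$ together with the Pieri-type decomposition $(\cX(\chi^\mu;\chi^{\nu^-})\times\chi^\mu)\up^{S_m\wr S_n} = \sum_{\eta\supset\nu^-}\cX(\chi^\mu;\chi^\eta)$, singling out the contribution from $\eta=\nu$ by comparison with the inductive statement for smaller $|\nu|$.

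The main obstacle is establishing lex-maximality, i.e.\ ruling out constituents $\chi^\lambda$ with $\lambda$ strictly lex-greater than $\lambda^*$. The row-wise bounds $\lambda_i\le n\mu_i$ for $i<k$ combined with lex order force any such $\lambda$ to satisfy $\lambda_i=n\mu_i$ for $i<k$ and $\lambda_k > n(\mu_k-1)+\nu_1$. A careful analysis of how the $S_n$-module structure on $M$ changes as $\lambda_k$ increases---tracking the additional LR constraints imposed by the content $\mu^{\times n}$---shows that the irreducible constituents of $M$ are then labelled by $\nu'$ strictly dominating $\nu$, so $\chi^\nu$ does not appear and the multiplicity is forced to $0$. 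This is the delicate combinatorial heart of the argument, essentially amounting to a Clifford-theoretic tracking of how the $S_n$-module structure on the LR-tableaux basis transforms under the residual content constraints.
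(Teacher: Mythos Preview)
The paper does not prove this theorem; it is quoted verbatim from \cite[Corollary~9.1]{PW} (Paget--Wildon) as a black-box tool, with no argument given. There is therefore no proof in the present paper to compare your proposal against.

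That said, your outline is a plausible reconstruction of how such a result is proved. The reduction via Frobenius reciprocity and Clifford theory to the $S_n$-module structure on the multiplicity space $M=\operatorname{Hom}_U((\chi^\mu)^{\times n},\chi^\lambda\!\down_U)$ is standard and correct, and the row bounds $\lambda_i\le n\mu_i$ do follow from iterated Littlewood--Richardson (though not literally from Lemma~\ref{lem:LR-first-part} alone, which only controls the first part; you need the stronger fact $\lambda\subseteq n\mu$). The genuine content, as you yourself flag, is the lex-maximality step: your assertion that for $\lambda$ lex-greater than $\lambda^*$ the constituents of $M$ are labelled by partitions strictly dominating $\nu$ is exactly the heart of the matter, and you have not actually argued it---you have described what needs to happen rather than why it does. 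The inductive alternative via $(S_m\wr S_{n-1})\times S_m$ is closer in spirit to how \cite{PW} proceeds (their argument goes through a careful analysis of semistandard tableaux of maximal weight in plethysms), but again the control over which summand survives at the extremal $\lambda$ is precisely where the work lies, and your sketch does not supply it.
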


\begin{theorem}[{\cite[Theorem 3.5]{GTT}}]\label{thm:GTT3.5}
	Let $p$ be an odd prime and $k\in\N$. Let $K:=S_{p^{k-1}}\wr S_p\le S_{p^k}$ and $\chi\in\Irr_{p'}(S_{p^k})$. Then $\chi\down_K$ has a unique irreducible constituent $\chi^*$ lying in $\Irr_{p'}(K)$, and $[\chi\down_K,\chi^*]=1$. Moreover, 
	\[ \Irr_{p'}(S_{p^k})\to\Irr_{p'}(K),\qquad \chi\mapsto\chi^* \]
	is a bijection.	
	More precisely, if $\chi=\chi^\lambda$ where 
	$\lambda=(p^k-(mp+x),1^{mp+x})$ with $x\in\{0,1,\ldots,p-1\}$, then $\chi^*\in\{\cX(\mu;\nu_1), \cX(\mu;\nu_2) \}$ where
	\[ \mu=(p^{k-1}-m,1^m),\quad \nu_1=(p-x,1^x)\quad\text{and}\quad\nu_2=\nu_1'. \]
\end{theorem}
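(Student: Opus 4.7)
The plan is to establish a bijection between two $p^k$-element sets via restriction, and identify it explicitly.

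First, I would enumerate both sides. By the hook length formula, $\Irr_{p'}(S_{p^k})$ consists exactly of the $p^k$ hook characters $\chi^{(p^k-j,1^j)}$ for $j\in[0,p^k-1]$. For $K=S_{p^{k-1}}\wr S_p$, Clifford theory applied to the base group $L=(S_{p^{k-1}})^{\times p}$ shows that $\Irr_{p'}(K)=\{\cX(\chi^\mu;\chi^\nu):\mu\vdash p^{k-1}\text{ a hook},\ \nu\vdash p\text{ a hook}\}$. Indeed, an irreducible character of $K$ induced from a mixed tuple $(\phi_1,\dotsc,\phi_p)\in\Irr(S_{p^{k-1}})^p$ has degree divisible by $[S_p:\operatorname{Stab}]$, and since the stabilizer of a mixed tuple in $S_p$ cannot contain any $p$-cycle, $p$ divides this index. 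This gives $p^{k-1}\cdot p=p^k$ characters on each side. Moreover, since $|S_{p^k}|_p=|K|_p=p^{(p^k-1)/(p-1)}$ (by Legendre's formula), we have $p\nmid[S_{p^k}:K]$, which guarantees that each $\chi^\lambda\down_K$ contains at least one $p'$-constituent.

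The main step is uniqueness and multiplicity one. I would pass further to $L$ and exploit the Littlewood--Richardson rule: since $\lambda$ is a hook, each constituent $\chi^{\mu^1}\times\cdots\times\chi^{\mu^p}$ of $\chi^\lambda\down_L$ has each $\mu^i$ a hook (as subdiagrams of hooks are hooks). Using the formula $\cX(\chi^\mu;\chi^\nu)\down_L=\chi^\nu(1)\cdot(\chi^\mu)^{\times p}$ (since $\chi^\nu$ inflated from $S_p$ restricts trivially to $L$), and noting that no other irreducible of $K$ restricts to contain $(\chi^\mu)^{\times p}$, one derives the identity
\[ \LR(\lambda;\underbrace{\mu,\dotsc,\mu}_{p}) \;=\; \sum_{\nu\vdash p}\chi^\nu(1)\cdot[\chi^\lambda\down_K,\cX(\chi^\mu;\chi^\nu)] \]
for every hook $\mu$. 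A direct hook analysis (via iterated Pieri-style tableau counts, showing that gluing $p$ copies of a hook into another hook is highly restrictive) shows this coefficient equals $\chi^{\nu_1}(1)=\binom{p-1}{x}$ when $\mu=(p^{k-1}-m,1^m)$ with $m=\lfloor j/p\rfloor$ and $x=j-mp$, and vanishes for all other hook $\mu$. Combining with the existence of a $p'$-constituent forces $[\chi^\lambda\down_K,\cX(\chi^\mu;\chi^{\nu_1})]+[\chi^\lambda\down_K,\cX(\chi^\mu;\chi^{\nu_2})]=1$ with all other multiplicities zero; thus exactly one of these two $p'$-characters occurs, with multiplicity one. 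Bijectivity of the resulting map $\chi\mapsto\chi^*$ then follows by matching the cardinalities.

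The main obstacle will be the hook LR computation: Theorem~\ref{thm:PW9.1} only controls the lex-greatest constituent of $\cX(\chi^\mu;\chi^\nu)\up^{S_{p^k}}$, so proving both $\LR(\lambda;\mu,\dotsc,\mu)=\binom{p-1}{x}$ and its vanishing for other hook $\mu$ requires a careful direct combinatorial enumeration. Moreover, pinning down which of the two conjugate hooks $\nu_1,\nu_2=\nu_1'$ actually arises (since Lemma~\ref{lem:conj-restr} shows both are plausible at the level of Sylow restriction) requires tracking an additional sign or character-theoretic datum in the canonical extension $\widetilde{(\chi^\mu)^{\times p}}$, reflecting the inherent ambiguity already present in the statement.
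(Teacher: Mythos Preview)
The paper does not prove this statement; it is quoted verbatim from \cite[Theorem~3.5]{GTT} and used as a black box, so there is no proof in the paper to compare against. I can therefore only assess your outline on its own merits.

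Your setup is correct: both $\Irr_{p'}(S_{p^k})$ and $\Irr_{p'}(K)$ have $p^k$ elements, $p\nmid[S_{p^k}:K]$, and the identity
\[
\LR(\lambda;\underbrace{\mu,\dotsc,\mu}_{p})=\sum_{\nu\vdash p}\chi^\nu(1)\,[\chi^\lambda\down_K,\cX(\chi^\mu;\chi^\nu)]
\]
is valid. The gap is the sentence ``Combining with the existence of a $p'$-constituent forces $[\chi^\lambda\down_K,\cX(\chi^\mu;\chi^{\nu_1})]+[\chi^\lambda\down_K,\cX(\chi^\mu;\chi^{\nu_2})]=1$ with all other multiplicities zero.'' This does not follow: the single scalar equation $\sum_{\nu}\chi^\nu(1)\,c_\nu=\binom{p-1}{x}$ in non-negative integers $c_\nu$, together with $c_\nu>0$ for some hook $\nu$, has many solutions. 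Already at $p=3$, $\lambda=(8,1)$, $\mu=(3)$, $x=1$, the constraint reads $c_{(3)}+2c_{(2,1)}+c_{(1^3)}=2$, satisfied by $(c_{(3)},c_{(2,1)},c_{(1^3)})=(0,1,0)$ but equally by $(2,0,0)$, $(1,0,1)$ and $(0,0,2)$. For $p=5$, $x=2$ one has $\binom{4}{2}=6$, and spurious solutions involving the non-hook partitions $(3,2),(2,2,1)$ of degree $5$ appear as well (e.g.\ $c_{(5)}=1$, $c_{(3,2)}=1$). Thus knowing $\LR(\lambda;\mu^{(p)})$ alone cannot isolate the multiplicities $[\chi^\lambda\down_K,\cX(\chi^\mu;\chi^\nu)]$ individually.

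To close this, you need genuinely more information than the restriction to the base group $L$ provides: either a direct computation of each $[\chi^\lambda\down_K,\cX(\chi^\mu;\chi^\nu)]$ (for instance via a symmetric-function or plethysm identity separating the different $\nu$), or an independent argument on the $K$-side showing that $\cX(\chi^\mu;\chi^\nu)\up^{S_{p^k}}$ contains at most one hook character. Your last paragraph flags the LR enumeration and the $\nu_1$ versus $\nu_2$ ambiguity as the main obstacles, but the actual missing step is this deduction of individual multiplicities from their weighted sum.
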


It is also convenient to state the following easy consequence of Mackey's Theorem (see \cite[Problem 5.2]{IBook}),
as it will be used frequently in some of our proofs. 

\begin{lemma}\label{lem:mackey}
	Let $G$ be a finite group. Suppose $H,K\le G$ satisfy $KH=G$. Let $\phi\in\Char(H)$. Then 
	\[ \phi\up_H^G\down_K = \phi\down_{H\cap K}^H\up^K. \]
\end{lemma}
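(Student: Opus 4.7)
The plan is to derive this as a direct specialisation of the general Mackey decomposition formula. Recall that for any subgroups $H, K \le G$ and any $\phi \in \Char(H)$, the Mackey decomposition theorem states that
\[ \phi\up_H^G\down_K = \sum_{g} (\phi^g)\down_{K \cap gHg^{-1}}^{gHg^{-1}}\up^K, \]
where $g$ ranges over a set of representatives of the $(K,H)$-double cosets in $G$, and $\phi^g$ denotes the conjugate character on $gHg^{-1}$.

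The key observation is that the hypothesis $KH = G$ forces this sum to collapse to a single term. Indeed, the $(K,H)$-double cosets partition $G$, so
\[ G = \bigsqcup_{g} KgH, \]
and since $G = KH = K \cdot 1 \cdot H$ is itself a single double coset, one may take the trivial element as the unique representative. Substituting $g = 1$ into the Mackey formula, we obtain $\phi\up_H^G\down_K = \phi\down_{K \cap H}^H\up^K$, which is exactly the claimed identity.

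There is no real obstacle here; the lemma is essentially a tautological reformulation of Mackey's theorem under the assumption that $K$ and $H$ jointly cover $G$. The only thing worth noting is that the statement as written is symmetric-looking in $H$ and $K$ (up to swapping which subgroup is restricted to), which is consistent since the hypothesis $KH = G$ is equivalent to $HK = G$ for finite groups.
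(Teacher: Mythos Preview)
Your argument is correct and matches the paper's own treatment: the paper does not spell out a proof but simply cites the result as an easy consequence of Mackey's Theorem (referring to \cite[Problem 5.2]{IBook}), which is precisely the specialisation you carry out. There is nothing to add.
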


We record some notation that we will use throughout in this section. 

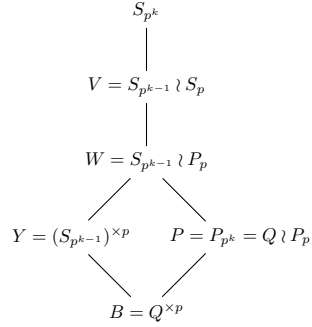
\begin{figure}[h!]
	\centering
		\begin{tikzpicture}[scale=1.0, every node/.style={scale=0.7}]
			\draw (0,0) node(S) {$S_{p^k}$};
			\draw (0,-0.7) node(V) {$V=S_{p^{k-1}}\wr S_p$};
			\draw (0,-1.4) node(W) {$W=S_{p^{k-1}}\wr P_p$};
			\draw (-1,-2) node(Y) {$Y=(S_{p^{k-1}})^{\times p}$\qquad};
			\draw (1,-2) node(P) {\qquad$P=P_{p^k}=Q\wr P_p$};
			\draw (0,-2.6) node(B) {$B=Q^{\times p}$};
			\draw (S) -- (V) -- (W) -- (Y) -- (B) -- (P) -- (W);
		\end{tikzpicture}
		\caption{Subgroups of $S_{p^k}$, where $Q:=P_{p^{k-1}}$}\label{fig:subgps}
\end{figure}

\begin{notation}\label{not:subgps}
	We let $P:=P_{p^k}$ and $Q:=P_{p^{k-1}}$. Recall from Section~\ref{sec:trees} that $P=Q^{\times p}\rtimes P_p = Q\wr P_p$: we denote by $B$ the base group of this wreath product, so that the subgroup $B$ of $P$ satisfies $B\cong Q^{\times p}$.
	We further take $Y$ to be the Young subgroup of $S_{p^k}$ acting on $[1,p^k]$ with the same orbits as $B$. Clearly $Y\cong (S_{p^{k-1}})^{\times p}$, and $B=Y\cap P$.
	We let $W=Y\rtimes P_p\cong S_{p^{k-1}}\wr P_p$ and observe that $P\leq W$. 
	Finally, we let $V=Y\rtimes S_p\cong S_{p^{k-1}}\wr S_p$. These subgroups and their inclusions are illustrated in Figure~\ref{fig:subgps}.
\end{notation}

When dealing with irreducible characters of symmetric groups, we will often write $\lambda$ to denote both the partition $\lambda$ and the irreducible character $\chi^\lambda$. The meaning will always be clear from context, and this will allow us to greatly ease the notation and reduce the levels of superscripts and subscripts needed.

\begin{lemma}\label{lem:mixed}
	Let $\theta\in\Irr(P)$. Suppose $\theta_1,\dotsc,\theta_p\in\Irr(Q)$ satisfy $\theta\in\Irr(P\mid \theta_1\times\cdots\times\theta_p)$.
	Suppose $\lambda\in\cP(p^k)$ satisfies $\chi^\lambda\in\Irr(S_{p^k}\mid \lambda^1\times\cdots\times\lambda^p)$ where $\lambda^i\in\Omega(\theta_i)$	for each $i\in[1,p]$.
	
	If either $\lambda^1,\ldots,\lambda^p$ are mixed or $\theta_1,\ldots,\theta_p$ are mixed, then $\lambda\in\Omega(\theta)$.
	
	In particular, if $\theta=\cX(\theta_1;x)$ for some $\theta_1\in\Irr(P_{p^{k-1}})$ and $x\in[0, p-1]$, then $\cM(p,\Omega(\theta_1))\subseteq\Omega(\theta)$.
\end{lemma}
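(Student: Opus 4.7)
The plan is to compute $[\chi^\lambda\down_P,\theta]$ by working through the chain of subgroups $B\le Y\le W$ and $B\le P\le W$ depicted in Figure~\ref{fig:subgps}. The hypotheses give directly that $[\chi^\lambda\down_Y,\lambda^1\times\cdots\times\lambda^p]\ge 1$ and $[\lambda^i\down_Q,\theta_i]\ge 1$ for each $i\in[1,p]$, so restricting one step further to $B=Q^{\times p}$ yields $[\chi^\lambda\down_B,\theta_1\times\cdots\times\theta_p]\ge 1$. The remaining task is to promote this to $[\chi^\lambda\down_P,\theta]\ge 1$, which splits naturally into two cases depending on the Clifford-theoretic structure of $\theta$.

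In the first case, where $\theta_1,\dotsc,\theta_p$ are mixed, Clifford theory applied to $B\trianglelefteq P$ with the cyclic action of $P/B\cong C_p$ shows that the $C_p$-stabiliser of $\theta_1\times\cdots\times\theta_p$ is trivial ($p$ being prime, any nontrivial orbit has size $p$), and hence $\theta=(\theta_1\times\cdots\times\theta_p)\up_B^P$ is irreducible. Frobenius reciprocity for the inclusion $B\le P$ then yields
\[ [\chi^\lambda\down_P,\theta] = [\chi^\lambda\down_B,\theta_1\times\cdots\times\theta_p] \ge 1. \]

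The second case, where $\theta_1=\cdots=\theta_p=:\phi$ but $\lambda^1,\dotsc,\lambda^p$ are mixed, requires more care because $(\phi\times\cdots\times\phi)\up_B^P=\sum_{\varepsilon=0}^{p-1}\cX(\phi;\phi_\varepsilon)$ has $p$ distinct constituents, and we need to single out the one matching $\theta=\cX(\phi;\phi_\varepsilon)$. The plan is to pass to the intermediate subgroup $W=Y\rtimes P_p$: by the mixedness of the $\lambda^i$, Clifford theory applied to $Y\trianglelefteq W$ gives that $\xi:=(\lambda^1\times\cdots\times\lambda^p)\up_Y^W$ is irreducible, and Frobenius reciprocity exhibits $\xi$ as a constituent of $\chi^\lambda\down_W$. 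Then Mackey (Lemma~\ref{lem:mackey}), applied with $YP=W$ and $Y\cap P=B$, gives $\xi\down_P=(\lambda^1\times\cdots\times\lambda^p)\down_B\up_B^P$. Since $\theta\down_B=\phi^{\times p}$ (the inflated $\phi_\varepsilon$ being trivial on $B$), a further application of Frobenius reciprocity yields
\[ [\xi\down_P,\theta] = [(\lambda^1\times\cdots\times\lambda^p)\down_B,\phi^{\times p}] = \prod_{i=1}^p [\lambda^i\down_Q,\phi] \ge 1, \]
whence $[\chi^\lambda\down_P,\theta]\ge[\xi\down_P,\theta]\ge 1$ since $\xi\mid\chi^\lambda\down_W$.

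The ``in particular'' clause will follow immediately from the definition of $\cM(p,\Omega(\theta_1))$: any $\mu$ in this set satisfies $\LR(\mu;\lambda^1,\dotsc,\lambda^p)\neq 0$ for some mixed $\lambda^i\in\Omega(\theta_1)$, placing us in the second case with $\phi=\theta_1$. The main obstacle is precisely this second case: the coarse information $[\chi^\lambda\down_B,\phi^{\times p}]\ge 1$ cannot distinguish among the $p$ extensions $\cX(\phi;\phi_\varepsilon)$, and the intermediate subgroup $W$, together with the mixedness of the $\lambda^i$, is what makes this refinement possible.
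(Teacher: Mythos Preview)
Your proof is correct and follows essentially the same approach as the paper's: both split into the two Clifford-theoretic cases, handle the mixed-$\theta_i$ case via $\Irr(P\mid\overline{\theta})=\{\theta\}$ (equivalently, $\theta=\overline{\theta}\up^P$ and Frobenius reciprocity), and handle the mixed-$\lambda^i$ case by passing to $W$, invoking Mackey with $YP=W$ and $Y\cap P=B$, and then observing that $\theta$ appears in $\overline{\theta}\up^P$. Your treatment is in fact slightly more explicit than the paper's in writing out the Frobenius reciprocity steps and the final product formula $\prod_i[\lambda^i\down_Q,\phi]$, but the argument is the same.
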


\begin{proof}
	Let $\overline{\theta}:=\theta_1\times\cdots\times\theta_p\in\Irr(B)$ and $\overline{\lambda}:=\lambda^1\times\cdots\times\lambda^p\in\Irr(Y)$. The assumptions imply that $\overline{\theta}\mid\lambda\down_B$, so there exists $\phi\in\Irr(P\mid\overline{\theta})$ such that $\phi\mid\lambda\down_P$. Suppose first that $\theta_1,\ldots, \theta_p$ are mixed. Then $\Irr(P\mid\overline{\theta})=\{\theta\}$, whence $\theta\mid\lambda\down_P$ and therefore $\lambda\in\Omega(\theta)$.
	
	Let us now suppose that $\theta_i=\theta_1$ for all $i\in [1,p]$. Then $\theta=\cX(\theta_1;\varepsilon)$ for some $\varepsilon\in[0,p-1]$. Moreover, by assumption we must have that $\lambda^1,\ldots,\lambda^p$ are mixed. 
	This implies that $\Irr(W\mid \overline{\lambda})=\{\overline\lambda\up^W\}$. Since $\overline{\lambda}\mid\lambda\down_Y$ by assumption, then $\overline\lambda\up^W\mid \lambda\down_W$.
	Moreover, we observe that $P\cdot Y=W$ and that $P\cap Y=B$. Hence, by Lemma~\ref{lem:mackey} we see that $(\overline{\lambda}\up^W)\down_P = (\overline{\lambda}\down_B)\up^P$. Since $\overline{\theta}\mid\overline{\lambda}\down_B$ by assumption, it follows that $\overline{\theta}\up^P$ is a summand of $\lambda\down_P$.
	Finally, $\overline{\theta}\up^P=\sum_{i=0}^{p-1}\cX(\theta_1;i)$, hence $\theta=\cX(\theta_1;\varepsilon)$ is an irreducible constituent of $\lambda\down_P$. Thus $\lambda\in\Omega(\theta)$.
	
	The second part of the assertion of the lemma then follows immediately. 
\end{proof}

\begin{corollary}\label{cor:omega-mixed}
	Let $\theta_1,\ldots, \theta_p\in\Irr(Q)$ be mixed and let $\phi:=(\theta_1\times\cdots\times\theta_p)\up^P\in\Irr(P)$. Then 
	\[ \Omega(\phi) = \Omega(\theta_1)\star\cdots\star\Omega(\theta_p). \]
\end{corollary}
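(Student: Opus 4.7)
The plan is to prove the two inclusions separately, where one direction is essentially immediate from Lemma~\ref{lem:mixed} and the other relies on transitivity of restriction through the chain of subgroups $P \ge B \le Y \le S_{p^k}$ combined with Frobenius reciprocity.

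For the inclusion $\Omega(\theta_1)\star\cdots\star\Omega(\theta_p) \subseteq \Omega(\phi)$: suppose $\lambda$ lies in the left-hand set. By definition of the $\star$-product, there exist $\lambda^i \in \Omega(\theta_i)$ for each $i\in[1,p]$ with $\LR(\lambda;\lambda^1,\dotsc,\lambda^p) \ne 0$, so $\chi^\lambda \in \Irr(S_{p^k} \mid \chi^{\lambda^1}\times\cdots\times\chi^{\lambda^p})$. Since the $\theta_i$ are mixed by hypothesis, Lemma~\ref{lem:mixed} applies directly and yields $\lambda \in \Omega(\phi)$.

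For the reverse inclusion $\Omega(\phi) \subseteq \Omega(\theta_1)\star\cdots\star\Omega(\theta_p)$: let $\lambda \in \Omega(\phi)$, so $[\chi^\lambda\down_P,\phi] > 0$. Restricting further to the base group $B = Q^{\times p}$ (preserving non-negative multiplicities), I get $[\chi^\lambda\down_B, \phi\down_B] > 0$. By Frobenius reciprocity applied to $\phi = (\theta_1\times\cdots\times\theta_p)\up^P$, the character $\theta_1\times\cdots\times\theta_p$ appears in $\phi\down_B$, and hence
\[ [\chi^\lambda\down_B,\ \theta_1\times\cdots\times\theta_p] > 0. \]
Now decompose $\chi^\lambda\down_Y$ via the Littlewood--Richardson rule, then restrict to $B = Y \cap P \cong Q^{\times p}$ factor by factor:
\[ \chi^\lambda\down_B = \sum_{(\mu^1,\dotsc,\mu^p)} \LR(\lambda;\mu^1,\dotsc,\mu^p) \bigl( \chi^{\mu^1}\down_Q \times \cdots \times \chi^{\mu^p}\down_Q \bigr), \]
summed over $p$-tuples of partitions of $p^{k-1}$. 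Taking inner product with $\theta_1\times\cdots\times\theta_p$ and using that $\Irr(Q^{\times p})$ is a product of irreducible characters gives
\[ [\chi^\lambda\down_B,\ \theta_1\times\cdots\times\theta_p] = \sum_{(\mu^1,\dotsc,\mu^p)} \LR(\lambda;\mu^1,\dotsc,\mu^p)\, \prod_{i=1}^p [\chi^{\mu^i}\down_Q,\theta_i]. \]
Since this sum is positive and each summand is a product of non-negative integers, there must exist a $p$-tuple $(\mu^1,\dotsc,\mu^p)$ with $\LR(\lambda;\mu^1,\dotsc,\mu^p)\ne 0$ and $\mu^i \in \Omega(\theta_i)$ for every $i\in[1,p]$. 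Hence $\lambda \in \Omega(\theta_1)\star\cdots\star\Omega(\theta_p)$, completing the proof.

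There is no real obstacle here; the argument is essentially a packaging of Lemma~\ref{lem:mixed} (forward direction) with a routine application of Frobenius reciprocity and the Littlewood--Richardson rule (reverse direction). The only subtlety is being careful about which characters appear in $\phi\down_B$ versus $\chi^\lambda\down_B$, but positivity of multiplicities throughout makes the constituent-chasing clean.
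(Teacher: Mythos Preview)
Your proof is correct and follows essentially the same approach as the paper: one inclusion via Lemma~\ref{lem:mixed}, the other by observing $\overline{\theta}\mid\phi\down_B\mid\chi^\lambda\down_B$ and then decomposing $\chi^\lambda\down_B$ through $Y$ using Littlewood--Richardson. The paper phrases the reverse inclusion slightly more tersely in the language of constituents rather than writing out the explicit sum over $p$-tuples, but the argument is the same.
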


\begin{proof}
	Write $\overline{\theta}:=\theta_1\times\cdots\times\theta_p\in\Irr(B)$.
	First suppose that $\lambda\in\Omega(\theta_1)\star\cdots\star\Omega(\theta_p)$.
	Then there exists some $\overline{\lambda}\in\Irr(Y\mid\overline{\theta})$ such that $\overline{\lambda}\mid\lambda\down_Y$, and $\overline{\lambda}=\lambda^1\times\cdots\times\lambda^p$ where $\lambda^i\in\Omega(\theta_i)$ for each $i\in[1,p]$.
	Since $\theta_1,\dotsc,\theta_p$ are mixed, it follows that $\lambda\in\Omega(\phi)$ by Lemma~\ref{lem:mixed}.
	
	Conversely, suppose that $\lambda\in\Omega(\phi)$. Since $\overline{\theta}\mid\phi\down_B$ and $\phi\mid\lambda\down_P$, we find that $\overline{\theta}\mid\lambda\down_B$. Hence there exists $\overline{\lambda}\in\Irr(Y\mid\overline{\theta})$ such that $\overline{\lambda}\mid\lambda\down_Y$. But $\overline{\lambda}\in\Irr(Y\mid\overline{\theta})$ implies that $\overline{\lambda}=\lambda^1\times\cdots\times\lambda^p$ for some $\lambda^i\in\cP(p^{k-1})$ such that $\lambda^i\in\Omega(\theta_i)$ for all $i\in[1,p]$. Thus $\overline{\lambda}\mid\lambda\down_P$ implies that $\lambda\in \Omega(\theta_1)\star\cdots\star\Omega(\theta_p)$, as desired.
\end{proof}

Notice that the statement of Corollary~\ref{cor:omega-mixed} does not hold when $\theta_1,\ldots, \theta_p$ are not mixed. Consider the following example, for instance. Suppose $p=5$, $k=2$ and $\phi=\cX(\phi_1;\phi_2)$. Then $\Omega(\phi_1)=\cB_5(4)$ and $\cB_5(4)^{\star 5}=\cB_{25}(20)$, but $\cX(\phi_1;\phi_2)\nmid (20,5)\down_{P_{25}}$, as shown in \cite[Lemma 4.7]{GL2}.

Theorems~\ref{thm:value-1} and~\ref{thm:m-final-pk} below are the main results of this section. Their proof relies on an involved inductive argument. The following lemma provides us with the base case of this induction. 

\begin{lemma}\label{lem:small-k}
	Let $p\ge 5$ be a prime.
	\begin{itemize}\itemspace{5pt}
		\item[(a)] We have that $\Omega(\phi_0)= \puncp{p}$. Also, $\Omega(\phi_i)=\cB_p(p-1)$ and $[(p-1,1)\down_{P_p}, \phi_i]=1$ for all $i\in [1,p-1]$. 
		
		\item[(b)] Write $T=(\underline{a};b)$ to denote the tree $(a\mid\dotsc\mid a;b)\in\cF_2$, which is admissible if $a,b\in[0,p-1]$. 
		In this case let $\theta(\underline{a};b)$ denote the corresponding irreducible character $\theta(T)=\Psi_2([T])\in\Irr(P_{p^2})$.
	
		Then for $i,j\in[1, p-1]$, we have
		\[ \begin{array}{crl}
			\Omega(\theta(\underline{0};0)) &= & \puncp{p^2}, \\
			\Omega(\theta(\underline{0};i)) &= & \cB_{p^2}(p^2-1), \\
			\Omega(\theta(\underline{i};0)) &= & \puncb{p^2}{p^2-p}, \\
			\Omega(\theta(\underline{i};j)) &= & \cB_{p^2}(p^2-p-1) \sqcup \close{\{(p^2-p,\mu)\mid \mu\in\cB_p(p-1)\}}.
		\end{array} \]
		In particular, for all $i,j\in [1,p-1]$ and $\theta:=\theta((\underline{i};j))$ we have that $\eta(\theta)=p+1=\gamma_0(\theta)+\gamma_1(\theta)$ and $\Omega(\theta)\setminus \cB_{p^2}(p^2-\eta(\theta))$ contains no thin partitions. Moreover,  
		\[ \begin{array}{rclrl}
			[\lambda\down_{P_{p^2}}, \theta(\underline{0};i)] & = & 1 & \text{for all}\ \lambda\in & \close{\{(p^2-1,1)\}}, \\
			{}[\lambda\down_{P_{p^2}}, \theta(\underline{i};0)] & = & 1 & \text{for all}\ \lambda\in & \close{ \{ \tworow{p^2}{p^2-p}, \hook{p^2}{p^2-p} \} }, \\
			{}[\lambda\down_{P_{p^2}}, \theta(\underline{i};j)] & \geq & 2 & \text{for all}\ \lambda\in & \close{\{\tworow{p^2}{p^2-p-1},  \hook{p^2}{p^2-p-1}\}}.
		\end{array} \]
			
		\item[(c)] Suppose $i_1,\ldots, i_p\in [0,p-1]$ are mixed. Let $T$ be the admissible tree $(i_1\mid\dotsc\mid i_p;p)\in\cF_2$
		and let $\theta=\theta(T)$. Then
		\[ \Omega(\theta) = \cB_{p^2}(p^2-\eta(\theta)). \]
		Moreover, $[\lambda\down_P,\theta]=1$ for all $\lambda\in \close{ \{ \tworow{p^2}{p^2-\eta(T)}, \hook{p^2}{p^2-\eta(T)} \} }$.
	\end{itemize}
\end{lemma}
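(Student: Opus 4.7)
The plan is to handle each of the three parts using a combination of prior results (Theorems~\ref{thm:GL1},~\ref{thm:PW9.1},~\ref{thm:equiv-classes} and Lemma~\ref{lem:GL2-4.3}) together with direct character-theoretic computations, working within the subgroup chain $B \le P \le W \le V \le S_{p^{k}}$ from Notation~\ref{not:subgps}. For part~(a), $\Omega(\phi_0)=\puncp{p}$ is Theorem~\ref{thm:GL1}. For $\phi_i$ with $i\in[1,p-1]$, I will use the fact that $\phi_1\up^{S_p}=\cdots=\phi_{p-1}\up^{S_p}$ (a consequence of Theorem~\ref{thm:equiv-classes} at $k=1$) together with the fact that $\sum_{i=0}^{p-1}\phi_i$ is the regular character of $P_p$, so that $\sum_i\phi_i\up^{S_p}$ is the regular character of $S_p$. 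Subtracting $\phi_0\up^{S_p}$ (known) and dividing by $p-1$ then yields an explicit formula for $\phi_i\up^{S_p}$, from which both $\Omega(\phi_i)=\cB_p(p-1)$ and $[\chi^{(p-1,1)}\down_{P_p},\phi_i]=1$ follow.

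For part~(b), the first two subcases reduce immediately to Theorem~\ref{thm:GL1} (for $\theta(\underline 0;0)=\triv_{P_{p^2}}$) and Lemma~\ref{lem:GL2-4.3} with $k=2$ (for $\theta(\underline 0;i)=\cX(\triv_p;\phi_i)$). For $\theta(\underline i;0)=\cX(\phi_i;\phi_0)$, Lemma~\ref{lem:mixed} combined with part~(a) gives $\cM(p,\cB_p(p-1))\subseteq\Omega(\theta(\underline i;0))$, which Proposition~\ref{prop:mixed} (in the $m-t=1$ regime) evaluates as $\cB_{p^2}(p^2-p-1)$. The remaining containments in $\puncb{p^2}{p^2-p}$, together with the multiplicity-$1$ claims for the extremal partitions $(p^2-p,p)$ and $(p^2-p,1^p)$, I will establish by computing $\theta(\underline i;0)\up^V=\cX(\phi_i\up^{S_p};\phi_0\up^{S_p})$ via \cite[Lemma 3.2]{CT}, expanding using part~(a) and Theorem~\ref{thm:GL1}, and reading off lex greatest constituents via Theorem~\ref{thm:PW9.1}; in particular $\cX(\chi^{(p-1,1)};\chi^{(p)})\up^{S_{p^2}}$ contains $\chi^{(p^2-p,p)}$ with multiplicity~$1$. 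For $\theta(\underline i;j)=\cX(\phi_i;\phi_j)$ with $j\in[1,p-1]$, I will combine the Clifford-theoretic identity $(\phi_i)^{\times p}\up^P=\sum_{\varepsilon=0}^{p-1}\cX(\phi_i;\phi_\varepsilon)$ with the fact (from Theorem~\ref{thm:equiv-classes}) that $[\chi^\lambda\down_P,\cX(\phi_i;\phi_j)]$ is independent of $j\in[1,p-1]$ to obtain
\[ [\chi^\lambda\down_B,(\phi_i)^{\times p}] = [\chi^\lambda\down_P,\cX(\phi_i;\phi_0)] + (p-1)\,[\chi^\lambda\down_P,\cX(\phi_i;\phi_1)], \]
extracting $[\chi^\lambda\down_P,\cX(\phi_i;\phi_j)]$ from case~3 and a direct LR computation of the left-hand side. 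The multiplicity $\ge 2$ bound on the extremal two-row and hook partitions will then follow from Lemma~\ref{lem:SL2.18}, after restricting $\chi^\lambda$ to $W$ and identifying a suitable $\cX(\eta;\tau)$ constituent with $[\eta,\phi_i]\ge 2$.

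For part~(c), Corollary~\ref{cor:omega-mixed} gives $\Omega(\theta)=\Omega(\phi_{i_1})\star\cdots\star\Omega(\phi_{i_p})$, where each factor is either $\puncp{p}$ or $\cB_p(p-1)$ by part~(a). Iterating Proposition~\ref{prop:smooth}(i)--(ii) collapses the product to $\cB_{p^2}(p^2-\eta(\theta))$, where $\eta(\theta)$ equals the number of nonzero $i_j$'s. The multiplicity-one claim for the extremal two-row and hook partitions $\lambda\in\close{\{\tworow{p^2}{p^2-\eta},\hook{p^2}{p^2-\eta}\}}$ will follow from observing that the constraints $\LR(\lambda;\mu^1,\ldots,\mu^p)\neq 0$ combined with the requirement $\mu^j\in\Omega(\phi_{i_j})$ force a unique orbit (under cyclic rotation by $P/B$) of valid base-group constituents.

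The main obstacle will be the exclusion side of cases~3 and~4 in part~(b): verifying that $(p^2-p,p-1,1),(p^2-p,2,1^{p-2})$ and their conjugates are \emph{not} in $\Omega(\theta(\underline i;0))$, and dually that $(p^2-p,p),(p^2-p,1^p)$ and their conjugates are \emph{not} in $\Omega(\theta(\underline i;j))$ for $j\in[1,p-1]$. These will require a careful case-by-case analysis of the LR decomposition of $\chi^\lambda\down_Y$, tracking precisely which base-group constituents $\phi_{i_1}\times\cdots\times\phi_{i_p}$ arise and which extension $\cX(\phi_i;\phi_\varepsilon)$ each contributes to, using Lemma~\ref{lem:SL2.19} to separate the different extensions.
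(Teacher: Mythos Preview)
Your plan for parts (a) and (c) is sound and essentially matches the paper. For (a), your regular-character trick is a clean alternative to the paper's direct appeal to the Murnaghan--Nakayama rule (via \cite[Lemma 4.2]{GL2}). For (c), both you and the paper use Corollary~\ref{cor:omega-mixed} together with Proposition~\ref{prop:smooth}(ii) for $\Omega(\theta)$, and for the multiplicity-one claim you both use that $[\lambda\down_P,\theta]=[\lambda\down_B,\overline\theta]$ combined with the observation that the first-part constraint $\lambda_1=\sum_s(\mu^s)_1$ pins down a unique base-group constituent.

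For part (b) the two approaches diverge substantially. The paper invokes \cite[Lemma 4.7]{GL2} for almost everything in the third and fourth cases --- the full determination of $\Omega(\theta(\underline i;0))$ and $\Omega(\theta(\underline i;j))$ as well as the $\ge 2$ multiplicity bound --- and proves from scratch only the multiplicity-one claim for $\theta(\underline i;0)$ at $\lambda\in\{\tworow{p^2}{p^2-p},\hook{p^2}{p^2-p}\}$ (via Theorem~\ref{thm:PW9.1} and an LR count, essentially a special case of what later becomes Lemma~\ref{lem:usual-PW/GTT+LR}). Your self-contained reconstruction via Lemma~\ref{lem:mixed}, Proposition~\ref{prop:mixed} and the Clifford identity is more transparent but considerably more work; in particular your exclusion arguments for $(p^2-p,p-1,1)$, $(p^2-p,2,1^{p-2})$, etc., will amount to carrying out by hand the analysis later packaged in Lemma~\ref{lem:usual-PW/GTT+LR}.

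There is one genuine gap. Your proposed route to $[\lambda\down_P,\theta(\underline i;j)]\ge 2$ via Lemma~\ref{lem:SL2.18} asks for an irreducible constituent $\cX(\chi^\mu;\tau)$ of $\lambda\down_W$ with $[\chi^\mu\down_{P_p},\phi_i]\ge 2$. For $p=5$ this is impossible: one computes $[\chi^\mu\down_{P_5},\phi_i]=(\chi^\mu(1)-\chi^\mu(\sigma))/5\in\{0,1\}$ for every $\mu\vdash 5$. (If you intend $\eta$ to be reducible, beware that $\cX(\eta_1+\eta_2;\tau)\neq\cX(\eta_1;\tau)+\cX(\eta_2;\tau)$, so finding both $\cX(\chi^{\mu_1};\tau)$ and $\cX(\chi^{\mu_2};\tau)$ in $\lambda\down_W$ does not suffice.) A workable substitute is the two-contribution argument later used in the proof of Proposition~\ref{prop:3}: one contribution from the mixed constituent $\big((p-1,1)^{\times(p-1)}\times(p-2,2)\big)\up^W$ of $\lambda\down_W$ (via Mackey and Lemma~\ref{lem:LR-sum}), and a second, disjoint contribution from a constituent of the form $\cX((p-1,1);\omega)$ in $\lambda\down_V$ with $\omega\in\Omega(\phi_j)$. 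Alternatively, you can push your Clifford-identity computation further by pinning down $[\lambda\down_P,\cX(\phi_i;\phi_0)]$ exactly at $\lambda\in\{\tworow{p^2}{p^2-p-1},\hook{p^2}{p^2-p-1}\}$, which then yields the value of $[\lambda\down_P,\cX(\phi_i;\phi_j)]$ by subtraction.
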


\begin{proof}
	\begin{itemize}\itemspace{5pt}
		\item[(a)] This follows from the Murnaghan--Nakayama rule (see \cite[Lemma 4.2]{GL2}).
		
		\item[(b)] Using Lemma~\ref{lem:conj-restr}, Theorem~\ref{thm:GL1}, Lemma~\ref{lem:GL2-4.3} and \cite[Lemma 4.7]{GL2}, we only need to check that $[\lambda\down_P,\theta]=1$ for $\lambda\in\{\tworow{p^2}{p^2-p}, \hook{p^2}{p^2-p} \}$ and $\theta=\theta(\underline{i};0)$. 
		
		Let $\lambda=\tworow{p^2}{p^2-p}$. First observe that $[\lambda\down_Y, (p-1,1)^{\times p}]=1$. This is an easy application of the Littlewood--Richardson rule. 
		Since $[\lambda\down_V, \cX\big((p-1,1);(p)\big)]=1$ by Theorem~\ref{thm:PW9.1} and $\cX((p-1,1);\nu)\down_Y=\chi^\nu(1)\cdot(p-1,1)^{\times p}$ for all $\nu\in\cP(p)$, then
		\[ [\lambda\down_V, \cX\big((p-1,1);\nu\big)]=\begin{cases}
			1 & \text{if }\nu=(p),\\
			0 & \text{if }\nu\in\cP(p)\setminus\{(p)\}.
		\end{cases}\]
		In particular, $\theta\mid\cX\big((p-1,1);(p)\big)\down_P$ because $\phi_i\mid(p-1,1)\down_{P_p}$ and $\phi_0\mid (p)\down_{P_p}$, so $[\lambda\down_P,\theta]\ge 1$.
		
		On the other hand, $\theta\down_B=\phi_i^{\times p}$ and so
		\[ [\lambda\down_P,\theta]\le [\lambda\down_B,\phi_i^{\times p}] = [(p-1,1)^{\times p}\down_B,\phi_i^{\times p}] = [(p-1,1)\down_{P_p},\phi_i]^p = 1,\]
		where the first equality follows from $\Omega(\phi_i)=\cB_p(p-1)$, Lemma~\ref{lem:LR-first-part}  and the fact that $\lambda_1=p^2-p=p(p-1)$.
		The case $\lambda=\hook{p^2}{p^2-p}$ is analogous, replacing the partition $(p)$ by $(1^p)$.
		
		\item[(c)] Using Corollary~\ref{cor:omega-mixed}, we know that $\Omega(\theta)=\Omega(\phi_{i_1})\star\Omega(\phi_{i_2})\star\cdots\star\Omega(\phi_{i_p})$. Let $J=\{s\in [1,p]\ |\ i_s\neq 0\}$. Then $\eta(\theta)=|J|$ and $\Omega(\phi_{i_s})=\cB_p(p-1)$ for all $s\in J$, while $\Omega(\phi_{i_s})=\puncp{p}$ whenever $s\notin J$ by (a). 
		Since $i_1,\ldots,i_p$ are mixed, we have that $J\neq\emptyset$ and therefore that $\eta(\theta)\neq 0$. Hence we can use Theorem~\ref{prop:smooth}(ii) to deduce that $\Omega(\theta)=\cB_{p^2}(p^2-\eta(\theta))$.
		
		Let us now fix $\lambda=\tworow{p^2}{p^2-\eta(\theta)}$. let $\theta_t:=\phi_{i_t}$ for $t\in[1,p]$ and write $\overline{\theta}:=\theta_1\times\cdots\times\theta_p$.
		Since $\Irr(P_{p^2}\mid\overline{\theta}) = \{\theta\}$ and $[\theta\down_B, \overline{\theta}]=1$, we have that $[\lambda\down_P,\theta] = [\lambda\down_B, \overline{\theta}]$. 
		Moreover, 
		\[ [\lambda\down_B, \overline{\theta}] = \sum_{\mu^1,\ldots,\mu^p\in\cP(p^{k-1})} \LR(\lambda;\mu^1,\ldots,\mu^p)\cdot \prod_{t=1}^p [\mu^t\down_{P_p}, \theta_t]. \]
		By Lemma~\ref{lem:LR-first-part} we have that $\LR(\lambda;\mu^1,\ldots,\mu^p)>0$ only if $\mu^t$ is a two-row partition for every $t\in [1,p]$, since $\lambda$ is a two-row partition. At the same time, $[\mu^t\down_{P_p}, \theta_t]>0$ implies that $\mu^t\in \Omega(\theta_t)$. From (a), we deduce that $\mu^t\in\cB_{p}(p-\eta(\theta_t))$. 
		From $\LR(\lambda;\mu^1,\ldots,\mu^p)>0$ and Lemma~\ref{lem:LR-first-part}, we also obtain that 
		\[ p^2-\eta(\theta)=\lambda_1\le \sum_{t=1}^p (\mu^t)_1\leq \sum_{i=1}^p (p-\eta(\theta_t))=p^2-\eta(\theta), \]
		where the last equality follows because $\eta(\theta)=\eta(\theta_1)+\cdots+\eta(\theta_p)$. Thus 	
		\[ [\lambda\down_B,\overline{\theta}] = \LR(\lambda; \tworow{p}{p-\eta(\theta_1)},\ldots,\tworow{p}{p-\eta(\theta_p)}) \cdot \prod_{t=1}^p [\tworow{p}{p-\eta(\theta_t)}\down_{P_p}, \theta_t] = 1, \]
		where $[\tworow{p}{p-\eta(\theta_t)}\down_{P_p}, \theta_t] = 1$ follows again from (a).
		An analogous argument allows us to treat the case where $\lambda=\hook{p^2}{p^2-\eta(\theta)}$. By Lemma~\ref{lem:conj-restr}, the same holds for all partitions $\lambda\in \close{ \{ \tworow{p^2}{p^2-\eta(T)}, \hook{p^2}{p^2-\eta(T)} \} }$.
	\end{itemize}
\end{proof}

From Lemma~\ref{lem:small-k} we immediately obtain the following. We recall the definition of $M(\theta)$ from \eqref{eq:m-and-M} in the introduction.

\begin{corollary}\label{cor:M-p-p2}
	Let $k\in\{1,2\}$ and let $\theta\in\Irr(P_{p^k})$. Then $M(\theta)=p^k-\gamma_0(\theta)$.
\end{corollary}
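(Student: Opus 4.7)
The plan is to prove this by direct case analysis, using the explicit descriptions of $\Omega(\theta)$ provided by Lemma~\ref{lem:small-k}, together with a direct count of $\gamma_0(\theta)$ from the associated tree $T\in\cT(\theta)$. In every case, the strategy is to (i) identify the largest possible first part $\lambda_1$ among partitions $\lambda\in\Omega(\theta)$, which (using closure under conjugation, Lemma~\ref{lem:conj-restr}) coincides with the smallest $t$ such that $\Omega(\theta)\subseteq\cB_{p^k}(t)$, i.e.~$M(\theta)$; and (ii) count the vertices of $T$ that are people with no 1-descendants.

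For $k=1$, there are only two cases: $\theta=\phi_0$ gives $\Omega(\theta)=\puncp{p}$, which contains $(p)$, so $M(\theta)=p$, matching $\gamma_0(\theta)=0$ from a root labelled $0$; and $\theta=\phi_i$ for $i\in[1,p-1]$ gives $\Omega(\theta)=\cB_p(p-1)$, so $M(\theta)=p-1$, matching $\gamma_0(\theta)=1$.

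For $k=2$, every admissible tree in $\cF_2$ is of one of the five shapes classified in Lemma~\ref{lem:small-k}(b)--(c): namely $(\underline{a};\varepsilon)$ with $a,\varepsilon\in[0,p-1]$ (four sub-cases depending on whether $a,\varepsilon$ are zero or nonzero), or $(i_1\mid\dotsc\mid i_p;p)$ with mixed $i_j\in[0,p-1]$. In each homogeneous case, I would read $M(\theta)$ directly off the given description of $\Omega(\theta)$ and confirm it matches $p^2-\gamma_0(\theta)$: for $(\underline{0};0)$ one gets $M=p^2$ and $\gamma_0=0$; for $(\underline{0};i)$ one gets $M=p^2-1$ and $\gamma_0=1$ (the root is the only person, and has no descendants who are people); for $(\underline{i};0)$ one gets $M=p^2-p$ and $\gamma_0=p$ (the $p$ leaves are people, the root is not). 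In the mixed case, Lemma~\ref{lem:small-k}(c) gives $\Omega(\theta)=\cB_{p^2}(p^2-\eta(\theta))$ and also produces an explicit partition of first part $p^2-\eta(\theta)$ in $\Omega(\theta)$, so $M(\theta)=p^2-\eta(\theta)$; since the root has label $p$, every person in $T$ is a leaf (hence has no $1$-descendants), so $\gamma_0(\theta)=\eta(\theta)$.

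The only subtlety, and the step I would take most care with, is the case $\theta=\theta(\underline{i};j)$ with $i,j\in[1,p-1]$. Here $\Omega(\theta)=\cB_{p^2}(p^2-p-1)\sqcup\close{\{(p^2-p,\mu)\mid \mu\in\cB_p(p-1)\}}$, and I must check that the largest first part of a partition in $\Omega(\theta)$ is exactly $p^2-p$ (witnessed for instance by $(p^2-p,p-1)$) rather than $p^2-p-1$, so that $M(\theta)=p^2-p$. On the tree side, the root has label $j\in[1,p-1]$ (so is a person) but each of the $p$ leaves is a person too, so the root belongs to $\Gamma_1(T)$ rather than $\Gamma_0(T)$; hence $\gamma_0(\theta)=p$, again matching $p^2-M(\theta)$. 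Assembling these routine verifications covers all admissible trees of height $\le 1$, which completes the proof.
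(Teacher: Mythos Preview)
Your approach is exactly the paper's: the paper simply states that the corollary follows immediately from Lemma~\ref{lem:small-k}, and you have spelled out the case analysis that makes this immediate. One small slip: in the $(\underline{i};j)$ case your witness $(p^2-p,p-1)$ is a partition of $p^2-1$, not $p^2$; take instead e.g.\ $(p^2-p,p-1,1)$, which lies in $\close{\{(p^2-p,\mu)\mid\mu\in\cB_p(p-1)\}}$ and has first part $p^2-p$.
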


We are now ready to describe $\Omega(\theta)$ for all those irreducible characters $\theta$ whose corresponding trees, i.e.~any $T\in\cT(\theta)$, have very few people without 1-descendants. In this case, the structure of the set $\Omega(\theta)$ is particularly simple.

\begin{lemma}\label{lem:small-gamma_0}
	Let $k\in\N$ and $\theta\in\Irr(P_{p^k})$. If $0<\gamma_0(\theta)<p$, then $\Omega(\theta)=\cB_{p^k}(p^k-\gamma_0(\theta))$.
\end{lemma}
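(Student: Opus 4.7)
The plan is to proceed by induction on $k$, with the base cases $k=1,2$ handled directly by Lemma~\ref{lem:small-k}. For $k=1$, the only $\theta$ satisfying $0<\gamma_0(\theta)<p$ are the non-trivial $\phi_i$ with $i\in[1,p-1]$, and each has $\gamma_0=1$ and $\Omega(\phi_i)=\cB_p(p-1)$. For $k=2$, enumerating admissible trees in $\cF_2$, those satisfying the hypothesis are either of the form $(\underline{0};b)$ with $b\in[1,p-1]$, or of mixed form $(i_1\mid\dotsc\mid i_p;p)$ with $1\le|\{s:i_s\ne 0\}|\le p-1$; in either case parts (b) and (c) of Lemma~\ref{lem:small-k} yield $\Omega(\theta)=\cB_{p^2}(p^2-\gamma_0(\theta))$.

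For the inductive step with $k\ge 3$, fix $T\in\cT(\theta)$ and write $T=(T_1\mid\dotsc\mid T_p;\varepsilon)$. Since $0<\gamma_0(\theta)<p$, Lemma~\ref{lem:tree-bounds}(b) gives $\val(T)=1$. I then split into three cases according to $\varepsilon$. If $\varepsilon=0$, admissibility forces $[T_1]=\cdots=[T_p]$, so $\gamma_0(\theta)=p\cdot\gamma_0(T_1)$ would be a positive multiple of $p$, contradicting $\gamma_0(\theta)<p$. If $\varepsilon\in[1,p-1]$, the root is a person with no $1$-descendants (since $\val(T)=1$), so no subtree $T_i$ contains a person and $\gamma_0(\theta)=1$. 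Writing $\theta=\cX(\phi;\phi_\varepsilon)$, a short auxiliary induction shows that the only $\phi\in\Irr(P_{p^{k-1}})$ with $\eta(\phi)=0$ is $\phi=\triv_{p^{k-1}}$: indeed, $\eta(T)=0$ forces all labels in $\{0,p\}$, and a root labelled $p$ would require mixed children, but those are recursively all trivial. Lemma~\ref{lem:GL2-4.3} then yields $\Omega(\theta)=\cB_{p^k}(p^k-1)$, as desired.

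The main case is $\varepsilon=p$. Here $\theta=(\theta_1\times\cdots\times\theta_p)\up^{P_{p^k}}$ with mixed $\theta_1,\dotsc,\theta_p\in\Irr(P_{p^{k-1}})$, and $\gamma_0(\theta)=\sum_i\gamma_0(\theta_i)$. For each $i$, $\gamma_0(\theta_i)<p$: if $\gamma_0(\theta_i)>0$, the inductive hypothesis gives $\Omega(\theta_i)=\cB_{p^{k-1}}(p^{k-1}-\gamma_0(\theta_i))$; if $\gamma_0(\theta_i)=0$, the observation from the previous case forces $\theta_i=\triv_{p^{k-1}}$, so $\Omega(\theta_i)=\puncp{p^{k-1}}$ by Theorem~\ref{thm:GL1}. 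Mixedness of the $\theta_i$ ensures at least one factor is a genuine box. Corollary~\ref{cor:omega-mixed} reduces $\Omega(\theta)$ to the iterated $\star$-product of these sets, and repeated application of parts (i) and (ii) of Proposition~\ref{prop:smooth} collapses them into $\cB_{p^k}(p^k-\gamma_0(\theta))$. The main obstacle is verifying the size hypotheses for each application of Proposition~\ref{prop:smooth}; these reduce to inequalities of the form $\gamma_0(\theta)<p^{k-1}/2$, which hold since $\gamma_0(\theta)<p\le p^{k-1}/2$ once $p\ge 5$ and $k\ge 3$.
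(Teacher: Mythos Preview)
Your proof is correct and follows essentially the same approach as the paper's: induction on $k$ with Lemma~\ref{lem:small-k} as base case, then for $k\ge 3$ splitting according to the root label and using Lemma~\ref{lem:GL2-4.3} in the $\cX(\triv;\varepsilon)$ case and Corollary~\ref{cor:omega-mixed} plus Proposition~\ref{prop:smooth} in the mixed case. The only cosmetic differences are that the paper treats $\varepsilon\in[0,p-1]$ in one stroke via the observation $\gamma_0(\theta)\in\{p\gamma_0(\varphi),\,p\gamma_0(\varphi)+1\}$ rather than invoking Lemma~\ref{lem:tree-bounds}(b), and it justifies the existence of a genuine box factor simply from $\gamma_0(\theta)>0$ rather than from mixedness.
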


\begin{proof}
	We proceed by induction on $k$, noting that if $k\le 2$ then the statement follows from Lemma~\ref{lem:small-k}. Now suppose $k\ge 3$. If $\theta=\cX(\varphi;\varepsilon)$ for some $\varphi\in\Irr(P_{p^{k-1}})$ and some $\varepsilon\in [0, p-1]$, then $\gamma_0(\theta)\in\{p\cdot\gamma_0(\varphi), p\cdot\gamma_0(\varphi)+1\}$. Since $0<\gamma_0(\theta)<p$, we must have $\gamma_0(\varphi)=0$ and $\varepsilon\ne 0$. That is, $\theta=\cX(\triv_{p^{k-1}}; \varepsilon)$, and so $\Omega(\theta)=\cB_{p^k}(p^k-1)$ from Lemma~\ref{lem:GL2-4.3}.
	
	Otherwise, $\theta=\theta_1\times\cdots\times\theta_p\up^P$ for some mixed $\theta_i\in\Irr(Q)$. Then $\gamma_0(\theta)=\sum_{i=1}^p\gamma_0(\theta_i)$. We have that $\Omega(\theta_i)=\puncp{p^{k-1}}$ if $\gamma_0(\theta_i)=0$ by Theorem~\ref{thm:GL1}, or $\Omega(\theta_i)=\cB_{p^{k-1}}(p^{k-1}-\gamma_0(\theta_i))$ by inductive hypothesis if $\gamma_0(\theta_i)\in[1, p-1]$. But $\Omega(\theta)=\Omega(\theta_1)\star\cdots\star\Omega(\theta_p)$ by Corollary~\ref{cor:omega-mixed}, and so $\Omega(\theta)=\cB_{p^k}(p^k-\sum_{i=1}^p \gamma_0(\theta_i))$ by Theorem~\ref{prop:smooth} as $\gamma_0(\theta_i)\ge 1$ for some $i\in [1, p]$.
\end{proof}

\begin{example}\label{ex:5,25}
	Let $p=5$. In Tables~\ref{tab:5} and~\ref{tab:25} we record all of the elements of $\theta\in\Irr(P_{p^k})$ for $k=1$ and $k=2$ respectively, up to the equivalence relation $\sim_{0,p}$ introduced in Definition~\ref{def:label-replace}. For each representative $\theta$, we give an admissible tree $T\in\cT(\theta)$, the tree statistics introduced in Definition~\ref{def:trees-stats}, and $\Omega(\theta)$. We remark that if $\theta\sim_{0,p}\zeta$, then $\eta(\theta)=\eta(\zeta)$, $\gamma_i(\theta)=\gamma_i(\zeta)$ for all $i\in\N_0$, and $\val(\theta)=\val(\zeta)$. Moreover, $\theta\up^{S_{p^k}}=\zeta\up^{S_{p^k}}$ and $\Omega(\theta)=\Omega(\zeta)$ by Theorem~\ref{thm:equiv-classes}.
	\hfill$\lozenge$
\end{example}

\begin{example}\label{ex:125}
	In Figure~\ref{fig:125}, we illustrate several examples of $\theta\in\Irr(P_{p^3})$ where $p=5$, listing various associated tree statistics, as well as information on $\Omega(\theta)$, $m(\theta)$ and $M(\theta)$.
	
	The fourth subfigure illustrates a tree $T\in\cT(\theta)$ for the linear character $\cX(a;b;c):=\cX( \cX(\phi_a;\phi_b);\phi_c ) \in\Irr(P_{125})$. Recalling the equivalence relation $\sim_{0,p}$ from Definition~\ref{def:label-replace} and Theorem~\ref{thm:equiv-classes}, there are only 8 cases to consider when calculating the Sylow branching coefficients which describe $\theta\up^{S_{125}}$: these are tabulated in Table~\ref{tab:125}. 
	
	Results on $\Omega(\theta)$ follow from Theorems~\ref{thm:GL1}, \ref{thm:value-1} and~\ref{thm:value-2}, and \cite[Example 5.9]{GL2}. Results on $M(\theta)$ and $m(\theta)$ follow from Theorem~\ref{thm:M}, Corollary~\ref{cor:m-pk} and Theorem~\ref{thm:m-val-2}.
	In particular, for $\theta=\cX(1;1;\varepsilon)$ where $\varepsilon\in\{0,1\}$, we have that 
	$\cB_{125}(95)\subseteq\Omega(\theta)\subseteq\cB_{125}(100)$
	and $\Omega(\theta)\setminus\cB_{125}(95)$ contains no thin partitions. Moreover, 
	\[ \Omega(\theta)\cap\close{\{ (100,\mu)\mid \mu\in\cP(25)\}} = \close{\{ (100,\mu)\mid \mu\in\Omega(\cX(1;\varepsilon))\}}, \]
	where $\Omega(\cX(1;\varepsilon))$ is recorded in Lemma~\ref{lem:small-k}.
	\newqed
\end{example}

\begin{theorem}\label{thm:M}
Let $k\in\N$ and let $\theta\in\Irr(P_{p^k})$. Then $M(\theta)=p^k-\gamma_0(\theta)$. 
\end{theorem}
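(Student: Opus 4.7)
My plan is to prove the two inequalities $M(\theta) \leq p^k - \gamma_0(\theta)$ and $M(\theta) \geq p^k - \gamma_0(\theta)$ by simultaneous induction on $k$. The base cases $k \in \{1, 2\}$ are given by Corollary~\ref{cor:M-p-p2}. For the inductive step $k \geq 3$, if $\gamma_0(\theta) < p$ then Lemma~\ref{lem:small-gamma_0} immediately yields $\Omega(\theta) = \cB_{p^k}(p^k - \gamma_0(\theta))$ and we are done, so I may assume $\gamma_0(\theta) \geq p$ and split on the root label of any $T \in \cT(\theta)$.

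If $\theta = (\theta_1 \times \cdots \times \theta_p)\up^P$ with mixed $\theta_i \in \Irr(Q)$, then the root has label $p$ (contributing nothing to $\gamma_0$), so $\gamma_0(\theta) = \sum_i \gamma_0(\theta_i)$, and Corollary~\ref{cor:omega-mixed} gives $\Omega(\theta) = \Omega(\theta_1) \star \cdots \star \Omega(\theta_p)$. The upper bound follows from IH combined with iterated application of Proposition~\ref{prop:SL5.7}. For the lower bound, choose $\lambda^{(i)} \in \Omega(\theta_i)$ with $(\lambda^{(i)})_1 = p^{k-1} - \gamma_0(\theta_i)$ from IH, and let $\lambda := \lambda^{(1)} + \cdots + \lambda^{(p)}$ be the partition with componentwise part sums; then Lemma~\ref{lem:LR-sum} gives $\LR(\lambda; \lambda^{(1)}, \ldots, \lambda^{(p)}) = 1$, forcing $\lambda \in \Omega(\theta)$ with $\lambda_1 = \sum_i (\lambda^{(i)})_1 = p^k - \gamma_0(\theta)$.

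If $\theta = \cX(\varphi; \phi_\varepsilon)$ for $\varphi \in \Irr(Q)$ and $\varepsilon \in [0, p-1]$, the hypothesis $\gamma_0(\theta) \geq p$ forces $\varphi \neq \triv_Q$ (else $\gamma_0(\theta) \leq 1$), whence the root vertex has a human descendant in each subtree and $\gamma_0(\theta) = p\gamma_0(\varphi)$. The upper bound is routine: $\lambda \in \Omega(\theta)$ implies $\varphi^{\times p} = \theta\down_B$ divides $\lambda\down_B$, forcing some $(\mu^{(1)}, \ldots, \mu^{(p)}) \in \Omega(\varphi)^{\times p}$ with $\LR(\lambda; \mu^{(1)}, \ldots, \mu^{(p)}) > 0$; Lemma~\ref{lem:LR-first-part} with IH then yields $\lambda_1 \leq p(p^{k-1} - \gamma_0(\varphi)) = p^k - \gamma_0(\theta)$, and conjugation symmetry (Lemma~\ref{lem:conj-restr}) handles the length bound. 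For the lower bound, I strengthen the inductive statement to also record that $\mu := \tworow{p^{k-1}}{p^{k-1} - \gamma_0(\varphi)} \in \Omega(\varphi)$ with $[\chi^\mu\down_Q, \varphi] = 1$. When $\varepsilon = 0$, take $\lambda := \tworow{p^k}{p^k - \gamma_0(\theta)} = p\mu$: Lemma~\ref{lem:LR-sum} gives $[\chi^\lambda\down_Y, \mu^{\times p}] = 1$, and combined with Theorem~\ref{thm:PW9.1} and the identity $\cX(\chi^\mu; \chi^\nu)\down_Y = \chi^\nu(1)\,\mu^{\times p}$ this forces $[\chi^\lambda\down_V, \cX(\chi^\mu; \chi^{(p)})] = 1$ and $[\chi^\lambda\down_V, \cX(\chi^\mu; \chi^\nu)] = 0$ for all $\nu \ne (p)$. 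A direct $C_p$-character computation on the $\varphi^{\times p}$-isotypic subspace of $\cX(\chi^\mu; \chi^{(p)})\down_P$ then identifies its unique $P$-constituent lying over $\varphi^{\times p}$ as $\cX(\varphi; \phi_0) = \theta$. When $\varepsilon \in [1, p-1]$, take $\lambda := (p^k - \gamma_0(\theta), \gamma_0(\theta) - 1, 1)$: by Theorem~\ref{thm:PW9.1} this is precisely the lex-greatest partition with $[\chi^\lambda, \cX(\chi^\mu; \chi^{(p-1, 1)})\up^{S_{p^k}}] \ne 0$, and using \cite[Lemma 3.2]{CT} together with $\chi^{(p-1, 1)}\down_{P_p} = \phi_1 + \cdots + \phi_{p-1}$ the restriction $\cX(\chi^\mu; \chi^{(p-1,1)})\down_P^V$ contains each $\cX(\varphi; \phi_j)$ (for $j \in [1, p-1]$) exactly once, so in particular $\theta = \cX(\varphi; \phi_\varepsilon)$ is a constituent of $\chi^\lambda\down_P$.

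The main obstacle is the $C_p$-twist identification in the wreath case: one must carefully track which character $\phi_j \in \Irr(P_p)$ parametrises the $P$-constituent of $\cX(\chi^\mu; \chi^\nu)\down_P^V$ lying over $\varphi^{\times p}$, not merely that some such constituent exists. For $\nu = (p)$, the cyclic action on $V_\varphi^{\otimes p}$ is the untwisted permutation action, whose character equals $(d^p, d, d, \ldots, d)$ at $(1, \sigma, \sigma^2, \ldots, \sigma^{p-1})$ where $d = \varphi(1)$; comparing with the character of $\cX(\varphi; \phi_0)$ (namely $(d^p, d, d, \ldots, d)$) yields the $\phi_0$-identification. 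For $\nu = (p-1, 1)$ the computation reduces via \cite[Lemma 3.2]{CT} to the linearity of $\cX(\chi^\mu\down_Q; \cdot)$ in its second argument, combined with $\chi^{(p-1, 1)}\down_{P_p} = \sum_{i=1}^{p-1} \phi_i$, yielding the required $\phi_j$-distribution.
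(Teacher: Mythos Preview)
Your upper-bound argument and the mixed-root case for the lower bound are correct and match the paper's proof. The gap is in the wreath case $\theta=\cX(\varphi;\phi_\varepsilon)$, where you propose to strengthen the inductive hypothesis to assert that the \emph{specific} two-row partition $\mu:=\tworow{p^{k-1}}{p^{k-1}-\gamma_0(\varphi)}$ lies in $\Omega(\varphi)$ with $[\chi^\mu\down_Q,\varphi]=1$. This strengthened statement is false. Take $k-1=2$ and $\varphi=\cX(\phi_i;\phi_j)$ with $i,j\in[1,p-1]$. Then $\gamma_0(\varphi)=p$ (the $p$ leaves lie in $\Gamma_0$, the root lies in $\Gamma_1$), so your $\mu$ would be $\tworow{p^2}{p^2-p}=(p^2-p,p)$. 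But Lemma~\ref{lem:small-k}(b) gives $\Omega(\varphi)=\cB_{p^2}(p^2-p-1)\sqcup\close{\{(p^2-p,\nu)\mid\nu\in\cB_p(p-1)\}}$, and $(p)\notin\cB_p(p-1)$, so $(p^2-p,p)\notin\Omega(\varphi)$. Your construction of $\lambda$ in both the $\varepsilon=0$ and $\varepsilon\ne 0$ subcases then rests on a premise that can fail. (You also do not re-establish the strengthened hypothesis at level $k$ in the $\varepsilon\ne 0$ subcase, since your witness $(p^k-\gamma_0(\theta),\gamma_0(\theta)-1,1)$ is not a two-row partition.)

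The paper avoids this problem by not pinning down $\mu$: it takes \emph{any} $\mu\in\Omega(\theta_1)$ with $\mu_1=M(\theta_1)$, which exists by the unstrengthened inductive hypothesis, and any $\nu\in\Omega(\phi_\varepsilon)$. Theorem~\ref{thm:PW9.1} then produces a partition $\lambda$ with $\lambda_1=p\mu_1$ and $[\chi^\lambda\down_V,\cX(\chi^\mu;\chi^\nu)]=1$, and Lemmas~\ref{lem:SL2.18} and~\ref{lem:SL2.19} together guarantee $[\cX(\chi^\mu;\chi^\nu)\down_P,\theta]\ge 1$ \emph{regardless} of whether $[\chi^\mu\down_Q,\theta_1]$ equals $1$ or is larger. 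No multiplicity-one control is needed, and no $C_p$-isotypic identification is required.
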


\begin{proof}
	We proceed by induction on $k\in\N$. If $k\in\{1, 2\}$ then the statement holds by Corollary~\ref{cor:M-p-p2}. 
	
	Let us now assume that $k\geq 3$. We will use the notation introduced in Notation~\ref{not:subgps}. 
	
	Let $\overline{\theta}=\theta_1\times\theta_2\times\cdots\times\theta_p$ be an irreducible constituent of $\theta\down_B$. Here, $\theta_i\in\Irr(Q)$ for every $i\in [1,p]$. The inductive hypothesis implies that $M(\theta_i)=p^{k-1}-\gamma_0(\theta_i)$ for all $i\in [1,p]$. Moreover, $\gamma_0(\theta)=\gamma_0(\theta_1)+\gamma_0(\theta_2)+\cdots +\gamma_0(\theta_p)$. This shows that it will suffice to prove that $M(\theta)=\Sigma$, where $\Sigma:=M(\theta_1)+M(\theta_2)+\cdots+M(\theta_p)$. 
	
	We first focus on showing that $M(\theta)\le\Sigma$. Let $\lambda\in\Omega(\theta)$ and assume, without loss of generality, that $\lambda_1\ge\ell(\lambda)$. Since $\overline{\theta}\mid\theta\down_B$, we have that $\overline{\theta}$ is an irreducible constituent of $\lambda\down_B$. Since 
	\[ \lambda\down_B=(\lambda\down_Y)\down_B=\sum_{\mu^1,\ldots, \mu^p\in\cP(p^{k-1})} \LR(\lambda;\mu^1,\ldots, \mu^p)\cdot \big(\mu^1\times\cdots\times\mu^p\big)\down_B, \]
	we deduce that there exist $\mu^1,\ldots,\mu^p\in\cP(p^{k-1})$ such that $\mu^i\in\Omega(\theta_i)$ for every $i\in [1,p]$ and such that $\LR(\lambda;\mu^1,\ldots, \mu^p)\neq 0$. Using Lemma~\ref{lem:LR-first-part} and recalling that $M(\theta_i)\geq (\mu^i)_1$ for every $i\in [1,p]$, we deduce that $\Sigma\geq (\mu^1)_1+ (\mu^2)_1+\cdots + (\mu^p)_1\geq \lambda_1$. This implies that $\lambda\in\cB_{p^k}(\Sigma)$. Therefore we have shown $\Omega(\theta)\subseteq \cB_{p^k}(\Sigma)$, giving $M(\theta)\le\Sigma$ as desired. 
	
	In order to show that $\Sigma\leq M(\theta)$, we exhibit a partition $\lambda\in\Omega(\theta)$ with $\lambda_1=\Sigma$.
	In order to construct such a $\lambda$ we distinguish two cases, depending on the structure of $\theta$. 
	
	Suppose first that $\theta_1, \theta_2,\ldots, \theta_p$ are mixed. For each $i\in [1,p]$ choose $\lambda^i\in\Omega(\theta_i)$ such that $(\lambda^i)_1=M(\theta_i)$. Now consider $\lambda=\lambda^1+\lambda^2+\cdots+\lambda^p$. 
	Then clearly $\lambda_1=(\lambda^1)_1+\cdots+(\lambda^p)_1=\Sigma$, and $\LR(\lambda;\lambda^1,\ldots, \lambda^p)=1$. It follows from Lemma~\ref{lem:mixed} that $\lambda\in\Omega(\theta)$, and so $\Sigma\le M(\theta)$.
	
	Let us now consider the case where $\theta_1=\theta_2=\cdots=\theta_p$. In this case, $\theta=\cX(\theta_1;\phi_{\varepsilon})$ for some $\varepsilon\in [0,p-1]$ and $\Sigma=p\cdot M(\theta_1)$.
	
	If $\theta_1=\triv_Q$, then the assertion of the theorem holds by Theorem~\ref{thm:GL1} if $\varepsilon=0$ and by Lemma~\ref{lem:GL2-4.3} if $\varepsilon\ne 0$.
	
	Otherwise, $\theta_1\ne\triv_Q$ and therefore $M(\theta_1)<p^{k-1}$. 
	Choose $\mu\in\Omega(\theta_1)$ with $\mu_1=M(\theta_1)$. Define 
	\[ \lambda=(p\mu_1,\ldots,p\mu_{l(\mu)-1}, p(\mu_{l(\mu)}-1)+\nu_1,\nu_2,\ldots,\nu_{\ell(\nu)})\]
	where $\nu\in\Omega(\phi_\varepsilon)$.
	Then Lemmas~\ref{lem:SL2.18} and~\ref{lem:SL2.19} imply that $[\cX(\mu;\nu)\down_P, \theta]\neq 0$ as $\mu\in\Omega(\theta_1)$ and $\nu\in\Omega(\phi_\varepsilon)$. Moreover, $[\cX(\mu;\nu), \lambda\down_V]=1$ by Theorem~\ref{thm:PW9.1}. Hence $\lambda\in\Omega(\theta)$, and $\lambda_1=p\mu_1=\Sigma$. We conclude that $\Sigma\leq M(\theta)$.
\end{proof}

\begin{table}[h!]
	\[ \begin{array}{|cccc|cccccc|l|}
		\hline
		\theta && T\in\cT(\theta) & \theta(1) & \eta(\theta) & \gamma_0(\theta) & \gamma_1(\theta) & \val(\theta) & n-\gamma_0(\theta) & n-\gamma_0(\theta)-\gamma_1(\theta) & \Omega(\theta) \\
		\hline
		\hline
		\phi_0=\triv_{5} && \overset{0}{\circ} & 1 & 0 & 0 & 0 & 0 & 5 & 5 & \puncp{5} \\[3pt]
		\phi_\varepsilon, & {\scriptstyle\varepsilon\in[1,4]} & \overset{\varepsilon}{\bullet} & 1 & 1 & 1 & 0 & 1 & 4 & 4 & \cB_5(4)\\[3pt]
		\hline
	\end{array} \]
	\caption{$\Irr(P_p)$ for $p=5$, their corresponding admissible trees and statistics.}\label{tab:5}
\end{table}

\begin{table}[!h]
	\begin{small}
		\[ \begin{array}{|clc|cc|l|}
			\hline
			\theta & T\in\cT(\theta) & \theta(1) & \eta, \gamma_0, \gamma_1, \val & {\scriptscriptstyle n-\gamma_0, n-\gamma_0-\gamma_1} & \Omega(\theta) \\
			\hline
			\hline
			& \multirow[c]{2}{2cm}{\begin{tikzpicture}[scale=0.65, every node/.style={scale=0.7}]
					\draw (0,0) node(R) {$\overset{0}{\circ}$};
					\draw (-1,-1) node(0) {$\overset{0}{\circ}$};
					\draw (-0.5,-1) node(1) {$\overset{0}{\circ}$};
					\draw (0,-1) node(2) {$\overset{0}{\circ}$};
					\draw (0.5,-1) node(3) {$\overset{0}{\circ}$};
					\draw (1,-1) node(4) {$\overset{0}{\circ}$};
					\draw (R) -- (0);
					\draw (R) -- (1);
					\draw (R) -- (2);
					\draw (R) -- (3);
					\draw (R) -- (4);
			\end{tikzpicture}}
			& & & & \\
			\cX(0;0)=\triv_{25} & & 1 & 0, 0, 0, 0 & 25, 25 & \puncp{25} \\[8pt]
			& \multirow[c]{2}{2cm}{\begin{tikzpicture}[scale=0.65, every node/.style={scale=0.7}]
					\draw (0,0) node(R) {$\overset{\varepsilon}{\bullet}$};
					\draw (-1,-1) node(0) {$\overset{0}{\circ}$};
					\draw (-0.5,-1) node(1) {$\overset{0}{\circ}$};
					\draw (0,-1) node(2) {$\overset{0}{\circ}$};
					\draw (0.5,-1) node(3) {$\overset{0}{\circ}$};
					\draw (1,-1) node(4) {$\overset{0}{\circ}$};
					\draw (R) -- (0);
					\draw (R) -- (1);
					\draw (R) -- (2);
					\draw (R) -- (3);
					\draw (R) -- (4);
			\end{tikzpicture}}
			& & & & \\
			\cX(0;\varepsilon) & & 1 & 1, 1, 0, 1 & 24, 24 & \cB_{25}(24) \\[8pt]
			& \multirow[c]{2}{2cm}{\begin{tikzpicture}[scale=0.65, every node/.style={scale=0.7}]
					\draw (0,0) node(R) {$\overset{0}{\circ}$};
					\draw (-1,-1) node(0) {$\overset{\varepsilon}{\bullet}$};
					\draw (-0.5,-1) node(1) {$\overset{\varepsilon}{\bullet}$};
					\draw (0,-1) node(2) {$\overset{\varepsilon}{\bullet}$};
					\draw (0.5,-1) node(3) {$\overset{\varepsilon}{\bullet}$};
					\draw (1,-1) node(4) {$\overset{\varepsilon}{\bullet}$};
					\draw (R) -- (0);
					\draw (R) -- (1);
					\draw (R) -- (2);
					\draw (R) -- (3);
					\draw (R) -- (4);
			\end{tikzpicture}}
			& & & & \\
			\cX(\varepsilon;0) & & 1 & 5, 5, 0, 1 & 20, 20 & \puncb{25}{20} \\[8pt]
			& \multirow[c]{2}{2cm}{\begin{tikzpicture}[scale=0.65, every node/.style={scale=0.7}]
					\draw (0,0) node(R) {$\overset{\sigma}{\bullet}$};
					\draw (-1,-1) node(0) {$\overset{\varepsilon}{\bullet}$};
					\draw (-0.5,-1) node(1) {$\overset{\varepsilon}{\bullet}$};
					\draw (0,-1) node(2) {$\overset{\varepsilon}{\bullet}$};
					\draw (0.5,-1) node(3) {$\overset{\varepsilon}{\bullet}$};
					\draw (1,-1) node(4) {$\overset{\varepsilon}{\bullet}$};
					\draw (R) -- (0);
					\draw (R) -- (1);
					\draw (R) -- (2);
					\draw (R) -- (3);
					\draw (R) -- (4);
			\end{tikzpicture}}
			& & & & \\
			\cX(\varepsilon;\sigma) & & 1 & 6, 5, 1, 2 & 20, 19 & {\scriptstyle\cB_{25}(20)\sqcup \close{\{ (20,\mu)\mid\mu\in\cB_5(4) \}} } \\[8pt]
			& \multirow[c]{2}{2cm}{\begin{tikzpicture}[scale=0.65, every node/.style={scale=0.7}]
					\draw (0,0) node(R) {$\overset{5}{\circ}$};
					\draw (-1,-1) node(0) {$\overset{0}{\circ}$};
					\draw (-0.5,-1) node(1) {$\overset{0}{\circ}$};
					\draw (0,-1) node(2) {$\overset{0}{\circ}$};
					\draw (0.5,-1) node(3) {$\overset{0}{\circ}$};
					\draw (1,-1) node(4) {$\overset{\varepsilon}{\bullet}$};
					\draw (R) -- (0);
					\draw (R) -- (1);
					\draw (R) -- (2);
					\draw (R) -- (3);
					\draw (R) -- (4);
			\end{tikzpicture}}
			& & & & \\
			(\phi_0^{\times 4}\times\phi_\varepsilon)\up^{P_{25}} & & 5 & 1, 1, 0, 1 & 24, 24 & \cB_{25}(24) \\[8pt]
			& \multirow[c]{2}{2cm}{\begin{tikzpicture}[scale=0.65, every node/.style={scale=0.7}]
					\draw (0,0) node(R) {$\overset{5}{\circ}$};
					\draw (-1,-1) node(0) {$\overset{0}{\circ}$};
					\draw (-0.5,-1) node(1) {$\overset{0}{\circ}$};
					\draw (0,-1) node(2) {$\overset{0}{\circ}$};
					\draw (0.5,-1) node(3) {$\overset{\varepsilon}{\bullet}$};
					\draw (1,-1) node(4) {$\overset{\sigma}{\bullet}$};
					\draw (R) -- (0);
					\draw (R) -- (1);
					\draw (R) -- (2);
					\draw (R) -- (3);
					\draw (R) -- (4);
			\end{tikzpicture}}
			& & & & \\
			(\phi_0^{\times 3}\times\phi_\varepsilon \times\phi_\sigma)\up^{P_{25}} & & 5 & 2, 2, 0, 1 & 23, 23 & \cB_{25}(23) \\[8pt]
			& \multirow[c]{2}{2cm}{\begin{tikzpicture}[scale=0.65, every node/.style={scale=0.7}]
					\draw (0,0) node(R) {$\overset{5}{\circ}$};
					\draw (-1,-1) node(0) {$\overset{0}{\circ}$};
					\draw (-0.5,-1) node(1) {$\overset{0}{\circ}$};
					\draw (0,-1) node(2) {$\overset{\varepsilon}{\bullet}$};
					\draw (0.5,-1) node(3) {$\overset{0}{\circ}$};
					\draw (1,-1) node(4) {$\overset{\sigma}{\bullet}$};
					\draw (R) -- (0);
					\draw (R) -- (1);
					\draw (R) -- (2);
					\draw (R) -- (3);
					\draw (R) -- (4);
			\end{tikzpicture}}
			& & & & \\
			(\phi_0^{\times 2}\times\phi_\varepsilon \times\phi_0\times\phi_\sigma)\up^{P_{25}} & & 5 & 2, 2, 0, 1 & 23, 23 & \cB_{25}(23) \\[8pt]
			& \multirow[c]{2}{2cm}{\begin{tikzpicture}[scale=0.65, every node/.style={scale=0.7}]
					\draw (0,0) node(R) {$\overset{5}{\circ}$};
					\draw (-1,-1) node(0) {$\overset{0}{\circ}$};
					\draw (-0.5,-1) node(1) {$\overset{0}{\circ}$};
					\draw (0,-1) node(2) {$\overset{\varepsilon}{\bullet}$};
					\draw (0.5,-1) node(3) {$\overset{\sigma}{\bullet}$};
					\draw (1,-1) node(4) {$\overset{\tau}{\bullet}$};
					\draw (R) -- (0);
					\draw (R) -- (1);
					\draw (R) -- (2);
					\draw (R) -- (3);
					\draw (R) -- (4);
			\end{tikzpicture}}
			& & & & \\
			(\phi_0^{\times 2}\times\phi_\varepsilon \times\phi_\sigma \times\phi_\tau)\up^{P_{25}} & & 5 & 3, 3, 0, 1 & 22, 22 & \cB_{25}(22) \\[8pt]
			& \multirow[c]{2}{2cm}{\begin{tikzpicture}[scale=0.65, every node/.style={scale=0.7}]
					\draw (0,0) node(R) {$\overset{5}{\circ}$};
					\draw (-1,-1) node(0) {$\overset{0}{\circ}$};
					\draw (-0.5,-1) node(1) {$\overset{\varepsilon}{\bullet}$};
					\draw (0,-1) node(2) {$\overset{0}{\circ}$};
					\draw (0.5,-1) node(3) {$\overset{\sigma}{\bullet}$};
					\draw (1,-1) node(4) {$\overset{\tau}{\bullet}$};
					\draw (R) -- (0);
					\draw (R) -- (1);
					\draw (R) -- (2);
					\draw (R) -- (3);
					\draw (R) -- (4);
			\end{tikzpicture}}
			& & & & \\
			{\scriptscriptstyle (\phi_0 \times \phi_\varepsilon \times \phi_0 \times \phi_\sigma \times\phi_\tau)\up^{P_{25}}} & & 5 & 3, 3, 0, 1 & 22, 22 & \cB_{25}(22) \\[8pt]
			& \multirow[c]{2}{2cm}{\begin{tikzpicture}[scale=0.65, every node/.style={scale=0.7}]
					\draw (0,0) node(R) {$\overset{5}{\circ}$};
					\draw (-1,-1) node(0) {$\overset{0}{\circ}$};
					\draw (-0.5,-1) node(1) {$\overset{\varepsilon}{\bullet}$};
					\draw (0,-1) node(2) {$\overset{\sigma}{\bullet}$};
					\draw (0.5,-1) node(3) {$\overset{\tau}{\bullet}$};
					\draw (1,-1) node(4) {$\overset{\nu}{\bullet}$};
					\draw (R) -- (0);
					\draw (R) -- (1);
					\draw (R) -- (2);
					\draw (R) -- (3);
					\draw (R) -- (4);
			\end{tikzpicture}}
			& & & & \\
			{\scriptscriptstyle (\phi_0 \times \phi_\varepsilon \times \phi_\sigma \times\phi_\tau \times\phi_\nu )\up^{P_{25}}} & & 5 & 4, 4, 0, 1 & 21, 21 & \cB_{25}(21) \\[8pt]
			& \multirow[c]{2}{2cm}{\begin{tikzpicture}[scale=0.65, every node/.style={scale=0.7}]
					\draw (0,0) node(R) {$\overset{5}{\circ}$};
					\draw (-1,-1) node(0) {$\overset{\varepsilon}{\bullet}$};
					\draw (-0.5,-1) node(1) {$\overset{\sigma}{\bullet}$};
					\draw (0,-1) node(2) {$\overset{\tau}{\bullet}$};
					\draw (0.5,-1) node(3) {$\overset{\nu}{\bullet}$};
					\draw (1,-1) node(4) {$\overset{\omega}{\bullet}$};
					\draw (R) -- (0);
					\draw (R) -- (1);
					\draw (R) -- (2);
					\draw (R) -- (3);
					\draw (R) -- (4);
			\end{tikzpicture}}
			& & & & \\
			{\scriptscriptstyle (\phi_\varepsilon \times \phi_\sigma \times\phi_\tau \times\phi_\nu \times\phi_\omega )\up^{P_{25}}} & & 5 & 5, 5, 0, 1 & 20, 20 & \cB_{25}(20) \\[8pt]
			\hline
		\end{array} \]
	\end{small}
	\caption{$\Irr(P_{p^2})$ for $p=5$, their corresponding admissible trees and statistics.\\Here, $\varepsilon,\sigma,\tau,\nu,\omega$ are arbitrary elements of $[1,4]$, and, only in the last row, they must be mixed.}\label{tab:25}
\end{table}

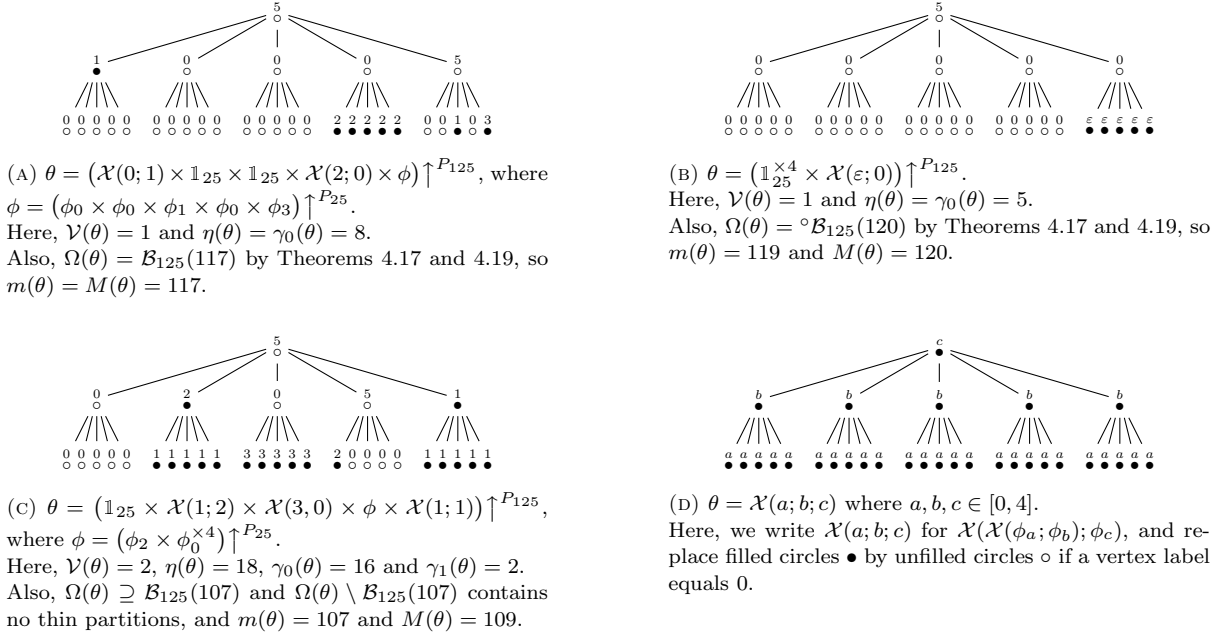
\begin{figure}[t!]
	\centering
	\begin{subfigure}[t]{0.45\textwidth}
		\centering
		\begin{tikzpicture}[scale=1.0, every node/.style={scale=0.7}]
			\draw (0,0.1) node(R) {$\overset{5}{\circ}$};
			\draw (-2.4,-0.6) node(0) {$\overset{1}{\bullet}$};
			\draw (-1.2,-0.6) node(1) {$\overset{0}{\circ}$};
			\draw (0,-0.6) node(2) {$\overset{0}{\circ}$};
			\draw (1.2,-0.6) node(3) {$\overset{0}{\circ}$};
			\draw (2.4,-0.6) node(4) {$\overset{5}{\circ}$};
			\draw (R) -- (0);
			\draw (R) -- (1);
			\draw (R) -- (2);
			\draw (R) -- (3);
			\draw (R) -- (4);
			\draw (-2.8,-1.4) node(00) {$\overset{0}{\circ}$};
			\draw (-2.6,-1.4) node(01) {$\overset{0}{\circ}$};
			\draw (-2.4,-1.4) node(02) {$\overset{0}{\circ}$};
			\draw (-2.2,-1.4) node(03) {$\overset{0}{\circ}$};
			\draw (-2.0,-1.4) node(04) {$\overset{0}{\circ}$};
			\draw (0) -- (00);
			\draw (0) -- (01);
			\draw (0) -- (02);
			\draw (0) -- (03);
			\draw (0) -- (04);
			\draw (-1.6,-1.4) node(10) {$\overset{0}{\circ}$};
			\draw (-1.4,-1.4) node(11) {$\overset{0}{\circ}$};
			\draw (-1.2,-1.4) node(12) {$\overset{0}{\circ}$};
			\draw (-1.0,-1.4) node(13) {$\overset{0}{\circ}$};
			\draw (-0.8,-1.4) node(14) {$\overset{0}{\circ}$};
			\draw (1) -- (10);
			\draw (1) -- (11);
			\draw (1) -- (12);
			\draw (1) -- (13);
			\draw (1) -- (14);
			\draw (-0.4,-1.4) node(20) {$\overset{0}{\circ}$};
			\draw (-0.2,-1.4) node(21) {$\overset{0}{\circ}$};
			\draw (0,-1.4) node(22) {$\overset{0}{\circ}$};
			\draw (0.2,-1.4) node(23) {$\overset{0}{\circ}$};
			\draw (0.4,-1.4) node(24) {$\overset{0}{\circ}$};
			\draw (2) -- (20);
			\draw (2) -- (21);
			\draw (2) -- (22);
			\draw (2) -- (23);
			\draw (2) -- (24);
			\draw (0.8,-1.4) node(30) {$\overset{2}{\bullet}$};
			\draw (1.0,-1.4) node(31) {$\overset{2}{\bullet}$};
			\draw (1.2,-1.4) node(32) {$\overset{2}{\bullet}$};
			\draw (1.4,-1.4) node(33) {$\overset{2}{\bullet}$};
			\draw (1.6,-1.4) node(34) {$\overset{2}{\bullet}$};
			\draw (3) -- (30);
			\draw (3) -- (31);
			\draw (3) -- (32);
			\draw (3) -- (33);
			\draw (3) -- (34);
			\draw (2.0,-1.4) node(40) {$\overset{0}{\circ}$};
			\draw (2.2,-1.4) node(41) {$\overset{0}{\circ}$};
			\draw (2.4,-1.4) node(42) {$\overset{1}{\bullet}$};
			\draw (2.6,-1.4) node(43) {$\overset{0}{\circ}$};
			\draw (2.8,-1.4) node(44) {$\overset{3}{\bullet}$};
			\draw (4) -- (40);
			\draw (4) -- (41);
			\draw (4) -- (42);
			\draw (4) -- (43);
			\draw (4) -- (44);
		\end{tikzpicture}
		\caption{$\theta = \big( \cX(0;1)\times \triv_{25}\times \triv_{25}\times \cX(2;0)\times\phi \big)\up^{P_{125}}$, where $\phi=\big( \phi_0\times\phi_0\times\phi_1\times\phi_0\times\phi_3 \big)\up^{P_{25}}$.\\
			Here, $\val(\theta)=1$ and $\eta(\theta)=\gamma_0(\theta)=8$.\\
			Also, $\Omega(\theta)=\cB_{125}(117)$ by Theorems~\ref{thm:value-1} and~\ref{thm:punctures}, so $m(\theta)=M(\theta)=117$.}
		\label{fig:125-1}
	\end{subfigure}
	\hfill
	\begin{subfigure}[t]{0.45\textwidth}
		\centering
		\begin{tikzpicture}[scale=1.0, every node/.style={scale=0.7}]
			\draw (0,0.1) node(R) {$\overset{5}{\circ}$};
			\draw (-2.4,-0.6) node(0) {$\overset{0}{\circ}$};
			\draw (-1.2,-0.6) node(1) {$\overset{0}{\circ}$};
			\draw (0,-0.6) node(2) {$\overset{0}{\circ}$};
			\draw (1.2,-0.6) node(3) {$\overset{0}{\circ}$};
			\draw (2.4,-0.6) node(4) {$\overset{0}{\circ}$};
			\draw (R) -- (0);
			\draw (R) -- (1);
			\draw (R) -- (2);
			\draw (R) -- (3);
			\draw (R) -- (4);
			\draw (-2.8,-1.4) node(00) {$\overset{0}{\circ}$};
			\draw (-2.6,-1.4) node(01) {$\overset{0}{\circ}$};
			\draw (-2.4,-1.4) node(02) {$\overset{0}{\circ}$};
			\draw (-2.2,-1.4) node(03) {$\overset{0}{\circ}$};
			\draw (-2.0,-1.4) node(04) {$\overset{0}{\circ}$};
			\draw (0) -- (00);
			\draw (0) -- (01);
			\draw (0) -- (02);
			\draw (0) -- (03);
			\draw (0) -- (04);
			\draw (-1.6,-1.4) node(10) {$\overset{0}{\circ}$};
			\draw (-1.4,-1.4) node(11) {$\overset{0}{\circ}$};
			\draw (-1.2,-1.4) node(12) {$\overset{0}{\circ}$};
			\draw (-1.0,-1.4) node(13) {$\overset{0}{\circ}$};
			\draw (-0.8,-1.4) node(14) {$\overset{0}{\circ}$};
			\draw (1) -- (10);
			\draw (1) -- (11);
			\draw (1) -- (12);
			\draw (1) -- (13);
			\draw (1) -- (14);
			\draw (-0.4,-1.4) node(20) {$\overset{0}{\circ}$};
			\draw (-0.2,-1.4) node(21) {$\overset{0}{\circ}$};
			\draw (0,-1.4) node(22) {$\overset{0}{\circ}$};
			\draw (0.2,-1.4) node(23) {$\overset{0}{\circ}$};
			\draw (0.4,-1.4) node(24) {$\overset{0}{\circ}$};
			\draw (2) -- (20);
			\draw (2) -- (21);
			\draw (2) -- (22);
			\draw (2) -- (23);
			\draw (2) -- (24);
			\draw (0.8,-1.4) node(30) {$\overset{0}{\circ}$};
			\draw (1.0,-1.4) node(31) {$\overset{0}{\circ}$};
			\draw (1.2,-1.4) node(32) {$\overset{0}{\circ}$};
			\draw (1.4,-1.4) node(33) {$\overset{0}{\circ}$};
			\draw (1.6,-1.4) node(34) {$\overset{0}{\circ}$};
			\draw (3) -- (30);
			\draw (3) -- (31);
			\draw (3) -- (32);
			\draw (3) -- (33);
			\draw (3) -- (34);
			\draw (2.0,-1.4) node(40) {$\overset{\varepsilon}{\bullet}$};
			\draw (2.2,-1.4) node(41) {$\overset{\varepsilon}{\bullet}$};
			\draw (2.4,-1.4) node(42) {$\overset{\varepsilon}{\bullet}$};
			\draw (2.6,-1.4) node(43) {$\overset{\varepsilon}{\bullet}$};
			\draw (2.8,-1.4) node(44) {$\overset{\varepsilon}{\bullet}$};
			\draw (4) -- (40);
			\draw (4) -- (41);
			\draw (4) -- (42);
			\draw (4) -- (43);
			\draw (4) -- (44);
		\end{tikzpicture}
		\caption{$\theta = \big( \triv_{25}^{\times 4} \times \cX(\varepsilon;0) \big)\up^{P_{125}}$.\\
			Here, $\val(\theta)=1$ and $\eta(\theta)=\gamma_0(\theta)=5$.\\
			Also, $\Omega(\theta)=\puncb{125}{120}$ by Theorems~\ref{thm:value-1} and~\ref{thm:punctures}, so $m(\theta)=119$ and $M(\theta)=120$.}
		\label{fig:125-2}
	\end{subfigure}
	
	\vspace{10pt}
	
	\begin{subfigure}[t]{0.45\textwidth}
		\centering
		\begin{tikzpicture}[scale=1.0, every node/.style={scale=0.7}]
			\draw (0,0.1) node(R) {$\overset{5}{\circ}$};
			\draw (-2.4,-0.6) node(0) {$\overset{0}{\circ}$};
			\draw (-1.2,-0.6) node(1) {$\overset{2}{\bullet}$};
			\draw (0,-0.6) node(2) {$\overset{0}{\circ}$};
			\draw (1.2,-0.6) node(3) {$\overset{5}{\circ}$};
			\draw (2.4,-0.6) node(4) {$\overset{1}{\bullet}$};
			\draw (R) -- (0);
			\draw (R) -- (1);
			\draw (R) -- (2);
			\draw (R) -- (3);
			\draw (R) -- (4);
			\draw (-2.8,-1.4) node(00) {$\overset{0}{\circ}$};
			\draw (-2.6,-1.4) node(01) {$\overset{0}{\circ}$};
			\draw (-2.4,-1.4) node(02) {$\overset{0}{\circ}$};
			\draw (-2.2,-1.4) node(03) {$\overset{0}{\circ}$};
			\draw (-2.0,-1.4) node(04) {$\overset{0}{\circ}$};
			\draw (0) -- (00);
			\draw (0) -- (01);
			\draw (0) -- (02);
			\draw (0) -- (03);
			\draw (0) -- (04);
			\draw (-1.6,-1.4) node(10) {$\overset{1}{\bullet}$};
			\draw (-1.4,-1.4) node(11) {$\overset{1}{\bullet}$};
			\draw (-1.2,-1.4) node(12) {$\overset{1}{\bullet}$};
			\draw (-1.0,-1.4) node(13) {$\overset{1}{\bullet}$};
			\draw (-0.8,-1.4) node(14) {$\overset{1}{\bullet}$};
			\draw (1) -- (10);
			\draw (1) -- (11);
			\draw (1) -- (12);
			\draw (1) -- (13);
			\draw (1) -- (14);
			\draw (-0.4,-1.4) node(20) {$\overset{3}{\bullet}$};
			\draw (-0.2,-1.4) node(21) {$\overset{3}{\bullet}$};
			\draw (0,-1.4) node(22) {$\overset{3}{\bullet}$};
			\draw (0.2,-1.4) node(23) {$\overset{3}{\bullet}$};
			\draw (0.4,-1.4) node(24) {$\overset{3}{\bullet}$};
			\draw (2) -- (20);
			\draw (2) -- (21);
			\draw (2) -- (22);
			\draw (2) -- (23);
			\draw (2) -- (24);
			\draw (0.8,-1.4) node(30) {$\overset{2}{\bullet}$};
			\draw (1.0,-1.4) node(31) {$\overset{0}{\circ}$};
			\draw (1.2,-1.4) node(32) {$\overset{0}{\circ}$};
			\draw (1.4,-1.4) node(33) {$\overset{0}{\circ}$};
			\draw (1.6,-1.4) node(34) {$\overset{0}{\circ}$};
			\draw (3) -- (30);
			\draw (3) -- (31);
			\draw (3) -- (32);
			\draw (3) -- (33);
			\draw (3) -- (34);
			\draw (2.0,-1.4) node(40) {$\overset{1}{\bullet}$};
			\draw (2.2,-1.4) node(41) {$\overset{1}{\bullet}$};
			\draw (2.4,-1.4) node(42) {$\overset{1}{\bullet}$};
			\draw (2.6,-1.4) node(43) {$\overset{1}{\bullet}$};
			\draw (2.8,-1.4) node(44) {$\overset{1}{\bullet}$};
			\draw (4) -- (40);
			\draw (4) -- (41);
			\draw (4) -- (42);
			\draw (4) -- (43);
			\draw (4) -- (44);
		\end{tikzpicture}
		\caption{$\theta=\big( \triv_{25}\times \cX(1;2)\times \cX(3,0)\times \phi\times\cX(1;1) \big)\up^{P_{125}}$, where $\phi=\big( \phi_2\times\phi_0^{\times 4} \big)\up^{P_{25}}$.\\
			Here, $\val(\theta)=2$, $\eta(\theta)=18$, $\gamma_0(\theta)=16$ and $\gamma_1(\theta)=2$.\\
			Also, $\Omega(\theta)\supseteq\cB_{125}(107)$ and $\Omega(\theta)\setminus\cB_{125}(107)$ contains no thin partitions, and $m(\theta)=107$ and $M(\theta)=109$.}
		\label{fig:125-3}
	\end{subfigure}
	\hfill
	\begin{subfigure}[t]{0.45\textwidth}
		\centering
		\begin{tikzpicture}[scale=1.0, every node/.style={scale=0.7}]
			\draw (0,0.1) node(R) {$\overset{c}{\bullet}$};
			\draw (-2.4,-0.6) node(0) {$\overset{b}{\bullet}$};
			\draw (-1.2,-0.6) node(1) {$\overset{b}{\bullet}$};
			\draw (0,-0.6) node(2) {$\overset{b}{\bullet}$};
			\draw (1.2,-0.6) node(3) {$\overset{b}{\bullet}$};
			\draw (2.4,-0.6) node(4) {$\overset{b}{\bullet}$};
			\draw (R) -- (0);
			\draw (R) -- (1);
			\draw (R) -- (2);
			\draw (R) -- (3);
			\draw (R) -- (4);
			\draw (-2.8,-1.4) node(00) {$\overset{a}{\bullet}$};
			\draw (-2.6,-1.4) node(01) {$\overset{a}{\bullet}$};
			\draw (-2.4,-1.4) node(02) {$\overset{a}{\bullet}$};
			\draw (-2.2,-1.4) node(03) {$\overset{a}{\bullet}$};
			\draw (-2.0,-1.4) node(04) {$\overset{a}{\bullet}$};
			\draw (0) -- (00);
			\draw (0) -- (01);
			\draw (0) -- (02);
			\draw (0) -- (03);
			\draw (0) -- (04);
			\draw (-1.6,-1.4) node(10) {$\overset{a}{\bullet}$};
			\draw (-1.4,-1.4) node(11) {$\overset{a}{\bullet}$};
			\draw (-1.2,-1.4) node(12) {$\overset{a}{\bullet}$};
			\draw (-1.0,-1.4) node(13) {$\overset{a}{\bullet}$};
			\draw (-0.8,-1.4) node(14) {$\overset{a}{\bullet}$};
			\draw (1) -- (10);
			\draw (1) -- (11);
			\draw (1) -- (12);
			\draw (1) -- (13);
			\draw (1) -- (14);
			\draw (-0.4,-1.4) node(20) {$\overset{a}{\bullet}$};
			\draw (-0.2,-1.4) node(21) {$\overset{a}{\bullet}$};
			\draw (0,-1.4) node(22) {$\overset{a}{\bullet}$};
			\draw (0.2,-1.4) node(23) {$\overset{a}{\bullet}$};
			\draw (0.4,-1.4) node(24) {$\overset{a}{\bullet}$};
			\draw (2) -- (20);
			\draw (2) -- (21);
			\draw (2) -- (22);
			\draw (2) -- (23);
			\draw (2) -- (24);
			\draw (0.8,-1.4) node(30) {$\overset{a}{\bullet}$};
			\draw (1.0,-1.4) node(31) {$\overset{a}{\bullet}$};
			\draw (1.2,-1.4) node(32) {$\overset{a}{\bullet}$};
			\draw (1.4,-1.4) node(33) {$\overset{a}{\bullet}$};
			\draw (1.6,-1.4) node(34) {$\overset{a}{\bullet}$};
			\draw (3) -- (30);
			\draw (3) -- (31);
			\draw (3) -- (32);
			\draw (3) -- (33);
			\draw (3) -- (34);
			\draw (2.0,-1.4) node(40) {$\overset{a}{\bullet}$};
			\draw (2.2,-1.4) node(41) {$\overset{a}{\bullet}$};
			\draw (2.4,-1.4) node(42) {$\overset{a}{\bullet}$};
			\draw (2.6,-1.4) node(43) {$\overset{a}{\bullet}$};
			\draw (2.8,-1.4) node(44) {$\overset{a}{\bullet}$};
			\draw (4) -- (40);
			\draw (4) -- (41);
			\draw (4) -- (42);
			\draw (4) -- (43);
			\draw (4) -- (44);
		\end{tikzpicture}
		\caption{$\theta=\cX(a;b;c)$ where $a,b,c\in[0,4]$.\\
			Here, we write $\cX(a;b;c)$ for $\cX( \cX(\phi_a;\phi_b); \phi_c)$, and replace filled circles $\bullet$ by unfilled circles $\circ$ if a vertex label equals 0.}
		\label{fig:125-4}
	\end{subfigure}
	\caption{Examples of $\theta\in\Irr(P_{p^3})$ where $p=5$, together with a representative admissible tree $T\in\cT(\theta)$, tree statistics from Definition~\ref{def:trees-stats} and information on their Sylow branching coefficients.}
	\label{fig:125}
\end{figure}

\begin{table}[t!]
	\[ \def\arraystretch{1.3}
	\begin{array}{|cc|clcc|lcc|}
		\hline
		\theta && \val(\theta) & \gamma_i(\theta)\text{ for }i<\val(\theta) & \eta(\theta) && \Omega(\theta) & m(\theta) & M(\theta) \\
		\hline
		\hline
		\cX(0;0;0) && 0 & & 0 && \puncp{125} & 123 & 125\\
		\hline
		\cX(0;0;1) && 1 & \gamma_0=1 & 1 && \cB_{125}(124) & 124 & 124\\
		\cX(0;1;0) && 1 & \gamma_0=5 & 5 && \puncb{125}{120} & 119 & 120\\
		\cX(1;0;0) && 1 & \gamma_0=25 & 25 && \puncb{125}{100} & 99 & 100\\
		\hline
		\cX(0;1;1) && 2 & \gamma_0=5,\ \gamma_1=1 & 6 && {\scriptstyle \cB_{125}(119) \sqcup \close{\{(120,\mu)\mid\mu\in\cB_5(4) \}}} & 119 & 120\\
		\cX(1;0;1) && 2 & \gamma_0=25,\ \gamma_1=1 & 26 && {\scriptstyle \cB_{125}(99) \sqcup \close{\{(100,\mu)\mid\mu\in\cB_{25}(24) \}}} & 99 & 100\\
		\cX(1;1;0) && 2 & \gamma_0=25,\ \gamma_1=5 & 30 && {\scriptstyle\text{(see Example~\ref{ex:125})}} & 95 & 100\\
		\hline
		\cX(1;1;1) && 3 & \gamma_0=25,\ \gamma_1=5,\ \gamma_2=1 & 31 && {\scriptstyle\text{(see Example~\ref{ex:125})}} & 95 & 100\\
		\hline
	\end{array} \]
	\caption{Equivalence class representatives for $\sim_{0,p}$ on the linear characters in $\Irr(P_{p^3})$, where $p=5$, their tree statistics, and information on their Sylow branching coefficients.}
	\label{tab:125}
\end{table}

We move now to the analysis of $m(\theta)$ for any $\theta\in\Irr(P_{p^k})$, recalling its definition from \eqref{eq:m-and-M} in the introduction. We start by studying the situation for all those irreducible characters $\theta$ such that $\val(\theta)\leq 1$. As we will see, in this case we will be able to completely determine the set $\Omega(\theta)$. 
In the following remark, we first settle the case where $\val(\theta)=0$.

\begin{remark}\label{rem: Value0}
	It is easy to see that $\val(\theta)=0$ if and only if $\theta=\triv_{p^k}$. 
	Let $\triv=\triv_{p^k}$. Then by Theorem~\ref{thm:GL1} we know that $\Omega(\triv)=\puncp{p^k}$, and therefore $m(\triv)=p^k-2$. Moreover, we observe that $\gamma_0(\triv)=0$ and that for any $\lambda\in\close{\{\tworow{n}{n-\gamma_0(\triv)},\hook{n}{n-\gamma_0(\triv)} \}}=\{(n),(1^n)\}$, we clearly have that $[\chi^\lambda\down_{P_n}, \triv]=1$.
	\hfill$\lozenge$
\end{remark}

Before moving on to studying $\Omega(\theta)$ for $\theta$ such that $\val(\theta)= 1$, we state and prove a technical lemma that will be repeatedly used in the proofs of our main results. 

\begin{lemma}\label{lem:usual-PW/GTT+LR}
	Let $k\in\N_{\geq 2}$ and let $N\in [1, p^{k-1}]$. Let $\theta=\cX(\varphi;\varepsilon)\in\Irr(P)$ for some $\varphi\in\Irr(Q)$ and $\varepsilon\in[0, p-1]$. Suppose that $\lambda\in\cP(p^k)$, $\nu\in\cP(p)$ and $\alpha\in\{\tworow{p^{k-1}}{N}, \hook{p^{k-1}}{N}\}$ satisfy the following conditions:
	\begin{enumerate}[label=(\roman*)]
		\item[(i)] $[\alpha\down_Q,\varphi]=1$, 
		\item[(ii)] If $\lambda^1,\dotsc,\lambda^p\in\Omega(\varphi)$ and $\LR(\lambda;\lambda^1,\ldots,\lambda^p)\neq 0$, then $\lambda^i=\alpha$ for all $i\in [1,p]$,
		\item[(iii)] $[\lambda\down_V,\cX(\alpha;\nu)]=1$, and
		\item[(iv)] $[\lambda\down_Y,\alpha^{\times p}]=\chi^\nu(1)$.
	\end{enumerate}
	Then $[\lambda\down_P,\theta]=[\cX(\alpha;\nu)\down_P,\theta] = [\nu\down_{P_p},\phi_\varepsilon]$. In particular, if $\lambda=p\alpha$ then there is some $\nu\in\{(p),(1^p)\}$ satisfying (iii) and (iv).
\end{lemma}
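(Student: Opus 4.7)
The plan is to work along the chain $S_{p^k} \geq V \geq W \geq P$ from Notation~\ref{not:subgps} and show that $\cX(\alpha;\nu)$ is essentially the unique piece of $\lambda\down_V$ contributing to $[\lambda\down_P,\theta]$. First, I would use (i) together with (ii) to rigidify the Littlewood--Richardson decomposition of $\lambda\down_B$: since $\theta\down_B = \varphi^{\times p}$,
\[ [\lambda\down_B, \varphi^{\times p}] = \sum_{\lambda^1,\ldots,\lambda^p\in\cP(p^{k-1})} \LR(\lambda;\lambda^1,\ldots,\lambda^p)\prod_{i=1}^p [\lambda^i\down_Q,\varphi], \]
and a summand contributes only when every $\lambda^i\in\Omega(\varphi)$. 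Hypothesis (ii) forces $\lambda^i=\alpha$ for all $i$, and then (i) gives $[\lambda\down_B,\varphi^{\times p}] = \LR(\lambda;\alpha,\ldots,\alpha) = [\lambda\down_Y,\alpha^{\times p}] = \chi^\nu(1)$ by (iv).

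Next, I would show that $\cX(\alpha;\nu)$ is the unique irreducible constituent of $\lambda\down_V$ lying above $\varphi^{\times p}$ in $B$, appearing with multiplicity $1$. Any such $V$-constituent must restrict to $Y$ containing some $\lambda^1\times\cdots\times\lambda^p$ with each $\lambda^i\in\Omega(\varphi)$, which by (ii) forces each $\lambda^i=\alpha$; and the only irreducible characters of $V$ above $\alpha^{\times p}$ are those of the form $\cX(\alpha;\chi^\mu)$ for $\mu\vdash p$, since any mixed induced character $\beta_1\times\cdots\times\beta_p\up^V$ lying above $\alpha^{\times p}$ would force all $\beta_i=\alpha$, contradicting mixedness. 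From
\[ \chi^\nu(1) = [\lambda\down_Y,\alpha^{\times p}] = \sum_{\mu\vdash p}[\lambda\down_V,\cX(\alpha;\chi^\mu)]\chi^\mu(1), \]
combined with (iii) (which says the $\mu=\nu$ summand already contributes $\chi^\nu(1)$), I deduce $[\lambda\down_V,\cX(\alpha;\chi^\mu)]=0$ for $\mu\neq\nu$. Since constituents of $\lambda\down_V$ not above $\varphi^{\times p}$ cannot produce any constituent of $\lambda\down_P$ above $\varphi^{\times p}$, this yields $[\lambda\down_P,\theta] = [\cX(\alpha;\nu)\down_P,\theta]$.

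The final main step is to compute $[\cX(\alpha;\nu)\down_P,\theta]$ via the intermediate group $W$. Restricting from $V$ to $W$, a standard wreath-product calculation yields $\cX(\alpha;\chi^\nu)\down_W = \sum_{j=0}^{p-1}[\chi^\nu\down_{P_p},\phi_j]\cX(\alpha;\phi_j)$. I would then establish the key identity $[\cX(\alpha;\phi_j)\down_P,\cX(\varphi;\phi_i)] = \delta_{ij}$ by analysing $\widetilde{\alpha^{\otimes p}}\big|_P$ as a $P$-representation: writing $\alpha\down_Q = \varphi + \Delta$ with $[\Delta,\varphi]=0$ (from (i)), the $B$-module $(\alpha\down_Q)^{\otimes p} = (\varphi+\Delta)^{\otimes p}$ decomposes into summands indexed by patterns in $\{\varphi,\Delta\}^p$, and only the all-$\varphi$ pattern contains $\varphi^{\times p}$. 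This pattern is stable under the cyclic $P_p$-action and carries the canonical extension $\widetilde{\varphi^{\otimes p}} = \cX(\varphi;\phi_0)$, while no other summand yields a $P$-constituent above $\varphi^{\times p}$ (the all-$\Delta$ summand by $[\Delta,\varphi]=0$, and each mixed-pattern orbit by being induced from $B$ with a non-$\varphi^{\otimes p}$ restriction). Multiplying by the inflation of $\phi_j$ sends $\cX(\varphi;\phi_0)$ to $\cX(\varphi;\phi_j)$, giving the claim, and summing produces $[\cX(\alpha;\nu)\down_P,\theta] = [\chi^\nu\down_{P_p},\phi_\varepsilon]$. For the \emph{in particular} statement with $\lambda = p\alpha$, I would take $\nu=(p)$: hypothesis (iv) reduces to $\LR(p\alpha;\alpha,\ldots,\alpha)=1$, which is Lemma~\ref{lem:LR-sum}, and (iii) follows from Theorem~\ref{thm:PW9.1}, since the explicit formula there shows that the lex-greatest partition whose character appears in $\cX(\alpha;(p))\up^{S_{p^k}}$ is precisely $p\alpha$, with multiplicity $1$.

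I expect the main obstacle to be the third step, specifically the identity $[\cX(\alpha;\phi_j)\down_P,\cX(\varphi;\phi_i)] = \delta_{ij}$. The subtlety is that $\alpha\down_Q$ may have many irreducible constituents beyond $\varphi$, so one must carefully track how the canonical tensor extension $\widetilde{\alpha^{\otimes p}}$ interacts with restriction from $W$ to $P$, verifying both that the correct extension of $\varphi^{\otimes p}$ is picked out in the $\varphi^{\otimes p}$-isotypic component and that the top-level linear character $\phi_j$ is transported faithfully; this is fundamentally a representation-theoretic analysis of the pattern decomposition of $U^{\otimes p}$ under $P_p$-orbits, rather than a pure character calculation.
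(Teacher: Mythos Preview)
Your proposal is correct and follows essentially the same route as the paper: isolate $\cX(\alpha;\nu)$ as the unique $V$-constituent of $\lambda\down_V$ that can contribute to $[\lambda\down_P,\theta]$ (the paper does this by writing $\lambda\down_V=\cX(\alpha;\nu)+\Delta$ and showing $[\Delta\down_P,\theta]=0$ via exactly your condition~(ii) argument), and then compute $[\cX(\alpha;\nu)\down_P,\theta]$.

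Two minor differences are worth noting. First, your third step---the identity $[\cX(\alpha;\phi_j)\down_P,\cX(\varphi;\phi_i)]=\delta_{ij}$---is precisely Lemma~\ref{lem:SL2.19} applied with $G=Q$, $H=P_p$, the irreducible $\varphi$, and the character $\chi^\alpha\down_Q$ (which contains $\varphi$ with multiplicity~$1$ by hypothesis~(i)); the paper simply cites that lemma, and your pattern-decomposition sketch is the standard proof of it, so the obstacle you anticipated does not arise. Second, for the ``in particular'' clause the paper splits into the two-row and hook cases for $\alpha$, invoking Theorem~\ref{thm:GTT3.5} for the latter, whereas you use Theorem~\ref{thm:PW9.1} with $\nu=(p)$ uniformly. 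This works for both shapes of $\alpha$: the formula in Theorem~\ref{thm:PW9.1} with $\mu=\alpha$ and $\nu=(p)$ yields exactly $p\alpha$ as the lex-greatest constituent in either case, so your treatment is a little cleaner here.
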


\begin{proof}
	We first prove the final assertion, that if $\lambda=p\alpha$ then there is some $\nu\in\{(p),(1^p)\}$ satisfying (iii) and (iv). Indeed, if $\alpha=\hook{p^{k-1}}{N}$ then $[\lambda\down_V,\cX(\alpha;\nu)]=1$ for some $\nu\in\{(p),(1^p)\}$ by Theorem~\ref{thm:GTT3.5}, while if $\alpha=\tworow{p^{k-1}}{N}$ then $[\lambda\down_V,\cX(\alpha;(p))]=1$ by Theorem~\ref{thm:PW9.1}. Moreover, clearly $[\lambda\down_Y,\alpha^{\times p}]=1$ from the Littlewood--Richardson rule, and $\chi^\nu(1)=1$ for $\nu\in\{(p),(1^p)\}$.
	
	Next, we turn to showing that $[\lambda\down_P,\theta]=[\cX(\alpha;\nu)\down_P,\theta]$.
	From (iii), we may write $\lambda\down_V=\cX(\alpha;\nu)+\Delta$ for some $\Delta\in\Char(V)$.
	Now, $[\lambda\down_Y,\alpha^{\times p}]=\chi^\nu(1)$ from (iv) and also $[\cX(\alpha;\nu)\down_Y,\alpha^{\times p}]=\chi^\nu(1)$, so $[\Delta\down_Y,\alpha^{\times p}]=0$. 
	Then since $\theta\down_B=\varphi^{\times p}$, we have that
	\begin{align*}
		[\Delta\down_P,\theta] \le [\Delta\down_B,\varphi^{\times p}] &= \sum_{\lambda^1,\ldots,\lambda^p\in\cP(p^{k-1})} [\Delta\down_Y,\lambda^1\times\cdots\times\lambda^p]\cdot \prod_{i=1}^p [\lambda^i\down_Q,\varphi] \\
		&= \sum_{\substack{\lambda^1,\dots\lambda^p\in\Omega(\varphi) \\ \LR(\lambda;\lambda^1,\ldots,\lambda^p)>0}} [\Delta\down_Y,\lambda^1\times\cdots\times\lambda^p]\cdot \prod_{i=1}^p [\lambda^i\down_Q,\varphi] = [\Delta\down_Y, \alpha^{\times p}] \cdot [\alpha\down_Q,\varphi]^{\times p},
	\end{align*}
	where the final equality follows from (ii). Thus $[\Delta\down_P,\theta] = 0$.
	Hence
	\[ [\lambda\down_P,\theta] = [\cX(\alpha;\nu)\down_P,\theta] + [\Delta\down_P,\theta] =  [\cX(\alpha;\nu)\down_P,\theta]. \]
	Finally, since we have $[\alpha\down_Q,\varphi]=1$ by (i) then $[\cX(\alpha;\nu)\down_P,\theta]=[\nu\down_{P_p},\phi_\varepsilon]$ follows from Lemma~\ref{lem:SL2.19}.
\end{proof}

We are now ready to prove one of the main results of this section. 

\begin{theorem}\label{thm:value-1}
	Let $k\in\N$ and $\theta\in\Irr(P_{p^k})$. Suppose $\val(\theta)=1$. Then
	\[ \Omega(\theta)\ \text{equals either}\ \cB_{p^k}(p^k-\gamma_0(\theta))\text{ or }\puncb{p^k}{p^k-\gamma_0(\theta)}. \]
	Moreover, for all $\lambda\in \close{\{\tworow{p^k}{p^k-\gamma_0(\theta)}, \hook{p^k}{p^k-\gamma_0(\theta)}\}}$, we have that $[\lambda\down_{P_{p^k}},\theta]=1$.
\end{theorem}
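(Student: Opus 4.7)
The plan is to proceed by strong induction on $k$. The base cases $k=1$ and $k=2$ follow directly from Lemma~\ref{lem:small-k}: for $k=1$ the only characters with $\val(\theta)=1$ are $\phi_\varepsilon$ ($\varepsilon\in[1,p-1]$), with $\Omega(\phi_\varepsilon)=\cB_p(p-1)$ and $\gamma_0(\phi_\varepsilon)=1$; for $k=2$, parts (b) and (c) of Lemma~\ref{lem:small-k} enumerate precisely the characters with $\val(\theta)=1$ and verify the claimed $\cB$ or $\puncb$ structure, together with the multiplicity-one statements for $\tworow{p^2}{p^2-\gamma_0(\theta)}$ and $\hook{p^2}{p^2-\gamma_0(\theta)}$.

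For the inductive step $k\geq 3$, I split on the recursive structure of $\theta$ from Section~\ref{sec:trees}. \emph{Case A:} $\theta=\cX(\triv_{p^{k-1}};\phi_\varepsilon)$ with $\varepsilon\in[1,p-1]$. Here $\val(\varphi)=0$ is forced (otherwise $\val(\theta)\ge 2$), so $\gamma_0(\theta)=1$, and Lemma~\ref{lem:GL2-4.3} combined with Lemma~\ref{lem:conj-restr} yields $\Omega(\theta)=\cB_{p^k}(p^k-1)$ and the required multiplicity. \emph{Case B:} $\theta=\theta_1\times\cdots\times\theta_p\up^P$ with mixed $\theta_i\in\Irr(Q)$. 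By assumption each $\theta_i$ has $\val(\theta_i)\le 1$, so $\Omega(\theta_i)$ is either $\puncp{p^{k-1}}$ (Theorem~\ref{thm:GL1}) or $\cB/\puncb$ by IH. By Corollary~\ref{cor:omega-mixed}, $\Omega(\theta)=\Omega(\theta_1)\star\cdots\star\Omega(\theta_p)$, and iterated use of Proposition~\ref{prop:smooth} (reading off Table~\ref{tab:smooth}) collapses this to either $\cB_{p^k}(p^k-\gamma_0(\theta))$ or $\puncb{p^k}{p^k-\gamma_0(\theta)}$, since $p^{k-1}-\gamma_0(\theta_i)$ contributions sum correctly. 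For the moreover statement I pick $\lambda=\tworow{p^k}{p^k-\gamma_0(\theta)}$, observe $\bar\theta\mid\lambda\down_B$ and argue as in Lemma~\ref{lem:small-k}(c) that $\LR$ constraints (Lemma~\ref{lem:LR-first-part}) combined with the inductive multiplicity-one statement applied to each $\theta_i$ force $[\lambda\down_P,\theta]=1$; the hook case is symmetric.

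\emph{Case C}, the main obstacle: $\theta=\cX(\varphi;\phi_0)$ with $\val(\varphi)=1$. Set $M:=p^{k-1}-\gamma_0(\varphi)$, so by IH $\Omega(\varphi)$ is $\cB_{p^{k-1}}(M)$ or $\puncb{p^{k-1}}{M}$, and $p^k-\gamma_0(\theta)=pM$. The upper bound $\Omega(\theta)\subseteq\cB_{p^k}(pM)$ is Theorem~\ref{thm:M}. For the lower bound, the second statement of Lemma~\ref{lem:mixed} yields $\cM(p,\Omega(\varphi))\subseteq\Omega(\theta)$, and Proposition~\ref{prop:mixed} (respectively Lemma~\ref{lem:A'}) describes this set explicitly, already catching all of $\cB_{p^k}(pM-1)$. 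To handle the boundary partitions $\lambda=(pM,\mu)$ with $\lambda_1=pM$ that are not captured by $\cM$, I invoke Lemma~\ref{lem:usual-PW/GTT+LR} with $\alpha\in\{\tworow{p^{k-1}}{M},\hook{p^{k-1}}{M}\}$: a valid choice of $\mu\in\Omega(\varphi)$ with $\mu_1=M$ exists by IH, and Theorem~\ref{thm:PW9.1} provides a constituent $\cX(\alpha;\nu)$ of $\lambda\down_V$ that is pulled back to $\theta$ via Lemma~\ref{lem:SL2.19}, using $[\phi_0,\nu\down_{P_p}]\ne 0$ which holds exactly when $\nu\in\puncp{p}$. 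This same mechanism produces the multiplicity-one statement for $\tworow{p^k}{pM}$ and $\hook{p^k}{pM}$.

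The trickiest point will be the $\puncb$ subcase of Case C: showing that the excluded partitions $\tworow{p^k}{pM-1}$, $(pM, 2, 1^{\ldots})$, and their conjugates really lie outside $\Omega(\theta)$. For this I plan an upper-bound argument restricting further to $B$, using that $\theta\down_B=\varphi^{\times p}$, that the Littlewood--Richardson coefficients $\LR(\lambda;\lambda^1,\ldots,\lambda^p)$ are severely constrained when $\lambda_1$ is forced to equal $(\lambda^1)_1+\cdots+(\lambda^p)_1$ (Lemma~\ref{lem:LR-first-part}), and the inductive description of $\Omega(\varphi)=\puncb{p^{k-1}}{M}$, arguing that any candidate decomposition would force each $\lambda^i$ into the excluded partitions of $\puncb{p^{k-1}}{M}$, a contradiction.
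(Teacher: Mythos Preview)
Your overall inductive architecture matches the paper's, and Cases~A and~B are essentially correct (modulo checking the numerical hypotheses of Proposition~\ref{prop:smooth}, which the paper handles via Lemma~\ref{lem:small-gamma_0}). The real problem is in Case~C.

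First, a notational slip that points to a genuine confusion: the partitions excluded from $\puncb{p^k}{pM}$ are $(pM,\gamma_0(\theta)-1,1)$ and $(pM,2,1^{\gamma_0(\theta)-2})$ together with their conjugates, \emph{not} $\tworow{p^k}{pM-1}$. The latter has first part $pM-1$ and already lies in the $\cB_{p^k}(pM-1)$ you have established.

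More seriously, your proposed exclusion argument via restriction to $B$ does not work. Take $\lambda=(pM,\gamma_0(\theta)-1,1)$ in the subcase $\Omega(\varphi)=\puncb{p^{k-1}}{M}$. You want to show $[\lambda\down_P,\theta]=0$, and you plan to bound this by $[\lambda\down_B,\varphi^{\times p}]$ and argue that every LR decomposition forces some $\lambda^i$ into the excluded set of $\puncb{p^{k-1}}{M}$. But this is false: setting $\alpha=\tworow{p^{k-1}}{M}\in\puncb{p^{k-1}}{M}$, one has $\LR(\lambda;\alpha,\ldots,\alpha)=\LR\bigl((\gamma_0(\theta)-1,1);(\gamma_0(\varphi)),\ldots,(\gamma_0(\varphi))\bigr)=p-1$ by Lemma~\ref{lem:iterated-LR} and a Kostka-number count, and $[\alpha\down_Q,\varphi]=1$ by the inductive hypothesis. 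Hence $[\lambda\down_B,\varphi^{\times p}]=p-1>0$, so the $B$-restriction gives no obstruction. The paper instead applies Lemma~\ref{lem:usual-PW/GTT+LR} with $\nu=(p-1,1)$ (verifying its condition~(ii) exactly via the argument you sketched, which correctly forces $\lambda^i=\alpha$) to obtain $[\lambda\down_P,\theta]=[(p-1,1)\down_{P_p},\phi_0]=0$; the passage through $V$ rather than $B$ is essential.

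There is a parallel gap on the inclusion side of Case~C. Even in the subcase $\Omega(\varphi)=\cB_{p^{k-1}}(M)$ with $\gamma_0(\varphi)=1$, Proposition~\ref{prop:mixed} gives only $\cM(p,\Omega(\varphi))=\cB_{p^k}(pM-1)$, so \emph{every} $(pM,\mu)$ with $\mu\in\cP(p)$ is a boundary partition not captured by $\cM$, and the outcome is in fact $\Omega(\theta)=\puncb{p^k}{pM}$. Your sketch only treats $\mu\in\close{\{(p)\}}$. The paper handles all $\mu\in\cP(p)$ at once via Lemma~\ref{lem:usual-PW/GTT+LR} with $\nu=\mu$, yielding $[\lambda\down_P,\theta]=[\mu\down_{P_p},\phi_0]$. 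Similarly, in the $\puncb$ subcase you still need to place $(pM,\mu)$ for $\mu\in\cB_{\gamma_0(\theta)}(\gamma_0(\theta)-2)$ inside $\Omega(\theta)$; the paper does this by showing $\cB_{\gamma_0(\theta)}(\gamma_0(\theta)-2)\subseteq\cM(p,\puncp{\gamma_0(\varphi)})$ and lifting via Lemma~\ref{lem:iterated-LR}, an argument your outline does not supply.
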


\begin{proof}
	We use the notation introduced in Notation~\ref{not:subgps}.
	We proceed by induction on $k$, noting that $k\le 2$ follows from Lemma~\ref{lem:small-k}. We may now suppose $k\ge 3$, and split our argument into three cases depending on the form of the character $\theta\in\Irr(P)$.

	\smallskip
	
	\noindent\textbf{Case 1:} For the first case, suppose that $\theta=\theta_1\times\cdots\times\theta_p\up^P$ for some mixed $\theta_1,\ldots, \theta_p\in\Irr(Q)$. Then 
	\[ \val(\theta)=\max_{i\in[1, p]} \val(\theta_i)\quad\text{and}\quad \gamma_0(\theta)=\sum_{i=1}^p \gamma_0(\theta_i). \]
	Since $\val(\theta)=1$, it follows that $\val(\theta_i)\in\{0,1\}$ for all $i\in [1,p]$. 
	If $\val(\theta_i)=0$ then Theorem~\ref{thm:GL1} shows that $\Omega(\theta_i)=\puncp{p^{k-1}}$. 
	On the other hand, the inductive hypothesis guarantees that if $\val(\theta_i)=1$, then $\Omega(\theta_i) = \cB_{p^{k-1}}(p^{k-1}-\gamma_0(\theta_i))$ or $\puncb{p^{k-1}}{p^{k-1}-\gamma_0(\theta_i)}$.
	
	Using Theorem~\ref{prop:smooth}, we deduce that $\Omega(\theta)=\cB_{p^k}(p^k-\gamma_0(\theta))$ or $\puncb{p^k}{p^k-\gamma_0(\theta)}$. We remark that we can invoke Theorem~\ref{prop:smooth} as the technical hypotheses involved in its statement are satisfied here. That is, $p^{k-1}\ge 5$ and $\gamma_0(\theta_i)\ge p$ whenever $\Omega(\theta_i)=\puncb{p^{k-1}}{p^{k-1}-\gamma_0(\theta_i)}$ by Lemma~\ref{lem:small-gamma_0} (since $\theta_i\ne\triv_Q$ in this case and so $\gamma_0(\theta_i)\ne 0$).
	
	Notice also that $\Omega(\theta)=\puncb{p^k}{p^k-\gamma_0(\theta)}$ can only occur if there is a unique $i\in[1, p]$ such that $\Omega(\theta_i)=\puncb{p^{k-1}}{p^{k-1}-\gamma_0(\theta_i)}$, and $\Omega(\theta_j)=\puncp{p^{k-1}}$ for all $j\in [1, p]\setminus\{i\}$ (that is, $\theta_j=\triv_Q$). 

	Finally, let $\lambda\in \close{\{\tworow{p^k}{p^k-\gamma_0(\theta)}, \hook{p^k}{p^k-\gamma_0(\theta)}\}}$. 
	By Lemma~\ref{lem:conj-restr}, we can assume without loss of generality that $\lambda\in\{\tworow{p^k}{p^k-\gamma_0(\theta)}, \hook{p^k}{p^k-\gamma_0(\theta)}\}$. 
	Suppose first that $\lambda=\tworow{p^k}{p^k-\gamma_0(\theta)}$. 			
	Let $\overline{\theta}:=\theta_1\times\cdots\times\theta_p\in\Irr(B)$. First, $[\lambda\down_P,\theta] = [\lambda\down_B,\overline{\theta}]$ since $\Irr(P\mid\overline{\theta})=\{\theta\}$ and $[\theta\down_B,\overline{\theta}]=1$. We have that
	\[ [\lambda\down_B,\overline{\theta}] = \sum_{\lambda^1,\dotsc,\lambda^p\in\cP(p^{k-1})} \LR(\lambda;\lambda^1,\ldots,\lambda^p) \cdot \prod_{i=1}^p [\lambda^i\down_Q, \theta_i]. \]
	Now $\LR(\lambda;\lambda^1,\ldots,\lambda^p)>0$ implies that $\ell(\lambda^i)\le \ell(\lambda)\le 2$ for each $i\in [1,p]$, and that $\lambda_1\le \sum_{i=1}^p (\lambda^i)_1$. On the other hand $[\lambda^i\down_Q, \theta_i]>0$ implies that $\lambda^i\in\Omega(\theta_i)$ and therefore that $(\lambda^i)_1\le p^{k-1}-\gamma_0(\theta_i)$ by Theorem~\ref{thm:M}. Since $\lambda_1 = p^k-\gamma_0(\theta)=\sum_{i=1}^p (p^{k-1}-\gamma_0(\theta_i))$, we have that
	\[ [\lambda\down_P,\theta] = [\lambda\down_B,\overline{\theta}] = \LR(\lambda; \ft^1,\ldots,\ft^p)\cdot \prod_{i=1}^p [\ft^i\down_Q,\theta_i] \]
	where $\ft^i:=\tworow{p^{k-1}}{p^{k-1}-\gamma_0(\theta_i)}$. 
	Clearly $\LR(\lambda; \ft^1,\ldots,\ft^p)=1$ since $\lambda=\ft^1+\cdots+\ft^p$. Moreover, if $\val(\theta_i)=0$ then $\theta_i=\triv_Q$ and so $[\ft^i\down_Q,\theta_i]=1$. On the other hand, if  $\val(\theta_i)=1$ then $[\ft^i\down_Q,\theta_i]=1$ by the inductive hypothesis. 
	It follows that $[\lambda\down_P,\theta]=1$, as desired.
	The case of $\lambda=\hook{p^k}{p^k-\gamma_0(\theta)}$ follows from entirely analogous arguments.

	\smallskip
	
	\noindent\textbf{Case 2:} For the second case, suppose now that $\theta=\cX(\varphi;\varepsilon)$ with $\varepsilon\in[1,p-1]$ and $\varphi\in\Irr(Q)$. Then $\val(\theta)=1$ implies that $\gamma_0(\theta)=1$ and $\val(\varphi)=0$, and so $\varphi=\triv_Q$. Then $\Omega(\theta)=\cB_{p^k}(p^k-1)$ and $[\lambda\down_P,\theta]=1$ for all $\lambda\in \close{\{\tworow{p^k}{p^k-1}, \hook{p^k}{p^k-1}\}}$ by Lemma~\ref{lem:GL2-4.3}.		
	
	\smallskip
	
	\noindent\textbf{Case 3:} For the third and final case, suppose that $\theta=\cX(\varphi;0)$ for some $\varphi\in\Irr(Q)$. Then $\gamma_0(\theta)=p\cdot\gamma_0(\varphi)$ and $\val(\varphi)=\val(\theta)=1$. By the inductive hypothesis, $\Omega(\varphi)$ equals either $\cB_{p^{k-1}}(p^{k-1}-\gamma_0(\varphi))$ or $\puncb{p^{k-1}}{p^{k-1}-\gamma_0(\varphi)}$. We analyse these two subcases separately. 
	
	\smallskip
	
	\noindent\textbf{Subcase 3.1:} For the first of these two subcases, suppose $\Omega(\varphi)=\cB_{p^{k-1}}(p^{k-1}-\gamma_0(\varphi))$. 
	Observe that
	\[ \cB_{p^k}(p^k-\gamma_0(\theta)-1) \subseteq \cM(p,\cB_{p^{k-1}}(p^{k-1}-\gamma_0(\varphi))) = \cM(p,\Omega(\varphi)) \subseteq \Omega(\theta), \]
	where the first inclusion follows from Proposition~\ref{prop:mixed}, and the final inclusion follows from Lemma~\ref{lem:mixed}. (Note the hypotheses of Proposition~\ref{prop:mixed} are satisfied as $\tfrac{p^{k-1}}{2}+1<p^{k-1}-\gamma_0(\varphi)$, because $k\ge 3$, $p\ge 5$ and $\gamma_0(\varphi)\le p^{k-2}$ from Lemma~\ref{lem:tree-bounds}.)
	We also remark that $\Omega(\theta)\subseteq\cB_{p^k}(p^k-\gamma_0(\theta))$ by Theorem~\ref{thm:M}. 
	Hence, it remains to study $\lambda\in\cP(p^k)$ with $\lambda_1=p^k-\gamma_0(\theta)$, by Lemma~\ref{lem:conj-restr}.
	Let us consider $\lambda=(p^k-\gamma_0(\theta),\mu)$ for some arbitrary $\mu\in\cP(\gamma_0(\theta))$.
	We split our analysis according to the value of $\gamma_0(\varphi)$.
	
	\noindent\textit{Subcase 3.1(i):} If $\gamma_0(\varphi)=1$, then $\lambda=(p^k-p,\mu)$ and $\mu\in\cP(p)$. Then using Lemma~\ref{lem:usual-PW/GTT+LR} with $\alpha=(p^{k-1}-1,1)$ and $\nu=\mu$ we obtain that $[\lambda\down_P,\theta]=[\mu\down_{P_p},\phi_0]$. 
	We observe that the four conditions required in the statement of Lemma~\ref{lem:usual-PW/GTT+LR} are satisfied in this setting with $N=p^{k-1}-\gamma_0(\varphi)=p^{k-1}-1$, since:
	\begin{itemize}
		\item Condition (i) of Lemma~\ref{lem:usual-PW/GTT+LR} is satisfied since $[\alpha\down_Q,\varphi]=1$ is given by the inductive hypothesis.
		\item Condition (ii) follows since $\LR(\lambda;\lambda^1,\ldots,\lambda^p)>0$ implies $\lambda^i\subseteq\lambda$ for all $i\in [1, p]$ and $p^k-p=\lambda_1\le \sum_{i=1}^p (\lambda^i)_1$, but $\lambda^i\in\Omega(\varphi)$ gives $(\lambda^i)_1\le p^{k-1}-1$, whence $\lambda^i=\alpha=(p^{k-1}-1,1)$ for each $i\in [1, p]$.
		\item Condition (iii) follows from Theorem~\ref{thm:PW9.1}.
		\item Condition (iv) follows from an easy application of the Littlewood--Richardson rule.
	\end{itemize}
	Then by Lemma~\ref{lem:small-k},
	\[ [\mu\down_{P_p},\phi_0] \begin{cases}
		= 0 & \text{if }\mu\in\close{\{(p-1,1)\}},\\
		= 1 & \text{if }\mu\in\close{\{(p)\}},\\
		\ge 1 & \text{if }\mu\in\cB_p(p-2).
	\end{cases} \]
	Thus, in this case we have proved both $\Omega(\theta)=\puncb{p^k}{p^k-\gamma_0(\theta)}$ as well as the part of the theorem statement concerning multiplicities for the specified thin partitions. 
				
	\noindent\textit{Subcase 3.1(ii):} Assume now that $\gamma_0(\varphi)\ge 2$. Observe that
	\[ \cB_{\gamma_0(\theta)}(\gamma_0(\theta)-1) = \cB_{p\gamma_0(\varphi)}(p\gamma_0(\varphi)-1) \subseteq \cM(p, \cP(\gamma_0(\varphi))) \]
	by Proposition~\ref{prop:mixed}. Recall that $\lambda=(p^k-\gamma_0(\theta),\mu)$ where $\mu\in\cP(\gamma_0(\theta))$. Hence if $\mu\in \cB_{\gamma_0(\theta)}(\gamma_0(\theta)-1)$, then there exist mixed $\mu^1,\ldots,\mu^p\in \cP(\gamma_0(\varphi))$ such that $\LR(\mu;\mu^1,\ldots,\mu^p)\neq 0$. Setting $\lambda^i:=(p^{k-1}-\gamma_0(\varphi), \mu^i)$ for each $i\in [1,p]$, we have that $\lambda^1,\ldots, \lambda^p\in \cB_{p^{k-1}}(p^{k-1}-\gamma_0(\varphi))$ are mixed and $\LR(\lambda;\lambda^1,\ldots,\lambda^p)=\LR(\mu;\mu^1,\ldots,\mu^p)\neq 0$ by Lemma~\ref{lem:iterated-LR}. It follows that $\lambda\in \cM(p, \cB_{p^{k-1}}(p^{k-1}-\gamma_0(\varphi)))$. Since $\cB_{p^{k-1}}(p^{k-1}-\gamma_0(\varphi))=\Omega(\varphi)$, we conclude that $\lambda\in\cM(p,\Omega(\varphi)) \subseteq \Omega(\theta)$ by Lemma~\ref{lem:mixed}.

	Finally, let $\lambda=\ft_{p^k}(p^k-\gamma_0(\theta))$; the argument for $\lambda=\fh_{p^k}(p^k-\gamma_0(\theta))$ is entirely analogous. Then $[\lambda\down_P,\theta]=[\nu\down_{P_p},\phi_0]=1$ for some $\nu\in\{(p),(1^p)\}$ from Lemma~\ref{lem:usual-PW/GTT+LR}, since $\lambda=p\alpha$ where $\alpha:=\ft_{p^{k-1}}(p^{k-1}-\gamma_0(\varphi))$. Note that conditions (i) and (ii) of Lemma~\ref{lem:usual-PW/GTT+LR} are satisfied since $\Omega(\varphi)=\cB_{p^{k-1}}(p^{k-1}-\gamma_0(\varphi))$ and $[\alpha\down_Q,\varphi]=1$ by the inductive hypothesis. Thus, also in this case we have proved both $\Omega(\theta)=\cB_{p^k}(p^k-\gamma_0(\theta))$ as well as	the statement on multiplicities. 			
		
	\smallskip
	
	\noindent\textbf{Subcase 3.2:} For the second of these two subcases, suppose now that $\Omega(\varphi)=\puncb{p^{k-1}}{p^{k-1}-\gamma_0(\varphi)}$. In particular, $\gamma_0(\varphi)\ge p$ by Lemma~\ref{lem:small-gamma_0}, since Theorem~\ref{thm:GL1} implies $\varphi\ne\triv_Q$. 
	Observe that 
	\[ \cB_{p^k}(p^k-\gamma_0(\theta)-1)\subseteq \cM(p,\Omega(\varphi))\subseteq\Omega(\theta), \]
	where the first inclusion follows from Lemma~\ref{lem:A'} (with $q=p$, $m=p^{k-1}$ and $t=p^{k-1}-\gamma_0(\varphi)$, which satisfy $q\ge 3$, $m-t=\gamma_0(\varphi)\ge p\ge 5$, and $t>\tfrac{m}{2}+2$ since $\gamma_0(\varphi)\le p^{k-2}$ by Lemma~\ref{lem:tree-bounds} and $k\ge 3$), and the second from Lemma~\ref{lem:mixed}. 
	Also, $\Omega(\theta)\subseteq\cB_{p^k}(p^k-\gamma_0(\theta))$ by Theorem~\ref{thm:M}. So it remains to consider $\lambda=(p^k-\gamma_0(\theta),\mu)$ for arbitrary $\mu\in\cP(\gamma_0(\theta))$. As we have done before, here we are assuming without loss of generality that $\lambda_1\ge \ell(\lambda)$. 
	
	\smallskip
	
	\noindent\textit{Subcase 3.2(i):} First suppose that $\mu\in\close{ \{ (\gamma_0(\theta)-1,1) \} }$.
	
	If $\mu=(\gamma_0(\theta)-1,1)$, then $\lambda\notin\Omega(\theta)$ because, letting $\alpha=\tworow{p^{k-1}}{p^{k-1}-\gamma_0(\varphi)}$ and $\nu=(p-1,1)$, Lemma~\ref{lem:usual-PW/GTT+LR} shows that $[\lambda\down_P,\theta] = [(p-1,1)\down_{P_p},\phi_0]=0$. 
	Indeed, the four conditions in the statement of Lemma~\ref{lem:usual-PW/GTT+LR} hold since (i) is given by the inductive hypothesis, (iii) follows from Theorem~\ref{thm:PW9.1} and (iv) from the Littlewood--Richardson rule. The reason that (ii) holds in the present situation where $\Omega(\varphi)=\puncb{p^{k-1}}{p^{k-1}-\gamma_0(\varphi)}$ is because if $\LR(\lambda;\lambda^1,\dotsc,\lambda^p)>0$ then $\lambda^i\subseteq\lambda$ for all $i\in[1,p]$ and $p^k-\gamma_0(\theta)=\lambda_1\le\sum_{i=1}^p (\lambda^i)_1$, while $\lambda^i\in\Omega(\varphi)$ gives $(\lambda^i)_1\le p^{k-1}-\gamma_0(\varphi)$. Then $p\gamma_0(\varphi)=\gamma_0(\theta)$ together with $\lambda^i\subseteq\lambda$ shows that $\lambda^i\in\{ \alpha,\beta \}$ where $\beta=(p^{k-1}-\gamma_0(\varphi),\gamma_0(\varphi)-1,1)$. But $\beta\notin\Omega(\varphi)$, and so this forces $\lambda^i=\alpha$ for all $i$.
	
	If $\mu=(\gamma_0(\theta)-1,1)'$, then a completely analogous argument shows that $\lambda\notin\Omega(\theta)$ in this instance also.
	
	\smallskip
	
	\noindent\textit{Subcase 3.2(ii):} Otherwise, $\mu\in\puncp{\gamma_0(\theta)}$.
	We will show that $\lambda\in\Omega(\theta)$ to conclude that $\Omega(\theta)=\puncb{p^{k-1}}{p^{k-1}-\gamma_0(\theta)}$. Moreover, we will also show that $[\lambda\down_P,\theta]=1$ whenever $\mu\in \close{\{(\gamma_0(\theta))\}}$, to complete the proof of the theorem.
	
	First, suppose that $\mu\in\close{\{(\gamma_0(\theta))\}}$. In this case we have that $\lambda\in\{\ft_{p^k}(p^k-\gamma_0(\theta)), \fh_{p^k}(p^k-\gamma_0(\theta)) \}$. Then we have that $\lambda=p\alpha$, for some $\alpha\in \{\ft_{p^{k-1}}(p^{k-1}-\gamma_0(\varphi)), \fh_{p^{k-1}}(p^{k-1}-\gamma_0(\varphi))\}$. Hence, we can use Lemma~\ref{lem:usual-PW/GTT+LR} to deduce that $[\lambda\down_P,\theta] = [\nu\down_{P_p},\phi_0]=1$.
	
	Now it remains to consider $\mu\in\cB_{\gamma_0(\theta)}(\gamma_0(\theta)-2)$.
	We will show below that $\cB_{\gamma_0(\theta)}(\gamma_0(\theta)-2)\subseteq \cM(p,\puncp{\gamma_0(\varphi)})$.
	Once we have that this holds, then this will mean that given any $\mu\in \cB_{\gamma_0(\theta)}(\gamma_0(\theta)-2)$ there exist mixed $\mu^1,\ldots,\mu^p\in\puncp{\gamma_0(\varphi)}$ such that $\LR(\mu;\mu^1,\ldots,\mu^p)\neq 0$. 
	This implies that $\LR(\lambda;(p^{k-1}-\gamma_0(\varphi),\mu^1),\ldots,(p^{k-1}-\gamma_0(\varphi),\mu^p))\neq 0$ by Lemma~\ref{lem:iterated-LR}. Notice also that for every $i\in [1,p]$ the partition $(p^{k-1}-\gamma_0(\varphi),\mu^i)$ is well-defined and belongs to $\puncb{p^{k-1}}{p^{k-1}-\gamma_0(\varphi)}$; this follows since $\gamma_0(\varphi)\le p^{k-2}$ and $k\ge 3$. 
	Using Lemma~\ref{lem:mixed}, we conclude that $\lambda\in\cM(p,\Omega(\varphi))\subseteq\Omega(\theta)$, as desired.
	
	Hence, to complete the proof of the theorem, we show that $\cB_{\gamma_0(\theta)}(\gamma_0(\theta)-2)\subseteq \cM(p,\puncp{\gamma_0(\varphi)})$.
	Let us fix $m:=\gamma_0(\varphi)$.
	Recall that $m\geq p$ and that $\gamma_0(\theta)=pm$. 
	\begin{itemize}
		\item If $p=5$ and $m\in\{5,6\}$ then one can see that $\cB_{25}(23)\subseteq \cM(5,\puncp{5})$ and that $\cB_{30}(28)\subseteq \cM(5,\puncp{6})$ by direct computation.
		Since $m\ge p$, we can now assume that $m>6$. Hence, we have $\tfrac{m}{2}+1<m-2$ and therefore we may use Proposition~\ref{prop:mixed} to see that $\cB_{pm}(pm-2p)\setminus \close{\{(pm-2p,2p),(pm-2p,1^{2p}) \}} \subseteq \cM(p,\puncp{m})$.
		
		\item If $\mu=(pm-2p,2p)$ then $\LR(\mu;(m),(m-2,2),\ldots,(m-2,2))\neq 0$, and so $\mu\in\cM(p,\puncp{m})$, while if $\mu=(pm-2p,1^{2p})$, then $\LR(\mu;(m),(m-2,1^2),\ldots,(m-2,1^2))\neq 0$	and so $\mu\in\cM(p,\puncp{m})$. 
		Similarly if $\mu\in\{ (pm-2p,2p)', (pm-2p,1^{2p})' \}$ then $\mu\in\cM(p,\puncp{m})$ also.
		Thus $\cB_{pm}(pm-2p)\subseteq\cM(p,\puncp{m})$.
		
		\item Next, suppose $\mu=(pm-p-r,\nu)$ for some $r\in [1,p-1]$ and $\nu\in\cP(p+r)$. 
		Clearly there exist $\nu^1,\ldots,\nu^r\in\cP(2)$ such that $\LR(\nu;\nu^1,\ldots,\nu^r,(1),\ldots,(1))\neq 0$. 
		It follows that $\LR(\mu;(m-2,\nu^1),\ldots,(m-2,\nu^r),(m),\ldots,(m))\neq 0$, since $r\ge 1$. 
		Also since $r\ge1$ and $p-r\ge1$, then $(m-2,\nu^1),\ldots,(m-2,\nu^r),(m),\ldots,(m)$ are mixed, whence $\mu\in\cM(p,\puncp{m})$.
		
		\item Finally, suppose $\mu=(pm-r,\nu)$ for some $r\in\{2,3,\ldots,p\}$ and $\nu\in\cP(r)$. Choose any $\omega\in\cP(2)$ such that $\omega\subset\nu$. Then clearly $\LR(\mu;(m-2,\omega),(m),\ldots,(m))\ne0$, and so $\mu\in\cM(p,\puncp{m})$.
	\end{itemize}
	Thus we have shown that $\cB_{pm}(pm-2)\subseteq\cM(p,\puncp{\gamma_0(\varphi)})$, which completes the proof.
\end{proof}

Theorem~\ref{thm:value-1} describes $\Omega(\theta)$ in terms of an easy combinatorial statistic, namely $\gamma_0(\theta)$, computed using the associated trees in $\cT(\theta)$, for every $\theta\in\Irr(P_{p^k})$ with $\val(\theta)=1$. 
Indeed, we have shown that $\Omega(\theta)$ is either $\cB_{p^k}(p^k-\gamma_0(\theta))$ or $\puncb{p^k}{p^k-\gamma_0(\theta)}$: we say that $\Omega(\theta)$ is \emph{punctured} if and only if $\Omega(\theta)=\puncb{p^k}{p^k-\gamma_0(\theta)}$.

We now complete the analysis of $\theta$ when $\val(\theta)=1$ by giving a concrete condition on the trees in $\cT(\theta)$ to distinguish when $\Omega(\theta)$ is punctured or not. 
To start, we collect in the following lemma something we have already observed in the course of the proof of Theorem~\ref{thm:value-1}. 

\begin{lemma}\label{lem:recursive-punctures}
	Let $k\in\N$ and let $\theta\in\Irr(P_{p^k})$ with $\val(\theta)=1$. Let $T\in\cT(\theta)$. Then $\Omega(\theta)$ is punctured if and only if one of the following holds:
	\begin{itemize}		
		\item[(i)] $[T]=[(T'\mid\dotsc\mid T';0)]$ where $\varphi:=\theta(T')\in\Irr(P_{p^{k-1}})$ has $\eta(\varphi)=1$; or
		\item[(ii)] $[T]=[(T'\mid\dotsc\mid T';0)]$ where $\varphi:=\theta(T')\in\Irr(P_{p^{k-1}})$ is such that $\Omega(\varphi)$ is punctured; or
		\item[(iii)] $[T]=[(T_1\mid\dotsc\mid T_p;p)]$ where, letting $\theta_i:=\theta(T_i)\in\Irr(P_{p^{k-1}})$ for each $i\in[1,p]$, we have that $\theta_1,\dotsc,\theta_p$ are mixed, and there exists a unique $j\in[1,p]$ such that $\Omega(\theta_j)$ is punctured and $\theta_i=\triv_{p^{k-1}}$ for all $i\ne j$.
	\end{itemize}
\end{lemma}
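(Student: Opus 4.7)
The plan is to revisit the trichotomy established in the proof of Theorem~\ref{thm:value-1} and, in each branch, read off precisely when $\Omega(\theta)=\puncb{p^k}{p^k-\gamma_0(\theta)}$ rather than $\cB_{p^k}(p^k-\gamma_0(\theta))$. Since $\val(\theta)=1$, a short argument using Remark~\ref{rem: Value0} (the root of $T$ is a person if and only if $\theta=\cX(\varphi;\varepsilon)$ with $\varepsilon\in[1,p-1]$, in which case $\val(\varphi)=0$ forces $\varphi=\triv_{p^{k-1}}$) separates three forms: (A) root label $p$, with $\theta=(\theta_1\times\cdots\times\theta_p)\up^P$ for mixed $\theta_i\in\Irr(Q)$; (B) root label $\varepsilon\in[1,p-1]$, with $\theta=\cX(\triv_{p^{k-1}};\varepsilon)$; (C) root label $0$, with $\theta=\cX(\varphi;0)$ and identical subtrees $T'\in\cT(\varphi)$.

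In form (A), Corollary~\ref{cor:omega-mixed} gives $\Omega(\theta)=\Omega(\theta_1)\star\cdots\star\Omega(\theta_p)$. Since $\val(\theta_i)\le 1$ for each $i$, each factor is one of $\puncp{p^{k-1}}$, $\cB_{p^{k-1}}(\cdot)$, or $\puncb{p^{k-1}}{\cdot}$, by Theorem~\ref{thm:GL1} and the already-proved statement of Theorem~\ref{thm:value-1}. Consulting Table~\ref{tab:smooth} I would argue: a single $\cB$-factor forces the output to be of type $\cB$; two or more $\puncb{}{}$-factors again force $\cB$; and having every factor equal to $\puncp{}{}$ is ruled out by mixedness of the $\theta_i$. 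So the output is punctured precisely when exactly one factor is $\puncb{}{}$ and the remaining $p-1$ are $\puncp{}{}$; equivalently, there is a unique $j\in[1,p]$ with $\Omega(\theta_j)$ punctured and $\theta_i=\triv_{p^{k-1}}$ for every $i\ne j$, which is (iii). In form (B), Lemma~\ref{lem:GL2-4.3} gives $\Omega(\theta)=\cB_{p^k}(p^k-1)$, which is not punctured, and none of (i)--(iii) can hold as the root label is neither $0$ nor $p$.

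In form (C), I would split on $\Omega(\varphi)$, using that $\val(\varphi)=\val(\theta)=1$. If $\Omega(\varphi)=\puncb{p^{k-1}}{\cdot}$, the corresponding subcase in the proof of Theorem~\ref{thm:value-1} yields $\Omega(\theta)=\puncb{p^k}{\cdot}$, matching (ii). If instead $\Omega(\varphi)=\cB_{p^{k-1}}(\cdot)$, the same proof separates outcomes by $\gamma_0(\varphi)$: when $\gamma_0(\varphi)=1$, which under $\val(\varphi)=1$ is equivalent to $\eta(\varphi)=1$, the output is $\puncb{p^k}{\cdot}$ and matches (i); when $\gamma_0(\varphi)\ge 2$, the output equals $\cB_{p^k}(\cdot)$ and neither (i) nor (ii) applies.

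The main obstacle will be ensuring that the three conditions (i)--(iii) are mutually exclusive and jointly exhaustive among the punctured cases. In particular, the form (A) analysis requires a careful scan of Table~\ref{tab:smooth} to verify that a single $\cB$-factor in a $\star$-product annihilates any puncturing from the other factors, and the bound $\gamma_0(\varphi)\ge p$ (guaranteed by Lemma~\ref{lem:small-gamma_0} whenever $\Omega(\varphi)$ is punctured) must be available in order to legitimately invoke Proposition~\ref{prop:smooth} at the required inputs.
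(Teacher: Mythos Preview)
Your proposal is correct and follows exactly the approach of the paper: the paper's own proof is even terser, remarking only that $\val(\theta)=1$ implies $\eta(\theta)=\gamma_0(\theta)$ and then stating that the lemma can be read off directly from the case-by-case analysis in the proof of Theorem~\ref{thm:value-1}. Your write-up simply makes that extraction explicit, and your verification that the hypotheses of Proposition~\ref{prop:smooth} are met (via Lemma~\ref{lem:small-gamma_0}) is precisely the check the paper performs inside that earlier proof.
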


\begin{proof}
	Recall that $\val(\theta)=1$ implies that $\eta(\theta)=\gamma_0(\theta)$. With this in mind, one can recover the statement of the present lemma directly from the case by case analysis performed in the proof of Theorem~\ref{thm:value-1}. 
\end{proof}

Lemma~\ref{lem:recursive-punctures} gives a recursive algorithm to deduce whether $\Omega(\theta)$ is punctured or not. 
We are ready to translate this recursive information into a concrete combinatorial condition on the trees associated to $\theta$.

\begin{theorem}\label{thm:punctures}
	Let $k\in\N$ and let $\theta\in\Irr(P_{p^k})$ with $\val(\theta)=1$. Let $T\in\cT(\theta)$. Then $\Omega(\theta)$ is punctured if and only if both of the following conditions hold: 		
	\begin{itemize}
		\item[(a)] $\eta(\theta)\ge 2$, i.e.~$T$ contains at least two people.
		\item[(b)] For any two people in $T$, their closest common graph-theoretic ancestor has label 0.
	\end{itemize}
\end{theorem}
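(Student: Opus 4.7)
The plan is to argue by induction on $k$, using Lemma~\ref{lem:recursive-punctures} as the workhorse that converts the structural characterization of punctured-ness into the combinatorial conditions (a) and (b). The base case $k=1$ is immediate: since $\val(\theta)=1$, $T$ must be a single vertex labelled by some $\varepsilon\in[1,p-1]$, so $\eta(T)=1$ and (a) fails, while $\Omega(\phi_\varepsilon)=\cB_p(p-1)$ is not punctured by Lemma~\ref{lem:small-k}(a); both sides of the equivalence are false. For the inductive step I would split according to the label of the root of $T$. If the root has label $\varepsilon\in[1,p-1]$, then $\theta=\cX(\varphi;\phi_\varepsilon)$ and $\val(\theta)=1$ forces $\varphi=\triv_{p^{k-1}}$; as in case 2 of the proof of Theorem~\ref{thm:value-1}, $\Omega(\theta)=\cB_{p^k}(p^k-1)$ is not punctured, and the root is the unique person in $T$, so (a) also fails.

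Next, if the root of $T$ is labelled $0$, then $T=(T'\mid\dotsc\mid T';0)$ and $\theta=\cX(\varphi;\phi_0)$ with $\varphi=\theta(T')$; necessarily $\val(\varphi)=1$ (else $\theta$ would be trivial). The crucial geometric observation is that two people lying in \emph{different} copies of $T'$ have the root (label $0$) as their closest common ancestor, automatically satisfying (b), while two people lying in the \emph{same} copy of $T'$ share the closest common ancestor they have in $T'$. Hence (b) for $T$ is equivalent to (b) for $T'$, and since $\eta(\theta)=p\cdot\eta(\varphi)\ge p\ge 5$, condition (a) for $T$ is automatic. By Lemma~\ref{lem:recursive-punctures}(i)--(ii), $\Omega(\theta)$ is punctured iff $\eta(\varphi)=1$ or $\Omega(\varphi)$ is punctured. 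I would conclude by applying the inductive hypothesis to $\varphi$ and separating the two subcases $\eta(\varphi)=1$ (where (b) for $T'$ is vacuous, and hence holds) and $\eta(\varphi)\ge 2$.

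Finally, if the root has label $p$, then $T=(T_1\mid\dotsc\mid T_p;p)$ and $\theta=(\theta_1\times\dotsb\times\theta_p)\up^P$ with $\theta_1,\dotsc,\theta_p$ mixed. Now any two people drawn from distinct subtrees $T_i,T_{i'}$ have the root as their CCA (label $p\neq 0$), so (b) forces all people of $T$ to lie in a single subtree $T_j$; this in turn forces $\theta_i=\triv_{p^{k-1}}$ for every $i\neq j$, while (a) guarantees $\eta(\theta_j)\ge 2$, and in particular $\val(\theta_j)=1$. The inductive hypothesis applied to $\theta_j$ then identifies ``(b) for $T_j$'' with ``$\Omega(\theta_j)$ is punctured'', which is precisely the hypothesis of Lemma~\ref{lem:recursive-punctures}(iii). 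The main (though minor) obstacle I anticipate is the careful alignment of the vacuous cases of (b) (when $\eta(\varphi)=1$, or when $\eta(T_j)\le 1$) with the clauses of Lemma~\ref{lem:recursive-punctures}, and matching the ``unique $j$'' of clause (iii) with the geometric fact that all people of $T$ cluster into a single subtree.
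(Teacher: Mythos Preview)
Your proposal is correct and follows essentially the same approach as the paper's proof: both argue by induction on $k$, invoke Lemma~\ref{lem:recursive-punctures} as the bridge between ``$\Omega(\theta)$ is punctured'' and the tree conditions, and split into cases according to the label of the root of $T$. The only cosmetic differences are that the paper takes $k\le 2$ as its base case and treats the two implications of the ``if and only if'' separately, whereas you start at $k=1$, absorb the root-label-$\varepsilon$ situation into the inductive step, and argue the equivalence in one pass.
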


\begin{proof}
	From Lemma~\ref{lem:small-k} we observe that $\Omega(\theta)$ is never punctured when $k=1$. Moreover, if $k=2$ then $\Omega(\theta)$ is punctured if and only if $\theta=\cX(\phi_\varepsilon;\phi_0)$ for some $\varepsilon\in[1, p-1]$, and notice $\eta(\phi_\varepsilon)=1$. This shows that both the implications of the statement hold whenever $k\le 2$. 

	By Lemma~\ref{lem:recursive-punctures}, we know that if $\Omega(\theta)$ is punctured then $[T]$ is of form (i), (ii) or (iii) in the statement of Lemma~\ref{lem:recursive-punctures}. 
	Proceeding by induction on $k\in\N$, it is not difficult to see that both conditions (a) and (b) hold in each of the three cases. 

	The converse implication deserves a little more explanation. Suppose that $\theta$ satisfies (a) and (b). Again, we proceed by induction on $k$. 
	If $k=2$ then (a), (b) and $\val(\theta)=1$ imply that $T=(\substack{\varepsilon\\\bullet}\mid\ldots\mid \substack{\varepsilon\\\bullet};0)$ for some $\varepsilon\in[1, p-1]$. 
	Then $\Omega(\theta)$ is indeed punctured by Lemma~\ref{lem:small-k}. 
	Now suppose $k\ge 3$ and write $T=(T_1\mid\ldots\mid T_p;x)$. For each $i\in [1,p]$ let $\theta_i:=\theta(T_i)$. Since $T$ satisfies (a) and (b) and $\val(\theta)=1$, the root of $T$ cannot be a person, i.e.~we must have $\ell(x)\in\{0,p\}$.
	
	If $\ell(x)=p$ then $T$ satisfying (a) and (b) implies that all of the people in $T$ are in a single subtree $T_j$, for some unique $j\in[1, p]$. In particular, this means for each $i\in[1, p]\setminus\{j\}$, every vertex in $T_i$ must have label 0 or $p$. But $T_i$ is admissible, so all vertices in $T_i$ must have label 0, whence $\theta_i=\triv_{p^{k-1}}$. Moreover, that $T_j$ itself satisfies (a) and (b) is then immediately inherited from $T$, so by inductive hypothesis $\Omega(\theta_j)$ is punctured. Thus $T$ satisfies condition (iii) of Lemma~\ref{lem:recursive-punctures} and therefore  $\Omega(\theta)$ is punctured.
			
	On the other hand, if $\ell(x)=0$ then $[T_1]=\cdots=[T_p]$ and $\eta(\theta)=p\eta(T_1)\ge 2$. If $\eta(T_1)=1$ then $T$ satisfies condition (i) of Lemma~\ref{lem:recursive-punctures} and therefore $\Omega(\theta)$ is punctured. Otherwise, $\eta(T_1)\ge 2$, that is, $T_1$ satisfies condition (a). Clearly $T_1$ also satisfies (b), inherited from $T$, so by inductive hypothesis $\Omega(\theta_1)$ is punctured. Thus $T$ satisfies condition (ii) of Lemma~\ref{lem:recursive-punctures} and $\Omega(\theta)$ is punctured.
\end{proof}

\begin{example}\label{ex:closest-ancestor}
	We illustrate examples of the combinatorial condition (b) from Theorem~\ref{thm:punctures} in Figure~\ref{fig:puncture}. For simplicity, we have drawn 3-ary trees instead of $p$-ary trees where $p\ge 5$.
	
	The only $\theta\in\Irr(P_{p^2})$ with punctured $\Omega(\theta)$ are $\theta\big((\substack{\varepsilon\\\bullet}\mid\dotsc\mid\substack{\varepsilon\\\bullet};0)\big)$ where $\varepsilon\in[1,p-1]$. These correspond to $\theta=\cX(\phi_\varepsilon;\phi_0)$.
	In Figure~\ref{fig:example-punc}, we list all of the $\theta\in\Irr(P_{p^3})$ for which $\Omega(\theta)$ is punctured. 
	\hfill$\lozenge$
\end{example}

\begin{figure}[h!]
	\centering
	\begin{subfigure}[t]{0.3\textwidth}
		\centering
		\begin{tikzpicture}[scale=1.0, every node/.style={scale=0.7}]
			\draw (0,0.2) node(R) {$a$};
			\draw (-1.2,-0.6) node(1) {$b$};
			\draw (0,-0.6) node(2) {$c$};
			\draw (1.2,-0.6) node(3) {$d$};
			\draw (R) -- (1);
			\draw (R) -- (2);
			\draw (R) -- (3);
			\draw (-1.5,-1.4) node(11) {$e$};
			\draw (-1.2,-1.4) node(12) {$f$};
			\draw (-0.9,-1.4) node(13) {$g$};
			\draw (1) -- (11);
			\draw (1) -- (12);
			\draw (1) -- (13);
			\draw (-0.3,-1.4) node(21) {$h$};
			\draw (0,-1.4) node(22) {$i$};
			\draw (0.3,-1.4) node(23) {$j$};
			\draw (2) -- (21);
			\draw (2) -- (22);
			\draw (2) -- (23);
			\draw (0.9,-1.4) node(31) {$l$};
			\draw (1.2,-1.4) node(32) {$m$};
			\draw (1.5,-1.4) node(33) {$n$};
			\draw (3) -- (31);
			\draw (3) -- (32);
			\draw (3) -- (33);
		\end{tikzpicture}
		\caption{The closest common graph-theoretic ancestor of vertex $e$ and vertex $f$ is vertex $b$; that of $e$ and $c$ is $a$; that of $a$ and $j$ is $a$.}
		\label{fig:punc-1}
	\end{subfigure}
	\hfill
	\begin{subfigure}[t]{0.3\textwidth}
		\centering
		\begin{tikzpicture}[scale=1.0, every node/.style={scale=0.7}]
			\draw (0,0.2) node(R) {$\overset{3}{s}$};
			\draw (-1.2,-0.6) node(1) {$\overset{3}{\circ}$};
			\draw (0,-0.6) node(2) {$\overset{0}{\circ}$};
			\draw (1.2,-0.6) node(3) {$\overset{3}{z}$};
			\draw (R) -- (1);
			\draw (R) -- (2);
			\draw (R) -- (3);
			\draw (-1.5,-1.4) node(11) {$\overset{2}{r}$};
			\draw (-1.2,-1.4) node(12) {$\overset{1}{\bullet}$};
			\draw (-0.9,-1.4) node(13) {$\overset{2}{\bullet}$};
			\draw (1) -- (11);
			\draw (1) -- (12);
			\draw (1) -- (13);
			\draw (-0.3,-1.4) node(21) {$\overset{1}{\bullet}$};
			\draw (0,-1.4) node(22) {$\overset{1}{\bullet}$};
			\draw (0.3,-1.4) node(23) {$\overset{1}{\bullet}$};
			\draw (2) -- (21);
			\draw (2) -- (22);
			\draw (2) -- (23);
			\draw (0.9,-1.4) node(31) {$\overset{2}{x}$};
			\draw (1.2,-1.4) node(32) {$\overset{1}{y}$};
			\draw (1.5,-1.4) node(33) {$\overset{0}{\circ}$};
			\draw (3) -- (31);
			\draw (3) -- (32);
			\draw (3) -- (33);
		\end{tikzpicture}
		\caption{This tree has value 1, but condition (b) of Theorem~\ref{thm:punctures} is not satisfied. Observe $x$ and $y$ are people but $z$ has label not 0. Similarly, $r$ and $x$ are people but $s$ has label not 0.}
		\label{fig:punc-2}
	\end{subfigure}
	\hfill
	\begin{subfigure}[t]{0.3\textwidth}
		\centering
		\begin{tikzpicture}[scale=1.0, every node/.style={scale=0.7}]
			\draw (0,0.2) node(R) {$\overset{3}{\circ}$};
			\draw (-1.2,-0.6) node(1) {$\overset{0}{\circ}$};
			\draw (0,-0.6) node(2) {$\overset{0}{\circ}$};
			\draw (1.2,-0.6) node(3) {$\overset{0}{\circ}$};
			\draw (R) -- (1);
			\draw (R) -- (2);
			\draw (R) -- (3);
			\draw (-1.5,-1.4) node(11) {$\overset{0}{\circ}$};
			\draw (-1.2,-1.4) node(12) {$\overset{0}{\circ}$};
			\draw (-0.9,-1.4) node(13) {$\overset{0}{\circ}$};
			\draw (1) -- (11);
			\draw (1) -- (12);
			\draw (1) -- (13);
			\draw (-0.3,-1.4) node(21) {$\overset{0}{\circ}$};
			\draw (0,-1.4) node(22) {$\overset{0}{\circ}$};
			\draw (0.3,-1.4) node(23) {$\overset{0}{\circ}$};
			\draw (2) -- (21);
			\draw (2) -- (22);
			\draw (2) -- (23);
			\draw (0.9,-1.4) node(31) {$\overset{1}{\bullet}$};
			\draw (1.2,-1.4) node(32) {$\overset{1}{\bullet}$};
			\draw (1.5,-1.4) node(33) {$\overset{1}{\bullet}$};
			\draw (3) -- (31);
			\draw (3) -- (32);
			\draw (3) -- (33);
		\end{tikzpicture}
		\caption{This tree has value 1, and condition (b) of Theorem~\ref{thm:punctures} is satisfied.}
		\label{fig:punc-3}
	\end{subfigure}
	\caption{Examples of admissible trees and condition (b) from Theorem~\ref{thm:punctures}. Recall that a vertex $x$ is a person if and only if $\fv(x)=1$, i.e.~the label of $x$ belong to $[1,p-1]$.
	In the first subfigure we have omitted all vertex labels.}
	\label{fig:puncture}
\end{figure}
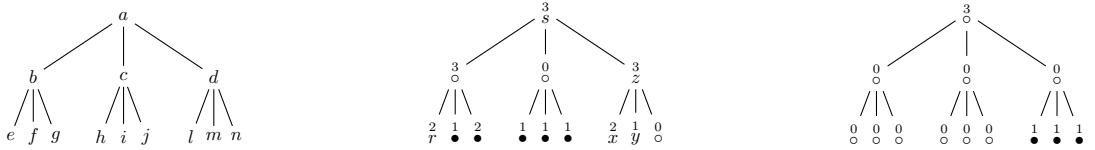

\begin{remark}
	We observe that if an admissible tree $T$ satisfies both conditions (a) and (b) of Theorem~\ref{thm:punctures}, then all people in $T$ are equidistant to the root vertex of $T$, and all people in $T$ have the same label. Moreover, the number of people in $T$ is a power of $p$, i.e.~$\eta(T)=p^i$ for some $i\in\N$, and also the root of $T$ is not a person (that is, has label $0$ or $p$). These properties can be easily verified for the trees in Figure~\ref{fig:example-punc}, which lists those $\theta\in\Irr(P_{p^3})$ for which $\Omega(\theta)$ is punctured.
	\newqed
\end{remark}

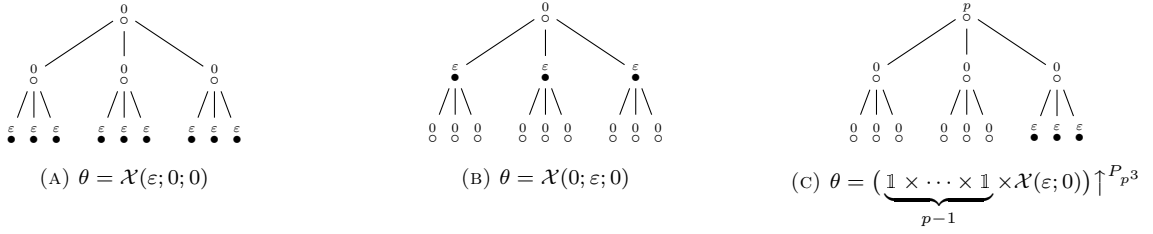
\begin{figure}[h!]
	\centering
	\begin{subfigure}[t]{0.3\textwidth}
		\centering
		\begin{tikzpicture}[scale=1.0, every node/.style={scale=0.7}]
			\draw (0,0.2) node(R) {$\overset{0}{\circ}$};
			\draw (-1.2,-0.6) node(1) {$\overset{0}{\circ}$};
			\draw (0,-0.6) node(2) {$\overset{0}{\circ}$};
			\draw (1.2,-0.6) node(3) {$\overset{0}{\circ}$};
			\draw (R) -- (1);
			\draw (R) -- (2);
			\draw (R) -- (3);
			\draw (-1.5,-1.4) node(11) {$\overset{\varepsilon}{\bullet}$};
			\draw (-1.2,-1.4) node(12) {$\overset{\varepsilon}{\bullet}$};
			\draw (-0.9,-1.4) node(13) {$\overset{\varepsilon}{\bullet}$};
			\draw (1) -- (11);
			\draw (1) -- (12);
			\draw (1) -- (13);
			\draw (-0.3,-1.4) node(21) {$\overset{\varepsilon}{\bullet}$};
			\draw (0,-1.4) node(22) {$\overset{\varepsilon}{\bullet}$};
			\draw (0.3,-1.4) node(23) {$\overset{\varepsilon}{\bullet}$};
			\draw (2) -- (21);
			\draw (2) -- (22);
			\draw (2) -- (23);
			\draw (0.9,-1.4) node(31) {$\overset{\varepsilon}{\bullet}$};
			\draw (1.2,-1.4) node(32) {$\overset{\varepsilon}{\bullet}$};
			\draw (1.5,-1.4) node(33) {$\overset{\varepsilon}{\bullet}$};
			\draw (3) -- (31);
			\draw (3) -- (32);
			\draw (3) -- (33);
		\end{tikzpicture}
		\caption{$\theta=\cX(\varepsilon;0;0)$}
		\label{fig:punc-i}
	\end{subfigure}
	\hfill
	\begin{subfigure}[t]{0.3\textwidth}
		\centering
		\begin{tikzpicture}[scale=1.0, every node/.style={scale=0.7}]
			\draw (0,0.2) node(R) {$\overset{0}{\circ}$};
			\draw (-1.2,-0.6) node(1) {$\overset{\varepsilon}{\bullet}$};
			\draw (0,-0.6) node(2) {$\overset{\varepsilon}{\bullet}$};
			\draw (1.2,-0.6) node(3) {$\overset{\varepsilon}{\bullet}$};
			\draw (R) -- (1);
			\draw (R) -- (2);
			\draw (R) -- (3);
			\draw (-1.5,-1.4) node(11) {$\overset{0}{\circ}$};
			\draw (-1.2,-1.4) node(12) {$\overset{0}{\circ}$};
			\draw (-0.9,-1.4) node(13) {$\overset{0}{\circ}$};
			\draw (1) -- (11);
			\draw (1) -- (12);
			\draw (1) -- (13);
			\draw (-0.3,-1.4) node(21) {$\overset{0}{\circ}$};
			\draw (0,-1.4) node(22) {$\overset{0}{\circ}$};
			\draw (0.3,-1.4) node(23) {$\overset{0}{\circ}$};
			\draw (2) -- (21);
			\draw (2) -- (22);
			\draw (2) -- (23);
			\draw (0.9,-1.4) node(31) {$\overset{0}{\circ}$};
			\draw (1.2,-1.4) node(32) {$\overset{0}{\circ}$};
			\draw (1.5,-1.4) node(33) {$\overset{0}{\circ}$};
			\draw (3) -- (31);
			\draw (3) -- (32);
			\draw (3) -- (33);
		\end{tikzpicture}
		\caption{$\theta=\cX(0;\varepsilon;0)$}
		\label{fig:punc-ii}
	\end{subfigure}
	\hfill
	\begin{subfigure}[t]{0.3\textwidth}
		\centering
		\begin{tikzpicture}[scale=1.0, every node/.style={scale=0.7}]
			\draw (0,0.2) node(R) {$\overset{p}{\circ}$};
			\draw (-1.2,-0.6) node(1) {$\overset{0}{\circ}$};
			\draw (0,-0.6) node(2) {$\overset{0}{\circ}$};
			\draw (1.2,-0.6) node(3) {$\overset{0}{\circ}$};
			\draw (R) -- (1);
			\draw (R) -- (2);
			\draw (R) -- (3);
			\draw (-1.5,-1.4) node(11) {$\overset{0}{\circ}$};
			\draw (-1.2,-1.4) node(12) {$\overset{0}{\circ}$};
			\draw (-0.9,-1.4) node(13) {$\overset{0}{\circ}$};
			\draw (1) -- (11);
			\draw (1) -- (12);
			\draw (1) -- (13);
			\draw (-0.3,-1.4) node(21) {$\overset{0}{\circ}$};
			\draw (0,-1.4) node(22) {$\overset{0}{\circ}$};
			\draw (0.3,-1.4) node(23) {$\overset{0}{\circ}$};
			\draw (2) -- (21);
			\draw (2) -- (22);
			\draw (2) -- (23);
			\draw (0.9,-1.4) node(31) {$\overset{\varepsilon}{\bullet}$};
			\draw (1.2,-1.4) node(32) {$\overset{\varepsilon}{\bullet}$};
			\draw (1.5,-1.4) node(33) {$\overset{\varepsilon}{\bullet}$};
			\draw (3) -- (31);
			\draw (3) -- (32);
			\draw (3) -- (33);
		\end{tikzpicture}
		\caption{$\theta= \big( \underbrace{\triv\times\cdots\times\triv}_{p-1} \times\cX(\varepsilon;0) \big) \up^{P_{p^3}}$}
		\label{fig:punc-iii}
	\end{subfigure}
	\caption{Those $\theta\in\Irr(P_{p^3})$ for which $\Omega(\theta)$ is punctured, drawn using 3-ary trees. Here, $\varepsilon\in[1,p-1]$. We write $\cX(a;b;c)$ for $\cX( \cX(\phi_a;\phi_b); \phi_c)$ and $\triv=\triv_{p^2}$.}
	\label{fig:example-punc}
\end{figure}

With Theorems~\ref{thm:GL1}, \ref{thm:value-1} and~\ref{thm:punctures} we have given a complete description of the set $\Omega(\theta)$, for every $\theta\in\Irr(P_{p^k})$ with $\val(\theta)\leq 1$. 
Of course, these results thus also provide us with the value $m(\theta)$ for all such irreducible characters $\theta$, which we record in the following corollary. 

\begin{corollary}\label{cor:m-pk}
	Let $\theta\in\Irr(P_{p^k})$ be such that $\val(\theta)\leq 1$. Then 
	\[ m(\theta) = \begin{cases}
		p^k-\gamma_0(\theta)-2 & \text{ if}\ \val(\theta)=0,\\ 
		p^k-\gamma_0(\theta)-1 & \text{ if}\ \val(\theta)=1\ \text{and conditions (a) and (b) of Theorem~\ref{thm:punctures} hold},\\
		p^k-\gamma_0(\theta) & \text{otherwise}.
	\end{cases} \]
\end{corollary}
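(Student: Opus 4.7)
The plan is to derive this corollary directly from the structural results Theorems~\ref{thm:GL1}, \ref{thm:value-1} and~\ref{thm:punctures}, together with Remark~\ref{rem: Value0}. The argument reduces to reading off the largest $t$ such that $\cB_{p^k}(t)\subseteq\Omega(\theta)$ from the explicit description of $\Omega(\theta)$ available in each of the three cases, so there should be no serious obstacle.

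First I would dispose of $\val(\theta)=0$. By Remark~\ref{rem: Value0}, $\theta=\triv_{p^k}$, $\gamma_0(\theta)=0$, and $\Omega(\theta)=\puncp{p^k}=\cP(p^k)\setminus\close{\{(p^k-1,1)\}}$. The two excluded partitions $(p^k-1,1)$ and $(1,1^{p^k-1})$ have first part or length equal to $p^k-1$, so neither lies in $\cB_{p^k}(p^k-2)$; hence $\cB_{p^k}(p^k-2)\subseteq\Omega(\theta)$. On the other hand $(p^k-1,1)\in\cB_{p^k}(p^k-1)\setminus\Omega(\theta)$, so $\cB_{p^k}(p^k-1)\not\subseteq\Omega(\theta)$. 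Thus $m(\theta)=p^k-2=p^k-\gamma_0(\theta)-2$.

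Next assume $\val(\theta)=1$. By Theorem~\ref{thm:value-1}, $\Omega(\theta)$ equals either $\cB_{p^k}(p^k-\gamma_0(\theta))$ or $\puncb{p^k}{p^k-\gamma_0(\theta)}$, and by Theorem~\ref{thm:punctures} the punctured option occurs precisely when conditions (a) and (b) hold. In the non-punctured case, $\Omega(\theta)=\cB_{p^k}(p^k-\gamma_0(\theta))$ gives $m(\theta)=p^k-\gamma_0(\theta)$ directly from the definition of $m(\theta)$.

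Finally, in the punctured case, by Definition~\ref{def:puncture} we have
\[ \Omega(\theta)=\cB_{p^k}(p^k-\gamma_0(\theta))\setminus\close{\{(p^k-\gamma_0(\theta),\gamma_0(\theta)-1,1),\,(p^k-\gamma_0(\theta),2,1^{\gamma_0(\theta)-2})\}}. \]
Each of the four partitions being removed has either its first part equal to $p^k-\gamma_0(\theta)$ (for the two listed) or its length equal to $p^k-\gamma_0(\theta)$ (for their conjugates), so none of them lies in $\cB_{p^k}(p^k-\gamma_0(\theta)-1)$. Therefore $\cB_{p^k}(p^k-\gamma_0(\theta)-1)\subseteq\Omega(\theta)$. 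Since $(p^k-\gamma_0(\theta),\gamma_0(\theta)-1,1)\in\cB_{p^k}(p^k-\gamma_0(\theta))\setminus\Omega(\theta)$, it follows that $m(\theta)=p^k-\gamma_0(\theta)-1$. The only delicate point in the whole argument is verifying this last inclusion and its sharpness, which amounts to the elementary check that the punctured partitions sit exactly on the boundary between $\cB_{p^k}(p^k-\gamma_0(\theta)-1)$ and $\cB_{p^k}(p^k-\gamma_0(\theta))$; once this is noted, the three cases combine to yield the stated piecewise formula for $m(\theta)$.
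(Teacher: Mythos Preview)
Your proof is correct and follows exactly the approach the paper intends: the corollary is stated there without a separate proof, as an immediate consequence of Theorems~\ref{thm:GL1}, \ref{thm:value-1} and~\ref{thm:punctures} together with Remark~\ref{rem: Value0}, and you have simply spelled out the elementary box-reading that this entails. One small slip: the conjugate of $(p^k-1,1)$ is $(2,1^{p^k-2})$, not ``$(1,1^{p^k-1})$''; your claim that it has length $p^k-1$ is correct, only the notation is off.
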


We are now ready to tackle the problem in general. More precisely, we will devote the rest of this section to the calculation of $m(\theta)$ for all of the remaining $\theta\in\Irr(P_{p^k})$. 

\begin{theorem}\label{thm:m-val-2}
	Let $k\in\N$ and $\theta\in\Irr(P_{p^k})$. Suppose $\val(\theta)\ge 2$. Then 
	\[ m(\theta)=p^k-\gamma_0(\theta)-\gamma_1(\theta). \]
\end{theorem}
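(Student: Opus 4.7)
The plan is to proceed by strong induction on $k\ge 2$. When $k=2$, any $\theta\in\Irr(P_{p^2})$ with $\val(\theta)\ge 2$ is of the form $\theta(\underline i;j)$ with $i,j\in[1,p-1]$ (cf.~Table~\ref{tab:25}); Lemma~\ref{lem:small-k}(b) then gives $\Omega(\theta)=\cB_{p^2}(p^2-p-1)\sqcup\close{\{(p^2-p,\mu)\mid\mu\in\cB_p(p-1)\}}$, and since $(p^2-p,p)\notin\Omega(\theta)$ while $\gamma_0(\theta)+\gamma_1(\theta)=p+1$, we read off $m(\theta)=p^2-p-1$, matching the claim. For the inductive step $k\ge 3$, the goal is to establish the two inclusions $\cB_{p^k}(p^k-\gamma_0-\gamma_1)\subseteq\Omega(\theta)$ and $\cB_{p^k}(p^k-\gamma_0-\gamma_1+1)\not\subseteq\Omega(\theta)$, splitting into cases that mirror the structural proof of Theorem~\ref{thm:value-1}.

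In Case A, $\theta=(\theta_1\times\cdots\times\theta_p)\up^P$ with mixed $\theta_i\in\Irr(Q)$, so $\val(\theta)=\max_i\val(\theta_i)\ge 2$ and $\gamma_j(\theta)=\sum_i\gamma_j(\theta_i)$ for $j\in\{0,1\}$. Corollary~\ref{cor:omega-mixed} gives $\Omega(\theta)=\Omega(\theta_1)\star\cdots\star\Omega(\theta_p)$, and Theorems~\ref{thm:GL1} and~\ref{thm:value-1}, together with the inductive hypothesis and Theorem~\ref{thm:M}, describe each $\Omega(\theta_i)$ as a (possibly punctured) box $\cB_{p^{k-1}}(p^{k-1}-\gamma_0(\theta_i)-\gamma_1(\theta_i))$ plus an exceptional set of partitions whose members all have at least three parts. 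Iterating Lemma~\ref{lem:smoothing-no-thins} one factor at a time then produces $\cB_{p^k}(p^k-\gamma_0-\gamma_1)\subseteq\Omega(\theta)$. For the sharpness statement in this case, I pick $i_0$ with $\val(\theta_{i_0})\ge 2$, take $\lambda^{(i_0)}\notin\Omega(\theta_{i_0})$ with $(\lambda^{(i_0)})_1=p^{k-1}-\gamma_0(\theta_{i_0})-\gamma_1(\theta_{i_0})+1$ by inductive hypothesis, and set $\lambda^{(i)}=\tworow{p^{k-1}}{p^{k-1}-\gamma_0(\theta_i)}$ for $i\ne i_0$; then $\lambda:=\lambda^{(1)}+\cdots+\lambda^{(p)}$ has $\lambda_1=p^k-\gamma_0-\gamma_1+1$, and Lemma~\ref{lem:LR-first-part} forces any LR decomposition witnessing $\lambda\in\Omega(\theta)$ to assign the $i_0$-slot a component of first part $(\lambda^{(i_0)})_1$ lying in $\Omega(\theta_{i_0})$, contradicting the choice of $\lambda^{(i_0)}$.

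In Case B, $\theta=\cX(\varphi;\phi_\varepsilon)$ for some $\varphi\in\Irr(Q)$ and $\varepsilon\in[0,p-1]$; a short case analysis on $\varepsilon$ and $\val(\varphi)$ determines $(\gamma_0(\theta),\gamma_1(\theta))$ from $(\gamma_0(\varphi),\gamma_1(\varphi))$ (it equals $(p\gamma_0(\varphi),p\gamma_1(\varphi))$ except when $\varepsilon\ne0$ and $\val(\varphi)=1$, where $\gamma_1(\theta)=1$). Lemma~\ref{lem:mixed} gives $\cM(p,\Omega(\varphi))\subseteq\Omega(\theta)$, and combining this with Proposition~\ref{prop:mixed} or Lemma~\ref{lem:A'} applied to the inductive bound $\cB_{p^{k-1}}(p^{k-1}-\gamma_0(\varphi)-\gamma_1(\varphi))\subseteq\Omega(\varphi)$ yields $\cB_{p^k}(p^k-\gamma_0-\gamma_1-1)\subseteq\Omega(\theta)$. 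To close the remaining gap by showing that every $\lambda=(p^k-\gamma_0-\gamma_1,\mu)$ with $\mu\in\cP(\gamma_0+\gamma_1)$ lies in $\Omega(\theta)$, I split on $\mu$: sufficiently mixed $\mu$ are handled via Lemma~\ref{lem:iterated-LR} and another $\cM(p,\cdot)$ construction using mixed LR predecessors in $\Omega(\varphi)$, while near-extremal $\mu$ are handled by Lemma~\ref{lem:usual-PW/GTT+LR} combined with Theorems~\ref{thm:PW9.1} or~\ref{thm:GTT3.5}, reducing $[\lambda\down_P,\theta]$ to $[\nu\down_{P_p},\phi_\varepsilon]$ for an appropriate $\nu\in\cP(p)$ and invoking the explicit multiplicities from Lemma~\ref{lem:small-k}(a). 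For sharpness in Case B, the bad partition is constructed from an inductively provided bad partition for $\varphi$ through the same pullback, and the vanishing $[\lambda\down_P,\theta]=0$ is verified via Lemma~\ref{lem:usual-PW/GTT+LR} applied to a carefully chosen $\alpha\in\Omega(\varphi)$.

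The main obstacle is this final \textbf{boundary} step in Case B, where the generic $\cM(p,\cdot)$-bound given by Proposition~\ref{prop:mixed} and Lemma~\ref{lem:A'} falls exactly one level short of the desired tight bound. This is the same phenomenon that in Case A requires Lemma~\ref{lem:smoothing-no-thins} together with a ``no thin partitions'' condition on the residues $\Omega(\theta_i)\setminus\cB_{p^{k-1}}(p^{k-1}-\gamma_0(\theta_i)-\gamma_1(\theta_i))$, so to make the induction go through cleanly I would strengthen the inductive hypothesis to track this residual thinness information, analogously to the extra data recorded in Lemma~\ref{lem:small-k}(b). The delicate LR control provided by Lemma~\ref{lem:usual-PW/GTT+LR} — especially its final clause on $\lambda=p\alpha$ — should then enable the inductive step to close.
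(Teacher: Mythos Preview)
Your overall architecture matches the paper's: the paper also derives Theorem~\ref{thm:m-val-2} from a stronger inductive statement (Theorem~\ref{thm:value-2}), namely that $\cB_{p^k}(N)\subseteq\Omega(\theta)$ and $\Omega(\theta)\setminus\cB_{p^k}(N)$ contains no thin partitions, where $N=p^k-\gamma_0(\theta)-\gamma_1(\theta)$. Your closing remark about strengthening the inductive hypothesis to carry ``no thin partitions'' is therefore exactly right and not optional. Once you adopt it, your Case~A sharpness argument should be discarded entirely: as written it is wrong (your $\lambda$ has $\lambda_1=p^k-\gamma_0(\theta)-\gamma_1(\theta_{i_0})+1$, not $p^k-\gamma_0-\gamma_1+1$, unless only one factor has value $\ge 2$; and even then Lemma~\ref{lem:LR-first-part} does not pin the $i_0$-slot's first part). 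The correct witness is simply $\tworow{p^k}{N+1}$, which is thin, lies in $\cB_{p^k}(N+1)$, and by the strengthened hypothesis cannot lie in $\Omega(\theta)$.

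There is a second, more substantive gap. In Case~B with $\val(\varphi)\ge 2$, to show $\lambda=\tworow{p^k}{N}\in\Omega(\theta)$ you propose to use Lemma~\ref{lem:usual-PW/GTT+LR}. But that lemma requires hypothesis~(i), namely $[\alpha\down_Q,\varphi]=1$, which your inductive hypothesis does not supply. The paper's fix is to carry a \emph{third} piece of data through the induction: $[\alpha\down_Q,\varphi]\ge 2$ for $\alpha\in\close{\{\tworow{p^{k-1}}{p^{k-1}-\zeta},\hook{p^{k-1}}{p^{k-1}-\zeta}\}}$. With this, Lemma~\ref{lem:SL2.18} (not Lemma~\ref{lem:usual-PW/GTT+LR}) gives $[\cX(\alpha;(p))\down_P,\theta]\ge 2$, hence $\lambda\in\Omega(\theta)$. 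Similarly, in the sub-case $\val(\varphi)=1$, $\varepsilon\ne 0$ (handled separately as Proposition~\ref{prop:3}), hypothesis~(ii) of Lemma~\ref{lem:usual-PW/GTT+LR} fails for $\lambda=\tworow{p^k}{pt-1}$ because one factor can have first part $t-1$; the paper instead exhibits two independent constituents of $\lambda\down_W$ (one via $\cX(\alpha;(p-1,1))$, one via a mixed induced character) to establish the multiplicity $\ge 2$ directly. So your strengthened hypothesis must include this multiplicity bound, and the boundary cases in~B need the arguments of Proposition~\ref{prop:3} and Lemma~\ref{lem:SL2.18} rather than Lemma~\ref{lem:usual-PW/GTT+LR}.
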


We will obtain Theorem~\ref{thm:m-val-2} as a corollary of the following more informative results. We start with a proposition dealing with irreducible characters of $P_{p^k}$ whose value is exactly $2$.  

\begin{proposition}\label{prop:3}
	Let $k\in\N_{\ge 2}$ and $\varepsilon\in[1, p-1]$. Let $\varphi\in\Irr(P_{p^{k-1}})$ satisfy $\val(\varphi)=1$, and let $\theta=\cX(\varphi;\varepsilon)\in\Irr(P_{p^k})$. Then the following hold: 
	\begin{itemize}
		\item[(i)] $\Omega(\theta) \supseteq \cB_{p^k}(p^k-\gamma_0(\theta)-\gamma_1(\theta))$ and
		\item[(ii)] $\Omega(\theta) \setminus \cB_{p^k}(p^k-\gamma_0(\theta)-\gamma_1(\theta))$ contains no thin partitions.
		\item[(iii)] Moreover, $[\lambda\down_{P_{p^k}},\theta]\ge 2$ for all $\lambda\in \close{\{\tworow{p^k}{p^k-\gamma_0(\theta)-\gamma_1(\theta)}, \hook{p^k}{p^k-\gamma_0(\theta)-\gamma_1(\theta)}\}}$.
	\end{itemize}
\end{proposition}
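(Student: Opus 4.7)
Let $a := p^{k-1} - \gamma_0(\varphi)$ and $b := \gamma_0(\varphi)$. Since $\val(\varphi) = 1$ and $\varepsilon \in [1, p-1]$, we have $\gamma_0(\theta) = pb$ and $\gamma_1(\theta) = 1$, so $p^k - \gamma_0(\theta) - \gamma_1(\theta) = pa - 1$. By Theorem~\ref{thm:value-1}, $\Omega(\varphi)$ equals either $\cB_{p^{k-1}}(a)$ or $\puncb{p^{k-1}}{a}$ (each containing $\cB_{p^{k-1}}(a-1)$), and $[\chi^\alpha\down_Q,\varphi]=1$ for $\alpha\in\{\tworow{p^{k-1}}{a},\hook{p^{k-1}}{a}\}$. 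By Lemma~\ref{lem:tree-bounds}, $\gamma_0(\varphi)\le p^{k-2}$, and in the punctured case Lemma~\ref{lem:small-gamma_0} gives $\gamma_0(\varphi)\ge p$. Part~(i) then follows from Lemma~\ref{lem:mixed} applied to $\theta=\cX(\varphi;\phi_\varepsilon)$, combined with Proposition~\ref{prop:mixed} in the $\cB$-case and Lemma~\ref{lem:A'} in the $\puncb$-case (both with $q=p$, $m=p^{k-1}$, $t=a$); the base case $k=2$ reduces to Lemma~\ref{lem:small-k}(b).

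For (ii), Theorem~\ref{thm:M} gives $\Omega(\theta)\subseteq\cB_{p^k}(pa)$, so by Lemma~\ref{lem:conj-restr} it suffices to rule out $\lambda\in\{\tworow{p^k}{pa},\hook{p^k}{pa}\}$. For each such $\lambda$, I would apply Lemma~\ref{lem:usual-PW/GTT+LR} with the corresponding $\alpha\in\{\tworow{p^{k-1}}{a},\hook{p^{k-1}}{a}\}$: hypothesis~(ii) holds because $\LR(\lambda;\lambda^1,\ldots,\lambda^p)\ne 0$ with $\lambda^i\in\Omega(\varphi)\subseteq\cB_{p^{k-1}}(a)$ combined with $\sum(\lambda^i)_1\ge\lambda_1=pa$ and $(\lambda^i)_1\le a$ forces $(\lambda^i)_1=a$ and hence $\lambda^i=\alpha$. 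The lemma then yields $[\chi^\lambda\down_P,\theta]=[\chi^\nu\down_{P_p},\phi_\varepsilon]=0$ for some $\nu\in\{(p),(1^p)\}$, since $\varepsilon\ne 0$.

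Part~(iii) is the most delicate. Take $\lambda=\tworow{p^k}{pa-1}=(pa-1,pb+1)$; the hook case is entirely analogous. Since $\lambda$ has two rows, each $\lambda^i$ appearing nontrivially in $\LR(\lambda;\lambda^1,\ldots,\lambda^p)$ is a two-row partition of $p^{k-1}$. Arguing as in (ii), the only tuples with all $\lambda^i\in\Omega(\varphi)$ are either $(\alpha,\ldots,\alpha)$ or have exactly one $\lambda^i=\beta:=\tworow{p^{k-1}}{a-1}=(a-1,b+1)$ (valid since $a\ge b+2$, as $b\le p^{k-2}$) and the remaining entries equal to $\alpha$. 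A two-variable Schur function expansion of $s_\alpha(x_1,x_2)^p$, comparing the coefficient of $x_1^{pa-1}x_2^{pb+1}$, yields $\LR(\lambda;\alpha^{\times p})=p-1$, while $\LR(\lambda;\beta,\alpha^{\times(p-1)})=1$ by Lemma~\ref{lem:iterated-LR}. Decomposing $\chi^\lambda\down_W$ by $P_p$-orbits on the $Y$-irreducibles in $\chi^\lambda\down_Y$ and then restricting each $W$-constituent to $P$ using Lemma~\ref{lem:SL2.19}, I would obtain
\[ [\chi^\lambda\down_P,\theta] = [\chi^\lambda\down_W,\cX(\alpha;\phi_\varepsilon)] + [\chi^\beta\down_Q,\varphi], \]
where the second summand is at least $1$ since $\beta\in\Omega(\varphi)$.

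The main obstacle is to control $[\chi^\lambda\down_W,\cX(\alpha;\phi_\varepsilon)]$ for $\varepsilon\in[1,p-1]$. From the computation above, $\sum_{\varepsilon'\in[0,p-1]}[\chi^\lambda\down_W,\cX(\alpha;\phi_{\varepsilon'})]=\LR(\lambda;\alpha^{\times p})=p-1$, and by the action of $N_{S_{p^k}}(P)/P$ on $\Irr(P)$ (which fixes $\phi_0$ and permutes $\phi_1,\ldots,\phi_{p-1}$ transitively) the values for $\varepsilon'\in[1,p-1]$ coincide; denoting this common value by $X$ and $[\chi^\lambda\down_W,\cX(\alpha;\phi_0)]$ by $Y$, one has $(p-1)X+Y=p-1$. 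The plan is to prove $Y=0$ by showing $\chi^\lambda$ is not a constituent of $\cX(\alpha;\nu)\up^{S_{p^k}}$ for any $\nu\in\puncp{p}$, using Theorem~\ref{thm:PW9.1} to analyse the lex-greatest constituents of these plethysm-type inductions. This forces $X=1$, and so $[\chi^\lambda\down_P,\theta]=1+[\chi^\beta\down_Q,\varphi]\ge 2$, completing part~(iii).
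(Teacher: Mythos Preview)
Your arguments for parts (i) and (ii) are correct and essentially coincide with the paper's. The issue is part (iii).

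\textbf{The gap.} Your plan to show $Y=[\chi^\lambda\down_W,\cX(\alpha;\phi_0)]=0$ by ruling out $\chi^\lambda$ as a constituent of $\cX(\alpha;\nu)\up^{S_{p^k}}$ for every $\nu\in\puncp{p}$ cannot be carried out with Theorem~\ref{thm:PW9.1} alone. That theorem only identifies the \emph{lex-greatest} constituent of $\cX(\alpha;\nu)\up^{S_{p^k}}$, which for $\alpha=(a,b)$ always has first part $pa$; since $\lambda=(pa-1,pb+1)$ is lex-smaller, nothing prevents it from appearing. Concretely, for $\nu=(p)$ the lex-max is $(pa,pb)$, and Theorem~\ref{thm:PW9.1} says nothing about whether $(pa-1,pb+1)$ occurs. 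Your claim is nevertheless \emph{true}: a two-variable specialisation shows that the only $\nu\in\cP(p)$ with $[\chi^\lambda\down_V,\cX(\alpha;\nu)]\ne 0$ is $\nu=(p-1,1)$ (count semistandard tableaux of shape $\nu$ with entries in $\{0,1\}$ and a single $1$). But this is a genuine plethysm computation, not a consequence of lex-maximality. A second, smaller issue: your symmetry argument should invoke conjugation by $Y\rtimes N_{S_p}(P_p)\le V$ (which normalises $W$ and fixes $\chi^\lambda$) rather than $N_{S_{p^k}}(P)/P$, since you need to permute the $\cX(\alpha;\phi_{\varepsilon'})\in\Irr(W)$. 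Finally, the hook case is not ``entirely analogous'': the two-variable trick does not apply, so you would need a separate argument there.

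\textbf{Comparison with the paper.} The paper avoids all of this by not computing $[\chi^\lambda\down_P,\theta]$ exactly. Instead it exhibits two $W$-constituents of $\chi^\lambda\down_W$ that are mutually orthogonal and each contribute at least $1$ upon restriction to $P$: namely $\cX(\alpha;\omega)\down_W$ for a suitable $\omega\in\close{\{(p-1,1)\}}$ (contributing via Lemma~\ref{lem:SL2.19} since $[\alpha\down_Q,\varphi]=1$ and $\omega\in\Omega(\phi_\varepsilon)$), and $(\alpha^{\times(p-1)}\times\beta)\up^W$ (contributing via Mackey, since $\varphi^{\times p}\mid(\alpha^{\times(p-1)}\times\beta)\down_B$). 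This two-sources argument is shorter and transfers directly to the hook case by replacing Theorem~\ref{thm:PW9.1} with Theorem~\ref{thm:GTT3.5}. Your route, once the plethysm gap is filled, actually yields more (the exact value $[\chi^\lambda\down_P,\theta]=1+[\chi^\beta\down_Q,\varphi]$ in the two-row case), but at the cost of substantially more work.
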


\begin{proof}
	Recall Notation~\ref{not:subgps}.
	The assertions hold for $k=2$ from Lemma~\ref{lem:small-k}(b), so we may assume $k\ge 3$. Recall that from Theorem~\ref{thm:value-1} we know that $\Omega(\varphi)$ equals either $\cB_{p^{k-1}}(p^{k-1}-\gamma_0(\varphi))$ or $\puncb{p^{k-1}}{p^{k-1}-\gamma_0(\varphi)}$. From now on we let $t:=p^{k-1}-\gamma_0(\varphi)$ and $N:=p^k-\gamma_0(\theta)-\gamma_1(\theta)$. Let $T\in\cT(\theta)$.
	
	Observe that $\gamma_1(\theta)=1$ as the root of $T$ is the only element of $\Gamma_1(T)$. Similarly, we observe that $\gamma_0(\theta)=p\cdot\gamma_0(\varphi)$. Hence $N=pt-1$.
	
	Now, we claim that $\cB_{p^k}(N)\subseteq\cM(p,\Omega(\varphi))$. 
	To prove this, we observe that if $\gamma_0(\varphi)\ge p$ then $\cB_{p^k}(pt-1) \subseteq\cM(p,\puncb{p^{k-1}}{t})$ by Lemma~\ref{lem:A'} with $q=p$, $m=p^{k-1}$ and $t=p^{k-1}-\gamma_0(\varphi)$. Hence $\cB_{p^k}(N)\subseteq\cM(p,\puncb{p^{k-1}}{t}) \subseteq \cM(p,\cB_{p^{k-1}}(t)) = \cM(p,\Omega(\varphi))$.
	On the other hand, if $\gamma_0(\varphi)<p$ then $\Omega(\varphi)=\cB_{p^{k-1}}(t)$ by Lemma~\ref{lem:small-gamma_0} (since $\val(\varphi)=1$ implies $\gamma_0(\theta)\ne 0$). Hence the above claim now follows from Proposition~\ref{prop:mixed}. 
	Since $\cM(p,\Omega(\varphi))\subseteq\Omega(\theta)$ by Lemma~\ref{lem:mixed}, we can now conclude that (i) holds as $\cB_{p^k}(N)\subseteq\cM(p,\Omega(\varphi))\subseteq\Omega(\theta)$.
	
	Next, we recall that $\Omega(\theta)\subseteq\cB_{p^k}(p^k-\gamma_0(\theta))=\cB_{p^k}(N+1)$ by Theorem~\ref{thm:M}. Using this and Lemma~\ref{lem:conj-restr}, to prove (ii) it therefore suffices to show that $\lambda\notin\Omega(\theta)$ for all $\lambda\in \{ \ft_{p^k}(N+1), \fh_{p^k}(N+1) \}$. Let $\lambda=\ft_{p^k}(N+1)$. 
	Setting $\alpha:=\ft_{p^{k-1}}(p^{k-1}-\gamma_0(\varphi))$, we observe that $\lambda=p\alpha$ and we use Lemma~\ref{lem:usual-PW/GTT+LR} (the hypotheses of which are satisfied by Theorem~\ref{thm:value-1}) to conclude that $[\lambda\down_P,\theta] = [\nu\down_{P_p},\phi_\varepsilon]=0$ for some $\nu\in\{(p),(1^p)\}$. If $\lambda=\fh_{p^k}(N+1)$ then $[\lambda\down_P,\theta]=0$ by the same argument, replacing $\alpha$ by $\fh_{p^{k-1}}(p^{k-1}-\gamma_0(\varphi))$.
	
	Finally, to prove (iii) it suffices to show that $[\lambda\down_P,\theta]\ge 2$ for $\lambda\in \{ \ft_{p^k}(N), \fh_{p^k}(N)\}$. Let $\lambda=\ft_{p^k}(N)$.
	Let $\mu=\ft_{p^{k-1}}(t)$ and $\nu=\ft_{p^{k-1}}(t-1)$. From Theorem~\ref{thm:value-1}, we know that $\mu,\nu\in\Omega(\varphi)$. 
	Let $\psi=\mu^{\times (p-1)}\times\nu\in\Irr(Y)$. Observe that $[\lambda\down_W,\psi\up^W]\neq 0$ because $[\lambda\down_Y,\psi]\neq 0$ and $\Irr(W\mid\psi)=\{\psi\up^W\}$. Moreover, using Lemma~\ref{lem:mackey} we have that $\psi\up^W\down_P=\psi\down_{P\cap Y}\up^{P}=\psi\down_{B}\up^{P}$. Since $\varphi^{\times p}$ is a constituent of $\psi\down_{B}$ and since $\theta$ is an irreducible constituent of $(\varphi^{\times p})\up^P$, we conclude that
	$[\psi\up^W\down_P, \theta]\geq 1$.
	On the other hand, using Theorem~\ref{thm:PW9.1} we deduce that there exists $\omega\in\close{\{(p-1,1)\}}$ such that $[\lambda\down_V,\cX(\mu;\omega)]=1$. Moreover, since $\mu\in\Omega(\varphi)$ and $\omega\in\Omega(\varepsilon)$ we use Lemmas~\ref{lem:SL2.18} and~\ref{lem:SL2.19} to obtain that $[\cX(\mu;\omega)\down_P,\theta]\ge 1$. Finally we observe that $[\cX(\mu;\omega)\down_W,\psi\up^W]=0$. This holds because 
	\[ [\cX(\mu;\omega)\down_W,\psi\up^W] \le [\cX(\mu;\omega)\down_Y, (\psi\up^W)\down_Y] = (p-1)\cdot [\mu^{\times p}, (\psi\up^W)\down_Y]=0. \]
	It follows that 
	\[ [\lambda\down_P,\theta]=  [(\lambda\down_W)\down_P,\theta]\ge [\cX(\mu;\omega)\down_P,\theta] + [(\psi\up^W)\down_P,\theta]\geq 2. \]
	If $\lambda=\hook{p^k}{N}$ then a similar argument, except where we use Theorem~\ref{thm:GTT3.5} to deduce that there exists $\omega\in\close{\{(p-1,1)\}}$ such that $[\lambda\down_V,\cX(\hook{p^{k-1}}{t};\omega)]=1$, shows that $[\lambda\down_P,\theta]\ge 2$ in this case also.
	This concludes the proof.
\end{proof}

We are now ready to extend Proposition~\ref{prop:3} to all $\theta\in\Irr(P_{p^k})$ with $\val(\theta)\ge 2$.

\begin{theorem}\label{thm:value-2}
	Let $k\in\N$ and $\theta\in\Irr(P_{p^k})$. Suppose $\val(\theta)\ge 2$. Then 
	\begin{itemize}
		\item[(i)] $\Omega(\theta) \supseteq \cB_{p^k}(p^k-\gamma_0(\theta)-\gamma_1(\theta))$, and
		\item[(ii)] $\Omega(\theta) \setminus \cB_{p^k}(p^k-\gamma_0(\theta)-\gamma_1(\theta))$ contains no thin partitions.
		\item[(iii)] Moreover, $[\lambda\down_{P_{p^k}},\theta]\ge 2$ for all $\lambda\in \close{\{\tworow{p^k}{p^k-\gamma_0(\theta)-\gamma_1(\theta)}, \hook{p^k}{p^k-\gamma_0(\theta)-\gamma_1(\theta)} \}}$.
	\end{itemize}
\end{theorem}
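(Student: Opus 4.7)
The plan is strong induction on $k$. The base case $k=2$ is Lemma~\ref{lem:small-k}(b), since the only $\theta\in\Irr(P_{p^2})$ with $\val(\theta)\ge 2$ are $\theta(\underline{i};j)$ with $i,j\in[1,p-1]$. For $k\ge 3$, let $\theta\in\Irr(P)$ with $\val(\theta)\ge 2$, set $N := p^k-\gamma_0(\theta)-\gamma_1(\theta)$, and adopt Notation~\ref{not:subgps}. Split on the structure of $\theta$: either (A) $\theta = (\theta_1\times\cdots\times\theta_p)\up^P$ with mixed $\theta_i\in\Irr(Q)$, or (B/C) $\theta = \cX(\varphi;\phi_\varepsilon)$ for some $\varphi\in\Irr(Q)$ and some $\varepsilon\in[0,p-1]$.

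In Case (A), $\val(\theta)=\max_i\val(\theta_i)\ge 2$, so some $\theta_j$ has $\val(\theta_j)\ge 2$. Corollary~\ref{cor:omega-mixed} gives $\Omega(\theta)=\Omega(\theta_1)\star\cdots\star\Omega(\theta_p)$, and each $\Omega(\theta_i)$ has the shape $\Delta_i\sqcup\cN_i$ required by Lemma~\ref{lem:smoothing-no-thins}, via Theorems~\ref{thm:GL1} and~\ref{thm:value-1} or the inductive hypothesis. Iterating Lemma~\ref{lem:smoothing-no-thins} (inspecting Table~\ref{tab:smooth}, with at least one factor of type $\cB$) delivers (i) and (ii). For (iii), take $\lambda\in\{\ft_{p^k}(N),\fh_{p^k}(N)\}$; Frobenius reciprocity gives $[\lambda\down_P,\theta]=[\lambda\down_B,\overline{\theta}]$ with $\overline{\theta}:=\theta_1\times\cdots\times\theta_p$, which we expand using the Littlewood--Richardson rule. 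The inequality $\sum_i(\lambda^i)_1\ge\lambda_1=N$, combined with the thin-partition bounds $(\lambda^i)_1\le p^{k-1}-\gamma_0(\theta_i)-\gamma_1(\theta_i)$ (from Theorem~\ref{thm:value-1} when $\val(\theta_i)\le 1$, so $\gamma_1(\theta_i)=0$; from the inductive (ii) otherwise), forces equality throughout and pins down the tuple $(\lambda^1,\ldots,\lambda^p)$ uniquely. Then $\LR(\lambda;\lambda^1,\ldots,\lambda^p)=1$ by Lemma~\ref{lem:LR-sum}, and the product $\prod_i[\lambda^i\down_Q,\theta_i]\ge 2$ because the $j$-th factor is $\ge 2$ by inductive (iii).

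In Case (B/C), write $t := p^{k-1}-\gamma_0(\varphi)-\gamma_1(\varphi)$, so $\gamma_i(\theta)=p\gamma_i(\varphi)$ for $i\in\{0,1\}$ and $N=pt$. When $\varepsilon=0$ we have $\val(\varphi)=\val(\theta)\ge 2$; when $\varepsilon\in[1,p-1]$ we have $\val(\varphi)\ge 1$, with $\val(\varphi)=1$ being Proposition~\ref{prop:3} and $\val(\varphi)\ge 2$ handled identically to the $\varepsilon=0$ case. By Lemma~\ref{lem:mixed} and the inductive containment $\cB_{p^{k-1}}(t)\subseteq\Omega(\varphi)$, we have $\Omega(\theta)\supseteq\cM(p,\cB_{p^{k-1}}(t))$, which Proposition~\ref{prop:mixed} identifies with $\cB_{p^k}(N)\setminus\close{\{\ft_{p^k}(N),\fh_{p^k}(N)\}}$ (its hypotheses hold since $\gamma_0(\varphi)+\gamma_1(\varphi)\ge p$ by Lemmas~\ref{lem:small-gamma_0} and~\ref{lem:tree-bounds}). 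For the excluded thin partitions, observe $\ft_{p^k}(N)=p\alpha$ with $\alpha := \ft_{p^{k-1}}(t)$. Theorem~\ref{thm:PW9.1} gives $[\ft_{p^k}(N)\down_V,\cX(\alpha;(p))]=1$, and the wreath-product restriction identifies $\cX(\alpha;(p))\down_P=\cX(\alpha\down_Q;\phi_0)$, whereupon Lemma~\ref{lem:SL2.18} applied to the inductive bound $[\alpha\down_Q,\varphi]\ge 2$ yields $[\cX(\alpha;(p))\down_P,\theta]\ge 2$ for every $\varepsilon\in[0,p-1]$; the hook case is analogous via Theorem~\ref{thm:GTT3.5}. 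This simultaneously delivers (i) and (iii). Finally, (ii) follows from $[\lambda\down_P,\theta]\le[\lambda\down_B,\varphi^{\times p}]$: for thin $\lambda$ with $\lambda_1>N$, the inductive (ii) for $\varphi$ forces $(\lambda^i)_1\le t$, whence $\sum_i(\lambda^i)_1\le pt=N<\lambda_1$, contradicting Littlewood--Richardson positivity.

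The most delicate step will be Case (A)(iii): orchestrating the three possible shapes of $\Omega(\theta_i)$ (depending on whether $\val(\theta_i)$ is $0$, $1$, or $\ge 2$) so that saturation of the first-part inequalities uniquely identifies the Littlewood--Richardson tuple and the inductive multiplicity bound propagates through the product. The remaining ingredients---Proposition~\ref{prop:mixed}, Lemma~\ref{lem:SL2.18}, and the restriction-of-$\cX$ formula---constitute a direct assembly of earlier results.
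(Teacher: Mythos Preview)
Your proposal is correct and follows essentially the same inductive architecture as the paper's proof: the same base case via Lemma~\ref{lem:small-k}, the same split into the mixed case and the $\cX(\varphi;\varepsilon)$ case (with Proposition~\ref{prop:3} peeling off $\val(\varphi)=1$), and the same key ingredients (Corollary~\ref{cor:omega-mixed}, Proposition~\ref{prop:mixed}, Lemma~\ref{lem:mixed}, Theorems~\ref{thm:PW9.1}/\ref{thm:GTT3.5}, and Lemma~\ref{lem:SL2.18}). The only cosmetic differences are that you invoke Lemma~\ref{lem:smoothing-no-thins} in Case~(A) where the paper argues (i) and (ii) directly from Proposition~\ref{prop:smooth} and a thin-partition bound, and in Case~(A)(iii) you pin down the Littlewood--Richardson tuple uniquely whereas the paper is content with the inequality $[\lambda\down_B,\overline{\theta}]\ge\prod_i[\lambda^i\down_Q,\theta_i]$; both routes are valid.
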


\begin{proof}
	We proceed by induction on $k$, noting that $\val(\theta)\ge 2$ implies $k\ge 2$, and the case of $k=2$ is given by Lemma~\ref{lem:small-k}. Now we may assume $k\ge 3$. Recall Notation~\ref{not:subgps} and let $\overline{\theta}:=\theta_1\times\cdots\times\theta_p$ be any irreducible constituent of $\theta\down_B$, where $\theta_i\in\Irr(Q)$. 
	
	There are two forms that $\theta$ may take: either $\theta=\cX(\varphi;\varepsilon)$ for some $\varepsilon\in[0, p-1]$ and $\varphi\in\Irr(Q)$, or $\theta=\theta_1\times\cdots\times\theta_p\up^P$ for some mixed $\theta_1,\ldots, \theta_p\in\Irr(Q)$. In the former case, we may further assume that $\val(\varphi)\ge 2$ since the case of $\val(\varphi)=1$ follows from Proposition~\ref{prop:3}. Then, in both cases, we have that $\gamma_0(\theta)+\gamma_1(\theta)=\sum_{i=1}^p (\gamma_0(\theta_i)+\gamma_1(\theta_i))$.
	
	\smallskip
	
	We first prove that (ii) holds. 
	Let $t\in\N_0$ be such that $\lambda:=\tworow{p^k}{p^k-t}\in\Omega(\theta)$. Since $\overline{\theta}$ is an irreducible constituent of $\theta\down_B$ and $\theta$ is a constituent of $\lambda\down_P$, then $\overline{\theta}\mid\lambda\down_B$. Thus there exist $\lambda^1,\ldots,\lambda^p\in \cP(p^{k-1})$ such that $\lambda^i\in\Omega(\theta_i)$ for all $i\in[1,p]$ and $\overline{\lambda}:=\lambda^1\times\cdots\times\lambda^p\in\Irr(Y\mid\overline{\theta})$ is an irreducible constituent of $\lambda\down_Y$. Then $\ell(\lambda^i)\le \ell(\lambda)\le 2$ for all $i$ by Lemma~\ref{lem:LR-first-part}.
	Using the inductive hypothesis together with Theorems~\ref{thm:GL1} and~\ref{thm:value-1}, we see that
	\begin{equation}\label{eqn:A(theta)}
		\Omega(\theta_i) = \begin{cases}
			\puncp{p^{k-1}} & \text{if }\val(\theta_i)=0,\\
			\puncb{p^{k-1}}{p^{k-1}-\gamma_0(\theta_i)} \text{ or }	\cB_{p^{k-1}}(p^{k-1}-\gamma_0(\theta_i)) & \text{if }\val(\theta_i)=1,\\
			\cB_{p^{k-1}}(p^{k-1}-\gamma_0(\theta_i)-\gamma_1(\theta_i))\sqcup\Gamma(\theta_i) & \text{if }\val(\theta_i)\ge 2,
		\end{cases}
	\end{equation}
	for some subset $\Gamma(\theta_i)$ of $\cP(p^{k-1})\setminus\cB_{p^{k-1}}(p^{k-1}-\gamma_0(\theta_i)-\gamma_1(\theta_i))$ containing no thin partitions. 
	Thus Lemma~\ref{lem:LR-first-part} implies that 
	\[ p^k-t = \lambda_1\le \sum_{i=1}^p (\lambda^i)_1 \le \sum_{i=1}^p (p^{k-1}-\gamma_0(\theta_i)-\gamma_1(\theta_i)), \]
	where we used that $\gamma_1(\theta_i)=0$ if $\val(\theta_i)=1$, and that $\gamma_0(\theta_i)=\gamma_1(\theta_i)=0$ if $\val(\theta_i)=0$. 
	We deduce that $t\geq \sum_{i=1}^p (\gamma_0(\theta_i)+\gamma_1(\theta_i)) = \gamma_0(\theta)+\gamma_1(\theta)$ and therefore that $\lambda_1=p^k-t\leq p^k- \gamma_0(\theta)-\gamma_1(\theta)$. This implies that $\lambda\in \cB_{p^k}(p^k- \gamma_0(\theta)-\gamma_1(\theta))$. With a completely analogous argument, we see that if $\lambda=\hook{p^k}{p^k-t}$ then $\lambda\in\cB_{p^k}(p^k-\gamma_0(\theta)-\gamma_1(\theta))$ also.	
	
	\smallskip
	
	Next, we turn to the proofs of (i) and (iii). We split our argument into two cases depending on the form of $\theta$.
	
	In the first case, suppose that $\theta_1=\cdots=\theta_p=:\varphi$. That is, $\theta=\cX(\varphi;\varepsilon)$ for some $\varepsilon\in[0, p-1]$ and $\varphi\in\Irr(Q)$ with $\val(\varphi)\ge 2$. 
	Let $\zeta:=\gamma_0(\varphi)+\gamma_1(\varphi)$. Then by the inductive hypothesis, $\Omega(\varphi)$ contains $\cB_{p^{k-1}}(p^{k-1}-\zeta)$ and no other thin partitions. Also we have that $[\alpha\down_Q,\varphi]\ge 2$ for all $\alpha\in \close{\{\tworow{p^{k-1}}{p^{k-1}-\zeta}, \hook{p^{k-1}}{p^{k-1}-\zeta}\}}$. 
	Moreover, note that $\gamma_0(\theta)+\gamma_1(\theta)=p\cdot\zeta$ since $\val(\varphi)\ge 2$.
	
	Now, Proposition~\ref{prop:mixed} and Lemma~\ref{lem:mixed} guarantee that
	\[ \cB_{p^k}(p^{k}-\gamma_0(\theta)-\gamma_1(\theta)-1)=\cB_{p^k}(p\cdot(p^{k-1}-\zeta)-1) \subseteq \cM(p,\cB_{p^{k-1}}(p^{k-1}-\zeta)) \subseteq \cM(p,\Omega(\varphi)) \subseteq\Omega(\theta). \]
	Let $\lambda=(p^k-p\zeta,\mu)$ for some $\mu\in\cP(p\zeta)$. 
	If $\mu\notin \close{\{(p\zeta)\}}$, then $\mu\in\cB_{p\zeta}(p\zeta-1)=\cM(p,\cP(\zeta))$ by Proposition~\ref{prop:mixed}, since $\val(\varphi)\ge 2$ certainly implies that $\zeta\ge 2$. 
	Hence, there exist mixed $\mu^1,\ldots, \mu^p\in\cP(\zeta)$ such that $\LR(\mu;\mu^1,\ldots,\mu^p)\neq 0$. Therefore, setting $\lambda^i:=(p^{k-1}-\zeta,\mu^i)$ for each $i\in [1,p]$ we have that $\LR(\lambda;\lambda^1,\ldots,\lambda^p)\neq 0$ by Lemma~\ref{lem:iterated-LR}. Since $\lambda^1,\ldots, \lambda^p$ are mixed and all belong to $\Omega(\varphi)$, it follows that $\lambda\in\Omega(\theta)$ by Lemma~\ref{lem:mixed}.
	If $\mu=(p\zeta)$, then $[\lambda\down_V,\cX(\tworow{p^{k-1}}{p^{k-1}-\zeta};(p))]=1$ by Theorem~\ref{thm:PW9.1}. Hence
	\[ [\lambda\down_P,\theta]\ge[\cX(\tworow{p^{k-1}}{p^{k-1}-\zeta};(p))\down_P,\theta] \ge 2, \]
	where the final inequality follows from Lemma~\ref{lem:SL2.18} since $[\tworow{p^{k-1}}{p^{k-1}-\zeta}\down_Q,\varphi]\ge 2$ by the inductive hypothesis. Finally, if $\mu=(1^{p\zeta})$, then $[\lambda\down_V,\cX(\hook{p^{k-1}}{p^{k-1}-\zeta};\nu)]=1$ for some $\nu\in\{(p),(1^p)\}$ by Theorem~\ref{thm:GTT3.5}. Then similarly $[\lambda\down_P,\theta]\ge 2$ by Lemma~\ref{lem:SL2.18} and the inductive hypothesis.
	Thus we have proven (i) and (iii) in this case.
	
	Let us now consider the second case, where $\theta_1,\ldots,\theta_p\in\Irr(Q)$ are mixed and $\theta=\overline{\theta}\up^P$. 
	By Corollary~\ref{cor:omega-mixed}, $\Omega(\theta)=\Omega(\theta_1)\star\cdots\star\Omega(\theta_p)$. Recall the possible forms of $\Omega(\theta_i)$ from \eqref{eqn:A(theta)}, and notice that if $\Omega(\theta_i)=\puncb{p^{k-1}}{p^{k-1}-\gamma_0(\theta_i)}$ then $\gamma_0(\theta_i)\ge p$ from  Lemma~\ref{lem:small-gamma_0} (as $\theta_i\ne\triv_Q$ and so $\gamma_0(\theta)\ne 0$). Moreover, since $\val(\theta)\geq 2$ then there exists $j\in [1,p]$ such that $\val(\theta_j)\geq 2$. Hence the inductive hypothesis implies that $\cB_{p^{k-1}}(p^{k-1}-\gamma_0(\theta_j)-\gamma_1(\theta_j))\subseteq \Omega(\theta_j)$. 
	With this in mind, using Theorem~\ref{prop:smooth} we have that
	\[ \Omega(\theta_1)\star\cdots\star\Omega(\theta_p) \supseteq \cB_{p^k}\left(\sum_{i=1}^p (p^{k-1}-\gamma_0(\theta_i)-\gamma_1(\theta_i)) \right), \]
	whence $\Omega(\theta) \supseteq \cB_{p^k}(p^k-\gamma_0(\theta)-\gamma_1(\theta))$ by Corollary~\ref{cor:omega-mixed}.
	
	Finally, let $\lambda=\tworow{p^k}{p^k-\gamma_0(\theta)-\gamma_1(\theta)}$ and $\lambda^i=\tworow{p^{k-1}}{p^{k-1}-\gamma_0(\theta_i)-\gamma_1(\theta_i)}$ for each $i\in [1,p]$. Write $\overline{\lambda}:=\lambda^1\times\cdots\times\lambda^p$ and observe that $\overline{\lambda}\mid\lambda\down_Y$. Since $\theta=\overline{\theta}\up^P$, we must have that $\val(\theta_j)=\val(\theta)\ge 2$ for some $j\in[1, p]$. Since $\Irr(P\mid\overline{\theta})=\{\theta\}$ and $[\theta\down_B,\overline{\theta}]=1$, we have that
	\[ [\lambda\down_P,\theta] = [\lambda\down_B,\overline{\theta}] \ge [\overline{\lambda}\down_B,\overline{\theta}] = \prod_{i=1}^p [\lambda^i\down_Q,\theta_i] \ge 2\cdot 1^{p-1}, \]
	where the last inequality follows by observing that $\lambda^i\in\Omega(\theta_i)$ for all $i\in [1,p]$ and that $[\lambda^j\down_Q,\theta_j]\ge 2$ by the inductive hypothesis. The case $\lambda=\hook{p^k}{p^k-\gamma_0(\theta)-\gamma_1(\theta)}$ is treated similarly. 
\end{proof}

\begin{corollary}
	Theorem~\ref{thm:m-val-2} holds. 
\end{corollary}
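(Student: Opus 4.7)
The plan is to deduce Theorem~\ref{thm:m-val-2} directly from Theorem~\ref{thm:value-2}, which has already delivered both (i) the lower bound $\cB_{p^k}(p^k-\gamma_0(\theta)-\gamma_1(\theta))\subseteq\Omega(\theta)$ and (ii) the non-existence of thin partitions in $\Omega(\theta)\setminus\cB_{p^k}(p^k-\gamma_0(\theta)-\gamma_1(\theta))$.

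From Theorem~\ref{thm:value-2}(i) it is immediate that $m(\theta)\ge p^k-\gamma_0(\theta)-\gamma_1(\theta)$ by the very definition of $m(\theta)$. For the reverse inequality, I will exhibit a partition inside $\cB_{p^k}(p^k-\gamma_0(\theta)-\gamma_1(\theta)+1)$ that does not lie in $\Omega(\theta)$. A natural candidate is the two-row partition
\[ \lambda := \bigl( p^k-\gamma_0(\theta)-\gamma_1(\theta)+1,\ \gamma_0(\theta)+\gamma_1(\theta)-1 \bigr). \]
Before using it, I need to check it is actually a well-defined, thin partition whose first part equals $p^k-\gamma_0(\theta)-\gamma_1(\theta)+1$. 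Since $\val(\theta)\ge 2$ there exists at least one person in $\cT(\theta)$ with a 1-descendant, so $\gamma_1(\theta)\ge 1$ and $\gamma_0(\theta)\ge 1$. Moreover, Lemma~\ref{lem:tree-bounds}(a) gives $\gamma_0(\theta)+\gamma_1(\theta)\le p^{k-1}+p^{k-2}$, which for $p\ge 5$ and $k\ge 2$ comfortably satisfies $2(\gamma_0(\theta)+\gamma_1(\theta))\le p^k+2$; hence $\lambda$ is indeed a valid two-row partition (in particular, a thin partition) fitting in a $(p^k-\gamma_0(\theta)-\gamma_1(\theta)+1)\times(p^k-\gamma_0(\theta)-\gamma_1(\theta)+1)$ box.

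Since $\lambda_1=p^k-\gamma_0(\theta)-\gamma_1(\theta)+1$ strictly exceeds $p^k-\gamma_0(\theta)-\gamma_1(\theta)$, we have $\lambda\notin\cB_{p^k}(p^k-\gamma_0(\theta)-\gamma_1(\theta))$. As $\lambda$ is thin, Theorem~\ref{thm:value-2}(ii) forces $\lambda\notin\Omega(\theta)$. Thus $\cB_{p^k}(p^k-\gamma_0(\theta)-\gamma_1(\theta)+1)\not\subseteq\Omega(\theta)$, giving $m(\theta)\le p^k-\gamma_0(\theta)-\gamma_1(\theta)$ and completing the equality.

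There is no real obstacle here: the genuine work (establishing both the inclusion in (i) and the absence of thin partitions in (ii)) has already been carried out in Theorem~\ref{thm:value-2}. The only thing to be careful about is the small-case sanity check that $\lambda$ is a bona fide two-row partition, which is handled by the bounds of Lemma~\ref{lem:tree-bounds}(a).
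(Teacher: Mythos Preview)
Your proof is correct and follows essentially the same approach as the paper: both derive $m(\theta)=p^k-\gamma_0(\theta)-\gamma_1(\theta)$ directly from parts (i) and (ii) of Theorem~\ref{thm:value-2}. The paper's proof is a single sentence asserting this, whereas you make the upper-bound step explicit by exhibiting the two-row witness $\lambda=\bigl(p^k-\gamma_0(\theta)-\gamma_1(\theta)+1,\ \gamma_0(\theta)+\gamma_1(\theta)-1\bigr)$ and checking it is a bona fide thin partition via Lemma~\ref{lem:tree-bounds}(a); this is a helpful elaboration but not a different route.
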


\begin{proof}
	That $m(\theta)=p^k-\gamma_0(\theta)-\gamma_1(\theta)$ follows directly from Theorem~\ref{thm:value-2}(i) and (ii).
\end{proof}

We can put our results on $m(\theta)$ together in the following theorem. 

\begin{theorem}\label{thm:m-final-pk}
	Let $k\in\N$ and $\theta\in\Irr(P_{p^k})$. Then 
	\[ m(\theta) = \begin{cases}
		p^k-\gamma_0(\theta)-\gamma_1(\theta)-2 & \text{ if}\ \val(\theta)=0,\\
		p^k-\gamma_0(\theta)-\gamma_1(\theta)-1 & \text{ if}\ \val(\theta)=1\ \text{and conditions (a) and (b) of Theorem~\ref{thm:punctures} hold},\\
		p^k-\gamma_0(\theta)-\gamma_1(\theta) & \text{otherwise}.
	\end{cases} \]
\end{theorem}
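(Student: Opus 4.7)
The theorem is a consolidation of results already established in this section, so the proof plan is essentially a matter of assembling cases according to the value of $\val(\theta)$ and invoking the appropriate earlier statements. No new combinatorial input is needed.

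The plan is to split into three cases. First, if $\val(\theta)=0$, then the associated trees contain no people at all, which forces every vertex label to lie in $\{0,p\}$; combined with admissibility (the $p$ children of a non-$p$-labelled vertex must lie in the same $\Pi$-orbit) one concludes $\theta=\triv_{p^k}$. Then $\gamma_0(\theta)=\gamma_1(\theta)=0$, and Theorem~\ref{thm:GL1} gives $\Omega(\triv_{p^k})=\puncp{p^k}$, whose largest $t$ with $\cB_{p^k}(t)\subseteq\puncp{p^k}$ is $p^k-2$. This matches $p^k-\gamma_0(\theta)-\gamma_1(\theta)-2$.

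Second, if $\val(\theta)=1$, then by definition $\gamma_i(\theta)=0$ for all $i\ge 1$, so in particular $\gamma_1(\theta)=0$. Theorem~\ref{thm:value-1} says $\Omega(\theta)$ is either $\cB_{p^k}(p^k-\gamma_0(\theta))$ or $\puncb{p^k}{p^k-\gamma_0(\theta)}$, and Theorem~\ref{thm:punctures} identifies the punctured case as precisely when conditions (a) and (b) hold. In the punctured case, $\puncb{p^k}{p^k-\gamma_0(\theta)}$ excludes the partitions $(p^k-\gamma_0(\theta),\gamma_0(\theta)-1,1)$ and $(p^k-\gamma_0(\theta),2,1^{\gamma_0(\theta)-2})$ (and their conjugates), all of which lie in $\cB_{p^k}(p^k-\gamma_0(\theta))$, so the largest $t$ with $\cB_{p^k}(t)\subseteq\Omega(\theta)$ drops by exactly one, yielding $m(\theta)=p^k-\gamma_0(\theta)-1$. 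Otherwise $\Omega(\theta)=\cB_{p^k}(p^k-\gamma_0(\theta))$ and $m(\theta)=p^k-\gamma_0(\theta)$.

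Finally, if $\val(\theta)\ge 2$, then Theorem~\ref{thm:value-2}(i) gives the inclusion $\cB_{p^k}(p^k-\gamma_0(\theta)-\gamma_1(\theta))\subseteq\Omega(\theta)$, so $m(\theta)\ge p^k-\gamma_0(\theta)-\gamma_1(\theta)$, while Theorem~\ref{thm:value-2}(iii) shows that $\tworow{p^k}{p^k-\gamma_0(\theta)-\gamma_1(\theta)+1}\notin\Omega(\theta)$ (indeed, one verifies by tracking the bounds in the proof of Theorem~\ref{thm:value-2}(ii) that no partition $\lambda$ with $\lambda_1=p^k-\gamma_0(\theta)-\gamma_1(\theta)+1$ and $\ell(\lambda)\le 2$ can lie in $\Omega(\theta)$), so $m(\theta)<p^k-\gamma_0(\theta)-\gamma_1(\theta)+1$. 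The three cases together give the stated formula. There is no genuine obstacle; the only mild subtlety is confirming in the $\val(\theta)=0$ case that admissibility of the tree forces triviality of $\theta$, and in the $\val(\theta)=1$ punctured case that the two `excluded' families of partitions really do sit inside $\cB_{p^k}(p^k-\gamma_0(\theta))$, both of which are straightforward from Definition~\ref{def:puncture} and Lemma~\ref{lem:tree-bounds}.
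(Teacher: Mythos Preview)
Your approach is essentially the same as the paper's own proof: split by $\val(\theta)$ and invoke the earlier results (the paper simply cites Corollary~\ref{cor:m-pk} and Theorem~\ref{thm:m-val-2}, while you unpack these one level further). The case analysis for $\val(\theta)\in\{0,1\}$ is fine.

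There is one slip in your $\val(\theta)\ge 2$ case. You write that Theorem~\ref{thm:value-2}(iii) shows $\tworow{p^k}{p^k-\gamma_0(\theta)-\gamma_1(\theta)+1}\notin\Omega(\theta)$, but part (iii) is a \emph{lower} bound on multiplicities for certain partitions and says nothing about exclusion. The correct reference is Theorem~\ref{thm:value-2}(ii): the partition $\tworow{p^k}{p^k-\gamma_0(\theta)-\gamma_1(\theta)+1}$ is thin and lies outside $\cB_{p^k}(p^k-\gamma_0(\theta)-\gamma_1(\theta))$, so (ii) directly gives $\lambda\notin\Omega(\theta)$. There is no need to ``track bounds in the proof''; the statement of (ii) suffices. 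With that correction your argument is complete and matches the paper's (which is exactly how Theorem~\ref{thm:m-val-2} is deduced from Theorem~\ref{thm:value-2}).
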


\begin{proof}
	This follows from Corollary~\ref{cor:m-pk} and Theorem~\ref{thm:m-val-2}, observing that when $\val(\theta)\leq 1$ then $\gamma_1(\theta)=0$. We remark that if $\val(\theta)=0$ then $\gamma_0(\theta)=\gamma_1(\theta)=0$.
\end{proof}

\bigskip

\section{The Structure of the Set $\Omega(\theta)$: Arbitrary $n$ Case}\label{sec:arbitrary-n}

Again, throughout this section $p$ will be a fixed prime number such that $p\ge 5$.
Using our results from the prime power case, in this section we now allow $n$ to be any positive integer and determine $m(\theta)$ and $M(\theta)$ for any $\theta\in\Irr(P_n)$. Exactly as before, we start with $M(\theta)$. 

\begin{lemma}\label{lem:omega-general-n}
	Let $n\in\N$ with $p$-adic expansion $n=p^{n_1}+\cdots+p^{n_t}$. Let $\theta\in\Irr(P_n)$ and suppose $\theta=\theta_1\times\cdots\times\theta_t$ where $\theta_i\in\Irr(P_{p^{n_i}})$. Then $\Omega(\theta)=\Omega(\theta_1)\star\cdots\star\Omega(\theta_t)$. 
\end{lemma}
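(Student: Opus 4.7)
The plan is to use transitivity of restriction through the Young subgroup $Y_\nu$ corresponding to the composition $\nu=(p^{n_1},\dotsc,p^{n_t})$ of $n$, combined with the Littlewood--Richardson rule. Write $Y:=Y_\nu=S_{p^{n_1}}\times\cdots\times S_{p^{n_t}}$ and recall $P_n\cong P_{p^{n_1}}\times\cdots\times P_{p^{n_t}}\le Y$.

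For any $\lambda\in\cP(n)$, the Littlewood--Richardson rule gives
\[ \chi^\lambda\down_Y = \sum_{\mu^i\in\cP(p^{n_i})} \LR(\lambda;\mu^1,\ldots,\mu^t)\cdot (\chi^{\mu^1}\times\cdots\times\chi^{\mu^t}). \]
Restricting further to $P_n$ (which acts as the direct product $P_{p^{n_1}}\times\cdots\times P_{p^{n_t}}$ on the factors of $Y$), we obtain
\[ \chi^\lambda\down_{P_n} = \sum_{\mu^i\in\cP(p^{n_i})} \LR(\lambda;\mu^1,\ldots,\mu^t)\cdot \bigl(\chi^{\mu^1}\down_{P_{p^{n_1}}}\bigr)\times\cdots\times\bigl(\chi^{\mu^t}\down_{P_{p^{n_t}}}\bigr). \]
Taking the inner product with $\theta=\theta_1\times\cdots\times\theta_t$ and using that inner products of characters of direct products split as products, we get
\[ [\chi^\lambda\down_{P_n},\theta] = \sum_{\mu^i\in\cP(p^{n_i})} \LR(\lambda;\mu^1,\ldots,\mu^t)\cdot \prod_{i=1}^t [\chi^{\mu^i}\down_{P_{p^{n_i}}},\theta_i]. \]

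From this identity both inclusions follow. If $\lambda\in\Omega(\theta)$, the left-hand side is nonzero, so there must exist some tuple $(\mu^1,\ldots,\mu^t)$ for which every term of the product on the right is nonzero, giving $\mu^i\in\Omega(\theta_i)$ with $\LR(\lambda;\mu^1,\ldots,\mu^t)\ne 0$; by the iterated definition of $\star$ this means $\lambda\in\Omega(\theta_1)\star\cdots\star\Omega(\theta_t)$. Conversely, if $\lambda$ lies in this $\star$-product, witnessed by some $\mu^i\in\Omega(\theta_i)$, then the corresponding term of the sum is a positive integer and all other terms are non-negative integers (since LR-coefficients and Sylow branching coefficients are non-negative), so the sum is nonzero and $\lambda\in\Omega(\theta)$.

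There is no real obstacle here; the only mild point to spell out is that $\star$ was originally defined in Section~\ref{sec:prelim} as a binary operation, so the $t$-fold expression $\Omega(\theta_1)\star\cdots\star\Omega(\theta_t)$ relies on the associativity of $\star$ (noted just after its definition) and the iterated form of the Littlewood--Richardson rule (Lemma~\ref{lem:iterated-LR} together with the standard unfolding of $\chi^\lambda\down_Y$ for $Y$ a product of more than two symmetric groups). Once this is in place the proof is just one line of character arithmetic.
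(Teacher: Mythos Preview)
Your argument is correct and is exactly the content the paper gestures at when it says the lemma ``is an immediate consequence of the basic properties of induction of characters'': you have simply written out, via restriction and Frobenius reciprocity, the transitivity $P_n\le Y\le S_n$ that the paper invokes in one sentence. One small remark: your parenthetical citation of Lemma~\ref{lem:iterated-LR} is off---that lemma is a specific identity about first parts, not the general iterated Littlewood--Richardson decomposition you actually use; the latter is just the standard repeated application of the binary rule, which (together with the associativity of $\star$ already noted in Section~\ref{sec:prelim}) is all you need.
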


\begin{proof}
	Since $P_n=P_{p^{n_1}}\times\cdots\times P_{p^{n_t}}\le S_{p^{n_1}}\times\cdots\times S_{p^{n_t}}\leq S_n$, the statement is an immediate consequence of the basic properties of induction of characters. 
\end{proof}

Before moving forward, we suggest to the reader to briefly recall Definition~\ref{def:trees-arbitrary}. 

\begin{theorem}\label{thm:arbitrary-M}
	For all $n\in\N$ and $\theta\in\Irr(P_n)$, we have that $M(\theta) = n-\gamma_0(\theta)$.
\end{theorem}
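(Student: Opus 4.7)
The plan is to reduce this general case to the prime power case (Theorem~\ref{thm:M}) via the $p$-adic decomposition of $n$ and the fact that $\Omega$ respects the $\star$-product under this decomposition (Lemma~\ref{lem:omega-general-n}).

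First, write $n = p^{n_1} + \cdots + p^{n_t}$ in its $p$-adic expansion and decompose $\theta = \theta_1 \times \cdots \times \theta_t$ with $\theta_i \in \Irr(P_{p^{n_i}})$. By Theorem~\ref{thm:M}, $M(\theta_i) = p^{n_i} - \gamma_0(\theta_i)$ for each $i$, and by Definition~\ref{def:trees-arbitrary}, $\gamma_0(\theta) = \sum_{i=1}^t \gamma_0(\theta_i)$. Hence $\sum_{i=1}^t M(\theta_i) = n - \gamma_0(\theta)$, and it suffices to show that $M(\theta) = \sum_{i=1}^t M(\theta_i)$.

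For the upper bound $\Omega(\theta) \subseteq \cB_n(n-\gamma_0(\theta))$: let $\lambda \in \Omega(\theta)$. By Lemma~\ref{lem:omega-general-n}, $\Omega(\theta) = \Omega(\theta_1) \star \cdots \star \Omega(\theta_t)$, so there exist $\mu^i \in \Omega(\theta_i)$ with $\LR(\lambda; \mu^1, \ldots, \mu^t) \neq 0$. Applying Lemma~\ref{lem:LR-first-part} and the definition of $M(\theta_i)$:
\[ \lambda_1 \le \sum_{i=1}^t (\mu^i)_1 \le \sum_{i=1}^t M(\theta_i) = n - \gamma_0(\theta). \]
Since $p$ is odd, $\Omega(\theta) = \close{\Omega(\theta)}$ by Lemma~\ref{lem:conj-restr}, so $\lambda' \in \Omega(\theta)$ as well, and the same argument applied to $\lambda'$ gives $\ell(\lambda) = (\lambda')_1 \le n - \gamma_0(\theta)$. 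Thus $\lambda \in \cB_n(n-\gamma_0(\theta))$.

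For the lower bound, we exhibit a partition $\lambda \in \Omega(\theta)$ with $\lambda_1 = n - \gamma_0(\theta)$. By Theorem~\ref{thm:M}, for each $i$ there exists $\mu^i \in \Omega(\theta_i)$ with $(\mu^i)_1 = M(\theta_i)$. Define $\lambda := \mu^1 + \mu^2 + \cdots + \mu^t$, so that $\lambda_1 = \sum_{i=1}^t (\mu^i)_1 = n - \gamma_0(\theta)$. By Lemma~\ref{lem:LR-sum}, $\LR(\lambda; \mu^1, \ldots, \mu^t) = 1 \neq 0$, hence $\lambda \in \Omega(\theta_1) \star \cdots \star \Omega(\theta_t) = \Omega(\theta)$. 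This forces $M(\theta) \ge n - \gamma_0(\theta)$, completing the proof.

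No step here should be a real obstacle: the main content is already contained in Theorem~\ref{thm:M} and Lemma~\ref{lem:omega-general-n}, and the remaining arguments are standard manipulations with the $\star$-product and the Littlewood--Richardson rule. The only minor subtlety is handling the length bound $\ell(\lambda) \le n - \gamma_0(\theta)$, which is taken care of by invoking conjugation symmetry via Lemma~\ref{lem:conj-restr}.
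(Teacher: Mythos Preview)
Your proof is correct and follows essentially the same approach as the paper: reduce to the prime power case via Theorem~\ref{thm:M}, use Lemma~\ref{lem:omega-general-n} to decompose $\Omega(\theta)$ as a $\star$-product, and construct the extremal $\lambda$ as $\mu^1+\cdots+\mu^t$ using Lemma~\ref{lem:LR-sum}. The only minor difference is in the upper bound: the paper invokes Proposition~\ref{prop:smooth}(i) to conclude $\Omega(\theta_1)\star\cdots\star\Omega(\theta_t)\subseteq\cB_{p^{n_1}}(M(\theta_1))\star\cdots\star\cB_{p^{n_t}}(M(\theta_t))=\cB_n(\Sigma)$ (checking that $M(\theta_i)>p^{n_i}/2$ since $\gamma_0(\theta_i)\le p^{n_i-1}$), whereas you argue directly via Lemma~\ref{lem:LR-first-part} on an individual $\lambda$ and handle $\ell(\lambda)$ by conjugation symmetry --- this is slightly more elementary and sidesteps the numerical hypothesis in Proposition~\ref{prop:SL5.7}.
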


\begin{proof}
	Let $n=p^{n_1}+\cdots+p^{n_t}$ be the $p$-adic expansion of $n$. 
	If $t=1$ then the statement holds by Theorem~\ref{thm:M}. Suppose now that $t\geq 2$. For each $i\in [1,t]$, let $\theta_i\in\Irr(P_{p^{n_i}})$ be such that $\theta=\theta_1\times\theta_2\times\cdots\times\theta_t$. 
	Moreover, let $\Sigma:=M(\theta_1)+M(\theta_2)+\cdots + M(\theta_t)$. Since $\gamma_0(\theta_i)\leq p^{n_i-1}<\frac{p^{n_i}}{2}$ we can use Proposition~\ref{prop:smooth} together with Lemma~\ref{lem:omega-general-n} to deduce that 
	\[ \Omega(\theta)=\Omega(\theta_1)\star\cdots\star\Omega(\theta_t)\subseteq \cB_{p^{n_1}}(M(\theta_1))\star\cdots\star \cB_{p^{n_t}}(M(\theta_t))=\cB_{n}(\Sigma). \]
	This implies that $M(\theta)\leq \Sigma$. 
	
	On the other hand, for every $i\in [1,t]$ let $\lambda^{i}\in\Omega(\theta_i)$ be such that $(\lambda^{i})_1 = M(\theta_i)$.  
	Setting $\lambda=\lambda^{1}+\cdots+\lambda^{t}$, we know from Lemma~\ref{lem:LR-sum} that $\LR(\lambda; \lambda^{1},\dotsc,\lambda^{t})=1$. Hence $\lambda\in\Omega(\theta_1)\star\cdots\star\Omega(\theta_t)=\Omega(\theta)$, and $\lambda_1=\sum_{i=1}^t (\lambda^{i})_1 = \Sigma$. Hence, we deduce that $\Sigma\leq M(\theta)$ and therefore that $\Sigma=M(\theta)$. 
	
	From Definition~\ref{def:trees-arbitrary}, we know that $\gamma_0(\theta)=\gamma_0(\theta_1)+\cdots+\gamma_0(\theta_t)$. Thus $M(\theta)=n-\gamma_0(\theta)$.
\end{proof}

We are now ready to extend Theorem~\ref{thm:value-1} from $n=p^k$ to any arbitrary $n\in\N$, by describing the structure of $\Omega(\theta)$ for any $\theta\in\Irr(P_n)$ with $\val(\theta)\leq 1$. This is done in Theorems~\ref{thm:val-1-arbitrary} and~\ref{thm:puncture-arbitrary} below, and it is a broad generalization of \cite[Theorem 2.9]{GL2}. Recall from Definition~\ref{def:puncture} that $\puncb{n}{t}$ denotes a punctured box, which is $\cB_n(t)$ less two particular partitions and their conjugates.

\begin{theorem}\label{thm:val-1-arbitrary}
	Let $n\in\N$ and $\theta\in\Irr(P_n)$. Suppose $\val(\theta)=1$. Then
	\[ \Omega(\theta)\ \text{equals either}\ \cB_{n}(n-\gamma_0(\theta))\text{ or }\puncb{n}{n-\gamma_0(\theta)}. \]
\end{theorem}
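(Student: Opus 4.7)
The plan is to reduce to the prime-power case via the $p$-adic expansion of $n$, and then iteratively apply Proposition~\ref{prop:smooth}. First, I would write $n = \sum_{i=1}^{t} p^{n_i}$ and correspondingly $\theta = \theta_1 \times \cdots \times \theta_t$ with $\theta_i \in \Irr(P_{p^{n_i}})$. By Lemma~\ref{lem:omega-general-n}, $\Omega(\theta) = \Omega(\theta_1) \star \cdots \star \Omega(\theta_t)$. The hypothesis $\val(\theta) = 1$ forces each $\val(\theta_i) \in \{0, 1\}$ with at least one equal to $1$. Using Theorems~\ref{thm:GL1} and~\ref{thm:value-1}, each factor $\Omega(\theta_i)$ then lies in one of four families: $\cB_1(1) = \cP(1)$ when $n_i = 0$; $\puncp{p^{n_i}}$ when $n_i \geq 1$ and $\val(\theta_i) = 0$; $\cB_{p^{n_i}}(p^{n_i} - \gamma_0(\theta_i))$ when $\val(\theta_i) = 1$ is non-punctured; or $\puncb{p^{n_i}}{p^{n_i} - \gamma_0(\theta_i)}$ when $\val(\theta_i) = 1$ is punctured. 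From Lemmas~\ref{lem:tree-bounds} and~\ref{lem:small-gamma_0} one also records that $\gamma_0(\theta_i) \geq p \geq 5$ in the last family, which in particular forces $n_i \geq 2$ there.

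Next, I would split into cases by the multiset of factor types and compute the star product iteratively, reading off the behaviour from Table~\ref{tab:smooth}. The dichotomy is clean. If some factor is a proper $\cB$, i.e.\ of type $\cB_{p^{n_i}}(p^{n_i}-\gamma_0(\theta_i))$ with $\gamma_0(\theta_i) \geq 1$, or else arises by first combining two punctured-$\cB$ factors via part~(vii) of Proposition~\ref{prop:smooth}, then combining with all remaining factors using parts~(i), (ii) and~(iv) keeps us inside the $\cB$-type, yielding $\Omega(\theta) = \cB_n(n - \gamma_0(\theta))$. If instead no such proper $\cB$ is present, then the only $\val = 1$ factor is a unique $\punc\cB$ and every other factor is either $\cB_1(1) = \cP(1)$ or $\puncp{p^{n_j}}$; combining the $\punc\cB$ factor with each of the others using parts~(v) (viewing $\cB_1(1)$ as $\cP(1)$) and~(vi) of Proposition~\ref{prop:smooth} preserves the $\punc\cB$-type, yielding $\Omega(\theta) = \puncb{n}{n - \gamma_0(\theta)}$. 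Parameter tracking is then automatic: each application of Proposition~\ref{prop:smooth} adds the individual parameters, so the final parameter equals $\sum_i (p^{n_i} - \gamma_0(\theta_i)) = n - \gamma_0(\theta)$.

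The main obstacle will be verifying that the size hypotheses of Proposition~\ref{prop:smooth} are satisfied at every step of the iteration, so that the partial products really do stay inside the four families we need. For an intermediate $\cB_X(A)$ about to be combined with a $\punc\cB$ factor via part~(iv), the condition $A \leq X - 1$ holds because the proper $\cB$ constituent contributes $\gamma_0 \geq 1$; for any intermediate $\puncb{X}{B}$ the condition $X/2 + 1 < B \leq X - 5$ follows from $\gamma_0(\theta_{j_0}) \geq p \geq 5$ (upper bound on $B$) together with $\gamma_0(\theta_{j_0}) \leq p^{n_{j_0}-1} < p^{n_{j_0}}/2 - 1 \leq X/2 - 1$ (lower bound on $B$), invoking Lemma~\ref{lem:tree-bounds} and the standing assumption $p \geq 5$. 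One delicate point warranting attention is the treatment of the degenerate factor $\cB_1(1)$: although it is of $\cB$-type, combining it with a $\punc\cB$ factor by part~(i) would destroy the puncture (since part~(i) has $a = x$ as a degenerate endpoint), while identifying $\cB_1(1) = \cP(1)$ and invoking part~(v) preserves it correctly; choosing the right order of combination and the right instance of Proposition~\ref{prop:smooth} is the essential technical care required.
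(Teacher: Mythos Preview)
Your proposal is correct and follows essentially the same route as the paper's own proof: reduce to the prime-power factors via Lemma~\ref{lem:omega-general-n}, identify each $\Omega(\theta_i)$ using Theorems~\ref{thm:GL1} and~\ref{thm:value-1}, and then compute the iterated $\star$-product using Proposition~\ref{prop:smooth}. The paper's proof is in fact extremely terse (one sentence after setting up the decomposition), whereas you spell out explicitly the case dichotomy from Table~\ref{tab:smooth}, the order in which to combine factors, and the verification that the numerical hypotheses of Proposition~\ref{prop:smooth} persist through the iteration; your treatment of the degenerate factor $\cB_1(1)=\cP(1)$ via part~(v) rather than part~(i) is a detail the paper leaves entirely implicit.
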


\begin{proof}
	As usual, let $n=p^{n_1}+\cdots+p^{n_t}$ be the $p$-adic expansion of $n$. If $t=1$ then the statement holds by Theorem~\ref{thm:value-1}. Suppose now that $t\geq 2$. For each $i\in [1,t]$, let $\theta_i\in\Irr(P_{p^{n_i}})$ be such that $\theta=\theta_1\times\theta_2\times\cdots\times\theta_t$. Since $\val(\theta)=1$ we have that $\val(\theta_i)\leq 1$ for each $i\in [1,t]$. 
	The statement is now a direct consequence of Lemma~\ref{lem:omega-general-n}, Theorem~\ref{thm:value-1} and Proposition~\ref{prop:smooth}. 
\end{proof}

As done before in the prime power case, we are able to determine exactly when $\Omega(\theta)$ is a punctured box. Let $n\in\N$ and let $\theta\in\Irr(P_n)$ with $\val(\theta)=1$. We say that $\Omega(\theta)$ is \emph{punctured} if $\Omega(\theta)= \puncb{n}{n-\gamma_0(\theta)}$.

\begin{theorem}\label{thm:puncture-arbitrary}
	Let $n\in\N$ and let $n=p^{n_1}+\cdots+p^{n_t}$ be its $p$-adic expansion. Let $\theta\in\Irr(P_n)$ be such that $\val(\theta)=1$. Suppose $\theta=\theta_1\times\theta_2\times\cdots\times\theta_t$ where $\theta_i\in\Irr(P_{p^{n_i}})$ for each $i\in[1,t]$. Then $\Omega(\theta)$ is punctured if and only if there exists a unique $j\in [1,t]$ such that $\val(\theta_i)=0$ for all $i\in [1,t]\setminus\{j\}$ and such that $\theta_j$ satisfies the following conditions: 
	\begin{itemize}
		\item[(a)] $\eta(\theta_j)\ge 2$, and
		\item[(b)] for any $T\in\cT(\theta_j)$, the closest common graph-theoretic ancestor of any two people in $T$ has label 0.
	\end{itemize}
\end{theorem}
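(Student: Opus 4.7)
The plan is to reduce to the prime power case (Theorems~\ref{thm:value-1} and~\ref{thm:punctures}) via Lemma~\ref{lem:omega-general-n}, and then use the star-product calculus of Proposition~\ref{prop:smooth}, summarised by Table~\ref{tab:smooth}, to determine when $\Omega(\theta) = \Omega(\theta_1) \star \cdots \star \Omega(\theta_t)$ is a punctured box. Since $\val(\theta)=1=\max_i\val(\theta_i)$, each $\val(\theta_i) \in \{0,1\}$: when $\val(\theta_i)=0$, $\theta_i = \triv$ and $\Omega(\theta_i) = \puncp{p^{n_i}}$ by Theorem~\ref{thm:GL1} (or $\cP(1) = \cB_1(1)$ when $n_i = 0$); when $\val(\theta_i)=1$, Theorem~\ref{thm:value-1} gives $\Omega(\theta_i) \in \{\cB_{p^{n_i}}(p^{n_i}-\gamma_0(\theta_i)),\, \puncb{p^{n_i}}{p^{n_i}-\gamma_0(\theta_i)}\}$, with the latter case characterised by (a) and (b) via Theorem~\ref{thm:punctures}. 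Note that $\val(\theta_i)=1$ forces $\gamma_0(\theta_i)\geq 1$, so the corresponding full box is strictly smaller than $\cP(p^{n_i})$, i.e.~genuinely of $\cB$-type and not of $\cP$-type.

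For sufficiency, assume there is a unique $j$ with $\val(\theta_j)=1$ satisfying (a) and (b), while $\val(\theta_i)=0$ for all $i\neq j$. Then $\Omega(\theta_j)$ is punctured, and every other $\Omega(\theta_i)$ is $\puncp{p^{n_i}}$ or $\cP(1)$. By the remark following Theorem~\ref{thm:punctures}, $\gamma_0(\theta_j)=\eta(\theta_j)$ is a power of $p$ of size at least $p\geq 5$; together with $\gamma_0(\theta_j)\leq p^{n_j-1}$ from Lemma~\ref{lem:tree-bounds}(a), this ensures the hypotheses $\tfrac{y}{2}+1 < b \leq y-5$ of Proposition~\ref{prop:smooth}(v),(vi) are comfortably satisfied. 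Starring the factors one at a time in any order (using (v) when absorbing $\cP(1)$ and (vi) when absorbing $\puncp{p^{n_i}}$) preserves the punctured box structure and produces $\Omega(\theta) = \puncb{n}{n-\gamma_0(\theta)}$.

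For necessity, suppose $\Omega(\theta)$ is punctured. Consulting Table~\ref{tab:smooth}: any $\cB$-type factor drags the entire product to type $\cB$, and two $\punc\cB$-type factors star together into type $\cB$, so the only way a star product of factors of types in $\{\cB,\punc\cB,\cP,\punc\cP\}$ lands in type $\punc\cB$ is if exactly one factor has type $\punc\cB$ and all others have type $\cP$ or $\punc\cP$. Combined with the classification above, this forces exactly one $\theta_j$ to have $\val(\theta_j)=1$ with $\Omega(\theta_j)$ punctured --- so $\theta_j$ satisfies (a) and (b) by Theorem~\ref{thm:punctures} --- and every other $\theta_i$ to satisfy $\val(\theta_i)=0$, since otherwise $\Omega(\theta_i)$ would be a genuine $\cB$-type set. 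The main obstacle is the corner case $n_i=0$: here $\Omega(\theta_i)=\cP(1)$ does not strictly match any set class appearing in Proposition~\ref{prop:smooth}, but the relevant identities ($\cP(1)\star\puncb{y}{b}=\puncb{y+1}{b+1}$ and the like) still hold via Pieri's rule, so such factors are absorbed seamlessly and the type-based reasoning is unaffected.
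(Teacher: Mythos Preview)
Your proposal is correct and follows essentially the same approach as the paper's proof: both reduce to the prime power case via Lemma~\ref{lem:omega-general-n}, classify each factor $\Omega(\theta_i)$ using Theorems~\ref{thm:GL1}, \ref{thm:value-1} and~\ref{thm:punctures}, and then appeal to the $\star$-product calculus of Proposition~\ref{prop:smooth} (Table~\ref{tab:smooth}) to determine when the product is a punctured box. Your treatment is in fact slightly more careful than the paper's in two respects: you explicitly verify the numerical hypotheses of Proposition~\ref{prop:smooth}(v),(vi) for the $\punc\cB$ factor, and you address the corner case $n_i=0$ (where $\Omega(\theta_i)=\cP(1)$ falls outside the stated ranges of Proposition~\ref{prop:smooth}), whereas the paper simply absorbs all trivial factors into a single $\cP(n-p^{n_j})$ without comment.
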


\begin{proof}
	Let us start by assuming the existence of a unique $j\in [1,t]$ such that $\val(\theta_i)=0$ for all $i\in [1,t]\setminus\{j\}$, and such that $\theta_j$ satisfies conditions (a) and (b). Then $\theta_i=\triv_{p^{n_i}}$ for all $i\in [1,t]\setminus\{j\}$, and hence $\Omega(\theta_i)=\puncp{p^{n_i}}$ for all such $i$ by Theorem~\ref{thm:GL1}. 
	Moreover, since $\val(\theta)=1$ we must have that $\val(\theta_j)=1$. Hence $\Omega(\theta_j)$ is punctured by Theorem~\ref{thm:punctures}; that is, $\Omega(\theta_j)=\puncb{n}{p^{n_j}-\gamma_0(\theta_j)}$ by Theorem~\ref{thm:value-1}. Now, using Lemma~\ref{lem:omega-general-n} and Proposition~\ref{prop:smooth} we obtain that 
	\[ \Omega(\theta)=\Omega(\theta_1)\star\cdots\star\Omega(\theta_t) 
	=\cP(n-p^{n_t})\star \puncb{p^{n_j}}{p^{n_j}-\gamma_0(\theta_j)} = \puncb{n}{n-\gamma_0(\theta)}. \]
	For the converse implication, suppose that $\Omega(\theta)$ is punctured; that is, $\Omega(\theta)=\puncb{n}{n-\gamma_0(\theta)}$.
	Since $\val(\theta)=1$, then $\val(\theta_i)\le 1$ for all $i\in[1,t]$. Thus Theorems~\ref{thm:GL1}, \ref{thm:value-1} and~\ref{thm:punctures} give a complete description of the possible forms of $\Omega(\theta_i)$.
	Now, by Lemma~\ref{lem:omega-general-n} we know that
	\[ \puncb{n}{n-\gamma_0(\theta)} = \Omega(\theta_1)\star\cdots\star\Omega(\theta_t). \]
	So by Proposition~\ref{prop:smooth} (it is helpful to refer to Table~\ref{tab:smooth} here), there exists a unique $j\in[1,t]$ such that $\Omega(\theta_j)=\puncb{p^{n_j}}{x}$ for some $x$, and $\Omega(\theta_i)=\puncp{p^{n_i}}$ for all $i\in[1,t]\setminus\{j\}$. 
	This implies that $\theta_i=\triv_{p^{n_i}}$ for all such $i$, and $\theta_j$ satisfies conditions (a) and (b), by Theorem~\ref{thm:punctures}.
\end{proof}

We remark that Theorems~\ref{thm:val-1-arbitrary} and~\ref{thm:puncture-arbitrary} provide a complete description of $\Omega(\theta)$ for all irreducible characters $\theta$ of value $\val(\theta)=1$. This generalises \cite[Theorem 2.9]{GL2} from linear to arbitrary irreducible characters of Sylow subgroups of symmetric groups, since a \textit{quasi-trivial} character of $P_n$ (in the sense of \cite[Definition 2.8]{GL2}) is precisely a linear character $\theta$ of $P_n$ such that $\val(\theta)=1$.

We now turn to the description of $\Omega(\theta)$ for all other irreducible characters $\theta$ of $P_n$. 

\begin{theorem}\label{thm:arbitrary-val-2}
	Let $n\in\N$ and $\theta\in\Irr(P_n)$ with $\val(\theta)\geq 2$. Then $\cB_n(n-\gamma_0(\theta)-\gamma_1(\theta))\subseteq \Omega(\theta)$ and 
	$\Omega(\theta)\setminus\cB(n-\gamma_0(\theta)-\gamma_1(\theta))$ contains no thin partitions. 
\end{theorem}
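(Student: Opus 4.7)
The plan is to reduce to the prime power case via the $p$-adic decomposition of $P_n$ and then apply Lemma~\ref{lem:smoothing-no-thins} iteratively. Writing $n = p^{n_1} + \cdots + p^{n_t}$ for the $p$-adic expansion, we have $\theta = \theta_1 \times \cdots \times \theta_t$ with each $\theta_i \in \Irr(P_{p^{n_i}})$. By Lemma~\ref{lem:omega-general-n}, $\Omega(\theta) = \Omega(\theta_1) \star \cdots \star \Omega(\theta_t)$, and by Definition~\ref{def:trees-arbitrary}, $\gamma_0(\theta) + \gamma_1(\theta) = \sum_i (\gamma_0(\theta_i) + \gamma_1(\theta_i))$ and $\val(\theta) = \max_i \val(\theta_i)$. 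The case $t = 1$ is Theorem~\ref{thm:value-2}, so one may assume $t \geq 2$.

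For each $i$, I would identify the form of $\Omega(\theta_i)$ using the prime power results: Theorem~\ref{thm:GL1} gives $\Omega(\theta_i) = \puncp{p^{n_i}}$ when $\val(\theta_i) = 0$ and $n_i\ge 1$ (with $\Omega(\triv_1) = \cB_1(1)$ in the degenerate case $n_i = 0$); Theorem~\ref{thm:val-1-arbitrary}-style analysis gives $\Omega(\theta_i) \in \{\cB_{p^{n_i}}(p^{n_i} - \gamma_0(\theta_i)),\ \puncb{p^{n_i}}{p^{n_i} - \gamma_0(\theta_i)}\}$ when $\val(\theta_i) = 1$; and Theorem~\ref{thm:value-2} gives $\Omega(\theta_i) = \cB_{p^{n_i}}(p^{n_i} - \gamma_0(\theta_i) - \gamma_1(\theta_i)) \sqcup \cN_i$ with $\cN_i$ containing no thin partitions when $\val(\theta_i) \geq 2$. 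In all cases $\Omega(\theta_i)$ has the shape $\Delta_i \sqcup \cN_i$ required for Lemma~\ref{lem:smoothing-no-thins}. Since $\val(\theta) \geq 2$, there is some $j \in [1, t]$ with $\val(\theta_j) \geq 2$, and this factor contributes a genuine $\cB$ component $\Delta_j = \cB_{p^{n_j}}(p^{n_j} - \gamma_0(\theta_j) - \gamma_1(\theta_j))$ (with $a_j < p^{n_j}$ since $\gamma_1(\theta_j)\ge 1$).

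Using commutativity and associativity of $\star$, I would combine $\Omega(\theta_j)$ with the other factors one at a time via Lemma~\ref{lem:smoothing-no-thins}. Inspecting Table~\ref{tab:smooth}, the $\cB$-row shows that $\cB \star X = \cB$ for any $X \in \{\cB, \puncb{\cdot}, \cP, \puncp{\cdot}\}$, so the iterated product is forced into the form $\cB_n(a) \sqcup \cN$ with $\cN$ containing no thin partitions. Summing the bounds $a_i$ across all factors (where $a_i = p^{n_i}$ for $\puncp{p^{n_i}}$, $a_i = p^{n_i} - \gamma_0(\theta_i)$ when $\val(\theta_i)=1$, and $a_i = p^{n_i} - \gamma_0(\theta_i) - \gamma_1(\theta_i)$ when $\val(\theta_i)\ge 2$) and using $\gamma_1(\theta_i) = 0$ for $\val(\theta_i)\le 1$, one obtains $a = n - \gamma_0(\theta) - \gamma_1(\theta)$, which yields both conclusions of the theorem simultaneously.

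The only technical obstacle is verifying that the hypotheses of Lemma~\ref{lem:smoothing-no-thins} hold at each intermediate stage: most importantly, that the running $\cB$ satisfies $\tfrac{x}{2} < a \leq x$, and that any $\puncb{\cdot}$ factor combined in satisfies $\tfrac{y}{2} + 1 < b \leq y - 5$. For a set $I \subseteq [1,t]$ of factors already combined, one has $x - a = \sum_{i \in I}(\gamma_0(\theta_i) + \gamma_1(\theta_i))$, and the termwise estimate $\gamma_0(\theta_i) + \gamma_1(\theta_i) \leq p^{n_i - 1} + p^{n_i - 2}$ from Lemma~\ref{lem:tree-bounds} combined with $p \geq 5$ gives $\gamma_0(\theta_i) + \gamma_1(\theta_i) < p^{n_i}/2$, hence $x - a < x/2$ as required. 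The punctured-box bounds follow similarly, using additionally that whenever $\Omega(\theta_i) = \puncb{p^{n_i}}{p^{n_i} - \gamma_0(\theta_i)}$ one has $\gamma_0(\theta_i) \geq p \geq 5$ by Lemma~\ref{lem:small-gamma_0}.
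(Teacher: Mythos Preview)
Your proposal is correct and follows essentially the same route as the paper: both reduce to the prime power case via the $p$-adic decomposition and Lemma~\ref{lem:omega-general-n}, then combine the known structures of the $\Omega(\theta_i)$ using the $\star$-product machinery of Proposition~\ref{prop:smooth}. The only organisational difference is that you invoke Lemma~\ref{lem:smoothing-no-thins} iteratively to obtain both conclusions at once, whereas the paper applies Proposition~\ref{prop:smooth} directly for the inclusion and handles the thin-partition statement by a short separate Littlewood--Richardson argument.
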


\begin{proof}
	Let $n=p^{n_1}+p^{n_2}+\cdots+p^{n_t}$ be the $p$-adic expansion of $n$. If $t=1$ then the statement holds by Theorem~\ref{thm:value-2}. Suppose now that $t\geq 2$ and that $\theta=\theta_1\times\theta_2\times\cdots\times\theta_t$, where $\theta_i\in\Irr(P_{p^{n_i}})$ for all $i\in [1,t]$. 
	From Definition~\ref{def:trees-arbitrary} we know that there exists $j\in [1,t]$ such that $\val(\theta_j)=\val(\theta)\geq 2$. Hence, by Theorem~\ref{thm:value-2} we have that $\cB_{p^{n_j}}(p^{n_j}-\gamma_0(\theta_j)-\gamma_1(\theta_j))\subseteq \Omega(\theta_j)$ and 
	$\Omega(\theta_j)\setminus\cB_{p^{n_j}}(p^{n_j}-\gamma_0(\theta_j)-\gamma_1(\theta_j))$ contains no thin partitions. 
	
	Let us now analyse the possible structures of $\Omega(\theta_i)$ for all the remaining $i\in [1,t]$. 
	If $\val(\theta_i)=0$, then $\theta_i=\triv_{p^{n_i}}$. In this case $\gamma_0(\theta_i)=\gamma_1(\theta_i)=0$ and by Theorem~\ref{thm:GL1} we have that $\Omega(\theta_i)=\puncp{p^{n_i}}=\puncp{p^{n_i}-\gamma_0(\theta_i)-\gamma_1(\theta_i)}$.
	If $\val(\theta_i)=1$ then $\gamma_1(\theta_i)=0$ and by Theorem~\ref{thm:value-1} we have that $\Omega(\theta_i)$ is equal to $\cB_{p^{n_i}}(p^{n_i}-\gamma_0(\theta_i)-\gamma_1(\theta_i))$ or to $\puncb{p^{n_i}}{p^{n_i}-\gamma_0(\theta_i)-\gamma_1(\theta_i)}$. 
	Finally, if $\val(\theta_i)\geq 2$ then we have that $\cB(p^{n_i}-\gamma_0(\theta_i)-\gamma_1(\theta_i))\subseteq \Omega(\theta_i)$ by Theorem~\ref{thm:value-2}. In particular, there is at least one such $i\in[1,t]$ with $\val(\theta_i)\ge 2$, namely $j$ above.
	Since $\gamma_0(\theta)+\gamma_1(\theta)=\sum_{y=1}^t(\gamma_0(\theta_y)+\gamma_1(\theta_y))$, using Proposition~\ref{prop:smooth} and Lemma~\ref{lem:omega-general-n} we deduce that 
	\[ \cB_n(n-\gamma_0(\theta)-\gamma_1(\theta))\subseteq \Omega(\theta_1)\star\Omega(\theta_2)\star\cdots\star\Omega(\theta_t)=\Omega(\theta). \]
	
	Let us now assume that $\lambda\in\Omega(\theta)$ is a thin partition. Without loss of generality we also assume that $\lambda_1\geq \ell(\lambda)$. 
	Since $\theta=\theta_1\times\cdots\times\theta_t$ is an irreducible constituent of $\lambda\down_{P_n}$, considering restriction to the intermediate subgroup $X:=S_{p^{n_1}}\times S_{p^{n_2}}\times\cdots\times S_{p^{n_t}}$, we see that there exist $\lambda^1,\lambda^2,\ldots, \lambda^t$ such that $\lambda^i\in\Omega(\theta_i)$ for every $i\in [1,t]$ and such that $\LR(\lambda; \lambda^{1}, \ldots, \lambda^{t})\neq 0$. By Lemma~\ref{lem:LR-first-part} this implies that $\lambda^i$ is a thin partition for every $i\in [1,t]$, and that $\lambda_1\leq (\lambda^1)_1+(\lambda^2)_1+\cdots+(\lambda^t)_1$. 
	Since each $\lambda^i$ is thin and contained in $\Omega(\theta_i)$, we have that $(\lambda^i)_1\leq p^{n_i}-\gamma_0(\theta_i)-\gamma_1(\theta_i)$ by Theorems~\ref{thm:GL1}, \ref{thm:value-1} and~\ref{thm:value-2}. We conclude that 
	\[ \lambda_1\leq \sum_{i=1}^t(\lambda^i)_1\leq \sum_{i=1}^t \big(p^{n_i}-\gamma_0(\theta_i)-\gamma_1(\theta_i)\big) = n-\gamma_0(\theta)-\gamma_1(\theta), \]
	as desired. 
\end{proof}

We can now put together our results to determine $m(\theta)$ for every $\theta\in\Irr(P_n)$. In order to state this compactly, we will say that $\theta\in\Irr(P_n)$ with $\val(\theta)=1$ is \emph{punctured} to mean that $\Omega(\theta)$ is punctured. More precisely, by Theorem~\ref{thm:puncture-arbitrary} this means the following.

\begin{definition}
	Let $n=p^{n_1}+p^{n_2}+\cdots+p^{n_t}$ be the $p$-adic expansion of $n$. Let $\theta=\theta_1\times\cdots\times\theta_t\in\Irr(P_n)$ with $\val(\theta)=1$. We say that $\theta$ is \emph{punctured} if there exists a unique $j\in [1,t]$ such that $\val(\theta_i)=0$ for all $i\in [1,t]\setminus\{j\}$ and such that $\theta_j$ satisfies the following conditions: 
	\begin{itemize}
		\item[(a)] $\eta(\theta_j)\ge 2$, and
		\item[(b)] for any $T\in\cT(\theta_j)$, the closest common graph-theoretic ancestor of any two people in $T$ has label 0.
	\end{itemize}
\end{definition}

The following theorem describes $m(\theta)$ for every irreducible character $\theta$ of $P_n$. In particular, it shows that in the great majority of the cases, $m(\theta)$ equals $n-\gamma_0(\theta)-\gamma_1(\theta)$.

\begin{theorem}\label{thm:final-m(theta)}
	Let $n\in\N$ and $\theta\in\Irr(P_{n})$. Then 
	\[ m(\theta) = \begin{cases}
			n-\gamma_0(\theta)-\gamma_1(\theta)-2 & \text{ if}\ n=p^k \text{ for some } k\in\N \text{ and } \val(\theta)=0,\\
			n-\gamma_0(\theta)-\gamma_1(\theta)-1 & \text{ if}\ \val(\theta)=1 \text{ and } \theta \text{ is punctured},\\
			n-\gamma_0(\theta)-\gamma_1(\theta), & \text{otherwise}.
		\end{cases} \]
\end{theorem}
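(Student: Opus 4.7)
The plan is to assemble Theorem~\ref{thm:final-m(theta)} as a direct corollary of the results already established for $\Omega(\theta)$. The case split in the statement matches exactly the case split coming from the various $\Omega(\theta)$--descriptions in Sections~\ref{sec:pk} and~\ref{sec:arbitrary-n}, so the work reduces to reading off the largest $t$ with $\cB_n(t)\subseteq\Omega(\theta)$ in each case, plus a single uniform obstruction argument using thin partitions. I would begin by separating the prime power case $n=p^k$, where the result is immediate from Theorem~\ref{thm:m-final-pk} (noting that for $\val(\theta)=0$ we have $\theta=\triv_{p^k}$ and $\gamma_0(\theta)=\gamma_1(\theta)=0$, so the expression $p^k-\gamma_0-\gamma_1-2$ there matches the first line of the present statement).

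For $n$ not a prime power (so $t\ge 2$ in the $p$-adic expansion $n=\sum_{i=1}^tp^{n_i}$), I would next dispose of $\val(\theta)=0$: then every $\theta_i=\triv_{p^{n_i}}$, so $\theta=\triv_n$ and Theorem~\ref{thm:GL1} gives $\Omega(\theta)=\cP(n)=\cB_n(n)$, yielding $m(\theta)=n=n-\gamma_0(\theta)-\gamma_1(\theta)$, which falls under the ``otherwise'' branch. For $\val(\theta)=1$, I would combine Theorem~\ref{thm:val-1-arbitrary} (which says $\Omega(\theta)$ is either $\cB_n(n-\gamma_0(\theta))$ or $\puncb{n}{n-\gamma_0(\theta)}$) with Theorem~\ref{thm:puncture-arbitrary} (which characterises the punctured case): since $\gamma_1(\theta)=0$ when $\val(\theta)=1$, reading $m(\theta)$ directly off the definition of $\puncb{n}{n-\gamma_0(\theta)}$ versus $\cB_n(n-\gamma_0(\theta))$ gives $n-\gamma_0(\theta)-1$ when $\theta$ is punctured and $n-\gamma_0(\theta)$ otherwise, matching the second and (part of the) third branches.

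The remaining case is $\val(\theta)\ge 2$. Here Theorem~\ref{thm:arbitrary-val-2} gives $\cB_n(n-\gamma_0(\theta)-\gamma_1(\theta))\subseteq\Omega(\theta)$, which provides $m(\theta)\ge n-\gamma_0(\theta)-\gamma_1(\theta)$. For the reverse inequality, I would exhibit a thin partition lying in $\cB_n(n-\gamma_0(\theta)-\gamma_1(\theta)+1)$ but not in $\Omega(\theta)$; the natural candidate is the two--row partition $\ft_n(n-\gamma_0(\theta)-\gamma_1(\theta)+1)$. Since $\val(\theta)\ge 2$ forces $\gamma_0(\theta),\gamma_1(\theta)\ge 1$, and by iterating the bound of Lemma~\ref{lem:tree-bounds}(a) over the $p$-adic expansion of $n$ one has $\gamma_0(\theta)+\gamma_1(\theta)\le n/p+n/p^2<n/2$ for $p\ge 5$, so this two--row partition is genuinely a (thin) partition of $n$. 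By the second assertion of Theorem~\ref{thm:arbitrary-val-2}, $\Omega(\theta)\setminus\cB_n(n-\gamma_0(\theta)-\gamma_1(\theta))$ contains no thin partitions, so this candidate fails to lie in $\Omega(\theta)$, giving $\cB_n(n-\gamma_0(\theta)-\gamma_1(\theta)+1)\not\subseteq\Omega(\theta)$ and thus $m(\theta)=n-\gamma_0(\theta)-\gamma_1(\theta)$, completing the ``otherwise'' branch.

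The proof is essentially bookkeeping: no new combinatorial identities are needed. The only non--trivial point, and the main place one has to be slightly careful, is the well--definedness check for the obstructing two--row partition in the $\val(\theta)\ge 2$ case, which requires verifying $n-\gamma_0(\theta)-\gamma_1(\theta)+1\ge\lceil n/2\rceil$ uniformly; this is where the general bound on $\gamma_0+\gamma_1$ inherited from Lemma~\ref{lem:tree-bounds} via Definition~\ref{def:trees-arbitrary} enters.
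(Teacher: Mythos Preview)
Your proposal is correct and follows essentially the same route as the paper: a case split on $\val(\theta)$, invoking Theorem~\ref{thm:GL1} for $\val(\theta)=0$, Theorems~\ref{thm:val-1-arbitrary} and~\ref{thm:puncture-arbitrary} for $\val(\theta)=1$, and Theorem~\ref{thm:arbitrary-val-2} for $\val(\theta)\ge 2$. The paper's proof is terser --- it simply declares each case an ``immediate consequence'' of the relevant theorem --- whereas you make the upper bound for $\val(\theta)\ge 2$ explicit by exhibiting the thin partition $\ft_n(n-\gamma_0(\theta)-\gamma_1(\theta)+1)$ and checking it is well-defined; this is exactly the content hidden in the paper's word ``immediate'' (compare how the paper deduces Theorem~\ref{thm:m-val-2} from Theorem~\ref{thm:value-2}). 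Your choice to first peel off the prime power case via Theorem~\ref{thm:m-final-pk} is a harmless organisational variant, since that theorem is itself assembled from the same ingredients.
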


\begin{proof}
	If $\val(\theta)=0$ then $\theta=\triv_n$ and $\gamma_0(\theta)=\gamma_1(\theta)=0$, and the result follows from Theorem~\ref{thm:GL1}. 
	If $\val(\theta)=1$ then $\gamma_1(\theta)=0$, and the result follows from Theorems~\ref{thm:val-1-arbitrary} and~\ref{thm:puncture-arbitrary}. 
	Finally, if $\val(\theta)\geq 2$ then the statement is an immediate consequence of Theorem~\ref{thm:arbitrary-val-2}. 
\end{proof}

\bigskip

\section{Applications \& future directions}\label{sec:future}

In this section we collect a few interesting and immediate consequences of the main results of this article. Fix a prime number $p\ge 5$.

We have now proven tight upper and lower bounds $M(\theta)$ and $m(\theta)$ respectively such that 
\[ \cB_n(m(\theta))\subseteq\Omega(\theta)\subseteq\cB_n(M(\theta)) \]
for all $n\in\N$ and $\theta\in\Irr(P_n)$, in Theorems~\ref{thm:arbitrary-M} and~\ref{thm:final-m(theta)}. As mentioned in the introduction, it is still unknown in general when a partition $\lambda$ belongs to $\Omega(\theta)$ if $\lambda\in \cB_n(M(\theta))\setminus\cB_n(m(\theta))$. However, it turns out that there are very few such $\lambda$: we describe the difference $M(\theta)-m(\theta)$ in Corollary~\ref{cor:M-m} below, from which we can deduce a crude but simple bound on this difference which does not depend on $\theta$. 

Then, in Corollary~\ref{cor:limit}, we show for all $\theta$ that, in fact, the set $\Omega(\theta)$ is a very large subset of the set of partitions of $n$. 
Moreover, we are able to determine exactly when $\lambda$ belongs to $\Omega(\theta)$ when $\lambda$ is a thin partition. That is, we completely determine the positivity of the Sylow branching coefficient $[\chi^\lambda\down_{P_n},\theta]$ in the case when $\lambda$ has a Durfee square of size at most two. This is done in Corollary~\ref{cor:thin} below.

\begin{corollary}\label{cor:M-m}
	Let $n\in\N$ and $\theta\in\Irr(P_n)$. Then $M(\theta)-m(\theta)=\gamma_1(\theta)+c$, where $c\in\{0,1,2\}$. In particular, $M(\theta)-m(\theta) \le \tfrac{n}{p^2}+2$ for all $\theta$.
\end{corollary}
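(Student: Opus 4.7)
The first claim is essentially just bookkeeping: combining Theorem~\ref{thm:arbitrary-M}, which gives $M(\theta) = n - \gamma_0(\theta)$, with the three-case formula for $m(\theta)$ in Theorem~\ref{thm:final-m(theta)}, the difference $M(\theta) - m(\theta)$ equals $\gamma_1(\theta) + c$ where $c \in \{0,1,2\}$ according to which case of the $m(\theta)$ formula applies. So the first sentence requires no real work beyond quoting these two theorems.

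The real content of the corollary is the bound $M(\theta) - m(\theta) \leq \tfrac{n}{p^2} + 2$. Since we already have $M(\theta) - m(\theta) \leq \gamma_1(\theta) + 2$, it suffices to show that $\gamma_1(\theta) \leq n/p^2$ for every $\theta \in \Irr(P_n)$. The plan is to reduce this to the prime-power case and then invoke Lemma~\ref{lem:tree-bounds}(a). Writing the $p$-adic expansion $n = p^{n_1} + \cdots + p^{n_t}$ and $\theta = \theta_1 \times \cdots \times \theta_t$, Definition~\ref{def:trees-arbitrary} gives $\gamma_1(\theta) = \sum_{i=1}^t \gamma_1(\theta_i)$.

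For each factor $\theta_i \in \Irr(P_{p^{n_i}})$, Lemma~\ref{lem:tree-bounds}(a) applied with $i=1$ yields $\gamma_1(\theta_i) \leq p^{n_i - 2}$ when $n_i \geq 2$; when $n_i \in \{0,1\}$ the corresponding trees have height at most $0$, so $\gamma_1(\theta_i) = 0$ trivially (no person in such a tree can have a $1$-descendant). In either case $\gamma_1(\theta_i) \leq p^{n_i}/p^2$, and summing over $i$ gives
\[
\gamma_1(\theta) \;=\; \sum_{i=1}^t \gamma_1(\theta_i) \;\leq\; \sum_{i=1}^t \frac{p^{n_i}}{p^2} \;=\; \frac{n}{p^2}.
\]
Combined with $M(\theta) - m(\theta) \leq \gamma_1(\theta) + 2$, this yields the claimed bound.

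There is no real obstacle here: the only mild subtlety is handling the small cases $n_i \in \{0,1\}$ where Lemma~\ref{lem:tree-bounds}(a) does not literally apply (its range is $i \in [0,k-1]$), but $\gamma_1$ vanishes on such factors for trivial structural reasons, which is enough.
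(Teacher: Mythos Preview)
Your proof is correct and follows essentially the same approach as the paper: both combine Theorems~\ref{thm:arbitrary-M} and~\ref{thm:final-m(theta)} for the first claim, then bound $\gamma_1(\theta)\le n/p^2$ by reducing to the prime-power case via Definition~\ref{def:trees-arbitrary} and applying Lemma~\ref{lem:tree-bounds}(a). Your treatment of the small cases $n_i\in\{0,1\}$ is in fact more explicit than the paper's, which simply asserts the general bound follows from the prime-power case.
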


\begin{proof}
	The first statement follows immediately from Theorems~\ref{thm:arbitrary-M} and~\ref{thm:final-m(theta)}.
	Since $\gamma_1(\theta)\le p^{k-2}=\tfrac{1}{p^2}\cdot n$ when $n=p^k$ by Lemma~\ref{lem:tree-bounds}, it follows that $\gamma_1(\theta)\le\tfrac{1}{p^2}\cdot n$ when $n\in\N$ is arbitrary, by Definition~\ref{def:trees-arbitrary}.
\end{proof}

Now, the quantity $\gamma_1(\theta)$ is in general very small compared with $n$ in an admissible (tuple of) trees. Although this may, in the worst case, seemingly be as large as $\tfrac{n}{p^2}$, in fact our results on $\Omega(\theta)$ show the following statement: that the restriction to $P_n$ of \emph{almost all} irreducible characters of $S_n$ have \emph{every} irreducible character of $P_n$ as a constituent. This will follow from the fact that $m(\theta)$ turns out to be extremely large compared with $n$. More precisely, let $\Omega(n)$ be the subset of $\cP(n)$ defined by 
\[ \Omega(n) := \{\lambda\in\cP(n) \mid [\lambda\down_{P_n}, \theta]\neq 0\ \text{for all}\ \theta\in\Irr(P_n)\}. \]
Of course, $\Omega(n)=\bigcap_{\theta\in\Irr(P_n)} \Omega(\theta)$. The following result shows that $\Omega(n)$ is almost the entirety of $\cP(n)$. 

\begin{corollary}\label{cor:limit}
	For all $n\in\N$ we have that $\cB_n\big(\tfrac{p-2}{p}\cdot n\big)\subseteq\Omega(n)$. In particular, $\lim_{n\to\infty} \tfrac{|\Omega(n)|}{|\cP(n)|}=1$.
\end{corollary}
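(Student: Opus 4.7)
The plan is to show that $m(\theta)\ge\tfrac{p-2}{p}\,n$ uniformly for every $\theta\in\Irr(P_n)$, so that $\Omega(n)=\bigcap_\theta \Omega(\theta)\supseteq\bigcap_\theta \cB_n(m(\theta))=\cB_n(\min_\theta m(\theta))$ yields the first inclusion; for the limit I would then invoke classical asymptotic estimates on the partition function to show that the complement $\cP(n)\setminus\cB_n(\lfloor\tfrac{p-2}{p}n\rfloor)$ has vanishing density. Writing $n=\sum_{i=1}^t p^{n_i}$ and $\theta=\theta_1\times\cdots\times\theta_t$ as in Definition~\ref{def:trees-arbitrary}, Lemma~\ref{lem:tree-bounds}(a) gives $\gamma_0(\theta_i)+\gamma_1(\theta_i)\le p^{n_i-1}+p^{n_i-2}=\tfrac{p+1}{p^2}\,p^{n_i}$ whenever $n_i\ge 2$, and the same inequality holds trivially when $n_i\in\{0,1\}$ (both sides being at most $1$). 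Summing over $i$ yields the uniform bound $\gamma_0(\theta)+\gamma_1(\theta)\le\tfrac{p+1}{p^2}\,n$.

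Next I would dispose of the constant shifts in Theorem~\ref{thm:final-m(theta)} by a short case analysis. If $\val(\theta)\ge 2$, the uniform bound above immediately gives $m(\theta)=n-\gamma_0(\theta)-\gamma_1(\theta)\ge n\cdot\tfrac{p^2-p-1}{p^2}\ge\tfrac{p-2}{p}\,n$, since $p^2-p-1\ge p^2-2p$. If $\val(\theta)=1$, then $\gamma_1(\theta)=0$ and $\gamma_0(\theta)\le n/p$ by the same argument, so for $\theta$ not punctured $m(\theta)=n-\gamma_0(\theta)\ge\tfrac{p-1}{p}\,n$; whereas if $\theta$ is punctured, its unique nontrivial component $\theta_j$ has $\eta(\theta_j)\ge 2$, forcing $n_j\ge 2$ (since single-vertex trees in $\cF_1$ can contain at most one person) and hence $n\ge p^2$, so that $m(\theta)=n-\gamma_0(\theta)-1\ge n-n/p-1\ge\tfrac{p-2}{p}\,n$. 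Finally, if $\val(\theta)=0$ then $\theta=\triv_n$; for $n$ not a prime power we have $m(\theta)=n$ outright, while for $n=p^k$ with $k\ge 1$ we have $m(\theta)=n-2\ge\tfrac{p-2}{p}\,n$ because $n\ge p$. For $n=1$ the inclusion is vacuous, as $\lfloor\tfrac{p-2}{p}\rfloor=0$. Combining these cases gives $m(\theta)\ge\tfrac{p-2}{p}\,n$ for every $\theta$, and intersecting over $\theta$ gives $\cB_n(\tfrac{p-2}{p}n)\subseteq\Omega(n)$.

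For the limit assertion I would apply the Hardy--Ramanujan asymptotic $p(n)\sim\tfrac{1}{4\sqrt{3}\,n}\,e^{\pi\sqrt{2n/3}}$, which implies $p(cn)/p(n)\to 0$ super-polynomially fast for any fixed $c\in(0,1)$. Setting $t_n:=\lfloor\tfrac{p-2}{p}\,n\rfloor$, the conjugation symmetry of $\cP(n)$ gives $|\cP(n)\setminus\cB_n(t_n)|\le 2\,|\{\lambda\in\cP(n):\lambda_1>t_n\}|\le 2(n-t_n)\,p(n-t_n-1)$, and $n-t_n-1\le\tfrac{2n}{p}$, so $|\cP(n)\setminus\cB_n(t_n)|=O\!\big(n\cdot p(\tfrac{2n}{p})\big)=o(p(n))$. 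Since $\cB_n(t_n)\subseteq\Omega(n)\subseteq\cP(n)$, this forces $|\Omega(n)|/|\cP(n)|\to 1$. The only real ``obstacle'' here is bookkeeping: ensuring the shifts $-1,-2$ in Theorem~\ref{thm:final-m(theta)} never break the bound in degenerate small-$n$ situations, which is precisely what the case split above is designed to control.
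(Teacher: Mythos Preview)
Your proof is correct, and it arrives at the same conclusion as the paper's but via a somewhat different route. The paper first establishes $m(\theta)\ge\tfrac{p-2}{p}\,p^k$ in the prime-power case using Theorem~\ref{thm:m-final-pk} and Lemma~\ref{lem:tree-bounds}, and then \emph{lifts} to arbitrary $n$ through the $\star$-product (Proposition~\ref{prop:SL5.7} plus Lemma~\ref{lem:omega-general-n}): from $\cB_{p^{n_i}}(\tfrac{p-2}{p}p^{n_i})\subseteq\Omega(\theta_i)$ for each factor one gets $\cB_n(\tfrac{p-2}{p}n)\subseteq\Omega(\theta_1)\star\cdots\star\Omega(\theta_t)=\Omega(\theta)$. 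You instead bypass the $\star$-product entirely and apply Theorem~\ref{thm:final-m(theta)} directly at the level of arbitrary $n$, bounding $\gamma_0(\theta)+\gamma_1(\theta)$ componentwise; this is arguably more economical since Theorem~\ref{thm:final-m(theta)} has already absorbed the $\star$-product machinery. Your case analysis on $\val(\theta)$ to handle the $-1$ and $-2$ shifts is clean; the only cosmetic slip is the parenthetical ``both sides being at most $1$'' for $n_i=1$ (the right-hand side is $(p+1)/p>1$), but the inequality $1\le(p+1)/p$ still holds, so nothing breaks.

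For the limit, the paper simply cites the Erd\H{o}s--Lehner result that almost all partitions have first part of order $\sqrt{n}\log n$, which immediately kills the complement of any $\cB_n(cn)$ with $c>0$. Your explicit Hardy--Ramanujan estimate, bounding $|\cP(n)\setminus\cB_n(t_n)|$ by $2(n-t_n)\,p(n-t_n-1)$ and then comparing exponents, is a perfectly valid and more self-contained alternative.
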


\begin{proof}
	We first observe that if $n=p^0$ then $\Irr(P_n)=\{\triv\}$ and $m(\triv)=1=n$.
	If $n=p^1$ then $m(\theta)\in\{p-1,p-2\}$ for all $\theta\in\Irr(P_n)$ by Lemma~\ref{lem:small-k}.
	If $n=p^k$ where $k\ge 2$, then Theorem~\ref{thm:final-m(theta)} and Lemma~\ref{lem:tree-bounds}(a) give us 
	\[ m(\theta)\ge p^k-\gamma_0(\theta)-\gamma_1(\theta)-2 \ge p^k-p^{k-1}-p^{k-2}-2 \]
	for all $\theta\in\Irr(P_n)$.
	Therefore, whenever $n$ is a power of $p$, we have that $m(\theta)\ge\tfrac{p-2}{p}\cdot n$ for all $\theta\in\Irr(P_n)$. In particular, this means that $\cB_n\big(\tfrac{p-2}{p}\cdot n\big)\subseteq\Omega(n)$ whenever $n=p^k$ for some $k\in\N_0$.
	
	Now suppose $n\in\N$ with $p$-adic expansion $n=\sum_{i=1}^t p^{n_i}$, and let $\theta=\theta_1\times\cdots\times\theta_t\in\Irr(P_n)$. Then $\cB_{p^{n_i}}\big(\tfrac{p-2}{p}\cdot p^{n_i}\big)\subseteq\Omega(n) \subseteq\Omega(\theta_i)$ for each $i\in[1,t]$. 
	But $\tfrac{p-2}{p}>\tfrac{1}{2}$ since $p\ge 5$, and so by Proposition~\ref{prop:SL5.7} and Lemma~\ref{lem:omega-general-n} we obtain 
	\[ \cB_n\big(\tfrac{p-2}{p}\cdot n\big)\subseteq \Omega(\theta_1)\star\cdots\star\Omega(\theta_t)=\Omega(\theta). \]
	Since $\theta\in\Irr(P_n)$ was arbitrary, we deduce that $\cB_n\big(\tfrac{p-2}{p}\cdot n\big)\subseteq\Omega(n)$. 
	
	The final limit then follows using the classical result \cite[(1.4)]{EL} that all but $o(|\cP(n)|)$ many partitions of $n$ lie in $\cB_n\big( \sqrt{n}\cdot( \tfrac{\log n}{c}+f(n) ) \big)$, for any function $f$ such that $f(n)\to\infty$ as $n\to\infty$ and where $c$ is a constant.
\end{proof}

The analysis of thin partitions has been a fundamental tool in establishing the value of $m(\theta)$ for every $\theta\in\Irr(P_n)$. We highlight in the following corollary that if $\lambda$ is a thin partition then one can easily read from the length of its first part (or first column) the irreducible constituents of its restriction to $P_n$. Since all the sets $\Omega(\theta)$ are closed under conjugation of partitions, we state for simplicity, and without loss of generality, the result for those thin partitions $\lambda$ with $\lambda_1\geq \ell(\lambda)$. 

\begin{corollary}\label{cor:thin}
	Let $\lambda$ be a thin partition with $\lambda_1\geq\ell(\lambda)$ and let $\theta\in\Irr(P_n)\setminus\{\triv_{P_n}\}$. Then $\lambda\in\Omega(\theta)$ if and only if $\lambda_1\le n-\gamma_0(\theta)-\gamma_1(\theta)$.
\end{corollary}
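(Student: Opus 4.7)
The plan is to split the proof into two cases, $\val(\theta)=1$ and $\val(\theta)\geq 2$, which are exhaustive since $\theta\ne\triv_{P_n}$ forces $\val(\theta)\geq 1$. In both cases, almost all of the work has already been done in the main theorems of Sections~\ref{sec:pk} and~\ref{sec:arbitrary-n}; the claim is essentially the statement that $M(\theta)$ and $m(\theta)$ coincide when one restricts attention to thin partitions $\lambda$ with $\lambda_1\ge\ell(\lambda)$. Observe first that under the assumption $\lambda_1\ge\ell(\lambda)$, the condition $\lambda\in\cB_n(t)$ is equivalent to $\lambda_1\le t$.

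For $\val(\theta)\ge 2$, I would invoke Theorem~\ref{thm:arbitrary-val-2} directly. One direction: if $\lambda_1\le n-\gamma_0(\theta)-\gamma_1(\theta)$, then $\lambda\in\cB_n(n-\gamma_0(\theta)-\gamma_1(\theta))\subseteq\Omega(\theta)$ by the first half of that theorem. Conversely, if $\lambda_1>n-\gamma_0(\theta)-\gamma_1(\theta)$, then $\lambda\notin\cB_n(n-\gamma_0(\theta)-\gamma_1(\theta))$; since $\Omega(\theta)\setminus\cB_n(n-\gamma_0(\theta)-\gamma_1(\theta))$ contains no thin partitions by the second half of Theorem~\ref{thm:arbitrary-val-2}, and $\lambda$ is thin, we conclude $\lambda\notin\Omega(\theta)$.

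For $\val(\theta)=1$, I note $\gamma_1(\theta)=0$, so the claimed inequality reduces to $\lambda_1\le n-\gamma_0(\theta)$. By Theorem~\ref{thm:val-1-arbitrary}, $\Omega(\theta)$ equals either $\cB_n(n-\gamma_0(\theta))$ or $\puncb{n}{n-\gamma_0(\theta)}$. In the former case the equivalence is immediate. In the punctured case, the key observation is that the partitions removed from $\cB_n(n-\gamma_0(\theta))$ in the definition of $\puncb{n}{n-\gamma_0(\theta)}$, namely $\close{\{(n-\gamma_0(\theta),\gamma_0(\theta)-1,1),(n-\gamma_0(\theta),2,1^{\gamma_0(\theta)-2})\}}$, are never thin. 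To justify this, I would use Theorem~\ref{thm:puncture-arbitrary} together with the remark after Theorem~\ref{thm:punctures}: in the punctured case one has $\gamma_0(\theta)=\gamma_0(\theta_j)$ which is a power of $p$, hence at least $p\ge 5$. A short inspection then shows that each of these partitions has three distinct part-values and at least three parts, so it can be neither a hook nor a two-row, nor the conjugate of a two-row, partition.

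There is no serious obstacle here: the only mildly subtle step is verifying that the two excluded partition shapes are never thin, which follows from the lower bound $\gamma_0(\theta)\ge p\ge 5$ in the punctured setting. The rest is a direct application of Theorems~\ref{thm:val-1-arbitrary} and~\ref{thm:arbitrary-val-2}, together with the elementary observation that for thin $\lambda$ with $\lambda_1\ge\ell(\lambda)$, membership in $\cB_n(t)$ is controlled entirely by $\lambda_1$.
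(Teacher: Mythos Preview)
Your proposal is correct and follows essentially the same approach as the paper, which simply states that the corollary follows directly from Theorems~\ref{thm:val-1-arbitrary} and~\ref{thm:arbitrary-val-2}. You have filled in the details the paper leaves implicit, in particular the verification that the partitions excised in the definition of $\puncb{n}{n-\gamma_0(\theta)}$ are never thin (using $\gamma_0(\theta)\ge p\ge 5$ in the punctured case), which is exactly the check needed to complete the $\val(\theta)=1$ branch.
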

\begin{proof}
	This follows directly from Theorems~\ref{thm:val-1-arbitrary} and~\ref{thm:arbitrary-val-2}.
\end{proof}

All of the results proved in this article hold for any prime $p\ge 5$. The structure of the sets $\Omega(\theta)$, as we consider all $n\in\N$ and all $\theta\in\Irr(P_n)$, presents a certain uniform behaviour for all primes $p$ greater than or equal to 5, as shown by Theorems~\ref{thm:val-1-arbitrary}, \ref{thm:puncture-arbitrary} and \ref{thm:final-m(theta)}. On the contrary, this does not seem to be the case for the primes $2$ and $3$.

More specifically, when $p=2$, even the structure of $\Omega(\triv_n)$ is not yet completely understood. Nevertheless, recent advances on Sylow branching coefficients for $p=2$ in \cite{GV,LO}, for example, show that it is dissimilar to the structure of $\Omega(\triv_n)$ in the case of odd primes, proven in \cite{GL1} (Theorem~\ref{thm:GL1}).

On the contrary, we believe that the case $p=3$ should be more tractable, but will nevertheless require new ideas and ad hoc techniques, in particular to deal with some intricate combinatorics. We observed a similar situation when trying to describe $\Omega(\theta)$ for any linear character $\theta\in\Irr(P_n)$: the case of $p\ge 5$ was first resolved in \cite{GL2}, while the analysis of the $p=3$ case was completed some time later in \cite{L}. Thus, the $p=3$ analogue of the present article will similarly be the subject of future investigation.

\bigskip

\subsection*{Acknowledgements}
We are grateful to the referees whose comments and suggestions were very helpful and greatly improved our article.
The first author is funded by the European Union – Next Generation EU, M4C1, CUP B53D23009410006, and PRIN 2022 - 2022PSTWLB \textit{Group Theory and Applications}. As a member of the GNSAGA, he is grateful for the support of the {\em Istituto Nazionale di Alta Matematica}. The second author was supported by Emmanuel College, Cambridge, and also thanks the Algebra Research Group at Universit\`a degli Studi di Firenze for their hospitality during research visits where part of this work was done.
This material is based upon work supported by the National Science Foundation under Grant No.~DMS-1928930, while the first author was in residence at the Mathematical Sciences Research Institute in Berkeley, California, during Summer 2023.

\bigskip



\begin{thebibliography}{9999}
	\bibitem[B63]{Brauer}
	{\sc R.~Brauer, }
	\newblock \emph{Representations of finite groups},
	\newblock Lectures of Modern Mathematics, vol.~I, Wiley, New York, 1963. 
	
	\bibitem[dBPW21]{dBPW}
	{\sc M.~de Boeck, R.~Paget and M.~Wildon, }
	\newblock Plethysms of symmetric functions and highest weight representations,
	\newblock \textit{Trans.~Amer.~Math.~Soc.} \textbf{374} (2021), 8013--8043.
	
	\bibitem[CT03]{CT}
	{\sc J.~Chuang and K.~M.~Tan, }
	\newblock Representations of wreath products of algebras,
	\newblock \emph{Math.~Proc.~Camb.~Philos.~Soc} \textbf{135}(3) (2003), 395--411.
	
	\bibitem[EL41]{EL}
	{\sc P.~Erd\H{o}s and J.~Lehner, }
	\newblock The distribution of the number of summands in the partitions of a positive integer,
	\newblock \emph{Duke Math.~J.} \textbf{8} (1941), 335--345.
	
	\bibitem[GKNT17]{GKNT17}
	{\sc  E. Giannelli, A. Kleshchev, G. Navarro and P. H. Tiep, }
	\newblock Restriction of odd degree characters and natural correspondences,
	\newblock \emph{Int.~Math.~Res.~Not.} vol. 2017 \textbf{20} (2017), 6089--6118.
	
	\bibitem[GL18]{GL1}
	{\sc E.~Giannelli and S.~Law,}
	\newblock On permutation characters and Sylow $p$-subgroups of $\mathfrak{S}_n$,
	\newblock \emph{J.~Algebra} \textbf{506} (2018), 409--428.
	
	\bibitem[GL21]{GL2}
	{\sc E.~Giannelli and S.~Law,}
	\newblock Sylow branching coefficients for symmetric groups,
	\newblock \textit{J.~London Math.~Soc.} (2) \textbf{103} (2021), 697--728.
	
	\bibitem[GLLV22]{GLLV}
	{\sc E.~Giannelli, S.~Law, J.~Long and C.~Vallejo,}
	\newblock Sylow branching coefficients and a conjecture of Malle and Navarro,
	\newblock \textit{Bull.~London Math.~Soc.} \textbf{54} (2022), 552--567.
	
	\bibitem[GTT18]{GTT}
	{\sc E.~Giannelli, J.~Tent and P.~H.~Tiep, }
	\newblock Irreducible characters of $3'$-degree of finite symmetric, general linear and unitary groups,
	\newblock \emph{J.~Pure Appl.~Algebra} \textbf{222} (2018), no. 8, 2199--2228.
	
	\bibitem[GV23]{GV}
	{\sc  E.~Giannelli and G.~Volpato,}
	\newblock Non-linear Sylow Branching Coefficients for $\mathfrak{S}_n$, 
	\newblock \emph{Math.~Z.} (2023) \textbf{303}: no.~41, 19 pp.
	
	
	\bibitem[I76]{IBook}
	{\sc I.~M.~Isaacs, }
	\newblock \emph{Character theory of finite groups},
	\newblock Dover, New York, 1976.
	
	\bibitem[IN24]{IN24}
	{\sc M. Isaacs and G.~Navarro}
	\newblock Primes and conductors of characters,
	\newblock \emph{J.~Algebra} \textbf{653} (2024), 42--53.
	
	\bibitem[J78]{J}
	{\sc G.~James,} 
	\newblock \emph{The representation theory of the symmetric groups}, 
	\newblock Lecture Notes in Mathematics, vol. 682, Springer, Berlin, 1978.
	
	\bibitem[JK81]{JK}
	{\sc  G.~James and A.~Kerber,}
	\newblock \emph{The representation theory of the symmetric group}, 
	\newblock Encyclopedia of Mathematics and its Applications, vol.~16, Addison-Wesley Publishing Co., Reading, Mass., 1981.
	
	\bibitem[K84]{Kaloujnine}
	{\sc L.~Kaloujnine,}
	\newblock La structure des $p$-groupes de Sylow des groupes sym\'etriques finis,
	\newblock \textit{Ann.~Sci.~\'Ecole Norm.~Sup.} (3) \textbf{65} (1948), 239--276.
	
	\bibitem[L23]{L}
	{\sc S.~Law,}
	\newblock Positivity of Sylow branching coefficients for symmetric groups,
	\newblock \emph{J.~Pure Appl.~Algebra} \textbf{227} (2023), \url{doi:10.1016/j.jpaa.2023.107355}.
	
	\bibitem[L19]{SLThesis}
	{\sc S.~Law,}
	\newblock \textit{On problems in the representation theory of symmetric groups},
	\newblock Ph.D.~thesis, University of Cambridge, 2019. 
	
	\bibitem[LO22]{LO}
	{\sc S.~Law and Y.~Okitani,}
	\newblock On plethysms and Sylow branching coefficients,
	\newblock \textit{Algebr.~Comb.}, \textbf{6} (2) (2023), 321--357.
	
	\bibitem[MN12]{MN}
	{\sc G.~Malle and G.~Navarro,} 
	\newblock Characterizing normal Sylow p-subgroups by character degrees, 
	\newblock \emph{J.~Algebra} {\bf 370} (2012), 402--406. 
	
	\bibitem[NT21]{NT21}
	{\sc G.~Navarro and P.~H.~Tiep, }
	\newblock The fields of values of characters of degree not divisible by $p$,
	\newblock \emph{Forum of Math.~Pi.} \textbf{9} (2021) e2, 1--28.

	\bibitem[NTV14]{NTV14}
	{\sc G.~Navarro, P.~H.~Tiep and C.~Vallejo, }
	\newblock McKay natural correspondences on characters, 
	\newblock \emph{Algebra and Number Theory} \textbf{8} no.~8 (2014), 1839--1856.
	
	\bibitem[O94]{OlssonBook}
	{\sc J.~B.~Olsson,}
	\newblock \emph{Combinatorics and Representations of Finite Groups},
	\newblock Vorlesungen aus dem Fachbereich Mathematik der Universit\"at Essen, Heft 20, 1994.
  
	\bibitem[OOR04]{OOR}
	{\sc R.~Orellana, M.~Orrison and D. Rockmore,}
	\newblock Rooted trees and iterated wreath products of cyclic groups,
	\newblock \emph{Adv.~in Appl.~Math.} \textbf{33} (2004), no.~3, 531--547. 
	
	\bibitem[PW19]{PW}
	{\sc R.~Paget and M.~Wildon, }
	\newblock Generalized Foulkes modules and maximal and minimal constituents of plethysms of Schur functions,
	\newblock \emph{Proc.~Lond.~Math.~Soc.}, (3) \textbf{118} (2019), 1153–1187.
\end{thebibliography}
\end{document}